\title{Spherical DG-functors}
\author{Rina Anno}
\email{anno@pitt.edu}
\address{Department of Mathematics\\
University of Pittsburgh\\
301 Thackeray Hall\\
Pittsburgh, PA 15260\\
USA}
\author{Timothy Logvinenko} 
\email{LogvinenkoT@cardiff.ac.uk} 
\address{School of Mathematics\\ 
Cardiff University\\
Senghennydd Road,\\
Cardiff, CF24 4AG\\
UK}
\DeclareMathOperator{\homm}{Hom}
\DeclareMathOperator{\shhomm}{{\it\mathcal{H}om\rm}}
\DeclareMathOperator{\eend}{End}
\DeclareMathOperator{\autm}{Aut}
\DeclareMathOperator{\picr}{Pic}
\DeclareMathOperator{\tot}{tot}
\DeclareMathOperator{\cl}{Cl}
\DeclareMathOperator{\spec}{Spec\;}
\DeclareMathOperator{\ext}{Ext}
\DeclareMathOperator{\Ext}{Ext}
\DeclareMathOperator{\ev}{ev}
\DeclareMathOperator{\trace}{tr}
\DeclareMathOperator{\action}{act}
\DeclareMathOperator{\modd}{\bf Mod}
\DeclareMathOperator{\lder}{\bf L}
\DeclareMathOperator{\rder}{\bf R}
\DeclareMathOperator{\ldertimes}{\overset{\lder}{\otimes}}
\DeclareMathOperator{\rderhom}{\rder\homm}
\DeclareMathOperator{\rdershom}{\rder\shhomm}
\DeclareMathOperator{\id}{Id}
\DeclareMathOperator{\cone}{Cone}
\DeclareMathOperator{\opp}{{opp}}
\DeclareMathOperator{\fg}{{\it fg}}
\DeclareMathOperator{\qrep}{\it \mathcal{Q}r}
\DeclareMathOperator{\hproj}{\mathcal{P}}
\DeclareMathOperator{\bhproj}{\bar{\mathcal{P}}}
\DeclareMathOperator{\acyc}{\it \mathcal{A}c}
\DeclareMathOperator{\intmod}{{\it \mathcal{I}nt}}
\DeclareMathOperator{\semifree}{\mathcal{S}\mathcal{F}}
\DeclareMathOperator{\sffg}{\mathcal{S}\mathcal{F}_{\fg}}
\DeclareMathOperator{\perf}{{\it \mathcal{P}erf}}
\DeclareMathOperator{\hmtpy}{{Ho}}
\DeclareMathOperator{\Morita}{{Mrt}}
\DeclareMathOperator{\pretriag}{{Pre\text{-}Tr}}
\DeclareMathOperator{\DGFun}{{DGFun}}
\DeclareMathOperator{\Fun}{{Fun}}
\DeclareMathOperator{\exfun}{{ExFun}}
\DeclareMathOperator{\cts}{{cts}}
\DeclareMathOperator{\convol}{{}}
\begin{document}

\def\bv{\mathbf{v}}
\def\kgc_{K^*_G(\mathbb{C}^n)}
\def\kgchi_{K^*_\chi(\mathbb{C}^n)}
\def\kgcf_{K_G(\mathbb{C}^n)}
\def\kgchif_{K_\chi(\mathbb{C}^n)}
\def\gpic_{G\text{-}\picr}
\def\gcl_{G\text{-}\cl}
\def\trch_{{\chi_{0}}}
\def\regring{{R}}
\def\regrep{{V_{\text{reg}}}}
\def\givrep{{V_{\text{giv}}}}
\def\lbar{{(\mathbb{Z}^n)^\vee}}
\def\genpx_{{p_X}}
\def\genpy_{{p_Y}}
\def\genpcn_{p_{\mathbb{C}^n}}
\def\gnat{gnat}
\def\twalg{{\regring \rtimes G}}
\def\L{{\mathcal{L}}}
\def\O{{\mathcal{O}}}
\def\gcd{\mbox{gcd}}
\def\lcm{\mbox{lcm}}
\def\tf{{\tilde{f}}}
\def\tD{{\tilde{D}}}
\def\A{{\mathcal{A}}}
\def\B{{\mathcal{B}}}
\def\C{{\mathcal{C}}}
\def\D{{\mathcal{D}}}
\def\barA{{\bar{\mathcal{A}}}}
\def\barAi{{\bar{\mathcal{A}}_1}}
\def\barAj{{\bar{\mathcal{A}}_2}}
\def\barB{{\bar{\mathcal{B}}}}
\def\barC{{\bar{\mathcal{C}}}}
\def\M{{\mathcal{M}}}
\def\Aopp{{\A^{\opp}}}
\def\Bopp{{\B^{\opp}}}
\def\Copp{{\C^{\opp}}}
\def\aA{\leftidx{_{a}}{\A}}
\def\bA{\leftidx{_{b}}{\A}}
\def\Aa{{\A_a}}
\def\Ea{E_a}
\def\aE{\leftidx{_{a}}{E}{}}
\def\Eb{E_b}
\def\bE{\leftidx{_{b}}{E}{}}
\def\Fa{F_a}
\def\aF{\leftidx{_{a}}{F}{}}
\def\Fb{F_b}
\def\bF{\leftidx{_{b}}{F}{}}
\def\aM{\leftidx{_{a}}{M}{}}
\def\aMb{\leftidx{_{a}}{M}{_{b}}}
\def\Ma{{M_a}}
\def\modk{{\modd\text{-}k}}
\def\modA{{\modd\text{-}\A}}
\def\modB{{\modd\text{-}\B}}
\def\sfA{{\semifree(\A)}}
\def\sfB{{\semifree(\B)}}
\def\sffgA{{\sffg(\A)}}
\def\sffgB{{\sffg(\B)}}
\def\hprojA{{\hproj(\A)}}
\def\hprojB{{\hproj(\B)}}
\def\qrepA{{\qrep(\A)}}
\def\qrepB{{\qrep(\B)}}
\def\opp{{\text{opp}}}
\def\perfsf{{\semifree^{\perf}}}
\def\prfhpr{{\hproj^{\scriptscriptstyle\perf}}}
\def\prfhprA{{\prfhpr(\A)}}
\def\prfhprB{{\prfhpr(\B)}}
\def\prfhprAopp{{\prfhpr(\Aopp)}}
\def\prfhprBopp{{\prfhpr(\Bopp)}}
\def\perfsfA{{\perfsf(\A)}}
\def\perfsfB{{\perfsf(\B)}}
\def\qrhpr{{\hproj^{qr}}}
\def\qrhprA{{\qrhpr(\A)}}
\def\qrhprB{{\qrhpr(\B)}}
\def\qrsf{{\semifree^{qr}}}
\def\qrsf{{\semifree^{qr}}}
\def\qrsfA{{\qrsf(\A)}}
\def\qrsfB{{\qrsf(\B)}}
\def\Aperfsf{{\semifree^{\A\text{-}\perf}(\AbimB)}}
\def\Bperfsf{{\semifree^{\B\text{-}\perf}(\AbimB)}}
\def\Aprfhpr{{\hproj^{\A\text{-}\perf}(\AbimB)}}
\def\Bprfhpr{{\hproj^{\B\text{-}\perf}(\AbimB)}}
\def\Aqrhpr{{\hproj^{\A\text{-}qr}(\AbimB)}}
\def\Bqrhpr{{\hproj^{\B\text{-}qr}(\AbimB)}}
\def\Aqrsf{{\semifree^{\A\text{-}qr}(\AbimB)}}
\def\Bqrsf{{\semifree^{\B\text{-}qr}(\AbimB)}}
\def\modAopp{{\modd\text{-}\Aopp}}
\def\modBopp{{\modd\text{-}\Bopp}}
\def\AmodA{{\A\text{-}\modd\text{-}\A}}
\def\AmodB{{\A\text{-}\modd\text{-}\B}}
\def\AmodC{{\A\text{-}\modd\text{-}\C}}
\def\BmodA{{\B\text{-}\modd\text{-}\A}}
\def\BmodB{{\B\text{-}\modd\text{-}\B}}
\def\BmodC{{\B\text{-}\modd\text{-}\C}}
\def\CmodA{{\C\text{-}\modd\text{-}\A}}
\def\CmodB{{\C\text{-}\modd\text{-}\B}}
\def\CmodC{{\C\text{-}\modd\text{-}\C}}
\def\AbimA{{\A\text{-}\A}}
\def\AbimC{{\A\text{-}\C}}
\def\BbimA{{\B\text{-}\A}}
\def\BbimB{{\B\text{-}\B}}
\def\BbimC{{\B\text{-}\C}}
\def\CbimA{{\C\text{-}\A}}
\def\CbimB{{\C\text{-}\B}}
\def\CbimC{{\C\text{-}\C}}
\def\AhprA{{\hproj\left(\AbimA\right)}}
\def\BhprB{{\hproj\left(\BbimB\right)}}
\def\AhprB{{\hproj\left(\AbimB\right)}}
\def\BhprA{{\hproj\left(\BbimA\right)}}
\def\QAbimB{{Q\A\text{-}\B}}
\def\AbimB{{\A\text{-}\B}}
\def\AonebimB{{\A_1\text{-}\B}}
\def\AtwobimB{{\A_2\text{-}\B}}
\def\BbimA{{\B\text{-}\A}}
\def\Aperf{{\A\text{-}\perf}}
\def\Bperf{{\B\text{-}\perf}}
\def\MddA{{M^{\tilde{\A}}}}
\def\MddB{{M^{\tilde{\B}}}}
\def\MhdA{{M^{h\A}}}
\def\MhdB{{M^{h\B}}}
\def\NhdB{{N^{h\B}}}
\def\Cat{{Cat}}
\def\DGCat{{DG\text{-}Cat}}
\def\HoDGCat{{\hmtpy(\DGCat)}}
\def\HoDGCatV{{\hmtpy(\DGCat_\mathbb{V})}}
\def\tr{{tr}}
\def\pretr{{pretr}}
\def\kctr{{kctr}}
\def\PreTrCat{{\DGCat^\pretr}}
\def\KcTrCat{{\DGCat^\kctr}}
\def\HoPretrCat{{\hmtpy(\PreTrCat)}}
\def\HoKcTrCat{{\hmtpy(\KcTrCat)}}
\def\Aquasirep{{\A\text{-}qr}}
\def\QAquasirep{{Q\A\text{-}qr}}
\def\Bquasirep{{\B\text{-}qr}} 
\def\lderA{{\tilde{\A}}} 
\def\lderB{{\tilde{\B}}} 
\def\adjunit{{\text{adj.unit}}}
\def\adjcounit{{\text{adj.counit}}}
\def\degzero{{\text{deg.0}}}
\def\degone{{\text{deg.1}}}
\def\degminusone{{\text{deg.-$1$}}}
\def\bareta{{\overline{\eta}}}
\def\barzeta{{\overline{\zeta}}}
\def\Ract{{R {\action}}}
\def\barRact{{\overline{\Ract}}}
\def\actL{{{\action} L}}
\def\baractL{{\overline{\actL}}}

\theoremstyle{definition}
\newtheorem{defn}{Definition}[section]
\newtheorem*{defn*}{Definition}
\newtheorem{exmpl}[defn]{Example}
\newtheorem*{exmpl*}{Example}
\newtheorem{exrc}[defn]{Exercise}
\newtheorem*{exrc*}{Exercise}
\newtheorem*{chk*}{Check}
\newtheorem*{remarks*}{Remarks}
\theoremstyle{plain}
\newtheorem{theorem}{Theorem}[section]
\newtheorem*{theorem*}{Theorem}
\newtheorem{conj}[defn]{Conjecture}
\newtheorem*{conj*}{Conjecture}
\newtheorem{prps}[defn]{Proposition}
\newtheorem*{prps*}{Proposition}
\newtheorem{cor}[defn]{Corollary}
\newtheorem*{cor*}{Corollary}
\newtheorem{lemma}[defn]{Lemma}
\newtheorem*{claim*}{Claim}
\newtheorem{Specialthm}{Theorem}
\renewcommand\theSpecialthm{\Alph{Specialthm}}
\numberwithin{equation}{section}
\renewcommand{\textfraction}{0.001}
\renewcommand{\topfraction}{0.999}
\renewcommand{\bottomfraction}{0.999}
\renewcommand{\floatpagefraction}{0.9}
\setlength{\textfloatsep}{5pt}
\setlength{\floatsep}{0pt}
\setlength{\abovecaptionskip}{2pt}
\setlength{\belowcaptionskip}{2pt}
\begin{abstract}
For two DG-categories $\A$ and $\B$
we define the notion of a \em spherical \rm Morita quasi-functor 
$\A \rightarrow \B$. We construct its associated autoequivalences: 
the \em twist \rm $T \in \autm D(\B)$ and 
the \em co-twist \rm $F \in \autm D(\A)$. 
We give sufficiency criteria for a quasi-functor to 
be spherical and for the twists associated to a collection 
of spherical quasi-functors to braid. Using the framework 
of DG-enhanced triangulated categories, we translate 
all of the above to Fourier-Mukai transforms between the derived
categories of algebraic varieties. This is a broad 
generalisation of the results on spherical objects in
\cite{SeidelThomas-BraidGroupActionsOnDerivedCategoriesOfCoherentSheaves} 
and on spherical functors in \cite{Anno-SphericalFunctors}. In fact, 
this paper replaces \cite{Anno-SphericalFunctors}, which has
a fatal gap in the proof of its main theorem. Though conceptually
correct, the proof was impossible to fix within the framework 
of triangulated categories. 
\end{abstract}

\maketitle

\section{Introduction} \label{section-intro}

Let $X$ be a smooth projective variety over an algebraically closed
field $k$ of characteristic $0$. 
Let $D(X)$ be the bounded derived category of coherent sheaves on $X$.
In \cite{SeidelThomas-BraidGroupActionsOnDerivedCategoriesOfCoherentSheaves}
Seidel and Thomas introduced the notion of a \em spherical object \rm 
in $D(X)$. These objects are defined in terms of certain 
cohomological properties and they are mirror-symmetric 
analogues of Lagrangian spheres on a symplectic manifold. 
Given a Lagrangian sphere we can associate to it a symplectic 
automorphism called the \em generalised Dehn twist\rm. Correspondingly:
\begin{theorem*}[\cite{SeidelThomas-BraidGroupActionsOnDerivedCategoriesOfCoherentSheaves}]
Let $E \in D(X)$. The twist functor $T_E$ 
is a certain functorial cone of the natural transformation 
$E \otimes_k \rder\homm_X(E, -) 
\xrightarrow{\mathrm{eval}}  \id_{D(X)}$. 
If $E$ is spherical, then $T_E$ is an autoequivalence of $D(X)$. 
\end{theorem*}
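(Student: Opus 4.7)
The plan is to exhibit $T_E$ as an exact autoequivalence by constructing an explicit quasi-inverse and verifying both compositions reduce to the identity. To bypass the well-known non-functoriality of cones in a triangulated category, I would first realise $T_E$ as the Fourier--Mukai transform on $D(X)$ with kernel
\[
P_E \;=\; \cone\!\bigl( E^{\vee} \boxtimes E \;\longrightarrow\; \O_{\Delta_X} \bigr) \;\in\; D(X \times X),
\]
so that $T_E$ becomes a genuine exact endofunctor. Introduce the adjoint pair $S\colon D(k) \to D(X)$, $V \mapsto E \otimes_k V$, with right adjoint $R = \rder\homm_X(E, -)$, and read $T_E$ as the cone of the counit $SR \to \id$. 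Serre duality together with the Calabi--Yau hypothesis $E \otimes \omega_X \cong E$ identifies the left adjoint $L$ of $S$ with $R[-n]$, while the sphere condition $\rder\homm_X(E,E) \cong k \oplus k[-n]$ forces $RS \cong \id_{D(k)} \oplus \id_{D(k)}[-n]$. These two isomorphisms are the only consequences of sphericity that will be used.

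Next I would evaluate $T_E$ on the spanning class $\{E\} \cup E^{\perp}$. For $F \in E^{\perp}$ the source of the evaluation map vanishes and hence $T_E(F) \cong F$. For $F = E$, the evaluation map is the projection $E \otimes_k (k \oplus k[-n]) \to E$ onto the first summand, and its cone is $E[1-n]$. Thus $T_E$ sends the spanning class into itself, and a short Yoneda argument using the identifications above shows that it is compatible with Serre duality, hence fully faithful on the Ext-pairing between $E$ and $E^{\perp}$.

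Now define the candidate inverse via the unit of the other adjunction:
\[
T_E^{\prime}(F) \;=\; \cone\!\bigl( F \;\longrightarrow\; E \otimes_k \rder\homm_X(F, E)^{\vee} \bigr)[-1],
\]
realised once more as a Fourier--Mukai kernel $P_E^{\prime}$ on $X \times X$. By the identification $L \cong R[-n]$ this is precisely $\cone(\id \to SL)[-1]$. I would then compute the convolution $P_E \star P_E^{\prime}$ step by step, applying the octahedral axiom together with $RS \cong \id \oplus \id[-n]$ to collapse the resulting iterated cone to $\O_{\Delta_X}$, so that $T_E \circ T_E^{\prime} \cong \id_{D(X)}$; the composition $T_E^{\prime} \circ T_E$ is handled symmetrically. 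Combined with the spanning-class computation above this yields essential surjectivity, and a Bondal--Orlov style verification of fully-faithfulness concludes that $T_E$ is an autoequivalence.

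The principal obstacle is the issue flagged in the abstract: $P_E \star P_E^{\prime}$ is a convolution of cones of cones, and its collapse to the diagonal depends on octahedral choices whose coherence is not automatic. In the Seidel--Thomas setting these choices can be made uniformly by working with Fourier--Mukai kernels on $X \times X$; in the more general framework that occupies the rest of this paper it is precisely this functoriality problem that forces one to pass to DG-enhancements.
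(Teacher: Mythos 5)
Your proposal reconstructs, essentially correctly, the spirit of the original Seidel--Thomas argument rather than the route this paper takes. The paper does not reprove the displayed theorem directly: it deduces it as a corollary of Theorem \ref{theorem-main-theorem} (in Fourier--Mukai form, Theorem \ref{theorem-main-theorem-for-coh-FM}) by specialising $Z = \spec k$, so $\A = k$, $s(V) = E \otimes_k V$, $r = \rder\homm_X(E,-)$, and the quasi-inverse to the twist is automatically the dual twist $T'$ by Proposition \ref{prop-t'-and-f'-are-left-adjoints-of-t-and-f-respectively}. Verifying sphericity then reduces to checking any two of the four conditions of the main theorem, which is where the Calabi--Yau condition $E \otimes \omega_X \cong E$ and $\rder\homm_X(E,E) \cong k \oplus k[-n]$ enter. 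Your approach instead builds the candidate inverse kernel $P_E'$ by hand (which is indeed the same object, the kernel representing $\cone(\id \to sl)[-1]$), evaluates $T_E$ on the spanning class $\{E\} \cup E^\perp$, and tries to collapse $P_E \star P_E'$ to $\mathcal{O}_\Delta$ via the octahedral axiom. What your route buys is concreteness; what the paper's route buys is generality (any $\A$, not just $k$) and, decisively, immunity to the coherence problem you flag in your final paragraph. You correctly observe that the collapse of an iterated cone via octahedra is not automatically coherent --- that is exactly the gap in \cite{Anno-SphericalFunctors} this paper was written to close. In the $Z=\spec k$ case Seidel--Thomas indeed handle the choices by hand, so your sketch is salvageable there, but it would not extend; the paper's Lemmas \ref{lemma-alpha-beta-compositions} and \ref{lemma-lt-fl-2maps}, proved at the level of twisted complexes over strongly pretriangulated DG-categories, are precisely the coherence statements whose triangulated-category analogues are unavailable. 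One smaller point: a spanning-class calculation alone does not establish essential surjectivity; in Seidel--Thomas it is used for fully-faithfulness, and essential surjectivity follows from the existence of the adjoint quasi-inverse, which is also what the paper's $T' \dashv T$ adjunction supplies for free.
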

Moreover, in
\cite[Theorem 2.17]{SeidelThomas-BraidGroupActionsOnDerivedCategoriesOfCoherentSheaves} 
Seidel and Thomas give simple criteria on a set $E_1$, \dots, $E_n$ 
of spherical objects in $D(X)$ sufficient to ensure that the
corresponding spherical twists $T_1$, \dots, $T_n$ 
represent the braid group $B_n$. In other words, that we have:
$$ T_{i} T_{j} T_{i} \simeq T_{j} T_{i} T_{j} \quad \quad |i - j| = 1, $$
$$ T_{i} T_{j} \simeq T_{j} T_{i} \quad \quad |i - j| \geq 2. $$

Spherical objects and twists quickly became an essential tool 
in studying derived categories of algebraic varieties as well 
as more classical areas of algebraic geometry
\cite{Mukai-OnTheModuliSpaceOfBundlesOnK3SurfacesI}, 
\cite{Bridgeland-StabilityConditionsOnK3Surfaces},
\cite{Bridgeland-StabilityConditionsAndKleinianSingularities}, 
\cite{IshiiUehara-AutoequivalencesOfDerivedCategoriesOnTheMinimalResolutionsOfA_nSingularitiesOnSurfaces},    
\cite{BroomheadPloog-AutoequivalencesOfToricSurfaces}.
For some time now it was understood by specialists
that the notion of a spherical object should generalise to 
the notion of a \em spherical functor \rm $D(Z) \xrightarrow{s} D(X)$
where $Z$ is some other variety. Such functor should produce 
two auto-equivalences --- the \em twist \rm $t \in \autm D(X)$ and 
the \em co-twist \rm $f \in \autm D(Z)$. More generally, there 
should be a notion of a spherical functor between two abstract 
triangulated categories. Limited special cases of this appear in 
\cite{Horja-DerivedCategoryAutomorphismsFromMirrorSymmetry}, 
\cite{Rouquier-CategorificationOfTheBraidGroups}
\cite{Szendroi-ArtinGroupActionsOnDerivedCategoriesOfThreefolds}, 
\cite{Toda-OnACertainGeneralizationOfSphericalTwists}, 
\cite{KhovanovThomas-BraidCobordismsTriangulatedCategoriesAndFlagVarieties}, 
but general treatment was obstructed by well-known imperfections 
of the axioms of triangulated categories such as non-functoriality 
of the cone construction and non-uniqueness of the data supplied 
by the octahedral axiom. 

In this paper, we are able, at last, to give a fully general and rigorous
treatment of spherical functors and to prove an ideal statement
about their associated auto-equivalences. Due to increased 
prominence of spherical twist autoequivalences in studying 
derived categories of algebraic varieties our results have 
been anticipated and made use of even as the paper was being 
written. The works which already apply the results of this paper include
\cite{Addington-NewDerivedSymmetriesOfSomeHyperkaehlerVarieties},
\cite{DonovanWemyss-NoncommutativeDeformationsAndFlops},
\cite{HalpernLeistnerShipman-AutoequivalencesOfDerivedCategoriesViaGeometricInvariantTheory}, \cite{BroomheadPauksztelloPloog-DiscreteDerivedCategoriesIHomomorphismsAutoequivalencesAndTStructures},
\cite{DonovanSegal-MixedBraidGroupActionsFromDeformationsOfSurfaceSingularities}.

A previous attempt at this general treatment was made 
in \cite{Anno-SphericalFunctors}. 
Conceptually sound, it was brought low by the octahedral axiom. 
The proof of its main theorem \cite[Prop.1]{Anno-SphericalFunctors} 
contained a fatal gap which is impossible to fix 
within the axioms of triangulated categories. Nonetheless, 
it was clear that its ideas could work if we had 
an extra level of control over what the octahedral 
axiom provides us with.  

We gain this extra control by passing to differential graded (DG) categories. 
The axioms of triangulated categories were developed in 
\cite{Verdier-DesCategoriesDeriveesDesCategoriesAbeliennes}
to describe the derived categories of algebraic varieties, which 
are cohomological truncations of certain natural DG-categories. The
imperfections of these axioms can now clearly be seen as  
artefacts of the truncation. Working in the original DG-category 
provides us precisely with the layer of control that was missing. 
This allows us not only to fix the results in 
\cite{Anno-SphericalFunctors}, but to significantly improve
upon them. It allows us to do something more ---
to provide for a collection of spherical functors, as
\cite{SeidelThomas-BraidGroupActionsOnDerivedCategoriesOfCoherentSheaves}
did for spherical objects, a set of straightforward criteria sufficient 
for braid relations to occur between their twists.
For some years now the first author was well-aware of what these criteria 
should be, but proving them on the level of 
triangulated categories was hopeless. 

We first state our results in the language of triangulated categories. 
Let $A$ and $B$ be two Karoubi closed triangulated categories and 
let $s$ be an exact functor $A \rightarrow B$ which has 
left and right adjoints $l$ and $r$. 
Suppose that we can construct a preferred functorial 
exact triangle for each of the four adjunction units and 
co-units involved. Use these triangles to define 
the \em twist \rm $t$ of $s$ by the exact triangle    
\begin{align}
\label{eqn-intro-twist-exact-triangle}
sr \xrightarrow{\adjcounit} &\id_{B} \rightarrow t, 
\end{align}
the \em dual twist \rm $t'$ of $s$ by the exact triangle
\begin{align}
\label{eqn-intro-dual-twist-exact-triangle}
t' \rightarrow &\id_{B} \xrightarrow{\adjunit} sl,
\end{align}
the \em cotwist \rm $f$ of $s$ by the exact triangle
\begin{align}
\label{eqn-intro-cotwist-exact-triangle}
f \rightarrow &\id_{A} \xrightarrow{\adjunit} rs,
\end{align}
and the \em dual cotwist \rm $f'$ of $s$ by the exact triangle
\begin{align}
\label{eqn-intro-dual-cotwist-exact-triangle}
ls \xrightarrow{\adjcounit} &\id_{A} \rightarrow f'.
\end{align}
Define also two natural transformations 
\begin{align}
\label{eqn-intro-lt(-1)-to-r}
lt[-1] \xrightarrow{\eqref{eqn-intro-twist-exact-triangle}}  
lsr 
\xrightarrow{\adjcounit}
r
\\
\label{eqn-intro-r-to-fl(1)}
r \xrightarrow{\adjunit}  
rsl 
\xrightarrow{\eqref{eqn-intro-cotwist-exact-triangle}}
fl[1]. 
\end{align}

\begin{defn}
\label{defn-exact-functor-is-spherical}
The functor $s$ is \em spherical \rm if all of 
the following holds:
\begin{enumerate}
\item
\label{eqn-intro-spher-defn-twist-is-an-autoequivalence}
 $t$ and $t'$ are quasi-inverse autoequivalences of $B$
\item 
\label{eqn-intro-spher-defn-cotwist-is-an-autoequivalence}
$f$ and $f'$ are quasi-inverse autoequivalences of $A$
\item 
\label{eqn-intro-spher-defn-twist-identifies-adjoints}
$lt[-1] \xrightarrow{\eqref{eqn-intro-lt(-1)-to-r}} r$ is an
isomorphism of functors (``the twist identifies the adjoints'').  
\item
\label{eqn-intro-spher-defn-cotwist-identifies-adjoints}
$r \xrightarrow{\eqref{eqn-intro-r-to-fl(1)}} fl[1]$ is an
isomorphism of functors (``the co-twist identifies the adjoints'').  
\end{enumerate}
\end{defn}
The main obstruction is the lack of canonical functorial 
exact triangles 
\eqref{eqn-intro-twist-exact-triangle}-\eqref{eqn-intro-dual-cotwist-exact-triangle}
defining $t$, $t'$, $f$ and $f'$. 
What \cite{Anno-SphericalFunctors} tried to do was
to assume that \em some \rm functorial exact triangles as above 
exist, define $s$ to be spherical if 
\eqref{eqn-intro-spher-defn-cotwist-is-an-autoequivalence}
and \eqref{eqn-intro-spher-defn-cotwist-identifies-adjoints}
hold, and then prove that for any spherical $s$ the condition
\eqref{eqn-intro-spher-defn-twist-is-an-autoequivalence} 
also holds. 
In this paper, as explained in more detail below, 
we assume that
\begin{enumerate}
\item 
$A$ and $B$ admit DG-enhancements
\item 
$s$, $r$ and $l$ descend from DG-functors $S$, $R$ and $L$ 
between some enhancements of $A$ and $B$
\end{enumerate}
and prove that there is a canonical construction of the exact triangles
\eqref{eqn-intro-twist-exact-triangle}-\eqref{eqn-intro-dual-cotwist-exact-triangle} determined
by a certain equivalence class of $S$ such that any two of the conditions in
Defn. \ref{defn-exact-functor-is-spherical}
imply that all four of them hold and $s$ is spherical.
This is the ideal statement mentioned above. 

Let us be more precise. Let $A$ be a triangulated category. 
Traditionally, a \em DG enhancement \rm of $A$ is a DG-category $\A$ 
together with an isomorphism $H^0(\A) \simeq A$. 
A more useful notion for us is that of a \em Morita enhancement\rm, 
which is a DG-category $\A$ together with an isomorphism 
$D_c(\A) \simeq A$. Here $D_c(\A)$ is the full subcategory
of the derived category $D(\A)$ consisting of the compact objects. 
A \em Morita equivalence \rm is a DG-functor $\A \xrightarrow{f} \B$
whose induced functor $D_c(\A) \xrightarrow{\lder f^*} D_c(\B)$ is an
equivalence of categories. This is the right notion of equivalence 
for Morita enhancements. Thus we are led to work in the 
Morita homotopy category $\Morita(\DGCat)$, which is the localisation
of the category $\DGCat$ of all DG-categories by Morita
equivalences. The objects of $\Morita(\DGCat)$ should be thought
of as enhanced Karoubi closed triangulated categories with 
a fixed equivalence class of enhancements. The morphisms in 
$\Morita(\DGCat)$ are called \em Morita quasi-functors\rm. 
Each Morita quasi-functor $\A \rightarrow \B$ induces
a genuine exact functor $D_c(\A) \rightarrow D_c(\B)$. 

Let $\A$ and $\B$ be Morita enhancements of triangulated 
categories $A$ and $B$. A fundamental result of To{\"e}n 
\cite[Theorem 7.2]{Toen-TheHomotopyTheoryOfDGCategoriesAndDerivedMoritaTheory} 
implies that the Morita quasi-functors $\A \rightarrow \B$ 
are in $1$-to-$1$ correspondence with the isomorphism classes in $D(\AbimB)$
of the $\AbimB$-bimodules which are \em $\B$-perfect\rm, i.e. 
$\aM \in D_c(\B)$ for all $a \in \A$. Given  
$M \in D^{\Bperf}(\AbimB)$ the derived tensor product functor 
$$ (-) \ldertimes_\A M\colon D_c(\A) \rightarrow D_c(\B) $$ 
is the exact functor underlying the corresponding Morita
quasi-functor. Thus, we think of $D^{\Bperf}(\AbimB)$ as 
of a triangulated category structure on 
the set $\homm_{\Morita(\DGCat)}(\A, \B)$ and of morphisms
in it as morphisms of Morita quasi-functors. This packages up 
into a $2$-category structure on $\Morita(\DGCat)$ with a functor 
to the $2$-category of Karoubi closed triangulated categories. 
See Section \ref{section-DG-enhancements} for a brief survey
on DG-enhancements. 

We now describe our results. In the body of the
paper they are stated in a slightly more flexible language 
of DG-bimodules. Here we state them in the language of 
Morita quasi-functors, which gives a more intuitive picture.  
Let $\A \xrightarrow{S} \B$ be a Morita quasi-functor and 
let $A \xrightarrow{s} B$ be the underlying exact functor. 
Assume that $s$ has left and right adjoints $B \xrightarrow{l,r} A$ 
which also descend from Morita quasi-functors.
The derived $\A$- and $\B$- duals of $S$ in $D(\BbimA)$ 
are then $\A$-perfect and hence define Morita 
quasi-functors $\B \xrightarrow{L,R} \A$. In Section 
\ref{section-duals-and-adjoints} we construct
\em derived trace \rm and \em action \rm maps 
\begin{align}
SR \xrightarrow{\trace} \id_\B  \quad &\text{ and } \quad  
LS \xrightarrow{\trace} \id_\A  \\
\id_\B \xrightarrow{\action} SL \quad &\text{ and } \quad 
\id_\A \xrightarrow{\action} RS.  
\end{align}
and prove that the exact functors underlying $L$ and $R$ are precisely
$l$ and $r$ and that the derived trace and action 
maps above induce the units and co-units of the adjunctions of $s$,
$l$ and $r$. Then, working in the DG-enhancements, we construct natural 
exact triangles of Morita quasi-functors 
\begin{align}
\label{eqn-intro-twist-exact-triangle-DG}
SR \xrightarrow{\trace} &\id_\B \rightarrow T, \\
\label{eqn-intro-dual-twist-exact-triangle-DG}
T' \rightarrow & \id_\B \xrightarrow{\action} SL, \\
\label{eqn-intro-cotwist-exact-triangle-DG}
F \rightarrow & \id_\A \xrightarrow{\action} RS, \\
\label{eqn-intro-dual-cotwist-exact-triangle-DG}
LS \xrightarrow{\trace} &\id_\A \rightarrow F',
\end{align}
which define the \em twist \rm $T$, the \em dual twist \rm $T'$, 
the \em co-twist \rm $F$ and the \em dual co-twist \rm $F'$ of $S$. 
Thus we obtain a natural choice of functorial exact triangles 
\eqref{eqn-intro-twist-exact-triangle}-\eqref{eqn-intro-dual-cotwist-exact-triangle}
defining $t$, $t'$, $f$ and $f'$. We then prove that $t'$ and $f'$ 
are left adjoint to $t$ and $f$, respectively. All the above 
constructions are readily seen to be Morita-invariant, i.e. 
they are preserved if we replace $\A$ or $\B$ by 
a Morita-equivalent DG-category. Hence they only depend 
on Morita equivalence classes of $\A$ and $\B$ 
and on $S \in \homm_{\Morita(\DGCat)}(\A, \B)$. 

The following is the main result of this paper:
\begin{theorem}[see Theorem \ref{theorem-main-theorem}]
\label{theorem-main-theorem-intro}
If any two of the following conditions hold:
\begin{enumerate}
\item 
\label{item-intro-main-theorem-the-twist-is-an-equivalence}
$t$ is an autoequivalence of $B$
(``the twist is an equivalence'').  
\item 
\label{item-intro-main-theorem-the-cotwist-is-an-equivalence}
$f$ is an equivalence of $A$
(``the cotwist is an equivalence'').  
\item 
\label{item-intro-main-theorem-the-twist-identifies-adjoints}
$lt[-1] \xrightarrow{\eqref{eqn-lt(-1)-to-r}} r$ is an
isomorphism of functors (``the twist identifies the adjoints'').  
\item 
\label{item-intro-main-theorem-the-cotwist-identifies-adjoints}
$r \xrightarrow{\eqref{eqn-r-to-fl(1)}} fl[1]$ is an
isomorphism of functors (``the cotwist identifies the adjoints'').  
\end{enumerate}
then all four hold and $S$ is said 
to be a spherical quasi-functor. 
\end{theorem}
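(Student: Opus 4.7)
The plan is to prove all six pairwise implications simultaneously by establishing, at the DG-bimodule level, a coherent web of natural morphisms that expresses the four families of interest in terms of one another. Conditions (1) and (2) say that the unit and counit of the adjunctions $T' \dashv T$ and $F' \dashv F$ (constructed earlier in the paper) are quasi-isomorphisms, while (3) and (4) say the same for the closely related bimodule maps built from the sphericality triangles. The strategy is to identify the cones of all these maps with one another up to canonical quasi-isomorphism, so that the vanishing of any two cones forces the vanishing of the other two.

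First, I would make the adjunction units and counits for $T' \dashv T$ and $F' \dashv F$ fully explicit as DG-bimodule morphisms, and then factor the sphericality maps $LT[-1] \to R$ and $R \to FL[1]$ through the appropriate rotated twist and dual twist triangles. The guiding computation is: splicing the rotated twist triangle $LT[-1] \to LSR \to L$ against the rotated dual cotwist triangle $LSR \to R \to F'R$, via the octahedral axiom (now canonical in the DG setting), shows that the cone of $LT[-1] \to R$ fits into a triangle between $L$ and $F'R$, so that (3) becomes equivalent to a canonical identification $F'R \simeq L[1]$. An entirely analogous computation on the dual-cotwist side identifies (4) with a canonical identification $L \simeq RT'[1]$. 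Both identifications are canonical only because the DG apparatus replaces the non-unique octahedral completion by explicit cones of concrete bimodule maps.

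Second, with these identifications in hand, the implication (3) and (4)\,$\Rightarrow$\,(1) and (2) becomes nearly formal: composing $F'R \simeq L[1]$ with $L \simeq RT'[1]$ yields $F'R \simeq RT'[2]$, and assembling this with the adjunctions $T' \dashv T$ and $F' \dashv F$ produces the unit and counit of the quasi-inverses $(T,T')$ and $(F,F')$. The remaining pairs are handled by symmetry and bootstrap. Assuming (1), we have $T' \simeq T^{-1}$, which allows one to transport (3) into (4) (and conversely) by substituting $T^{-1}$ for $T'$ in the canonical identification $L \simeq RT'[1]$; condition (2) plays the analogous role on the cotwist side. A case analysis over the $\binom{4}{2}=6$ pairs then closes the theorem, with most pairs reducing to one of these two patterns.

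The main obstacle, as the introduction anticipates, lies precisely in the canonical bimodule coherence required in the first step. In the triangulated setting the octahedral axiom supplies the third triangle only up to non-canonical choice, so the iterated cones used to identify $\mathrm{cone}(LT[-1] \to R)$ with $\mathrm{cone}(L \to F'R[-1])$ do not compose functorially; this is exactly the gap in \cite{Anno-SphericalFunctors}. The DG apparatus developed in the rest of the paper is built to make these cone computations canonical and functorial, at which point the sphericality argument reduces to a careful but finite amount of bookkeeping with explicit bimodule morphisms.
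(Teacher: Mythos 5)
Your framework is sound as far as it goes: lifting to the DG level so the cone constructions become canonical, reading conditions (3) and (4) as the isomorphism of the canonical maps $\beta'\colon F'R[-1]\to L$ and $\alpha'\colon L\to RT'[1]$ obtained from twisted-cube versions of the octahedral diagrams, and the ``transport'' idea that if $t$ is an equivalence (so $T'\simeq T^{-1}$) then $\alpha'$ factors as $L\to LTT'\to RT'[1]$ and hence (1)+(3) gives (4). That transport step corresponds to the paper's Lemma \ref{lemma-alpha-beta-compositions}, whose content is precisely that the \emph{specific} morphism $\alpha'$ equals the composite $L\xrightarrow{L\eqref{eqn-adjunction-unit-t't}}LTT'\xrightarrow{\beta T'}RT'[1]$ --- a nontrivial identity on the level of twisted complexes, not a mere existence-of-some-isomorphism statement. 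Your sketch glosses over that this factorization must be proved; without it the substitution $T'\rightsquigarrow T^{-1}$ is not obviously compatible with the canonical map.

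The genuine gap, however, is the pair $(1,2)$: you claim a case analysis over $\binom{4}{2}=6$ pairs ``closes the theorem, with most pairs reducing to one of these two patterns,'' but neither pattern applies to $(1)+(2)\Rightarrow(3),(4)$. Pattern~1 needs (3) and (4) as input; Pattern~2 uses (1) or (2) to transport between (3) and (4), but requires at least one of (3),(4) to be present. With only (1) and (2) you have neither canonical isomorphism $F'R[-1]\simeq L$ nor $L\simeq RT'[1]$ to compose, so the observation $F'R\simeq RT'[2]$ is unavailable. The paper handles this case with a materially different argument (the bulk of its proof): it first shows via Lemmas \ref{lemma-gamma-is-an-isomorphism}, \ref{lemma-lt-fl-2maps}, \ref{lemma-gamma'-is-an-isomorphism}, \ref{lemma-f'r-rt'-2maps} that the composites $\alpha\circ\beta$ and $\alpha'\circ\beta'$ are isomorphisms under (1)+(2), deduces that $FL[1]$ and $RT'[1]$ are retracts of $RSL$, splits these to get $L\oplus FL[1]\simeq RSL\simeq R\oplus RT'[1]$, and then proves by a careful vanishing argument (using the exactness of \eqref{eqn-t't-id-3x3-diagram} and delicate tensorings with $S$) that under this splitting $\alpha'$ must be an isomorphism. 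None of this is recoverable from composing the identifications in your Pattern~1, since those identifications are exactly what you are trying to prove. Your argument for $(3)+(4)\Rightarrow(1)+(2)$ is also too vague: saying the composite $F'R\simeq RT'[2]$ ``assembled with the adjunctions produces the unit and counit'' does not explain how; the paper instead shows $Q\simeq0$ and $Q'\simeq0$ directly imply the adjunction units and counits are isomorphisms by tensoring the $3\times3$ diagrams with $S$.
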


Finally, we give the braiding criteria for spherical quasi-functors. 
These have a natural interpretation in geometrical context that
is the subject of a future paper
\cite{AnnoLogvinenko-OnBraidingCriteriaForSphericalTwistsByFlatFibrations}. 
An example of these criteria being satisfied can be seen in 
a construction by Khovanov and Thomas in
\cite{KhovanovThomas-BraidCobordismsTriangulatedCategoriesAndFlagVarieties}. 
 
Let $A_1, \dots, A_n, B$ be triangulated categories 
with Morita enhancements $\A_1, \dots, \A_n, \B$. 
Let $\A_i \xrightarrow{S_i} \B$ be spherical Morita quasi-functors. 
For any $i\ne j$ trace maps $S_i R_i \xrightarrow{\trace} \id_\B$ 
and $S_j R_j \xrightarrow{\trace} \id_\B$ define a map
\begin{align}
\label{eqn-intro-maps-SiRiSjRj-SiRi-plus-SjRj}
S_i R_i S_j R_j \xrightarrow{S_iR_i \trace \oplus \trace S_j R_j } 
S_i R_i \oplus S_j R_j.
\end{align}

Next, for any $i\ne j$ define a Morita quasi-functor 
$\A_i \xrightarrow{O_i} \A_i$ by
\begin{align}
\label{eqn-intro-definition-of-O_i}
O_{ij}=F_i\cone \left( 
L_iS_jR_jS_i \xrightarrow{\trace\circ(L_i \trace S_i)} \id_{\A_i} \right). 
\end{align}
As $S_i$ is spherical we have $R_i[-1] \simeq F_i L_i$, so 
$S_i R_i \xrightarrow{\trace} \id_\B$ and 
$S_j R_j \xrightarrow{\trace} \id_\B$ define (cf. 
Section \ref{section-criterion-for-braiding}) a map
\begin{align}
\label{eqn-intro-maps-SiOijRi-SiRiSjRj-plus-SjRjSiRi}
S_i O_{ij} R_i \rightarrow S_i R_i S_j R_j \oplus S_j R_j S_i R_i.
\end{align}

\begin{theorem}[Theorems 
\ref{theorem-commutation-criterion}-\ref{theorem-braiding-criterion}]
\label{theorem-intro-braid-group-action}
Suppose that for all $i,j \in 1, \dots, n$ the following holds: 
\begin{enumerate}
\item If $|i - j| > 1$ there exists an isomorphism
$$ S_i R_i S_j R_j \simeq S_j R_j S_i R_i $$
which commutes with the maps 
\eqref{eqn-intro-maps-SiRiSjRj-SiRi-plus-SjRj}. 
\item If $|i - j| = 1$, there exists an isomorphism 
$$ S_i O_{ij} R_i \simeq S_j O_{ji} R_j $$
which commutes with the maps
\eqref{eqn-intro-maps-SiOijRi-SiRiSjRj-plus-SjRjSiRi}
\end{enumerate}
Then the twists $T_1, \dots, T_n$ generate a categorical 
action of the braid group $\B_n$ on $B$. 
\end{theorem}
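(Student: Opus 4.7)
The plan is to write each of $T_iT_j$ and $T_iT_jT_i$ as the convolution of an explicit twisted complex in the pre-triangulated DG-enhancement of $\B$, and to match the convolutions for $T_iT_j$ with $T_jT_i$ (resp.\ $T_iT_jT_i$ with $T_jT_iT_j$) using the hypotheses. The DG-setting is essential: in the pre-triangulated hull, convolutions of twisted complexes depend strictly functorially on the diagram of terms and differentials, so that an isomorphism of the underlying diagrams lifts to an isomorphism of the total objects, avoiding the octahedral indeterminacy of the purely triangulated framework.

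\textbf{Commutation.} Iterating the defining triangle $S_iR_i \to \id_\B \to T_i$ twice gives
\begin{equation*}
T_iT_j \;\simeq\; \bigl\{\, S_iR_iS_jR_j \;\longrightarrow\; S_iR_i \oplus S_jR_j \;\longrightarrow\; \id_\B \,\bigr\},
\end{equation*}
the convolution whose first differential is the map~\eqref{eqn-intro-maps-SiRiSjRj-SiRi-plus-SjRj} and whose second is the pair $(\trace,\trace)$. Swapping $i$ and $j$ gives the analogous expression for $T_jT_i$, and hypothesis~(1) matches the two diagrams, yielding $T_iT_j \simeq T_jT_i$.

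\textbf{Braid.} A third iteration writes $T_iT_jT_i$ as the convolution of an eight-term twisted complex whose terms are the triple products of factors in positions $1,2,3$ drawn from $\{S_iR_i,\id_\B\}$, $\{S_jR_j,\id_\B\}$, $\{S_iR_i,\id_\B\}$ respectively, and whose differentials are assembled from the trace maps. The heart of the argument is the construction of a canonical distinguished triangle
\begin{equation*}
S_iO_{ij}R_i \;\longrightarrow\; T_iT_jT_i \;\longrightarrow\; Q_{ij},
\end{equation*}
in which $S_iO_{ij}R_i$ is realised as the sub-convolution on the three terms $S_iR_iS_jR_jS_iR_i$, $S_iR_iS_iR_i$, $S_iR_i$, while $Q_{ij}$ is the convolution on the complementary five terms $S_iR_iS_jR_j$, $S_jR_jS_iR_i$, $S_iR_i$, $S_jR_j$, $\id_\B$. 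The identification of the sub-convolution uses sphericality of $S_i$ via $F_iL_i \simeq R_i[-1]$ to rewrite $O_{ij} = F_i\cone(L_iS_jR_jS_i \to \A_i)$, together with the triangle $S_iF_iR_i \to S_iR_i \to S_iR_iS_iR_i$ obtained by applying $S_i(-)R_i$ to the cotwist triangle $F_i \to \id_{\A_i} \to R_iS_i$. The quotient $Q_{ij}$ is visibly symmetric under $i \leftrightarrow j$, so the analogous analysis for $T_jT_iT_j$ produces the \emph{same} $Q_{ij}$; moreover the connecting map $Q_{ij} \to S_iO_{ij}R_i[1]$ factors through the map~\eqref{eqn-intro-maps-SiOijRi-SiRiSjRj-plus-SjRjSiRi}. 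Hypothesis~(2) then supplies the compatible isomorphism $S_iO_{ij}R_i \simeq S_jO_{ji}R_j$, and matching the two distinguished triangles gives $T_iT_jT_i \simeq T_jT_iT_j$.

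\textbf{Main obstacle.} The difficulty is concentrated in the braid step, specifically in identifying $S_iO_{ij}R_i$ as a canonical sub-convolution of $T_iT_jT_i$ and verifying that the complementary quotient $Q_{ij}$ is symmetric in $i \leftrightarrow j$ with the prescribed connecting map. This requires careful bookkeeping of a web of trace-map differentials inside an eight-term twisted complex and essential use of sphericality of $S_i$ via $F_iL_i \simeq R_i[-1]$. Precisely this identification cannot be carried out canonically in the triangulated framework because of octahedral ambiguity -- the same obstruction that defeated \cite{Anno-SphericalFunctors} -- and is resolved here only by working in the DG-enhancement.
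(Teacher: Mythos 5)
The proposal correctly identifies the basic strategy—writing $T_iT_j$ and $T_iT_jT_i$ as convolutions of twisted complexes and matching diagrams in the DG-enhancement—but it seriously underestimates what remains to be proved and, in the braid case, uses a decomposition that differs from the paper's and reintroduces the very ambiguity it claims to avoid.

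\textbf{Commutation step.} Your claim that ``hypothesis~(1) matches the two diagrams, yielding $T_iT_j \simeq T_jT_i$'' is the gap. The hypothesis only gives an isomorphism $\phi\colon S_iR_iS_jR_j\to S_jR_jS_iR_i$ compatible with the first differential of the three-term twisted complexes. Lifting $\phi$ to a cocycle $f$ and to $s_1$ with $ds_1 = \alpha - \beta f$ uses exactly the hypothesis, but extending to the full homotopy equivalence of twisted complexes requires a further element $s_2$ with $ds_2 = \gamma s_1$, and there is \emph{a priori} no reason for $[\gamma s_1]$ to vanish in $\Ext^{-1}_{D(\BbimB)}(S_iR_iS_jR_j,\B)$. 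The paper's Theorem~\ref{theorem-commutation-criterion} is essentially devoted to showing that one can correct $s_1$ so this class vanishes, and this is a nontrivial adjunction/surjectivity argument (using Prop.~\ref{prps-M^B-and-M^A-are-homotopy-adjoints-of-M-via-S-SRS-S-maps} to show the relevant map of Ext-groups is surjective), combined with the $A_\infty$-machinery of Appendix~\ref{section-on-homotopy-equivalences-of-twisted-complexes} (Theorem~\ref{theorem-homotopy-equiv-of-twisted-complexes-differ-only-first-term}). Your proposal does not address this obstruction at all.

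\textbf{Braid step.} Your plan of splitting off a distinguished triangle $S_iO_{ij}R_i \to T_iT_jT_i \to Q_{ij}$ with a visibly symmetric $Q_{ij}$ is not what the paper does, and it would not in any case close the argument: even with two matching distinguished triangles, the induced map on the middle terms is not canonical, so you are back to the octahedral indeterminacy you set out to avoid. The paper instead manipulates the eight-term twisted cube \eqref{diag-origami-initial} into a four-term twisted complex \eqref{eqn-braiding-twisted-complex1} whose last three terms are identically the same for $T_iT_jT_i$ and $T_jT_iT_j$ (only the leading term $S_iO_{ij}R_i$ vs.\ $S_jO_{ji}R_j$ differs), and then applies Theorem~\ref{theorem-homotopy-equiv-of-twisted-complexes-differ-only-first-term}. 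This time the obstruction theory is worse—classes in $\Ext^{-1}$ and $\Ext^{-2}$ must both be killed—which requires the diagram of Ext-groups in Lemma~\ref{lemma-diagram-of-ext-groups} and the exactness statement of Cor.~\ref{cor-braiding-existence-of-t1-and-t2}. Your proposal contains none of this, and the hypothesis on $S_iO_{ij}R_i\simeq S_jO_{ji}R_j$ alone (compatibility with one map) does not supply the required vanishings.

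In short: the idea that DG-categories give functorial cones does not mean a homotopy equivalence of leading terms automatically extends; the extension is obstructed, and controlling the obstructions is the real content of these theorems.
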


Finally, we interpret the above in the context of algebraic
geometry. Let $Z$ and $X$ be separated schemes of finite type
over $k$. Let $D_{qc}(Z)$ and $D_{qc}(X)$ be the 
derived categories of quasi-coherent sheaves and
$D(Z)$ and $D(X)$ be the bounded derived categories of 
coherent sheaves on $Z$ and $X$. Let $\A$ and $\B$ be 
the standard DG-enhancements of $D(Z)$ and $D(X)$. These
are given by the 
DG-categories of $h$-injective complexes of sheaves on $Z$ and $X$, 
respectively. 
In Example \ref{example-quasi-functors-between-enhancements-of-DBCoh}
we prove an analogue for the bounded coherent derived categories of
the famous result of To{\"e}n 
\cite[Theorem 8.9]{Toen-TheHomotopyTheoryOfDGCategoriesAndDerivedMoritaTheory}
for the unbounded quasi-coherent ones. We prove that
the exact functors $D(Z) \rightarrow D(X)$ which descend from 
the Morita quasi-functors $\A \rightarrow \B$ are precisely the
Fourier-Mukai transforms. Given an object $E \in D(Z \times X)$ 
the Fourier-Mukai transform $\Phi_E$ is apriori 
a functor $D_{qc}(Z) \rightarrow D_{qc}(X)$. 
In Example \ref{example-quasi-functors-between-enhancements-of-DBCoh}
we identify $\homm_{\Morita(\DGCat)}(\A, \B)$ with 
the full subcategory of $D(Z \times X)$ consisting 
the objects $E$ such that $\Phi_E$ restricts 
to $D(Z) \rightarrow D(X)$. Under 
this identification, each Morita quasi-functor $\A \xrightarrow{S} \B$ 
goes to such object $E \in D(Z \times X)$ 
that $D(Z) \xrightarrow{\Phi_E} D(X)$ is
the exact functor $s$ underlying $S$. 

The above results for Morita quasi-functors can then all be interpreted
for the Fourier-Mukai transforms. 
Let $E \in D(Z \times X)$ be such that $\Phi_E$ restricts to 
a functor $D(Z) \xrightarrow{s} D(X)$ and this restriction
has a left adjoint which is also a Fourier-Mukai transform. 
E.g. it is sufficient to assume that $E$ is 
proper over $Z$ and $X$ and perfect over $Z$ and $X$. Our
results for Morita quasi-functors provide natural constructions
on the level of Fourier-Mukai kernels of the right and left adjoints 
$r$ and $l$ and of all four adjunctions units and co-units involved.
We conjecture that these coincide with the explicit formulas proved
independently 
in \cite{AnnoLogvinenko-OnTakingTwistsOfFourierMukaiFunctors} and 
\cite{AnnoLogvinenko-OrthogonallySphericalObjectsAndSphericalFibrations}.  
Regardless of whether this holds or not, 
the functorial exact triangles
\eqref{eqn-intro-twist-exact-triangle}-\eqref{eqn-intro-dual-cotwist-exact-triangle}
defining the twists and co-twists $t$, $t'$, $f$ and $f'$ are well-defined
and depend only on $E \in D(Z \times X)$. We say that $E$ is \em
spherical over $Z$ \rm if the four conditions of 
the Definition \ref{defn-exact-functor-is-spherical} are satisfied. 
Our main theorem then applies to show that, in fact, it suffices
to only verify any two of these four conditions. The braiding criteria
above translate similarly to the language of Fourier-Mukai kernels. 
It is worth noting that if we set $Z = \spec k$ then the natural
isomorphism $Z \times X \simeq X$ identifies 
$D(Z \times X)$ with $D(X)$
and our results imply immediately the results in 
\cite{SeidelThomas-BraidGroupActionsOnDerivedCategoriesOfCoherentSheaves}. 

Finally, we also describe in 
Section \ref{section-applications-to-algebraic-geometry} 
a variation on all of the above. It uses a slightly 
different enhancement framework which allows 
one to work with the unbounded derived categories $D_{qc}(Z)$ and $D_{qc}(X)$. 
The penalty is a strong smoothness condition. We can only work with  
$E \in D_{qc}(X \times Y)$ such that $\Phi_E$ has a left adjoint 
which is also a Fourier-Mukai transform and they both 
take compact objects to compact objects.  

About the structure of this paper: in Section 
\ref{section-DG-categories-and-DGmodules-and-bimodules}
we give an overview of the facts we need on DG-categories and DG-modules over
them. In Section \ref{section-duals-and-adjoints} we define the 
dualizing functors for DG-modules and DG-bimodules. 
We then construct and study \em trace \rm and \em action \rm maps and 
show them to be units and co-units of homotopy adjunctions between 
an $\AbimB$-bimodule $M$ and its $\A$- and $\B$-duals $M^{\A}$ and $M^{\B}$. 
In Section \ref{section-subsection-twisted-complexes} we give an
overview on twisted complexes over a DG-category and prove 
explicit formulas for taking a tensor product and for dualizing
on the level of twisted complexes. Section 
\ref{section-pre-triangulated-categories} summarizes the facts we need
about pre-triangulated categories. 
In Section \ref{section-twisted-cubes} we develop 
a theory of \em twisted cubes\rm, which acts as
a ``higher'' octahedral axiom for the world of 
pretriangulated categories. In Section \ref{section-DG-enhancements}
we explain the framework of DG-enhancements of triangulated
categories and its applications to algebraic geometry. In 
Section \ref{section-spherical-bimodules} we 
constructs twists and co-twists of a DG-bimodule, define a notion
of a spherical DG-bimodule and prove our main theorem on 
the level of DG-bimodules. In
Section \ref{section-applications-to-algebraic-geometry} we 
interpret this for Fourier-Mukai transforms between the derived
categories of algebraic varieties via the framework introduced in 
Section \ref{section-DG-enhancements}. 
In Section \ref{section-braiding-criteria} we state and prove the
braiding criteria for spherical DG-bimodules. Finally, the Appendix
\ref{section-on-homotopy-equivalences-of-twisted-complexes} contains
some technical results we need in Section \ref{section-braiding-criteria}  
on constructing homotopy equivalences between 
twisted complexes. There the authors have to resort
to using $A_\infty$-categories, $A_\infty$-functors 
and the interpretation of DG quasi-functors as 
strictly unital $A_\infty$-functors between the corresponding
DG-categories. It is something they quite happily avoided doing 
throughout the rest of the paper. 

\em Acknowledgements: \rm We would like to thank Alexander Efimov, 
Alexei Bondal, Valery Lunts and Alexander Kuznetsov for useful
discussions in the course of writing this paper. We would like to
thank Pawel Sosna for a careful readthrough of the paper. 
The first author would like to thank the University of Pittsburgh 
for providing a
perfect environment for completing the work on this paper.  
The second author would like to offer similar thanks to 
Cardiff University. Both authors would like 
to thank the annual Lunts dacha seminar for being a helpful 
and stimulating environment for research. 

\section{Preliminaries}
\label{section-preliminaries}

Some proofs in this paper rely on explicit computations where 
matching up the signs becomes important. As there are different sign 
conventions present in the literature for the material in this
section, we make our choices explicit at the cost of restating some 
very well-known definitions. We aim to enable our reader to verify 
all the computations which are ``left to the reader''. 

\em Notation: \rm Throughout the paper all schemes are defined and 
all DG-categories are considered over the same base field we denote
by $k$. 

Let $X$ be a scheme. We denote by $D_{\text{qc}}(X)$, resp. $D(X)$, 
the full subcategory of the derived category of
$\mathcal{O}_X$-$\modd$ consisting of complexes with 
quasi-coherent, resp. bounded and coherent, cohomology.
%

\subsection{DG categories, modules and bimodules}
\label{section-DG-categories-and-DGmodules-and-bimodules}
Throughout this section $k$ is a commutative ring. 
\subsubsection{DG categories}
Let $E$ and $F$ be complexes of $k$-modules. 
Define $E \otimes F$ to be the complex of $k$-modules 
\begin{align}
\nonumber
(E \otimes F)_n  = \bigoplus_{i + j = n} E_i \otimes F_j  \\
d(e \otimes f) = de \otimes f + (-1)^{\deg(e)}e \otimes df. 
\end{align}
We have the standard sign-twisting 
isomorphism $E \otimes F \xrightarrow{\sim} F \otimes E$ given by 
\begin{align}
\label{eqn-E-otimes-F-iso-F-otimes-E}
e \otimes f \mapsto (-1)^{\deg(e)\deg(f)} f \otimes e.
\end{align}
Define $\homm_k(E,F)$ to be the complex of $k$-modules 
\begin{align}
\label{eqn-hom-complex-of-dg-k-modules}
\nonumber
\homm^n_k(E,F) = \bigoplus_{j-i=n} \homm_k(E_i,F_j)  \\
df = d_F \circ f - (-1)^{\deg(f)} f \circ d_E. 
\end{align}

A DG-category over $k$ is 
a category $\A$ whose morphism spaces $\homm_{\A}(a,b)$ 
are complexes of $k$-modules and 
whose composition maps 
$$ \homm_{\A}(b,c) \otimes \homm_{\A}(a,b) 
\rightarrow \homm_{\A}(a,c) $$ are 
closed degree $0$ maps of complexes of $k$-modules. 
The homotopy category $H^0(\A)$ has same objects 
as $\A$ and its morphisms spaces are $0$-th cohomologies of 
their counterparts in $\A$. 
Let $\modk$ be the DG-category of complexes of $k$-modules 
with morphism spaces defined by \eqref{eqn-hom-complex-of-dg-k-modules}
and the composition $(f \circ g)(s) = f(g(s))$. 
See \cite[\S1.1-1.2]{Keller-DerivingDGCategories} or 
\cite{Toen-LecturesOnDGCategories} for details.  

Given a DG-category $\A$ denote by $\Aopp$ the \em opposite
DG-category \rm of $\A$. Its objects are the same as those of $\A$ and 
for all $a,b \in \Aopp$ we have $\homm_{\Aopp}(a,b) = \homm_{\A}(b,a)$. 
The composition is defined by
composing the sign-twisting isomorphism 
$\eqref{eqn-E-otimes-F-iso-F-otimes-E}$ with the composition map of $\A$. 
In other words, we set
$$ \beta \circ_{\Aopp} \alpha = (-1)^{\deg(\alpha)\deg(\beta)} \alpha
\circ_\A \beta $$ 
for all $\alpha \in \homm_{\Aopp}(a,b)$, $\beta \in \homm_{\Aopp}(b,c)$. 

Let $\A$ and $\B$ be two DG-categories. A DG-functor 
$\A \rightarrow B$ is a $k$-linear functor which preserves 
the grading and the differential on morphisms.
Wherever the context permits we omit ``DG-'' and simply say 
``functor''. A degree $n$ natural transformation of 
DG-functors $\Phi \xrightarrow{t} \Psi$ 
is a collection 
$$\bigl\{ t(a) \in \homm^n_\B(\Phi(a), \Psi(a)) \bigr\}_{a \in \A}$$
where
$t(a') \circ \Phi(\alpha) = (-1)^{nm} \Psi(\alpha) \circ t(a)$ for every
$\alpha \in \homm^m_\A(a,a')$. Define the DG-category $\DGFun(\A,\B)$
as follows. Its objects are DG-functors $\A \rightarrow \B$. 
Its morphism complexes $\homm_{\DGFun(\A,\B)}^\bullet(\Phi, \Psi)$
consist of natural transformations $\Phi \xrightarrow{t} \Psi$
graded by degree and with differentials defined levelwise 
by those of $\B$, i.e. $dt(a) = d_\B(t(a))$ for each $a \in \A$. 
The composition maps are also defined levelwise by those of $\B$. 

We denote by $\DGCat$ the category whose objects are all small
DG-categories over $k$ and whose morphisms are DG-functors between
them. 

\subsubsection{Closed symmetrical monoidal structure on $\DGCat$}

Let $\A$ and $\B$ be DG-categories. We define $\A \otimes_k \B$ to be 
the DG-category whose objects are pairs $(a,b)$ with $a \in \A, b \in \B$,
whose morphism complexes are
$$\homm_{\A \otimes \B}\left(a \otimes b, a' \otimes b')\right) = 
\homm_{\A}(a \otimes a') \otimes \homm_{ \B}(b\otimes b')$$ 
and whose composition is defined by
$$ (\alpha' \otimes \beta') \circ (\alpha \otimes \beta)
= (-1)^{\deg(\beta')\deg(\alpha)}
(\alpha' \circ \alpha) \otimes (\beta' \circ \beta). $$ 
This construction is bifunctorial in $\A$ and $\B$ and 
defines a monoidal structure on $\DGCat$ whose unit is $k$.

The monoidal structure $(\otimes_k, k)$ is symmetric 
via a natural isomorphism 
\begin{align}
\label{eqn-B-x-A-isomorphism-to-A-x-B}
\B \otimes_k \A \xrightarrow{\sim} \A \otimes_k \B
\end{align}
defined on objects by $ b \otimes a \mapsto a \otimes b$
and on morphisms by 
$\beta \otimes \alpha \mapsto (-1)^{\deg(\alpha)\deg(\beta)} \alpha
\otimes \beta$. 

The monoidal structure $(\otimes_k, k)$ is, moreover, closed with
the internal $\homm$ given by $\DGFun(-,-)$. Explicitly, 
for any DG-categories $\A, \B, \C$ we have a natural isomorphism 
\begin{align}
\label{eqn-left-tensor-and-DGFun-adjunction-enhanced}
\DGFun\left(\A \otimes_k \B, \C\right) \xrightarrow{\sim}    
\DGFun\left(\B, \DGFun\left(\A, \C\right) \right) 
\end{align}
which takes any $\A \otimes_k \B \xrightarrow{\Phi} \C$ to 
the functor
$$ \forall\; b \in \B \quad b \mapsto \Phi(- \otimes b), 
\quad 
\forall\;\beta \in \homm_{\B}(b,b')
\quad \beta \mapsto \Phi(\id \otimes \beta)$$
and any $\Phi \xrightarrow{f} \Psi$ to the natural transformation  
$\bigl\{ \Phi(- \otimes b) \xrightarrow{f} \Psi(- \otimes b) \bigr\}_{b
\in \B}$.
The object set of $\DGFun(-,-)$ is the set $\homm_{\DGCat}(-,-)$, 
so the isomorphism \eqref{eqn-left-tensor-and-DGFun-adjunction-enhanced} 
induces an adjunction isomorphism between $(-) \otimes_k$ and $
\DGFun(\A,-)$ which makes $\DGFun$ the internal $\homm$ in
$(\DGCat, \otimes_k, k)$. 

For any two DG-categories $\A$ and $\B$ we have tautological
categorical isomorphisms 
\begin{align}
(\A \otimes_k \B)^\opp &\simeq \Aopp \otimes_k \B^\opp, \\
\DGFun(\A,\B)^\opp &\simeq \DGFun(\Aopp, \Bopp). 
\end{align}
The former isomorphism sends any pair of objects or morphisms in 
$\A \otimes_k \B$ to themselves considered as elements of 
$\Aopp \otimes_k \B^\opp$. The latter sends any functor or a natural
transformation in $\DGFun(\A,\B)$ to itself considered as an element
of $\DGFun(\Aopp, \Bopp)$. 

Finally, for any four DG-categories $\A,\B, \C$ and $\D$ we have 
the simultaneous evaluation functor
\begin{align}
\label{eqn-simulatenous-evaluation-functor}
\DGFun(\A,\C) \otimes_k \DGFun(\B,\D) \rightarrow 
\DGFun(\A \otimes_k \B, \C \otimes_k \D)
\end{align}
which sends any pair of functors $\A \rightarrow \C$ and $\B \rightarrow \D$
to the functor of simultaneously evaluating them on any pair of
objects or morphisms in $\A \otimes_k \B$. Similarly for 
natural transformations of such pairs of functors. Note that 
there are no sign twists involved. 

\subsubsection{DG modules}
\label{section-DG-modules}

A (right) $\A$-module is a functor from $\Aopp$ to $\modk$. 
Denote by $\modA$ the DG-category $\DGFun(\Aopp, \modk)$. 
For the reasons of brevity and to mimic the notation used for DG-algebras, 
for any two $E,F \in \modA$ we write $\homm_\A(E,F)$ for 
$\homm_{\modA}(E,F)$.
The category $H^0(\modA)$ admits natural structure of a triangulated 
category which 
is defined levelwise by the usual triangulated structure 
on $H^0(\modk)$, cf. \cite[\S 2.2]{Keller-DerivingDGCategories}. 

For any $E \in \modA$ and $a \in \A$ we write $E_{a}$ for the complex 
of $k$-modules $E(a)$. We write $v \in E$ if $v \in E_{a}$ for some 
$a \in \A$. The Yoneda embedding $\A \hookrightarrow \modA$ is 
the fully faithful functor defined on the objects by
$$ a\mapsto \homm_{\A}(-, a) \quad\quad  \forall\; a \in \A$$
and on the morphisms by composition. 
For each $a \in \A$ denote by $\aA$ its image under the Yoneda embedding, 
these are the \em representable \rm objects of $\modA$. 
Note, that for all $a,b \in \A$ we have $\aA_b = \homm_{\A}(b,a)$. 
For any $E \in \modA$ trivially $\homm_{\A}(\aA, E) = E_a $. For 
each $s \in E_{a}$ and $\alpha \in \aA_{b}$ we write $s \cdot \alpha$ 
for the element $(-1)^{\deg(s)\deg(\alpha)} E(\alpha)(s) \in E_{b}$. 
We have
$$ (s \cdot \alpha) \cdot \beta = s \cdot (\alpha \circ \beta). $$
In other words, we can think of the data defining an $\A$-module $E$
as of collection of fibers $E_a \in C(k)$ for each $a \in \A$
with a right action of (the $\homm$-spaces of) $\A$ on them,
such that $\aA_b$ acts on $E_a$ and maps it to $E_b$. Similarly,  
a morphism of right $\A$-modules $E \xrightarrow{t} F$ can be thought of 
as a collection of maps $E_a \xrightarrow{t} F_a$ in $\modk$ which 
commute with the $\A$-action: 
$t(s\cdot\alpha) = t(s) \cdot \alpha$ for any $s \in E_a$ and 
$\alpha \in \aA$. 

A left $\A$-module is a right $\Aopp$-module, i.e. 
a functor $\A \rightarrow \modk$. To facilitate the treatment of
bimodules, it is often useful to treat right $\Aopp$-modules as left 
$\A$-modules and employ for them the following notation. 
For any $F \in \modAopp$ and $a \in \A$ we write $\aF$ 
(instead of $\Fa$) for the complex $F(a)$.
For each $a \in \A$ write $\Aa$ for the image of $a$ under 
the Yoneda embedding of $\Aopp$, i.e. for the 
functor $\homm_{\A}(a,-)$. Set 
$\alpha \cdot s = F(\alpha)(s)$ for each $s \in \aF$ 
and $\alpha \in \Aa$, it is a left action of $\A$ on $F$. 
A morphism of left $\A$-modules $E \xrightarrow{t} F$
can be thought of as a collection of maps 
$\aE \xrightarrow{t} \aF$
which skew-commute with the $\A$-action:
$t(\alpha \cdot s) = (-1)^{\deg(t)\deg(\alpha)} \alpha \cdot t(s)$
for any $s \in \aE$ and $\alpha \in \aA$.

\subsubsection{Tensor and Hom}
Let $\A$ be a DG-category and let 
$E$ and $F$ be a right and a left $\A$-module. Define 
the tensor product $E \otimes_\A F \in \modk$ to be the quotient of 
$\bigoplus_{a \in A} E_a \otimes \leftidx{_a}{F} \; \in \; \modk $
by the $\A$-action relations
\begin{align}
\label{eqn-A-action-relations-for-tensor-product}
(s \cdot \alpha) \otimes t = s \otimes (\alpha \cdot t)
\quad\quad\quad
\forall\; \alpha \in \bA_{a},\; s \in \Eb,\; t \in \aF.
\end{align}

We extend this to the functor
\begin{align}
\label{eqn-modk-valued-tensor-product}
(-)\otimes_{\A}(-)\colon \quad \modA \otimes_k \modAopp \rightarrow \modk
\end{align}
by defining the tensor product $\lambda \otimes \mu$ of 
two maps $E \xrightarrow{\lambda} E'$ and $F \xrightarrow{\mu} F'$
to be the following. The map  
\begin{align*}
\bigoplus_{a \in \A} \Ea \otimes \aF 
& \quad \rightarrow \quad
\bigoplus_{a \in \A} \Ea' \otimes \aF' 
\\
e \otimes f 
& \quad\mapsto\quad
(-1)^{\deg(e) \deg(\mu)} \lambda(e) \otimes \mu(f) 
\end{align*}
preserves $\A$-action relations and we define
$\lambda \otimes \mu$ to be the induced map
$E \otimes_\A F \rightarrow E' \otimes_\A F'$.  

Similarly, we define the functor
\begin{align}
\label{eqn-modk-valued-Hom}
\homm_\A\left(-,-\right)\colon \quad \modA \otimes_k (\modA)^\opp \rightarrow \modk
\end{align}
on objects by $(E,F) \mapsto \homm_\A(F,E)$ and on 
morphisms as follows. For any pair of maps $E \xrightarrow{\lambda} E'$ 
and $F' \xrightarrow{\mu} F$ in $\modA$ we define the composition map 
\begin{align*}
\homm_\A(F,E) & \xrightarrow{\lambda \circ (-) \circ \mu}
\homm_\A(F',E') \\
\alpha &\mapsto (-1)^{\deg(\alpha) \deg(\mu)} \lambda \circ \alpha \circ \mu.
\end{align*}

\subsubsection{DG bimodules}

An $\A\text{-}\B$ bimodule is an $\Aopp \otimes \B$-module. 
We write $\AmodB$ for 
$$ \DGFun(\A \otimes \Bopp, \modk)
\simeq \DGFun(\A, \modB) \simeq \DGFun(\Bopp, \modAopp)$$ considered
as the DG category of all $\AbimB$-bimodules. 
Let $M \in \AmodB $. For any $a \in \A, b \in \B$ we write $\aMb$ for 
$M(a,b) \in \modk$, write 
$\aM$ for the $\B$-module $M(a,-)$, and write $M_{b}$
for the $\Aopp$-module $M(-,b)$. The functor $\mathcal{A}
\rightarrow \modB$ which corresponds to $M$ maps
$a$ to $\aM$. We can extend it to the functor
$$(-) \otimes_{\mathcal{A}} M\colon \modA \rightarrow \modB,$$
where for any $E \in \modA$ and $b \in \B$
we set $(E \otimes_{\A} M)_b = E \otimes_{\A} M_b$ and
have $\B$ act via $M_b$. We can further extend this 
to a lift of the tensor bifunctor \eqref{eqn-modk-valued-tensor-product}
from $\modAopp$ to $\AmodB$ in the second argument. 
This admits a more general description. 

Let $\A$, $\B$ and $\C$ be any DG-categories. Since 
$\CmodA$ and $\CmodB$ are equivalent to $\DGFun(\C, \modA)$ and
$\DGFun(\C, \modB)$, the composition functor
$$ \DGFun(\modA,\modB) \otimes_k \DGFun(\C,\modA)
\xrightarrow{(-)\circ(-)}  
\DGFun(\C,\modB)$$
induces via the adjunction the functor
\begin{align}
\label{eqn-fiberwise-lift-of-a-functor}
\DGFun\left(\modA, \modB\right) \rightarrow 
\DGFun\left(\CmodA, \CmodB\right)
\end{align}
best described as the functor of ``defining fiberwise over $\C$''. It
takes a functor $\modA \xrightarrow{\Phi} \modB$ and defines a functor
$\CmodA \rightarrow \CmodB$ which takes any $\CbimA$ bimodule $E$
to the $\CbimB$ bimodule whose fiber over each $c \in \C$ is
$\Phi(\leftidx{_c}{E})$, and similarly for morphisms. 

We can apply a similar procedure to the functors whose domain is a tensor
product of module categories via the simultaneous evaluation functor 
\eqref{eqn-simulatenous-evaluation-functor}. We define the functor
\begin{align}
\label{eqn-bimodule-valued-tensor-product} 
(-) \otimes_\A (-) \colon\quad \CmodA \otimes_k \AmodB \rightarrow \CmodB
\end{align}
as the composition 
\begin{small}
\begin{align*}
\xymatrix{
\DGFun(C,\modA) \otimes_k \DGFun(\Bopp, \modAopp) 
\ar[d]^{\eqref{eqn-simulatenous-evaluation-functor}} \\
\DGFun(C \otimes_k \Bopp, \modA \otimes_k \modAopp)
\ar[d]^{\eqref{eqn-modk-valued-tensor-product} \circ (-)}\\
\DGFun(C \otimes_k \Bopp, \modk)
} 
\end{align*}
\end{small}
Similarly, we use $\modk$ valued $\homm$ functor 
$\eqref{eqn-modk-valued-Hom}$ to define the functors
\begin{align}
\label{eqn-bimodule-hom-as-right-modules} 
&\homm_{\B}(-,-) \colon\quad \CmodB \otimes_k (\AmodB)^\opp \rightarrow \CmodA \\
\label{eqn-bimodule-hom-as-left-modules} 
&\homm_{\A}(-,-) \colon\quad \AmodC \otimes_k (\AmodB)^\opp \rightarrow \BmodC.
\end{align}

For any $\AbimB$-bimodule $M$ we have the usual Tensor-Hom
adjunction: for any DG-category $\C$ 
$$ (-) \otimes_\A M\colon \CmodA \rightarrow \CmodB $$
is left adjoint to 
$$ \homm_\B(M,-) \colon \CmodB \rightarrow \CmodA. $$
Its adjunction co-unit 
\begin{align}
\label{eqn-adjunction-counit-for-M} 
\homm_\B\bigl(M,-\bigr) \otimes_\A M \rightarrow \id 
\end{align}
is given by the composition map 
$$ \homm_{\B}(M, -) \otimes_{\A} \homm_{\B}(\B, M) \rightarrow 
\homm_{\B}(\B, -), $$
and its adjunction unit 
\begin{align}
\label{eqn-adjunction-unit-for-M} 
\id \rightarrow \homm_\B\bigl(M,(-) \otimes_\A M\bigr) 
\end{align}
is defined by 
$$ s \mapsto 
\bigl(\forall\; t \in \aM, \quad t \mapsto s \otimes t \bigr) 
\quad \quad 
\forall\; c \in \C,\; a \in \A,\; s \in \leftidx{_c}{(-)}_a.
$$
Similarly,
$$ M \otimes_{\B} (-) \colon \BmodC \rightarrow \AmodC $$
is left adjoint to 
$$ \homm_\Aopp(M,-) \colon \AmodC \rightarrow \BmodC $$
with analogous adjunction unit
\begin{align}
\label{eqn-adjunction-unit-for-M-otimes-(-)} 
\id \rightarrow \homm_\Aopp\bigl(M,M\otimes_\B(-)\bigr) 
\end{align}
and counit
\begin{align}
\label{eqn-adjunction-counit-for-M-otimes-(-)} 
M \otimes_\B \homm_\Aopp\bigl(M,-\bigr) \rightarrow \id. 
\end{align}

\subsubsection{Derived category}

A module $C \in \modA$ is \em acyclic \rm if for each $a \in \A$
the complex of $k$-modules $C_a$ is acyclic. A module $P \in \modA$ is 
\em $h$-projective \rm if $\homm_{H^0(\modA)}(P,C) = 0$ 
for every acyclic $C \in \modA$. Denote by $\hproj(\A)$ the
corresponding full subcategory of $\modA$. 
A morphism $E \rightarrow F$ of $\A$-modules is a \em quasi-isomorphism \rm 
if for each $a \in \A$ the induced morphism $E_a \rightarrow F_a$ 
is a quasi-isomorphism. 
Let $\mathcal{M}' \subset \modA$ be a full DG-subcategory, 
then a left (resp.\ right) 
resolution of $E \in \A$ by $E' \in \mathcal{M}'$ 
is a quasi-isomorphism $E' \rightarrow E$ (resp.\ $E \rightarrow E'$). 
The \em derived category $D(\A)$ \rm is the localisation of $H^0(\modA)$ by 
the class of all quasi-isomorphisms. 
It can be understood explicitly as follows. By definition of
acyclicity $D(\A)$ is the Verdier quotient of $H^0(\modA)$ by 
$H^0(\acyc(\A))$. By definition of $h$-projectivity
$H^0(\hproj(\A))$ is left orthogonal to $H^0(\acyc(\A))$. Since 
left resolutions by $h$-projectives exist in $\modA$ we have in fact 
a semi-orthogonal decomposition 
$$H^0(\modA) = \bigl<\;H^0\left(\acyc(\A)\right),\; H^0\left(\hproj(\A)\right)\;\bigr>.$$
This canonically identifies $D(\A) = H^0(\modA)/H^0(\acyc(\A))$ with 
$H^0(\hproj(\A))$. In practice, we can use for resolutions
a smaller full subcategory $\sfA$ of the \em semifree \rm 
modules in $\modA$. These are the modules $E \in \modA$ which 
admit an exhaustive filtration 
$0 = F_0 \subseteq F_1 \subseteq F_{2} \subseteq \dots \subseteq E$ whose 
quotients $F_{i}/F_{i-1}$ are direct sums of shifts of representable modules. 
Any semifree module is $h$-projective and any $\A$-module can be resolved 
by a semifree module \cite[\S C.8]{Drinfeld-DGQuotientsOfDGCategories}.
When $k$ is a field, we have a functorial h-projective resolution of
$\A$-modules provided by the \em bar-resolution \rm $\bar{\A}$ of the diagonal
$\AbimA$-bimodule $\A$ \cite[\S 6.6]{Keller-DerivingDGCategories}. 

Another way to understand $D(\A)$ is via either of the two natural model 
category structures induced on $\modA$ from $\modk$. In particular, 
in the \em projective \rm model category structure on $\modk$ the
weak equivalences and the fibrations are the quasi-isomorphisms and 
the termwise surjections of complexes. In the corresponding 
model category structure on $\modA$ we define the equivalences and 
the fibrations levelwise in $\modk$, i.e. a morphism $A \rightarrow B$ 
is an equivalence (resp. fibration) if for every $a \in \A$ morphism 
$A_a \rightarrow B_a$ is an equivalence (resp. fibration) in 
$\modk$ \cite[\S 3]{Toen-TheHomotopyTheoryOfDGCategoriesAndDerivedMoritaTheory}. It follows that every $\A$-module is fibrant, while the cofibrant modules
are precisely the direct summands of semifree modules. 
We denote the full subcategory of $\modA$ consisting of cofibrant 
objects by $\intmod(\A)$. It is the Karoubi completion of
$\sfA$. 

Summarizing, we have a chain of full subcategories
\begin{align}
\sfA \hookrightarrow \intmod(\A) \hookrightarrow \hproj(\A) 
\end{align}
of $\modA$ which, after applying $H^0$  becomes a chain of equivalent
full triangulated subcategories  
\begin{align}
H^0(\sfA) 
\xrightarrow{\sim}
H^0(\intmod(\A))
\xrightarrow{\sim}
H^0(\hproj(\A))
\end{align}
of $H^0(\modA)$. The natural functor $H^0(\modA) \rightarrow D(\A)$
induces an equivalence of these with $D(\A)$. In the 
language of Section \ref{section-DG-enhancements},  
$\sfA$, $\intmod(\A)$ and $\hproj(\A)$ are 
quasi-equivalent DG-enhancements of $D(\A)$.  

An $\A$-module $E$ is \em quasi-representable \rm if it is
quasi-isomorphic to a representable module. 
We denote by $\qrepA$ and $\qrhprA$ 
the corresponding full subcategories of $\modA$ and of $\hprojA$. 
A semi-free $\A$-module $E$ 
is \em finitely-generated \rm if the filtration 
$F_0 \subseteq F_1 \subseteq F_{2} \subseteq \dots \subset E$ can be taken 
to be finite with quotients $F_i/F_{i-1}$ finite direct sums of shifts 
of representables. Denote by $\sffgA$ the corresponding full 
subcategory of $\sfA$. Its homotopy category $H^0(\sffgA)$ is 
the \em triangulated hull \rm of $H^0(\A)$ in $H^0(\modA)$, i.e. 
it is the smallest full triangulated subcategory of $H^0(\modA)$ 
containing $H^0(\A)$. An $\A$-module $E$ is \em perfect \rm if
its image in $D(\A)$ lies in the full subcategory $D_c(\A)$
of \em compact \rm objects, i.e. if $\homm_{D(\A)}\left(E,-\right)$ 
commutes with infinite direct sums. We denote the full subcategories 
of perfect modules in $\modA$ and $\hprojA$ by $\perf(\A)$ and 
$\prfhprA$, respectively.
 
In any category, an object $E$ is a \em retract \rm of an object $F$ 
if there exist morphisms $E \rightarrow F \rightarrow E$ whose 
composition is the identity. For $E, F \in \modA$  
we say that $E$ is a \em homotopy retract \rm of $F$ if there 
exist $E \rightarrow F \rightarrow E$ whose composition is 
homotopic to identity. In other words, $E$ is
a retract of $F$ in $H^0(\modA)$. In additive categories 
the notion of a retract is the same as that of a direct summand. 
The category $D_c(\A)$ is the Karoubi completion of $H^0(\sffgA)$ 
inside $D(\A)$ \cite[\S5]{Keller-DerivingDGCategories}. 
Thus $\prfhprA$ coincides with the full subcategory in $\modA$ of 
homotopy retracts of elements of $\sffgA$.

Let $(-) \ldertimes_\A M$,  $M \ldertimes_{\mathcal{B}} (-)$, 
$\rderhom_\B(M,-)$ and $\rderhom_{\Aopp}(M, -)$ be 
the corresponding derived functors. 
Whenever these functors are mentioned, unless made clear otherwise, 
$\C$ is assumed to be $k$.

We say that an $\AbimB$-bimodule $M$ is:
\begin{itemize}
\item \em $\A$-perfect \rm if $M_b$ is a perfect $\Aopp$-module 
for each $b \in \B$.
\item \em $\B$-perfect \rm if $\aM$ is a perfect $\B$-module for
each $a \in \A$.
\end{itemize}
We define similarly the notions of $\A$- and $\B$-
quasi-representability, $h$-projectivity, etc. 

Since acyclicity is defined levelwise in $\modk$,  
$\homm_{\B}(M,-)$ takes acyclic modules to acyclic for 
any $\B$-$h$-projective $M$. The same is true for
$M\otimes_{\B}(-)$, since it is trivially true for any 
$\B$-representable $M$. Thus to compute $\rder\homm_{\B}(M,-)$ 
and $M \ldertimes_{\B}(-)$ it suffices to take a $\B$-$h$-projective
resolution of $M$. Similarly, if $M$ is $\A$-$h$-projective
then $(-)\otimes_{\A}M$ and $\homm_{\Aopp}(M,-)$ compute 
$(-)\ldertimes_{\A}M$ and $\rder\homm_{\Aopp}(M,-)$.
If $k$ is a field\footnote{If it is not, one should take cofibrant
replacements of $\A$ and $\B$ 
\cite[Prop 3.3]{Toen-TheHomotopyTheoryOfDGCategoriesAndDerivedMoritaTheory}.}
then any $h$-projective $\AbimB$-bimodule is both $\A$- and
$\B$-$h$-projective \cite[\S6.1]{Keller-DerivingDGCategories}, 
and hence the derived functors above can
be computed by taking an $h$-projective resolution of $M$. 

It follows from the above, that $M$ is
\begin{itemize}
\item \em $\A$-perfect \rm if and only if $M \ldertimes_\B (-)$ restricts to 
$D_{c}(\Bopp) \rightarrow D_{c}(\Aopp)$.
\item \em $\B$-perfect \rm if and only if $(-) \ldertimes_\A M$ restricts to 
$D_{c}(\A) \rightarrow D_{c}(\B)$.
\end{itemize}

If $k$ is a field we can be more precise. Let $M \in \hproj(\AbimB)$. 
The functors $(-) \otimes_\A M$ and $M \otimes_\B (-)$ restrict to 
$\A \rightarrow \hproj(\B)$ and $\Bopp \rightarrow \hproj(\Aopp)$
and
\begin{itemize}
\item $\A$-perfect if and only if $M \otimes_{\mathcal{B}} (-)$
restricts to a functor $\prfhprBopp \rightarrow \prfhprAopp$. 
\item $\B$-perfect if and only if 
$(-) \otimes_{\mathcal{A}} M$ restricts to a functor 
$\prfhprA \rightarrow \prfhprB$.

\end{itemize}

\subsection{Duals and adjoints}
\label{section-duals-and-adjoints}

As before, let $\A$ be a DG-category. Define \em the diagonal $\A\text{-}\A$ 
bimodule $\A$ \rm by setting 
$\aA_b = \homm_{\A}(b,a)$ for any $a, b \in \A$. 
Then $\aA$ and $\A_a$ are precisely the representable modules
$\homm_\A(-,a)$ and $\homm_\A(a,-)$ in $\modA$ and $\modAopp$. 
This coincides with the notation introduced in \S\ref{section-DG-modules}.

The diagonal bimodule corresponds to the functor 
$\A \rightarrow \modA$ which sends $a \mapsto \aA$.
We have natural functorial isomorphisms
\begin{align}
\label{eqn-natural-isomorphisms-for-hom(A,-)-and-(-)-otimes-A}
\homm_{\A}\left(\A, -\right) \simeq
\id_\modA \simeq  (-) \otimes_{\A} \A 
\end{align}
given for any $\A$-module $M$ explicitly by
\begin{align*}
M \rightarrow M \otimes_\A \A \colon \quad &
s \mapsto s \otimes \id_a \quad \quad &\forall\; a \in \A, s \in M_a \\
M \otimes_\A \A \rightarrow M \colon \quad &
s \otimes \alpha \mapsto s.\alpha \quad \quad &\forall\; a,b \in \A, s
\in M_b, \alpha \in \bA_a \\
M \rightarrow \homm_\A\left(\A,M\right) \colon \quad &
s \mapsto \;
\left( \alpha \mapsto s.\alpha \quad \forall\; b \in \A, \alpha
\in \aA_b \right) \quad \quad &\forall\; a \in \A, s \in M_a \\
\homm_\A\left(\A,M\right) \rightarrow M\colon\quad &
\alpha \mapsto \alpha(\id_a) &
a \in \A, \alpha \in \homm_\A\left(\aA,M\right).
\end{align*}
We use these isomorphisms implicitly throughout the paper. 

On the other hand, $\homm_{\A}\left(-, \A\right)$ is \em the dualizing
functor \rm 
$$(-)^\A \colon (\modA)^{\opp} \rightarrow \modAopp. $$ 
Explicitly, for any $C \in \modA$ its \em dual module $C^{\A}$ \rm 
is the $\Aopp$-module $a \mapsto \homm_{\A}(C, \leftidx{_a}{\A})$. 
For any morphism $C \xrightarrow{\alpha} D$ in $\modA$
\em the dual morphism $\alpha^\A$ \rm is defined with a sign twist: 
for each $a \in \A$ define 
the requisite morphism $\homm_{\A}(D,\leftidx{_a}{\A}) \rightarrow   
\homm_{\A}(C,\leftidx{_a}{\A})$ by  
$$\beta \mapsto (-1)^{\deg(\beta) \deg(\alpha)} \beta \circ \alpha.$$ 

Tautologically, $(-)^{\A}$ restricts to $\id$ on
the Yoneda embedded subcategories 
$\Aopp \hookrightarrow (\modA)^{\opp}$ and 
$\Aopp \hookrightarrow \modAopp$. 
Therefore it induces an equivalence 
$$\sffg\left(\A\right)^{\opp} \xrightarrow{\sim} 
\sffg(\Aopp)$$ 
and a quasi-equivalence 
$$\prfhprA^{\opp} \rightarrow \prfhprAopp,$$ 
whose induced maps on morphism complexes are homotopy
equivalences. By abuse of notation, we also use $(-)^{\A}$ to refer 
to the dualizing functor for $\Aopp$. The double dualizing functor 
$(-)^{\mathcal{A}\mathcal{A}}\colon \modA \rightarrow \modA$ 
is isomorphic to the identity on $\sffg(\mathcal{A})$ and homotopic
to the identity on $\prfhprA$. An analogous claim holds for
$(-)^{\mathcal{A}\mathcal{A}}\colon \modAopp \rightarrow \modAopp$.

Let $C \in \modB$, $D \in \modA$ and let $M$ be an $\AbimB$-bimodule. 
There is a natural map of DG $k$-modules
\begin{align}
\label{eqn-tensor-product-and-hom-map} 
D \otimes_\A \homm_\B\left(C,M\right) 
\rightarrow 
\homm_\B\left(C, D \otimes_\A M\right) 
\end{align}
defined by setting for any $a \in \A$
$$ s \otimes \gamma \mapsto 
\bigl(\forall\; t \in C, \quad t \mapsto s \otimes \gamma(t) \bigr) 
\quad \quad 
s \in D_a, \gamma \in \homm_\B\left(C,\aM\right).
$$
This map is clearly an isomorphism when
either $C$ or $D$ are representable. 
It follows that it is an isomorphism
when either $C$ or $D$ lie in $\sffg(\A)$ and 
a homotopy equivalence when either $C$ or $D$ lie in $\prfhprA$.

If in \eqref{eqn-tensor-product-and-hom-map} we set $\B = \A$
and let $M$ be the diagonal bimodule $\A$ 
we obtain the \em evaluation map \rm 
\begin{align} \label{eqn-evaluation-map}
D \otimes_{\A} C^{\A} \xrightarrow{\ev} \homm_{\A}(C, D).
\end{align}
It is the same map of DG $k$-modules as the composition map 
$$ \homm_{\A}(\A, D) \otimes_{\A} \homm_{\A}(C, \A) 
\rightarrow 
\homm_{\A}(C,D). $$

Let $M$ be an $\AbimB$ bimodule. 
We define $M^\A$, \em the dual of $M$ with respect to
$\A$\em, to be the $\BbimA$-bimodule $\homm_{\Aopp}(M, \A)$. 
In other words, $M^\A$ corresponds 
to the functor $\B \rightarrow \modA$ which maps $b \mapsto (M_b)^\A$. 
Similarly, we define $M^\B$, \em the dual of $M$ with respect to
$\B$\em, to be the $\BbimA$-bimodule $\homm_{\B}(M, \B)$, 
which corresponds to
the functor $\mathcal{A}^{\opp} \rightarrow \modBopp$
which maps $a \mapsto (\leftidx{_a}{M})^\B$. 
More generally, define the functor 
$(-)^\A \colon (\AmodB)^\opp \rightarrow \BmodA$ 
fiberwise over $\B$ by the dualising functor of $\modA$, and 
define $(-)^\B$ similarly.  
Denote by $(-)^{\tilde{\A}}$ and $(-)^{\tilde{\B}}$ their derived functors 
$D(\AbimB)^\opp \rightarrow D(\BbimA)$. Since $(-)^\A$
is defined fiberwise over $\B$ it sends
$\A$-$h$-projective acyclic bimodules to acyclic ones. 
It follows that if $M$ is $\A$-$h$-projective then $M^\lderA \simeq (M)^\A$
in $D(\BbimA)$. 
Similarly, if $M$ is $\B$-$h$-projective, 
then $M^\lderB \simeq (M)^\B$ in $D(\BbimA)$. 

The evaluation map \eqref{eqn-evaluation-map}
induces a morphism of functors $\modB \rightarrow \modA$
\begin{align} \label{eqn-otimesM^B-to-hom-M-*-map}
(-) \otimes_{\B} M^\B \rightarrow \homm_{\B}(M, -)  
\end{align}
It follows from the above that for any $M$ the map 
\eqref{eqn-otimesM^B-to-hom-M-*-map} is an isomorphism on all of $\sffg(\B)$ 
and a homotopy equivalence on all of $\prfhprB$. On the other hand, 
if $M$ is $\B$-$h$-projective and $\B$-perfect, 
then for any $N \in \modB$ the morphism \eqref{eqn-otimesM^B-to-hom-M-*-map} is 
a quasi-isomorphism. This is because 
all $\leftidx{_a}{M}$ lie in $\prfhprB$ and thus 
\eqref{eqn-otimesM^B-to-hom-M-*-map} 
is a homotopy equivalence levelwise in $\modk$. Similarly, we obtain 
a morphism of functors 
\begin{align} \label{eqn-otimesM^A-to-hom-M-*-map}
M^\A \otimes_{\A} (-)  \rightarrow \homm_{\A}(M, -)  
\end{align}
which is a quasi-isomorphism on all of $\modAopp$ whenever $M$
is $\A$-$h$-projective and $\A$-perfect. 

Consider the map 
$$ M^\B \otimes_\A M \rightarrow \B$$
given by the Tensor-Hom adjunction counit 
\ref{eqn-adjunction-counit-for-M} evaluated at the diagonal bimodule 
$\B$. Taking its right adjoint with respect to $M^\B \otimes_\A (-)$
yields a map $M \rightarrow M^{\B\B}$. The induced natural transformation
\begin{align}
\label{eqn-double-B-dual-transformation}
\id \rightarrow (-)^{\B\B}
\end{align}
is a quasi-isomorphism for any $\B$-$h$-projective and $\B$-perfect
$M$, since it is then a homotopy equivalence levelwise in $\modB$. 
We similarly define a natural transformation 
\begin{align}
\label{eqn-double-A-dual-transformation}
\id \rightarrow (-)^{\A\A}
\end{align}
which is a quasi-isomorphism for any $\A$-$h$-projective and
$\A$-perfect $M$. 

The above properties of natural transformations
\eqref{eqn-otimesM^B-to-hom-M-*-map}-\eqref{eqn-double-A-dual-transformation}
imply the following:
\begin{lemma}
\label{lemma-hom-to-tensor-with-dual-and-double-duals}
\begin{enumerate}
\item
\label{item-MtildeA-to-RHom}
 For any $M \in D_{\Aperf}(\AbimB)$ we have an isomorphism 
of functors $D(\Aopp) \rightarrow D(\Bopp)$: 
\begin{align}
\label{eqn-MtildeA-to-RHom-iso}
M^{\tilde{\A}} \ldertimes_\A (-) \simeq 
\rder\homm_\A(M,-).
\end{align}
\item 
For any $M \in D_{\Bperf}(\AbimB)$ we have an isomorphism 
of functors $D(\B) \rightarrow D(\A)$: 
\begin{align}
\label{eqn-MtildeB-to-RHom-iso}
(-) \ldertimes_\B M^{\tilde{\B}} \xrightarrow{\sim}
\rder\homm_\B(M,-).
\end{align}
\item 
We have an isomorphism of endofunctors of $D_{\Aperf}(\AbimB)$:
\begin{align}
\label{eqn-identity-to-M-double-A-dual-iso}
\id \xrightarrow{\sim} (-)^{\tilde{\A}\tilde{\A}}.
\end{align}
\item 
We have an isomorphism of endofunctors of $D_{\Bperf}(\AbimB)$:
\begin{align}
\label{eqn-identity-to-M-double-B-dual-iso}
\id \xrightarrow{\sim} (-)^{\tilde{\B}\tilde{\B}}.
\end{align}
\end{enumerate}
\end{lemma}

In view of Tensor-Hom adjunction we then have:

\begin{cor}
\label{cor-derived-M-and-M^A-and-M^B-adjunction}
\begin{enumerate}
\item
\label{item-MddA-and-M-adjunction}
For any $M \in D_{\Aperf}(\AbimB)$ the functor 
$$ (-) \ldertimes_\B M^{\tilde{\A}} \colon \quad D(\B) \rightarrow D(\A)$$
is left adjoint to the functor
$$ (-) \ldertimes_\A M \colon \quad D(\A) \rightarrow D(\B).$$
\item 
\label{item-M-and-MddB-adjunction}
For any $M \in D_{\Bperf}(\AbimB)$ the functor 
$$ (-) \ldertimes_\B M^{\tilde{\B}} \colon \quad D(\B) \rightarrow D(\A)$$
is right adjoint to the functor
$$ (-) \ldertimes_\A M \colon \quad D(\A) \rightarrow D(\B).$$
\end{enumerate}
\end{cor}

\begin{proof}
\begin{enumerate}
\item By Lemma \ref{lemma-hom-to-tensor-with-dual-and-double-duals} 
we have the following isomorphism of functors
\begin{align}
\label{eqn-iso-of-tensor-M-to-RHom-MddA}
(-) \ldertimes_\A M 
\xrightarrow{\eqref{eqn-identity-to-M-double-A-dual-iso}}
(-) \ldertimes_\A M^{\tilde{\A}\tilde{\A}}  
\xrightarrow{\eqref{eqn-MtildeA-to-RHom-iso}}
\rder\homm_{\A}(\MddA,-). 
\end{align}
This isomorphism transforms the derived Tensor-Hom adjunction 
$$(-) \ldertimes_\B M^{\tilde{\A}} \quad \longleftrightarrow \quad \rder\homm_{\A}(\MddA,-) $$
with its unit \eqref{eqn-adjunction-unit-for-M} and 
counit \eqref{eqn-adjunction-counit-for-M} into the desired adjunction 
$$(-) \ldertimes_\B M^{\tilde{\A}} \quad \longleftrightarrow \quad (-) \ldertimes_\A M.$$
\item Similarly, by 
Lemma \ref{lemma-hom-to-tensor-with-dual-and-double-duals}
we have an isomorphism 
\begin{align}
\label{eqn-iso-of-tensor-MddB-to-RHom-M}
(-) \ldertimes_\B \MddB 
\xrightarrow{\eqref{eqn-MtildeB-to-RHom-iso}}
\rder\homm_{\B}(M,-) 
\end{align}
which produces the desired adjunction out of the Tensor-Hom adjunction 
$$(-) \ldertimes_\A M \quad \longleftrightarrow \quad
\rder\homm_{\B}(\MddB,-). $$
\end{enumerate}
\end{proof}

It is helpful to have the units and counits 
of the adjunctions in Cor. \ref{cor-derived-M-and-M^A-and-M^B-adjunction} 
written down explicitly in terms of the maps between corresponding
bimodules:

\begin{defn}
Let $M \in \AmodB$. The \em $\B$-trace map \rm 
\begin{align}
\label{eqn-trace-map-B}
M^\B \otimes_{\A} M \xrightarrow{tr} \B. 
\end{align}
in $\BmodB$ is the co-unit \eqref{eqn-adjunction-counit-for-M} of 
the Tensor-Hom adjunction evaluated at the diagonal bimodule $\B$.\\ 
The \em derived $\B$-trace map \rm is the induced map 
$\MddB \ldertimes_{\A} M \xrightarrow{tr} \B$ in $D(\BbimB)$. 

The \em $\A$-trace map \rm 
\begin{align}
\label{eqn-trace-map-A}
M \otimes_{\B} M^{\A} \xrightarrow{\trace} \A
\end{align}
and its derived version are defined similarly. 
\end{defn}

For $\B$- and $\A$-perfect $M$ the associativity of the composition map 
implies that the natural transformations
\begin{align*}
(-) \ldertimes_\A M \ldertimes_\B \MddA \xrightarrow{\trace} \id_{D(\A)} \\
(-) \ldertimes_\B \MddB \ldertimes_\A M \xrightarrow{\trace} \id_{D(\B)} 
\end{align*}
coincide with the counits of adjunctions in 
Cor. \ref{cor-derived-M-and-M^A-and-M^B-adjunction} 
\eqref{item-MddA-and-M-adjunction}-\eqref{item-M-and-MddB-adjunction}. 

\begin{defn}
Let $M \in \AmodB$. 
The \em $\A$-action map \rm 
\begin{align}
\label{eqn-A-action-map}
\A \xrightarrow{\action} \homm_\B(M, M)
\end{align}
in $\AmodA$ is the unit \eqref{eqn-adjunction-unit-for-M} of 
the Tensor-Hom adjunction evaluated at $\A$.
The \em derived $\A$-action map \rm is the induced map 
$\A \xrightarrow{\action} \rder\homm_\B(M, M)$ in $D(\AbimA)$. 
When $M$ is $\B$-perfect we also use this term 
for the corresponding map $\A \xrightarrow{\action} M \ldertimes_\B
\MddB$ 
obtained via the isomorphism 
$\eqref{eqn-MtildeB-to-RHom-iso}$. 

The \em $\B$-action map \rm 
\begin{align}
\label{eqn-B-action-map}
\B \xrightarrow{\action} \homm_{\Aopp}(M,M)
\end{align}
and its derived versions are defined similarly. 
\end{defn} 
For $\B$- and $\A$-perfect $M$ the induced natural transformations
\begin{align*}
\id_{D(\B)} \xrightarrow{\action} (-) \ldertimes_\B \MddA \ldertimes_\A M \\
\id_{D(\A)} \xrightarrow{\action} (-) \ldertimes_\A M \ldertimes_\B \MddB 
\end{align*}
coincide with the units of adjunctions in 
Cor. \ref{cor-derived-M-and-M^A-and-M^B-adjunction}
\eqref{item-MddA-and-M-adjunction}-\eqref{item-M-and-MddB-adjunction}. 
Showing this amounts to checking
that
\begin{align*}
\vcenter{
\xymatrix{
(-) \otimes_\A \A 
\ar[rr]^<<<<<<<<<<{\id \otimes \action}
\ar[d]^{\simeq}
& &
(-) \otimes_\A \homm_{\B}(M,M)
\ar[d]^{\eqref{eqn-tensor-product-and-hom-map}}
\\
(-)
\ar[rr]_<<<<<<<<<<<<{\eqref{eqn-adjunction-unit-for-M}}
& &
\homm_{\B}(M, (-) \otimes_\A M)
}
}
\end{align*}
commutes. It is a straightforward exercise we leave to the reader. 

We would now like to lift these derived adjunctions to homotopy ones.
That is, given an $\A$- and $\B$-perfect $M \in \AmodB$ we
would like to write down $h$-projective resolutions of $M$, $\MddA$ and 
$\MddB$ and
four maps which induce in the homotopy category the units and 
counits of the two adjunctions in 
Cor. \ref{cor-derived-M-and-M^A-and-M^B-adjunction}.

We use very specific resolutions of $M$, $\MddA$ and $\MddB$ 
obtained via the bar-construction, 
cf. \cite[\S6.6]{Keller-DerivingDGCategories}. We briefly recall 
the essentials. Let $\barA \rightarrow \A$ and $\barB \rightarrow \B$ be 
the bar-resolutions of the diagonal bimodules in $\AmodA$ and $\BmodB$. 
These are quasi-isomorphisms with $\barA$ and $\barB$ semifree. 
The induced natural transformations 
$ (-) \otimes_\A \barA \rightarrow \id_{\modA}$ and 
$ (-) \otimes_\B \barB \rightarrow \id_{\modB}$
are functorial $h$-projective resolutions for $\A$- and $\B$-modules, 
respectively. This can be seen via the following useful fact:
\begin{prps}
\label{prps-C-hproj-times-hproj-equals-hproj}
Let $\A$, $\B$ and $\C$ be DG-categories. 
Let $M$ be an $\AbimB$-bimodule and $N$ be a $\BbimC$-bimodule. 
If either of the following holds
\begin{enumerate}
\item $M$ is $h$-projective and $N$ is a $\C$-$h$-projective.
\item $M$ is $\A$-$h$-projective and $N$ is $h$-projective.
\end{enumerate}
then $M \otimes_B N$ is an $h$-projective $\AbimC$-bimodule.  
\end{prps}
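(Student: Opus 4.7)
The plan is to prove this by using the tensor-hom adjunctions for bimodules established earlier in Section \ref{section-DG-categories-modules-and-bimodules}, together with the observation that acyclicity of bimodules is defined fiberwise in $\modk$. Recall that an $\AbimC$-bimodule $P$ is $h$-projective iff $\homm_{H^0(\AmodC)}(P, K) = 0$ for every acyclic $K \in \AmodC$, and that this is equivalent to the morphism complex $\homm_{\AmodC}(P, K)$ itself being acyclic for every such $K$ (apply the vanishing to all shifts of $P$). So I fix an acyclic $K \in \AmodC$ and aim to show that $\homm_{\AmodC}(M \otimes_\B N, K)$ is acyclic.

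\textbf{Case (1):} $M$ is $h$-projective as an $\AbimB$-bimodule and $N$ is $\C$-$h$-projective. The right-tensor/right-hom adjunction applied with $(\A,\B,\C)$ playing the roles of $(\C,\B,\A)$ in the paper's adjunction gives a natural isomorphism
\[
\homm_{\AmodC}(M \otimes_\B N, K) \;\simeq\; \homm_{\AmodB}(M, \homm_\C(N, K)),
\]
where $\homm_\C(N,-)\colon \AmodC \to \AmodB$ is the fiberwise-lifted hom. Fiberwise over $\A$ and $\B$, $\homm_\C(N,K)_{a,b} = \homm_\C(\leftidx{_b}{N}, \leftidx{_a}{K})$. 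Since $N$ is $\C$-$h$-projective, each $\leftidx{_b}{N}$ is $h$-projective in $\modC$; since $K$ is acyclic, each $\leftidx{_a}{K}$ is acyclic. As noted in the paper, $h$-projectivity implies $\homm_\C(\leftidx{_b}{N}, -)$ preserves acyclics, so $\homm_\C(N,K)$ is levelwise acyclic, i.e.\ acyclic as an $\AbimB$-bimodule. Now $h$-projectivity of $M$ as an $\AbimB$-bimodule forces $\homm_{\AmodB}(M, \homm_\C(N, K))$ to be acyclic, completing the argument.

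\textbf{Case (2):} $M$ is $\A$-$h$-projective and $N$ is $h$-projective. Here I use the other adjunction instead: $M \otimes_\B (-)\colon \BmodC \to \AmodC$ is left adjoint to $\homm_\Aopp(M,-)\colon \AmodC \to \BmodC$, so
\[
\homm_{\AmodC}(M \otimes_\B N, K) \;\simeq\; \homm_{\BmodC}(N, \homm_\Aopp(M, K)).
\]
Fiberwise, $\homm_\Aopp(M,K)_{b,c} = \homm_\Aopp(M_b, K_c)$, where each $M_b$ is $h$-projective in $\modAopp$ by assumption and each $K_c$ is acyclic. Hence $\homm_\Aopp(M,K)$ is acyclic as a $\BbimC$-bimodule, and the $h$-projectivity of $N$ then gives the desired acyclicity.

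There is no real obstacle; the only delicate point is keeping careful track of the sidedness conventions (which of $M$ or $N$ gets dualized against $\A$ versus $\C$) so that the fiberwise acyclicity argument applies to the correct variable. Once the adjunction is written down in the right form, the two cases are symmetric and the result follows immediately from the stated fact that $h$-projectivity on one side makes the corresponding $\homm$-functor preserve acyclics.
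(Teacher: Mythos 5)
Your proof is correct and follows essentially the same approach as the paper: the adjunction between $(-) \otimes_\B N$ and $\homm_\C(N,-)$ (done over $\A$) reduces case (1) to the observation that $\homm_\C(N,K)$ is levelwise acyclic, and the paper handles case (2) by saying "treated similarly," which is exactly the symmetric adjunction you spell out with $\homm_\Aopp(M,-)$.
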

\begin{proof}
Suppose $M$ is $h$-projective and $N$ is $\C$-$h$-projective
and let $Q$ be any acyclic $\AbimC$-bimodule. 
By the adjunction of $(-) \otimes_\B N$ and 
$\homm_{\C}(N, -)$ done over $\A$ we have a natural 
isomorphism
\begin{align*}
\homm_{\AbimC}
\left(M \otimes_B N, Q \right) 
\simeq
\homm_{\AbimB}
\left(M , \homm_\C\left(N, Q \right) \right).
\end{align*}
For any $a \in \A$ and $b \in \B$, $\leftidx{_b}{N}$ and $Q_a$ are
an $h$-projective and an acyclic $\C$-modules. It follows that 
$\homm_\C(N,Q)$ is an acyclic $\AbimB$-bimodule, and hence
$\homm_{\AbimB}\left(M , \homm_\C\left(N, Q \right) \right)$ is acyclic. 
We have now shown $\homm_{\AbimC} \left(M \otimes_B N, Q \right)$ to
be acyclic for any acyclic $Q$, whereby $M \otimes_B N$ is 
an $h$-projective $\AbimC$-bimodule. 

The case of $M$ being $\A$-$h$-projective and $N$ being
$h$-projective is treated similarly. 
\end{proof}

Similarly, for $\AbimB$ bimodules an $h$-projective resolution could
be obtained by tensoring with $\overline{\A^\opp \otimes \B}$. However, 
there is another resolution more suited to our needs:

\begin{cor}
\label{cor-barA-M-barB-is-h-proj-resolution-of-M}
Let $M \in \AmodB$. Then 
$\barA \otimes_\A M \otimes_\B \barB \rightarrow M$ 
is an $h$-projective resolution of $M$. 
\end{cor}

\begin{proof}
By Prop. \ref{prps-C-hproj-times-hproj-equals-hproj} the bimodule
$\barA \otimes_\A M$ is $\A$-$h$-projective, and then by Prop.
\ref{prps-C-hproj-times-hproj-equals-hproj} again 
$\left(\barA \otimes_\A M\right) \otimes_\B \barB$ is
$h$-projective. 
\end{proof}

\begin{defn}
Define $\bhproj(\AbimB)$ to be the full subcategory of
$\hproj(\AbimB)$ consisting of all bimodules of form $\barA \otimes_\A
M \otimes_\B \barB$ for some $M \in \AmodB$. 
\end{defn}

Note that by Cor. \ref{cor-barA-M-barB-is-h-proj-resolution-of-M}
we have a canonical identification $H^0(\bhproj(\AbimB)) \simeq D(\AbimB)$. 

Let $N$ be any $\AmodB$ bimodule. The quasi-isomorphisms $\barA \rightarrow \A$ and $\barB \rightarrow \B$ and functorial isomorphisms \eqref{eqn-natural-isomorphisms-for-hom(A,-)-and-(-)-otimes-A} yield functorial quasi-isomorphisms
\begin{align}
\label{eqn-homotopy-equivalences-with-tensoring-by-bar-complex}
\barA \otimes_{\A} N \xrightarrow{\sim} N \xleftarrow{\sim} N \otimes_{\B} \barB.  
\end{align}

If $N \in \hproj(\AbimB)$ then so are $\barA \otimes_\A N$ and $N \otimes_\B
\barB$, and the two quasi-isomorphisms 
in \eqref{eqn-homotopy-equivalences-with-tensoring-by-bar-complex} are
actually homotopy equivalences. If moreover $N \in \bhproj(\AbimB)$, 
we have canonical homotopy inverses of 
\eqref{eqn-homotopy-equivalences-with-tensoring-by-bar-complex}
\begin{align}
\label{eqn-inverses-of-homotopy-equivalences-with-tensoring-by-bar-complex}
\barA \otimes_{\A} N \xleftarrow{\sim} N 
\xrightarrow{\sim} N \otimes_{\B} \barB.  
\end{align}
induced by the comultiplication maps $\barA \rightarrow \barA
\otimes_\A \barA$ and $\barB \rightarrow \barB \otimes_\B \barB$
defined in \cite[\S6.6]{Keller-DerivingDGCategories}. Moreover, 
these are genuine right inverses -- the following compositions 
are not merely homotopic but equal to $\id$:
$$ N 
\xrightarrow{\eqref{eqn-inverses-of-homotopy-equivalences-with-tensoring-by-bar-complex}}
\barA \otimes_\A N 
\xrightarrow{\eqref{eqn-homotopy-equivalences-with-tensoring-by-bar-complex}}
N $$
$$ N 
\xrightarrow{\eqref{eqn-inverses-of-homotopy-equivalences-with-tensoring-by-bar-complex}}
N \otimes_\B \barB 
\xrightarrow{\eqref{eqn-homotopy-equivalences-with-tensoring-by-bar-complex}}
N. $$
This is our
main reason for introducing $\bhproj(\AbimB)$: it makes
a number of diagrams commute genuinely and not up to homotopy.  
Throughout the rest of the paper, where necessary,
we implicitly identify any $N \in \bhproj(\AbimB)$ with 
$\barA \otimes_\A N$ and $N \otimes_\B \barB$ via 
\eqref{eqn-homotopy-equivalences-with-tensoring-by-bar-complex} 
and \eqref{eqn-inverses-of-homotopy-equivalences-with-tensoring-by-bar-complex}.  

The dualisation functors $(-)^\A$ and $(-)^\B$ do not restrict to functors 
$\bhproj(\AbimB) \rightarrow \bhproj(\BbimA)$. We thus define:

\begin{defn}
\label{defn-homotopy-duals} 
Let $M \in \AmodB$. Define $\MhdA$ and $\MhdB$ to be the bimodules 
$\barB \otimes_B M^\A \otimes_\A \barA$ and 
$\barB \otimes_B M^\B \otimes_\A \barA$, respectively. 
\end{defn}

These are our chosen $h$-projective resolutions of the derived duals
$\MddA$ and $\MddB$. 
We now proceed to define the unit and counit maps of our homotopy adjunctions. 

\begin{defn}
Let $M \in \bhproj(\AbimB)$. 
The \em homotopy $\A$-trace map $M \otimes_\B \MhdA 
\xrightarrow{\trace} \barA$ \rm is the composition  
\begin{align}
\label{eqn-homotopy-A-trace-map}
M \otimes_\B \MhdA \xrightarrow{\barB \rightarrow \B}
M \otimes_\B M^\A \otimes_\A \barA \xrightarrow{\trace \otimes \id} \barA. 
\end{align}
Similarly, the \em homotopy $\B$-trace map 
$\MhdB \otimes_\A M \xrightarrow{\trace} \barB$ \rm is the composition
\begin{align}
\label{eqn-homotopy-B-trace-map}
\MhdB \otimes_\A M \xrightarrow{\barA \rightarrow \A}
\barB \otimes_\B M^\B \otimes_\A M \xrightarrow{\id \otimes \trace} \barB. 
\end{align}
\end{defn}
\begin{defn}
\label{defn-homotopy-action-maps}
Let $M \in \bhproj^{\Bperf}\!(\AbimB)$. The map 
\begin{align}
\label{eqn-otimes-M^B-to-hom-M-*-map-tensor-barA}
M \otimes_\B \MhdB \xrightarrow{\barB \rightarrow \B} 
M \otimes_\B M^\B \otimes_\A \barA 
\xrightarrow{\eqref{eqn-otimesM^B-to-hom-M-*-map} \otimes \id } 
\homm_\B(M,M) \otimes_\A \barA
\end{align}
is a quasi-isomorphism since 
\eqref{eqn-otimesM^B-to-hom-M-*-map} is one. 
Thus there exists a homotopy lift of 
$\barA \xrightarrow{\action \otimes \id }  
\homm_\B(M,M) \otimes_\A \barA$ along 
\eqref{eqn-otimes-M^B-to-hom-M-*-map-tensor-barA}. Choose 
once and for all such a lift and call it the 
\em homotopy $\A$-action map\rm
\begin{align}
\label{eqn-homotopy-A-action-map}
\barA \xrightarrow{\action} M \otimes_\B \MhdB.
\end{align}

Let $M \in \bhproj^{\Aperf}\!(\AbimB)$. 
We define similarly the \em homotopy $\B$-action map \rm
\begin{align}
\label{eqn-homotopy-B-action-map}
\barB \xrightarrow{\action} \MhdA \otimes_\A M. 
\end{align}
\end{defn}

\begin{prps}
\label{prps-M^B-and-M^A-are-homotopy-adjoints-of-M-via-S-SRS-S-maps}
Let $\A$ and $\B$ be DG-categories and $M \in \bhproj(\AbimB)$.

If $M$ is $\B$-perfect
$(-) \otimes_\B \MhdB$ is homotopy right adjoint to 
$(-) \otimes_\A M$ with the unit and the counit being the 
homotopy $\B$-action and $\B$-trace maps. That is, the compositions 
\begin{align}
\label{eqn-mb-mb_m_mb-mb-homotopic-to-the-identity-map}
\MhdB
\xrightarrow{\id \otimes \action} 
\MhdB \otimes_\A M \otimes_\B \MhdB 
\xrightarrow{\trace \otimes \id} 
\MhdB
\end{align}
\begin{align}
\label{eqn-m-m_mb_m-m-homotopic-to-the-identity-map}
M 
\xrightarrow{\action \otimes \id} 
M \otimes_\B \MhdB \otimes_\A M
\xrightarrow{\id \otimes \trace} 
M
\end{align}
are homotopic to the identity maps. 

If $M$ is $\A$-perfect then 
$(-) \otimes_\B \MhdA$ is homotopy left adjoint to 
$(-) \otimes_\A M$ with the unit and the counit being the 
homotopy $\A$-action and $\A$-trace maps. That is, the compositions 
\begin{align}
\label{eqn-ma-ma_m_ma-ma-homotopic-to-the-identity-map}
\MhdA
\xrightarrow{\action \otimes \id} 
\MhdA \otimes_\A M \otimes_\B \MhdA 
\xrightarrow{\trace \otimes \id} 
\MhdA
\end{align}
\begin{align}
\label{eqn-m-m_ma_m-m-homotopic-to-the-identity-map}
M 
\xrightarrow{\action \otimes \id} 
M \otimes_\B \MhdA \otimes_\A M
\xrightarrow{\trace \otimes \id} 
M
\end{align}
are homotopic to the identity maps. 
\end{prps}
\begin{proof}
The compositions 
\begin{align*}
M^{\tilde{\B}} 
\xrightarrow{\id \otimes \action} 
M^{\tilde{\B}} 
\ldertimes_\A M 
\otimes_\B 
M^{\tilde{\B}} 
\xrightarrow{\trace \otimes \id} 
M^{\tilde{\B}} \\
M 
\xrightarrow{\action \otimes \id} 
M \ldertimes_\B M^{\tilde{\B}} \otimes_\A M
\xrightarrow{\trace \otimes \id} 
M 
\end{align*}
are equal to $\id_{D(\BbimA)}$ and $\id_{D(\AbimB)}$. 
This is because the derived $\B$-action
and $\B$-trace maps are the unit and the counit of 
a genuine adjunction between $(-) \ldertimes_\B M^{\tilde{\B}}$
and $(-) \ldertimes_\A M$. 

By construction, the images of homotopy $\B$-trace and $\B$-action maps
in $D(\BbimB)$ are identified with their derived counterparts  
by the isomorphism $\MddB \simeq \MhdB$. 
It follows that the images of
\eqref{eqn-mb-mb_m_mb-mb-homotopic-to-the-identity-map} and 
\eqref{eqn-m-m_mb_m-m-homotopic-to-the-identity-map} in  
$D(\BbimA)$ and $D(\AbimB)$ are conjugate (and thus equal) 
to $\id_{D(\BbimA)}$ and $\id_{D(\AbimB)}$. Hence 
\eqref{eqn-mb-mb_m_mb-mb-homotopic-to-the-identity-map} and 
\eqref{eqn-m-m_mb_m-m-homotopic-to-the-identity-map} themselves 
are homotopic to $\id_{\BmodA}$ and $\id_{\AmodB}$.  

The second assertion is proved analogously. 
\end{proof}

Let $\A$, $\B$ and $\C$ be DG-categories. Let $M \in \AmodB$ and $N
\in \BmodC$. Consider the composition 
\begin{align}
\label{eqn-M-otimes-N-right-dual}
N^\C \otimes_\B M^\B \xrightarrow{\ev} 
\homm_\B \left(M, N^\C\right) = 
\homm_\B \left(M, \homm_\C(N,\C)\right) 
\underset{\sim}{\xrightarrow{\text{adj.}}}
\homm_\B \left(M \otimes_B N, \C\right) 
= \left(M \otimes_B N\right)^\C
\end{align}
The first map is the evaluation map \eqref{eqn-evaluation-map}, 
it is a quasi-isomorphism if $M$ is $\B$-perfect and
$\B$-$h$-projective. The second map is the adjunction isomorphism 
for $(-) \otimes_\B N$ and $\homm_\C(N,-)$. Similarly
\begin{align}
\label{eqn-M-otimes-N-left-dual}
N^\B \otimes_\B M^\A \xrightarrow{\ev} 
\homm_\Bopp \left(N, \homm_\Aopp(M,\A)\right) 
\underset{\sim}{\xrightarrow{\text{adj.}}}
\left(M \otimes_B N\right)^\A, 
\end{align}
is a quasi-isomorphism if $N$ is $\A$-perfect and
$\A$-$h$-projective. We have thus:
\begin{lemma}
Let $\A$, $\B$ and $\C$ be DG-categories. Let 
$M$ and $N$ be $\AbimB$- and $\BbimC$-bimodules. 

If $M$ is $B$-perfect we have an isomorphism in $D(\CbimA)$:
\begin{align}
N^{\tilde{\C}} \ldertimes_\B M^{\tilde{\B}}
\xrightarrow{\eqref{eqn-M-otimes-N-right-dual}} 
\left(M \ldertimes_B N\right)^{\tilde{\C}}.
\end{align}
 
If $N$ is $\B$-perfect we have an isomorphism in $D(\CbimA)$:
\begin{align}
N^{\tilde{\B}} \ldertimes_\B M^{\tilde{\A}} 
\xrightarrow{\eqref{eqn-M-otimes-N-left-dual}} 
\left(M \ldertimes_B N\right)^{\tilde{\A}}. 
\end{align}
\end{lemma}

More is true:
\begin{lemma}
\label{lemma-derived-trace-and-action-maps-are-isomorphic}
Let $\A$ and $\B$ be DG-categories and $M
\in D(\AbimB)$ be $\A$- and $\B$-perfect. 
Then $\B \xrightarrow{\action} M^{\tilde{\A}} \ldertimes_\A M$ 
is isomorphic in $D(\BbimB)$ to
\begin{align*}
\left(M^{\tilde{\B}} \ldertimes_\A M \xrightarrow{\trace} \B
\right)^{l\tilde{\B}}.
\end{align*}
Similarly, $A \xrightarrow{\action} M \ldertimes_B M^{\tilde{\B}}$ 
is isomorphic in $D(\AbimA)$ to 
\begin{align*}
\left(M \ldertimes_\B M^{\tilde{\A}} \xrightarrow{\trace} \A
\right)^{r\tilde{\A}}.
\end{align*}
Here by $(-)^{l\tilde{B}}$ we mean dualising an $\BbimB$-bimodule 
as a left $\B$-module. Similarly for $(-)^{r\tilde{\A}}$, etc.
\end{lemma}

\begin{proof}
We only prove the first assertion, the second assertion is proved
similarly. 
Replace $M$ by an $h$-projective resolution. Then in $D(\BbimB)$
the map $\B \xrightarrow{\action} M^{\tilde{\A}} \ldertimes_\A M$ is
isomorphic  to $\B \xrightarrow{\action} \homm_\Aopp(M,M)$ 
and $M^{\tilde{\B}} \ldertimes_\A M \xrightarrow{\trace} \B$
is isomorphic to $M^\B \otimes_\A M \xrightarrow{\trace} \B$. 
It now suffices to show that in $\BmodB$ the diagram 
\begin{align}
\label{eqn-dual-trace-action-map-commutative-square}
\vcenter{
\xymatrix{
\B 
\ar[dd]_{\action}
\ar[r]^<<<<<<<<<<<<<{\action}
& 
\homm_{\Aopp}\left(M,M\right)
\ar[d]^{\id \otimes \eqref{eqn-double-B-dual-transformation}}
\\
&
\homm_{\Aopp}\left(M, M^{\B\B}\right)
\ar[d]^{\text{adjunction}}
\\
\homm_\Bopp\left(\B, \B\right)
\ar[r]_<<<<{\trace}
&
\homm_\Bopp\left(M^\B \otimes_\A M, \B\right)
} 
}
\end{align}
is commutative, since its left column is an isomorphism 
and its right column a quasi-isomorphism.  
The diagram \eqref{eqn-dual-trace-action-map-commutative-square} commutes
because both its halves can be readily seen to compose into the element of 
$$
\homm_{\BbimB}\left(\B, \homm_{\Bopp}\left(M^\B \otimes_\A M, \B\right)\right)
$$
which is adjoint to the trace map 
$M^\B \otimes_\A M \xrightarrow{\trace}\B$ in 
$$
\homm_{\BbimB}\left(M^\B \otimes_\A M, \B\right)
$$
under the adjunction of $M^\B \otimes_\A M \otimes_\B (-)$
and $\homm_{\Bopp}\left(M^\B \otimes_\A M, -\right)$. 
\end{proof}


Finally, we have the following analogue of 
Prps.~\ref{prps-C-hproj-times-hproj-equals-hproj} with 
$h$-projectivity replaced by perfection:

\begin{prps}
\label{prps-C-perfect-plus-perfect-equals-perfect}
Let $\A$, $\B$ and $\C$ be DG-categories. Let $M$ be a perfect
$\AbimB$-bimodule and $N$ be a $\C$-perfect $\BbimC$-bimodule. 
Then $M \ldertimes_B N$ is a perfect $\AbimC$-bimodule.  
\end{prps}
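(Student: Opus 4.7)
The plan is a thick-subcategory reduction to representables.

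First I would recall that the perfect $\AbimC$-bimodules form a thick triangulated subcategory of $D(\AbimC)$: it is the Karoubi completion of $H^0(\sffg(\AbimC))$ and hence the smallest thick subcategory containing the representable $\AbimC$-bimodules. The same holds with $\C$ replaced by $\B$. The functor $(-)\ldertimes_\B N\colon D(\AbimB)\to D(\AbimC)$ is triangulated, so the full subcategory of $D(\AbimB)$ whose objects are sent to perfect $\AbimC$-bimodules is itself thick. It therefore suffices to show that every representable $\AbimB$-bimodule maps to a perfect $\AbimC$-bimodule.

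Next I would carry out the key computation. For $a\in\A$, $b\in\B$ let $P_{(a,b)}=\A_a\otimes_k\leftidx{_b}{\B}$ be the representable $\AbimB$-bimodule at $(a,b)$, with value $\homm_\A(a,a')\otimes_k\homm_\B(b',b)$ at $(a',b')$. Being representable, $P_{(a,b)}$ is $h$-projective as a bimodule and hence, since $k$ is a field, $\B$-$h$-projective; thus $P_{(a,b)}\otimes_\B N$ computes $P_{(a,b)}\ldertimes_\B N$. Combining the isomorphism $\leftidx{_b}{\B}\otimes_\B N \simeq \leftidx{_b}{N}$ (a special case of \eqref{eqn-natural-isomorphisms-for-hom(A,-)-and-(-)-otimes-A}) with the obvious factorisation yields a natural isomorphism of $\AbimC$-bimodules
\[
P_{(a,b)}\otimes_\B N \;\cong\; \A_a\otimes_k\leftidx{_b}{N},
\]
where $\A_a$ supplies the left $\A$-module structure and $\leftidx{_b}{N}$ the right $\C$-module structure.

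Finally I would show $\A_a\otimes_k\leftidx{_b}{N}$ is perfect in $\AbimC$. Since $N$ is $\C$-perfect, $\leftidx{_b}{N}$ is a perfect $\C$-module, hence a homotopy retract of some finitely generated semifree $\C$-module $P'$. Applying the exact $k$-linear functor $\A_a\otimes_k(-)$ to a finite semifree filtration of $P'$ produces a finite filtration of $\A_a\otimes_k P'$ whose subquotients are finite direct sums of shifts of the representable $\AbimC$-bimodules $\A_a\otimes_k\leftidx{_c}{\C}$. So $\A_a\otimes_k P'$ is finitely generated semifree in $\AbimC$, and $\A_a\otimes_k\leftidx{_b}{N}$ is a homotopy retract of it, hence perfect. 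Combined with the thickness argument above, this proves the claim.

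The only (mild) obstacle is bookkeeping: verifying that $P_{(a,b)}\otimes_\B N$ computes the derived tensor (which rests on $h$-projectivity of representables, automatic over a field) and that the displayed isomorphism respects both the $\A$- and the $\C$-module structures. Neither step requires anything beyond the definitions.
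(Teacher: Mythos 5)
Your proof is correct, but it takes a genuinely different route from the paper's. The paper argues entirely at the level of the derived category via the compactness characterisation of perfectness: it writes down the chain of natural isomorphisms
\[
\homm_{D(\AbimC)}\bigl(M \ldertimes_\B N, \textstyle\bigoplus Q_i\bigr)
\simeq \homm_{D(\AbimB)}\bigl(M, \rderhom_\C(N,\textstyle\bigoplus Q_i)\bigr)
\simeq \textstyle\bigoplus \homm_{D(\AbimC)}\bigl(M\ldertimes_\B N, Q_i\bigr),
\]
using the $(-)\ldertimes_\B N \dashv \rderhom_\C(N,-)$ adjunction, the fact that $\rderhom_\C(N,-)$ commutes with direct sums because $N$ is $\C$-perfect, and the fact that $\homm_{D(\AbimB)}(M,-)$ commutes with direct sums because $M$ is perfect. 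This is a pure abstract-nonsense argument that never descends below the derived category. You instead use the thick-subcategory reduction to representables, compute $P_{(a,b)}\otimes_\B N \cong \A_a\otimes_k\leftidx{_b}{N}$ explicitly, and then climb back up via finitely generated semifree filtrations. Your computation is correct (including the identification of $P_{(a,b)}$ with $\A_a\otimes_k\leftidx{_b}{\B}$ and the Yoneda collapse $\bB\otimes_\B N_c\simeq\leftidx{_b}{N_c}$), and the filtration argument works since $k$ is a field, which the paper assumes throughout. What your approach buys is an explicit formula for how the functor $(-)\ldertimes_\B N$ acts on generators, which can be useful in its own right; what the paper's approach buys is brevity and independence from the base-field hypothesis (it only ever invokes the adjunctions and compactness, never $h$-projectivity of representables over $k$). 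Either proof is acceptable.
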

\begin{proof} 
Let $\bigoplus Q_i$ be an infinite direct sum of 
$\AbimC$-bimodules. We have a chain of natural isomorphisms:
\begin{align}
\label{item-otimes-N-adjunction}
\homm_{D(\AbimC)}
\left(M \ldertimes_B N, \bigoplus Q_i \right) 
\simeq
\homm_{D(\AbimB)}
\left(M , \rderhom_\C\left(N, \bigoplus Q_i\right) \right) \\
\label{item-N-C-perfect}
\homm_{D(\AbimB)}
\left(M , \rderhom_\C\left(N, \bigoplus Q_i\right) \right) 
\simeq
\homm_{D(\AbimB)}
\left(M , \bigoplus \rderhom_\C\left(N, Q_i\right) \right) \\
\label{item-M-perfect}
\homm_{D(\AbimB)}
\left(M , \bigoplus \rderhom_\C\left(N, Q_i\right) \right) 
\simeq
\bigoplus \homm_{D(\AbimB)}
\left(M , \rderhom_\C\left(N, Q_i\right) \right) \\
\label{item-rhom-N-adjunction}
\bigoplus \homm_{D(\AbimB)}
\left(M , \rderhom_\C\left(N, Q_i\right) \right) 
\simeq
\bigoplus \homm_{D(\AbimC)}
\left(M \ldertimes_B N , Q_i \right).
\end{align}
The isomorphisms \eqref{item-otimes-N-adjunction} and 
\eqref{item-rhom-N-adjunction} are due to the adjunction of 
$(-) \ldertimes_\B N$ and $\rderhom_{\C}(N, -)$ done over $\A$, 
\eqref{item-N-C-perfect} is due to $N$ being $\C$-perfect 
and \eqref{item-M-perfect} is due to $M$ being perfect. 

Thus $\homm_{D(\AbimC)}\left(M \ldertimes_\B N, -\right)$ 
commutes with infinite direct sums, i.e. $M \ldertimes_\B N$ 
is perfect. 
\end{proof}

Recall that a DG-category $\A$ is called \em smooth \rm if
the diagonal bimodule $\A$ is a perfect $\AbimA$-bimodule. 

\begin{cor}
\label{cor-for-smooth-A-perfect-means-perfect}
Let $\A$ be a smooth DG category and $\B$ be any DG-category. 
Then any $\B$-perfect $\AbimB$-bimodule $N$ is perfect.   
\end{cor}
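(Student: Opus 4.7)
The plan is to reduce this to a direct application of Proposition \ref{prps-C-perfect-plus-perfect-equals-perfect}. The key observation is that smoothness of $\A$ exactly gives us a perfect bimodule to tensor with, and tensoring with the diagonal returns the original bimodule up to isomorphism.

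More precisely, I would apply Proposition \ref{prps-C-perfect-plus-perfect-equals-perfect} with the roles $(\A, \B, \C)$ in the proposition played by $(\A, \A, \B)$. Take $M$ in the proposition to be the diagonal $\AbimA$-bimodule $\A$, and take the proposition's second bimodule to be our $N$, which is an $\AbimB$-bimodule. The hypotheses hold: $\A$ is perfect as an $\AbimA$-bimodule by the assumption of smoothness, and $N$ is $\B$-perfect by assumption. Hence by the proposition, $\A \ldertimes_\A N$ is a perfect $\AbimB$-bimodule.

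It remains to identify $\A \ldertimes_\A N$ with $N$ in $D(\AbimB)$. This is immediate from the natural isomorphism \eqref{eqn-natural-isomorphisms-for-hom(A,-)-and-(-)-otimes-A}, applied fiberwise over $\B$ in the sense of \eqref{eqn-fiberwise-lift-of-a-functor}: we have $(-) \otimes_\A \A \simeq \id_{\modA}$, and lifting this to $\AmodB$ via the fiberwise description of the tensor bifunctor \eqref{eqn-BmodC-valued-tensor-product} yields $N \otimes_\A \A \simeq N$ as $\AbimB$-bimodules. Since this isomorphism is on the underived level it descends trivially to the derived category. (Equivalently, since we can replace $\A$ by an $h$-projective resolution without changing anything, the derived tensor product $\A \ldertimes_\A N$ is computed by the ordinary $\A \otimes_\A N$, which is $N$.)

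Combining these two steps, $N$ is isomorphic in $D(\AbimB)$ to a perfect bimodule and therefore is itself perfect. There is no real obstacle here; the only thing to be careful about is that the bookkeeping of the three DG-categories in Proposition \ref{prps-C-perfect-plus-perfect-equals-perfect} matches up correctly, which it does with the assignment above.
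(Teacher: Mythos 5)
Your proof is correct and is essentially identical to the paper's: both apply Proposition \ref{prps-C-perfect-plus-perfect-equals-perfect} with $M = \A$ (perfect by smoothness) and the given $\B$-perfect bimodule $N$, then use $\A \ldertimes_\A N \simeq N$. You spell out the identification $N \otimes_\A \A \simeq N$ in slightly more detail than the paper does, but the argument is the same.
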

\begin{proof}
By definition, $\A$ being smooth means that $\A$ is 
a perfect $\AbimA$-bimodule. We then apply Lemma 
\ref{prps-C-perfect-plus-perfect-equals-perfect} 
to conclude that $N \simeq \A \otimes_\A N$ is perfect. 
\end{proof}

\section{Twisted complexes and twisted cubes}
\label{section-twisted-complexes}

\subsection{Twisted complexes}
\label{section-subsection-twisted-complexes}

The notion of a twisted complex was introduced in
\cite{BondalKapranov-EnhancedTriangulatedCategories}. 
There exist at present two different conventions for writing 
down twisted complexes: the original one introduced in 
\cite{BondalKapranov-EnhancedTriangulatedCategories} and a
slightly different one introduced in 
\cite{BondalLarsenLunts-GrothendieckRingofPretriangulatedCategories}
where all the objects in a twisted complex are shifted
so as to ensure that all the twisted maps have degree $1$. 
Abstractly, this latter convention is more natural as these
shifts are precisely what one has to do when taking the convolution
of a twisted complex.

However, all the twisted complexes we work with in this paper are
lifts of genuine complexes in the homotopy category, and hence 
they exist naturally in the convention of
\cite{BondalKapranov-EnhancedTriangulatedCategories}. For this 
reason we are going to present the material in this section, such as
the formulas for dualizing and tensoring twisted complexes, in 
the notation of \cite{BondalKapranov-EnhancedTriangulatedCategories}. 
The authors are well aware that the signs in these formulas are 
much simpler in the notation of
\cite{BondalLarsenLunts-GrothendieckRingofPretriangulatedCategories}. 
However, to actually apply any formula in 
\cite{BondalLarsenLunts-GrothendieckRingofPretriangulatedCategories}
convention to the twisted complexes we work with throughout the paper, 
we'd first have to shift everything to make all the twisted maps 
have degree $1$, then apply the formula, and then shift everything back 
to relate the answer to what we are working with. This would introduce
back all the complicated signs, and it is therefore better to write
down the formulas 
in \cite{BondalKapranov-EnhancedTriangulatedCategories} convention
from the start.

The definitions in the published version
of \cite{BondalKapranov-EnhancedTriangulatedCategories} contain sign errors. 
For reader's convenience we give below the corrected versions of 
these definitions:

\begin{defn}
\label{defn-twisted-complex} 
Let $\A$ be a DG-category. A \em twisted complex \rm over $\A$ is
a collection 
$$ \bigl\{ (E_i)_{i \in \mathbb{Z}}, \; \alpha_{ij} \colon E_i \rightarrow E_j  \bigr\} $$
where $E_i$ are objects in $\A$ with $E_i = 0$ for all but finite number 
of $i$, and $\alpha_{ij}$ are morphisms in $\A$ of degree $i - j + 1$
satisfying the condition
$$ (-1)^j d\alpha_{ij} + \sum_k \alpha_{kj} \circ \alpha_{ik} = 0. $$
\end{defn}
A twisted complex is called \em one-sided \rm if $\alpha_{ij} = 0$ for all $i
\geq j$. 

We adopt the following convention: to write down a twisted complex we 
write down two expressions separated by a comma. First expression is the 
$i$-th graded part of the twisted complex. The second expression is 
the twisted map from $i$th to $j$th graded parts of the twisted complex. 
E.g. $(E_i, \alpha_{ij})$ is a twisted complex whose $i$-th graded part 
is $E_i$ and whose twisted map from $E_i$ to $E_j$ is $\alpha_{ij}$. 

To make twisted complexes over $\A$ into a DG-category we define
the $\homm$-complex from a twisted complex $(E_i, \alpha_{ij})$ to 
a twisted complex $(F_i, \beta_{ij})$ to be the complex of $k$-modules
whose degree $p$ part is  
$$ \coprod_{p = q + l - k} \homm^q_{\A}(E_k, F_l) $$
with the differential defined by setting, 
for each $\gamma \in  \homm^q_{\A}(E_k, F_l)$,
$$ d\gamma = (-1)^l d_{\A} \gamma + \sum_{m \in
\mathbb{Z}}\left( \beta_{lm} \circ \gamma - (-1)^{q + l -k} \gamma
\circ \alpha_{mk} \right), $$ 
where $d_\A$ is the differential on morphisms in $\A$. 

The signs and indices in the definitions above are set up precisely
so as to ensure that the following notion of \em convolution \rm 
extends naturally to a fully faithful functor from 
the DG-category of twisted complexes over $\A$ to the DG-category $\modA$.
But first we need to define the notion of a \em shift \rm of an $\A$-module. 
We do it levelwise in $\modk$ and, since we are dealing with right
modules, we do not twist the $\A$-action, that is:

\begin{defn} Let $M$ be an $\A$-module. For any $n \in \mathbb{Z}$ 
define the $\A$-module $M[n]$ by setting 
$$(M[n])_a = M_a[n] \quad\quad \forall\;a \in \A$$
and having $\A$ act via its action on $M$. That is, for
any $\alpha \in \leftidx{_a}{A_b}$ and any $s \in (M[n])_a$ 
we set $s\cdot_{M[n]}\alpha \in (M[n])_b$
to be $s \cdot_{M} \alpha$.  
\end{defn}

\begin{defn}
Let $\A$ be a DG-category and let $(E_i, \alpha_{ij})$ be a twisted
complex over $\A$. Let $\bigoplus_i E_i[-i]$ be the $\A$-module
where we use the Yoneda embedding to embed each $E_i$ into $\modA$. 
The \em convolution \rm of $(E_i, \alpha_{ij})$
is the $\A$-module obtained by taking $\bigoplus_i E_i[-i]$
and endowing it with a new differential 
$d + \sum_{i,j} \alpha_{ij}$, where $d$ is 
the natural differential of $\bigoplus_i E_i[-i]$. 

We use curly brackets to denote taking the convolution of the twisted
complex, e.g. $\bigl\{ E_i, \alpha_{ij} \bigr\}$. 
\end{defn}

The most time-consuming part of proving the results below
is in getting the signs to agree. 
Recall the definitions of the bimodule-valued tensor product
and $\homm$ functors 
\eqref{eqn-bimodule-valued-tensor-product}-\eqref{eqn-bimodule-hom-as-left-modules}.
In particular, for any maps $E \xrightarrow{\alpha} E'$ in $\BmodA$ 
and $F \xrightarrow{\beta} F'$ in $\AmodC$ the product map 
$$ E \otimes_\A F \xrightarrow{\alpha \otimes \beta } E' \otimes F' $$
in $\BmodC$ is given for every $(b,c) \in \B \otimes \Copp$ and $a \in \A$ by
$$ e \otimes f \mapsto (-1)^{\deg(e) \deg(\beta)} \alpha(e) \otimes \beta(f) 
\quad\quad e \in \leftidx{_b}{E}{_a}, f \in \leftidx{_a}{F}{_c}.$$ 
Similarly, for any maps $E' \xrightarrow{\alpha} E$ in $\AmodB$ and 
$F \xrightarrow{\beta} F'$ in $\AmodC$ the composition map 
$${\beta \circ (-) \circ \alpha}\colon \quad \homm_\A(E,F) \rightarrow
\homm_\A(E',F')$$
in $\BmodC$ is given for every $(b,c) \in \B \otimes \Copp$ by
$$f \mapsto (-1)^{\deg(f)\deg(\alpha)} \beta \circ f \circ \alpha 
\quad\quad f \in \homm_\A(E_b,F_c).$$
The formula for the other (``as right modules'') bimodule $\homm$ functor 
\eqref{eqn-bimodule-hom-as-right-modules} is identical. 

\begin{lemma}
\label{lemma-tensor-and-hom-of-twisted-complexes} 
Let $\A$, $\B$ and $\C$ be DG-categories and let $(E_i, \alpha_{ij})$
be a twisted complex over $\AmodB$. 

\begin{enumerate}
\item
Let $(F_i, \beta_{ij})$ be a twisted complex over $\BmodC$ then
\begin{small}
\begin{align}
\label{eqn-tensor-product-of-twisted-complexes}
\bigl\{ E_i, \alpha_{ij} \bigr\}
\otimes_\B 
\bigl\{ F_i, \beta_{ij} \bigr\}
\simeq 
\bigl\{
\bigoplus_{k + l = i} E_k \otimes_\B F_l, 
\sum_{l + m = j}
(-1)^{l(k-m+1)}
\alpha_{km} \otimes \id_l 
+  
\sum_{k + n  = j} (-1)^k \id_k \otimes \beta_{ln}
\bigr\}. 
\end{align}
\end{small}
\item
\label{item-hom-complex-of-twisted-complexes}
Let $(F_i, \beta_{ij})$ be a twisted complex over $\CmodB$ then
\begin{scriptsize}
\begin{align}
\label{eqn-right-hom-complex-of-twisted-complexes}
\homm_\B 
\left(
\bigl\{ E_i, \alpha_{ij} \bigr\}
,
\bigl\{ F_i, \beta_{ij} \bigr\}
\right)
\simeq 
\bigl\{
\bigoplus_{l - k = i} \homm_\B(E_k,F_l), 
\sum_{l - m = j}
(-1)^{m(m-k)+l+1}
(-) \circ \alpha_{mk}  
+  
\sum_{n - k  = j} (-1)^{(l-n+1)k} \beta_{ln} \circ (-)
\bigr\}. 
\end{align}
\end{scriptsize}
Similarly, if $(F_i, \beta_{ij})$ is a twisted complex over $\AmodC$
then 
\begin{scriptsize}
\begin{align}
\label{eqn-left-hom-complex-of-twisted-complexes}
\homm_\A 
\left(
\bigl\{ E_i, \alpha_{ij} \bigr\}
,
\bigl\{ F_i, \beta_{ij} \bigr\}
\right)
\simeq 
\bigl\{
\bigoplus_{l - k = i} \homm_\A(E_k,F_l), 
\sum_{l - m = j}
(-1)^{m(m-k)+l+1}
(-) \circ \alpha_{mk}  
+  
\sum_{n - k  = j} (-1)^{(l-n+1)k} \beta_{ln} \circ (-)
\bigr\}. 
\end{align}
\end{scriptsize}
\end{enumerate}
\end{lemma}
\begin{proof}
\begin{enumerate}
\item 
An isomorphism of DG-modules is an isomorphism of the underlying graded  
modules which respects the differential. As a graded $\AbimC$-bimodule,
i.e. forgetting the differential, the LHS of 
\eqref{eqn-tensor-product-of-twisted-complexes} is isomorphic to 
\begin{align*}
\left( \bigoplus_{k \in \mathbb{Z}} E_k[-k] \right) 
\otimes_\B
\left( \bigoplus_{l \in \mathbb{Z}} F_l[-l] \right),
\end{align*}
while the RHS is isomorphic to 
\begin{align*}
\bigoplus_{k,l \in \mathbb{Z}}
\bigl( E_k \otimes_\B F_l[-k-l] \bigr)
\end{align*}
There is a tautological isomorphism between the two
\begin{align*}
e \otimes f \mapsto (-1)^{k' l} e \otimes f \quad \quad
\forall\; a \in \Aopp, b \in B, c \in C;\; k,l,k',l' \in \mathbb{Z};\;
e \in \left(\leftidx{_a}{\left(E_{k}\right)}{_b}\right)_{k'};\;
f \in \left(\leftidx{_b}{\left(E_{l}\right)}{_c}\right)_{l'}.
\end{align*}
which needs its sign twist to respect the
$\B$-action relations of the corresponding tensor products. 
This isomorphism can be readily seen to also respect the
differentials 
$$ d(e \otimes f) = (-1)^{k} d_{E_k}e \otimes f + \sum_{m}
\alpha_{km}(e) \otimes f + (-1)^{k + k' + l} e \otimes d_{F_l}f
+ \sum_{n} (-1)^{k + k'} e \otimes \beta_{ln}(f), $$
$$ d(e \otimes f) = (-1)^{k + l} d_{E_k}e \otimes f + 
(-1)^{k + l + k'} e \otimes d_{F_l}f +
\sum_{m} (-1)^{l(k-m+1)} \alpha_{km}(e) \otimes f + 
\sum_{n} (-1)^{k + (l-n+1)k'} e \otimes \beta_{ln}(f)$$
on the LHS and the RHS of
\eqref{eqn-tensor-product-of-twisted-complexes}. 

\item We only prove the first statement, the second is proved
identically. As a graded $\CbimA$-bimodule, 
the LHS of \eqref{eqn-right-hom-complex-of-twisted-complexes} is isomorphic to
\begin{align}
\label{eqn-right-hom-of-twisted-complexes-as-a-graded-bimodule}
\homm_\B\left(\bigoplus_k E_k[-k], \bigoplus_l F_l[-l]\right), 
\end{align}
while the RHS is isomorphic to 
\begin{align}
\label{eqn-twisted-complex-of-right-homs-as-a-graded-bimodule}
\bigoplus_{k,l} \homm_\B\left(E_k, F_l\right)[-(l-k)].
\end{align}
Since all the direct sums are finite the obvious 
natural map from
\eqref{eqn-twisted-complex-of-right-homs-as-a-graded-bimodule}
to 
\eqref{eqn-right-hom-of-twisted-complexes-as-a-graded-bimodule}
is an isomorphism of graded $\CbimA$-bimodules. 

It doesn't respect the differentials given 
for any $a \in \A$, $c \in \C$ and 
$f \in \homm^q_\B\left(\leftidx{_a}{(E_k)}{}, \leftidx{_c}{(F_l)}{}\right)$
by
\begin{align}
df = (-1)^{l} d_{\B}f + \sum_m (-1)^{q+l-k+1} f \circ \alpha_{mk} +
\sum_n \beta_{ln} 
\end{align}
on the LHS of \eqref{eqn-right-hom-complex-of-twisted-complexes} 
and by 
\begin{align}
df = (-1)^{l - k} d_{\B}f + \sum_m (-1)^{q(m-k+1) + m(m-k)+l+1 } f \circ \alpha_{mk} +
\sum_n (-1)^{(l-n+1)k} \beta_{ln} 
\end{align}
on the RHS of \eqref{eqn-right-hom-complex-of-twisted-complexes}.  
One can now readily check that the composition of 
the natural isomorphism above with the automorphism of  
\eqref{eqn-twisted-complex-of-right-homs-as-a-graded-bimodule}
which multiplies each $\homm^q_\B\left(E_k, F_l\right)$ by 
$(-1)^{(q-1)k}$ respects the differentials and thus yields
the desired isomorphism of DG bimodules. 
\end{enumerate}
\end{proof}

\begin{lemma}
\label{lemma-convolutions-and-the-dualizing-functor} 
Let $\A$ and $\B$ be DG-categories and 
let $(E_i, \alpha_{ij})$ be a twisted complex of $\AbimB$-bimodules. 
Let $E$ be its convolution $\bigl\{E_i, \alpha_{ij}\bigr\}$. 
\begin{enumerate}
\item
\label{item-dualizing-twisted-complexes}
Then 
\begin{align}
\label{eqn-right-dual-of-a-twisted-complex}
 E^\B \simeq 
\bigl\{E_{-i}^{\B}, (-1)^{j^2 + ij + 1} \alpha_{(-j)(-i)}^{\B} \bigr\} 
\\
\label{eqn-left-dual-of-a-twisted-complex}
 E^{\A} \simeq 
\bigl\{E_{-i}^{\A}, (-1)^{j^2 + ij + 1} \alpha_{(-j)(-i)}^{\A} \bigr\}
\end{align}
in $\AmodB$. 

\item 
\label{item-trace-and-action-maps-via-twisted-complexes}
The $\A$-trace map 
$E \otimes_\B E^\A \xrightarrow{\trace} \A$ is isomorphic 
to the image in $\AmodA$ of the map
\begin{align}
\label{eqn-trace-for-twisted-complexes}
\left(\bigoplus_{k-l=i} E_{k} \otimes_\B E^{\A}_{l}, 
\sum_{m-l=j} (-1)^{l(k-m+1)}\alpha_{km}\otimes \id + 
\sum_{k-n=j} (-1)^{k+n^2+nl+1}\id\otimes\alpha^{\A}_{nl}\right)
\rightarrow \A  
\end{align}
which consists of a single degree $0$ map 
$\bigoplus_{k} E_{k} \otimes_\B E^{\A}_{k}
\xrightarrow{\sum \trace} \A$.

The $\A$-action map $\A \xrightarrow{\action} \homm_{B}(E,E)$ 
is isomorphic to the image in $\AmodA$ of the map 
\begin{align}
\label{eqn-action-for-twisted-complexes}
\A \rightarrow \left(\bigoplus_{l-k=i} \homm_B\left(E_{k}, E_{l}\right), 
\sum_{l-m=j} (-1)^{m(m-k) + l + 1} (-) \circ \alpha_{mk} + 
\sum_{n-k=j} (-1)^{(l-n+1)k}\alpha_{ln} \circ (-) \right)
\end{align}
which consists of a single degree $0$ map.  
$ \A \xrightarrow{\sum \action}
\bigoplus_{k} \homm_\B \left(E_{k},E_{k}\right)$.
 
Analogous statements hold for $\B$-trace and $\B$-action maps.

\item 
\label{item-homotopy-trace-and-action-maps-via-twisted-complexes} 
Suppose each $E_i$ is $h$-projective and $\B$-perfect. 

The map $E \otimes_\B \barB \otimes_\B E^\A \xrightarrow{\trace} \barA$ is
homotopy equivalent to the image in $\AmodA$ of the map
\begin{align}
\label{eqn-homotopy-trace-for-twisted-complexes}
\left(\bigoplus_{k-l=i} E_{k} \otimes_{\B} \barB \otimes_\B E^{\A}_{l}, 
\sum_{m-l=j} (-1)^{l(k-m+1)}\alpha_{km}\otimes \id + 
\sum_{k-n=j} (-1)^{k+n^2+nl+1}\id\otimes\alpha^{\A}_{nl}\right)
\rightarrow \barA  
\end{align}
which consists of a single degree $0$ map 
$\bigoplus_{k} E_{k}\otimes_{\B} \barB \otimes_\B E^{\A}_{k}
\xrightarrow{\sum \trace}
\barA$.

The map $\barA \xrightarrow{\action} E \otimes_\B E^\B \otimes_\A \barA$ 
is homotopy equivalent to the image in $\AmodA$ of the map 
\begin{align}
\label{eqn-homotopy-action-for-twisted-complexes}
\barA \rightarrow \left(\bigoplus_{k-l=i} E_{k} \otimes_{\B}
E^{\B}_{l} \otimes_\A \barA, 
\sum_{m-l=j} (-1)^{-l(k-m+1)}\alpha_{km}\otimes \id + 
\sum_{k-n=j} (-1)^{k+n^2+nl+1}\id\otimes\alpha^{\B}_{nl}\right)
\end{align}
which consists of a single degree $0$ map 
$ 
\barA \xrightarrow{\sum \action} 
\bigoplus_{k} E_{k}\otimes_{\B} E^{\B}_{k} \otimes_\A \barA
$.
 
Analogous statements hold for $\B$-trace and $\B$-action maps when 
$E_i$ are $h$-projective and $\A$-perfect. 
\end{enumerate}
\end{lemma}
\begin{proof}
\em \eqref{item-dualizing-twisted-complexes}: \rm
Both statements follow immediately from Lemma 
\ref{lemma-tensor-and-hom-of-twisted-complexes}\eqref{item-hom-complex-of-twisted-complexes}
by setting the twisted complex $\left\{F_i, \beta_{ij}\right\}$
to be the corresponding diagonal bimodule concentrated in degree $0$. 

\em \eqref{item-trace-and-action-maps-via-twisted-complexes}: \rm
For the $\A$-trace map claim, the 
isomorphisms \eqref{eqn-left-dual-of-a-twisted-complex} and 
\eqref{eqn-tensor-product-of-twisted-complexes} compose to 
an isomorphism from $E \otimes_\B E^\A$ to the convolution of 
the LHS of \eqref{eqn-trace-for-twisted-complexes}. 
We claim that this isomorphism composes with the image of 
\eqref{eqn-trace-for-twisted-complexes} in $\AmodA$ to
give $E \otimes_\B E^{\A} \xrightarrow{\trace} \A$. 
When checking maps to be equal it suffices to only consider them 
as maps of graded modules. Thus we are reduced to checking that 
the trace map of a finite direct sum of graded modules equals 
the sum of the trace maps of the individual modules. This is
straightforward. 

For the $\A$-action map, we are similarly reduced to checking
that the action map $\A \rightarrow \homm_\B(E,E)$ is also 
compatible with finite direct sums. 

\em \eqref{item-homotopy-trace-and-action-maps-via-twisted-complexes}: \rm 
We only prove the first claim. 
The natural maps $\barA \rightarrow \A$ and $\barB \rightarrow \B$
induce isomorphisms in $D(\AbimA)$ between 
$E \otimes_\B \barB \otimes_\B E^\A \xrightarrow{\trace} \barA$
and $E \otimes_\B E^\A \xrightarrow{\trace} \A$ and also between 
\eqref{eqn-homotopy-trace-for-twisted-complexes} and 
\eqref{eqn-trace-for-twisted-complexes}. It follows from 
\eqref{item-trace-and-action-maps-via-twisted-complexes}
that $E \otimes_\B \barB \otimes_\B E^\A \xrightarrow{\trace} \barA$
and \eqref{eqn-homotopy-trace-for-twisted-complexes} are isomorphic
in $D(\AbimA)$. Since all the bimodules involved are $h$-projective 
the two are furthermore isomorphic in $H^0(\modA)$, 
as required. 
\end{proof}

\subsection{Pre-triangulated categories}
\label{section-pre-triangulated-categories}

Let $\A$ and $\B$ be two DG-categories. 
A functor $\A \xrightarrow{f} \B$ is a \em quasi-equivalence \rm 
if $f$ induces quasi-isomorphisms on morphism complexes and
if $H^0(\A) \xrightarrow{H^0(f)} H^0(\B)$ is an equivalence
of categories. 

A DG-category $\A$ is \em pretriangulated \rm if $H^0(\A)$ is a triangulated 
subcategory of $H^0(\modA)$ under the Yoneda embedding. 
The \em pretriangulated hull $\pretriag(A)$ \rm of $\A$ is the category of 
\em one-sided \rm twisted complexes in $\A$, these are the twisted
complexes $(E_i, q_{ij})$ where $q_{ij} = 0$ if $i \geq j$. 
The convolution functor gives a fully faithful embedding 
$\pretriag(\A) \hookrightarrow \modA$ whose composition with 
$\A \rightarrow \pretriag(\A)$ is the Yoneda embedding and 
whose image in $\modA$ is equivalent to $\sffg(\A)$
\cite[\S 2.4]{Drinfeld-DGQuotientsOfDGCategories}. Hence $H^0(\pretriag(\A))$ 
coincides with the triangulated hull of $H^0(\A)$ in $H^0(\modA)$. 
Therefore $\A$ is pretriangulated if and only if the natural 
embedding $\A \hookrightarrow \pretriag(\A)$ is a quasi-equivalence. We 
say that $\A$ is \em strongly pretriangulated \rm if 
$\A \hookrightarrow \pretriag(\A)$ is, in fact, an equivalence. 
In other words, if it has a quasi-inverse 
$\pretriag(\A) \xrightarrow{T} \A$.
Note, that in such case the convolution functor 
filters through the Yoneda embedding, i.e.
$$\pretriag(\A) \xrightarrow{T} \A \hookrightarrow \modA$$
is the convolution functor. For strongly pre-triangulated categories, 
by abuse of notation, 
we mainly use the term \em convolution functor \rm to mean $T$.   

Let $\A$ be any DG-category. It is known that $\pretriag(\A)$ 
is strongly pretriangulated 
\cite{BondalKapranov-EnhancedTriangulatedCategories}.
Also $\shhomm(\A,\C)$ is strongly pretriangulated for any 
strongly pretriangulated $\C$ since we can define convolutions of 
twisted complexes levelwise in $\C$. In particular, $\modA$ is 
strongly pretriangulated since $\modk$ is. Finally, 
a full subcategory of $\modA$ (or any other strongly pretriangulated 
DG-category) which is itself pretriangulated, e.g. it descends 
to a triangulated subcategory of $H^0(\modA)$, and closed under 
homotopy equivalences is strongly pretriangulated. 
Therefore $\hprojA$ and $\prfhprA$ are 
strongly pretriangulated and, for any other DG-category $\B$,
$\Aprfhpr$ and $\Bprfhpr$ are also strongly pretriangulated. 
If $\A$ itself is pretriangulated, then $\qrhprA$ and $\Aqrhpr$
are strongly pretriangulated. If, on the other hand, 
$\B$ is pretriangulated, then $\Bqrhpr$ is strongly pretriangulated. 
 
\subsection{Twisted cubes}
\label{section-twisted-cubes}
 
One of the chief technical tools we employ in this paper is 
a notion of a twisted cube over a pre-triangulated category. This 
seemingly trivial extension of a notion of a twisted complex has some  
far-reaching consequences that we exploit. To the authors' knowledge, 
the material below is original to this paper. 

We employ the following notation: let $I=\{-1,0\}^n$ enumerate vertices 
of an $n$-cube\footnote{We use $\{-1,0\}^n$ rather than $\{0,1\}^n$ as 
our indexing set since we want the arrows in the cube to go from lower 
to higher degree vertices and we want the terminal end of the cube 
to have degree $0$. This ensures that for a $1$-cube diagram, i.e. 
a single morphism, the corresponding twisted complex
coincides naturally with the cone of this morphism, with no shifts
involved.} For $\bar{i}, \bar{j}\in I$ with $\bar{i}=(i_1,\ldots, i_n)$ 
and $\bar{j}=(j_1,\ldots, j_n)$ we say that $\bar{j}>\bar{i}$ if $j_m\geq i_m$ 
for all $m$ and $\bar{i}\ne\bar{j}$. For any $\bar{i} \in I$ we denote by 
$|\bar{i}|$ its degree $\sum i_m$. 

Let $\C$ be a pre-triangulated category. A \em twisted $n$-cube over $\C$ \rm 
is 
\begin{enumerate}
\item  a set $\left\{ X_{\bar{i}} \right\}_{i \in I}$ of objects of $C$.  
\item  a set $\left\{q_{\bar{i}\bar{j}}\right\}_{\bar{i}, \bar{j} \in
I, \bar{i} < \bar{j}}$ of morphisms in $C$, such that
$q_{\bar{i}\bar{j}}$ is a morphism 
$X_{\bar{i}} \rightarrow X_{\bar{j}}$ of degree $|\bar{i}| - |\bar{j}| + 1$ 
which satisfies the relation
\begin{equation}
(-1)^{|\bar{j}|}dq_{\bar{i}\bar{j}}+\sum\limits_{\bar{i}<\bar{k}<\bar{j}} q_{\bar{k}\bar{j}}q_{\bar{i}\bar{k}}=0.
\end{equation}
\end{enumerate}

The \em total complex \rm $\tot (X_{\bar{i}}, q_{\bar{i}\bar{j}})$ 
of a twisted $n$-cube $(X_{\bar{i}}, q_{\bar{i}\bar{j}})$ 
is the one-sided twisted complex 
$$\left(
\bigoplus_{\substack{\bar{i} \in I,
\\|\bar{i}|=i}}X_{\bar{i}}, 
\sum_{\substack{\bar{i}, \bar{j} \in I, \\ |\bar{i}| = i, |\bar{j}| =j}} 
q_{\bar{i}\bar{j}}
\right) 
$$
over $\C$. Its convolution is an object of $\C$ which we call 
\em the convolution of the twisted cube $(X_{\bar{i}},
q_{\bar{i}\bar{j}})$\rm. 
 
\begin{lemma}[The Cube Lemma]
\label{lemma-the-cube-lemma} 
Let $X=\left(X_{\bar{i}}, q_{\bar{i}\bar{j}}\right)$ 
be a twisted $n$-cube indexed by $I$ 
over a pre-triangulated category $\C$. Choose $0\leq m\leq n$ and choose any 
$m$ indices in $1,\dots,n$ to define a splitting $I=J\times K$ with
$J=\{-1, 0\}^m$, $K=\{-1, 0\}^{n-m}$. Then 
\begin{enumerate}
\item Fix $\bar{k}\in K$. Then
$$ 
\left( X_{(\bar{i},\bar{k})} \right)_{\bar{i} \in J}
\quad\text{ and }\quad
\left((-1)^{|\bar{k}|} q_{(\bar{i},\bar{k})(\bar{j},\bar{k})} 
\right)_{\bar{i}, \bar{j} \in J}
$$ 
form a twisted $m$-cube indexed by $J$ over $\C$. We denote it 
by $Y^{\bar{k}}$ and call it a ``sign-twisted subcube'' of
$X$, to stress that the morphisms in $Y^{\bar{k}}$ and 
in $X$ differ (possibly) by a sign.  

\item Fix $\bar{k}, \bar{l} \in K$.   
For any $0 \leq i < j \leq m$ let 
$$p^{\bar{k}\bar{l}}_{ij} = 
\sum_{\substack{\bar{i},\bar{j}\in J,\\ |\bar{i}|=i, |\bar{j}|=j}}
q_{(\bar{i},\bar{k})(\bar{j},\bar{l})}. $$
The collection 
$\left(p^{\bar{k}\bar{l}}_{ij}\right)_{i,j}$ 
defines a morphism of twisted complexes 
$$\tot\left(Y^{\bar{k}} \right) \rightarrow 
\tot\left(Y^{\bar{l}} \right)$$
of degree $|\bar{k}|-|\bar{l}|+1$. Denote it by
$p^{\bar{k}\bar{l}}$.
\item The twisted complexes $\tot\left(Y^{\bar{k}}\right)$
and the morphisms $p^{\bar{k}\bar{l}}$ form 
a twisted $(n-m)$-cube over $\pretriag(\C)$ indexed by $K$. 
Let $Z \in \pretriag\left(\pretriag\left(\C\right)\right)$ be 
its total complex. 
\item The (double) convolution of $Z$ is isomorphic in $\C$ to the convolution 
of the original twisted cube $X$. In particular, 
it is independent of $m$ and of the choice of $I=J\times K$. 
\end{enumerate}
\end{lemma}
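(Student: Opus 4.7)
The plan is to verify each of the four assertions in order, since (4) relies on the correctness of (1)--(3). Throughout, the central input is the original twisted $n$-cube relation for $X$ applied to various pairs of vertices, together with careful tracking of the sign conventions in Definition \ref{defn-twisted-complex} and in the hom-complex of twisted complexes.

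For (1), I would fix $\bar{k} \in K$ and apply the twisted cube relation for $X$ to endpoints $(\bar{i}, \bar{k})$ and $(\bar{j}, \bar{k})$. Any intermediate vertex $(\bar{m}, \bar{p})$ must satisfy $\bar{p} = \bar{k}$, so the relation collapses to $(-1)^{|\bar{j}| + |\bar{k}|} d q_{(\bar{i},\bar{k})(\bar{j},\bar{k})} + \sum_{\bar{i} < \bar{m} < \bar{j}} q_{(\bar{m},\bar{k})(\bar{j},\bar{k})} \circ q_{(\bar{i},\bar{k})(\bar{m},\bar{k})} = 0$. Multiplying through by $(-1)^{|\bar{k}|}$ yields the twisted $m$-cube relation for $Y^{\bar{k}}$; the quadratic terms are unaffected because both factors pick up this sign, giving $(-1)^{2|\bar{k}|} = 1$. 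Assertion (2) is then an elementary degree count: the $(i,j)$-component of a morphism of twisted complexes of degree $|\bar{k}| - |\bar{l}| + 1$ must have morphism-degree $(i + |\bar{k}|) - (j + |\bar{l}|) + 1$, which agrees precisely with the degree of each $q_{(\bar{i},\bar{k})(\bar{j},\bar{l})}$ summed in $p^{\bar{k}\bar{l}}_{ij}$.

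The main technical obstacle is (3), verifying $(-1)^{|\bar{l}|} dp^{\bar{k}\bar{l}} + \sum_{\bar{k} < \bar{p} < \bar{l}} p^{\bar{p}\bar{l}} \circ p^{\bar{k}\bar{p}} = 0$ in the hom-complex of twisted complexes over $\C$. The strategy is to fix endpoints $(\bar{i},\bar{k})$ and $(\bar{j},\bar{l})$ and collect all contributions landing in $\homm_{\C}(X_{(\bar{i},\bar{k})}, X_{(\bar{j},\bar{l})})$. Expanding via the hom-complex differential formula, $(-1)^{|\bar{l}|}dp^{\bar{k}\bar{l}}$ splits into three types of contribution: the intrinsic differential $d_{\C} q_{(\bar{i},\bar{k})(\bar{j},\bar{l})}$; left-compositions through the twists of $\tot(Y^{\bar{l}})$, whose intermediate vertex lies in $J \times \{\bar{l}\}$; and right-compositions through the twists of $\tot(Y^{\bar{k}})$, whose intermediate vertex lies in $J \times \{\bar{k}\}$. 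The composition sum $\sum p^{\bar{p}\bar{l}} \circ p^{\bar{k}\bar{p}}$ then supplies all contributions whose intermediate vertex lies strictly between $\bar{k}$ and $\bar{l}$ in the $K$-direction. These four classes together partition the intermediate vertices appearing in the original cube relation at $((\bar{i},\bar{k}), (\bar{j},\bar{l}))$ by their $K$-coordinate. The bulk of the work is then sign bookkeeping: the $(-1)^{|\bar{k}|}$ introduced in (1), the sign on the intrinsic differential and the $(-1)^{q+l-k}$ from the hom-complex differential, and the $(-1)^{|\bar{l}|}$ prefactor must combine to reproduce the $(-1)^{|\bar{j}|+|\bar{l}|}$ sign attached to $dq_{(\bar{i},\bar{k})(\bar{j},\bar{l})}$ in the original cube relation.

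For (4), I would exhibit an explicit isomorphism in $\C$ between the double convolution of $Z$ and the single convolution of $X$. Unfolding all nested shifts, both have the same underlying graded object $\bigoplus_{\bar{i} \in I} X_{\bar{i}}[-|\bar{i}|]$. Their differentials agree summand by summand: the intrinsic part $d_{\C}$ is common; the in-plane twists (with $K$-coordinate preserved) come from the inner convolutions $\tot(Y^{\bar{k}})$, where the sign $(-1)^{|\bar{k}|}$ on the twist maps is absorbed by the shift $[-|\bar{k}|]$ in the outer convolution; and the cross-plane twists are recovered componentwise from the $p^{\bar{k}\bar{l}}$. Since this identification is completely symmetric in the partition $I = J \times K$, the resulting object does not depend on $m$ or the choice of splitting, proving the final claim.
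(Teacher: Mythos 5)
Your proposal takes the same direct-verification route as the paper (whose entire proof is ``A straightforward verification''), but you have filled in the actual skeleton. The outline is sound: the key observation in (1) is that any vertex lying between $(\bar{i},\bar{k})$ and $(\bar{j},\bar{k})$ in the partial order must have $K$-coordinate exactly $\bar{k}$, so the original cube relation restricts cleanly and the $(-1)^{|\bar{k}|}$ twist cancels in pairs in the quadratic terms; the degree count in (2) checks out ($q = p + i - j$ with $p = |\bar{k}|-|\bar{l}|+1$ matches $\deg q_{(\bar{i},\bar{k})(\bar{j},\bar{l})} = (i+|\bar{k}|)-(j+|\bar{l}|)+1$); and the partition of intermediate vertices by $K$-coordinate in (3) is exactly the right organizing principle, with the three boundary cases (same $\bar{k}$, same $\bar{l}$, strictly between) matching the three pieces of the hom-complex differential plus the composition sum. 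Your treatment of (4), identifying the underlying graded object and checking that the $(-1)^{|\bar{k}|}$ on the inner twist maps is compensated by the outer shift $[-|\bar{k}|]$, is the right explanation for why the signs in (1) were inserted in the first place. One small imprecision: in (1) you say ``multiplying through by $(-1)^{|\bar{k}|}$'' yields the relation for $Y^{\bar{k}}$, but strictly the linear term acquires one factor of $(-1)^{|\bar{k}|}$ via the redefined $\tilde q$ while the quadratic terms acquire two (one per factor), which cancel; the net effect is the one you describe, so the conclusion is unaffected.
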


\begin{proof}
A straightforward verification. 
\end{proof}

Given a twisted $n$-cube $\bar{X}$ over a pre-triangulated category 
$\C$ its image in $H^0(\C)$ is an ordinary $n$-cube shaped diagram 
$X$ which commutes (up to isomorphism). Roughly, the point of 
the Cube Lemma is that $X$ can be canonically extended in $H^0(\C)$ 
to an $n$-cube $X'$ of side $2$ with the following properties\footnote{ By an $n$-cube of side $2$ we
mean an $n$-dimensional cube whose sides are two edges long, i.e. 
it's vertices are enumerated by $\left\{-1,0,1\right\}^n$ instead
of $\left\{-1,0\right\}^n$. }:
\begin{itemize}
\item The vertices of $X'$ are the convolutions of the faces of $X$. 
\item The rows and columns of $X'$ are exact triangles in $H^0(\C)$.  
\item $X'$ commutes (up to isomorphism). 
\end{itemize}
  
This is best understood by looking at some examples. 
Let $\C$ be a pre-triangulated category. 
\begin{enumerate}
\item A twisted $0$-cube over $\C$ is a single object of $C$. 
\item A twisted $1$-cube over $\C$ is a pair of 
objects $A$ and $B$ of $\C$ together with a closed morphism 
$$A \xrightarrow{f_{AB}} B$$ of degree $0$. Write 
$$a \xrightarrow{f_{ab}} b$$
for its image in $H^0(\C)$. Here we denote by $a$ and $b$, and
$f_{ab}$ the classes of $A$, $B$ and $f_{AB}$ in $H^0(\C)$.

There are no non-trivial ways to split this up as a cube of cubes, so the 
Cube Lemma doesn't tell us anything new. However, 
the total complex of this cube is, trivially, 
$$A \xrightarrow{f_{AB}} \underset{\degzero}{B}$$ 
and its convolution fits into a diagram
\begin{align}
\label{eqn-1-cube-convolution}
A \xrightarrow{f_{AB}} B 
\rightarrow 
\left\{ A \xrightarrow{f_{AB}} \underset{\degzero}{B} \right\} \rightarrow
A[1] 
\end{align}
in $\C$ where the two new morphisms are induced by 
the canonical morphisms of twisted complexes 
\begin{align}
\label{eqn-morphism-B-to-A->B}
\left(\underset{\degzero}{B}\right)
\xrightarrow{\id_{B}}
\left(A \xrightarrow{f_{AB}} \underset{\degzero}{B}\right) \\
\label{eqn-morphism-A->B-to-A-shifted-by-1}
\left(A \xrightarrow{f_{AB}} \underset{\degzero}{B}\right) 
\xrightarrow{\id_{A}}
\left(\underset{\degminusone}{A}\right). 
\end{align}
Moreover, the image of \eqref{eqn-1-cube-convolution} in $H^0(\C)$ is
precisely the exact triangle
\begin{align}
\label{eqn-1-cube-induced-exact-triangle}
a \xrightarrow{f_{ab}} b
\rightarrow \cone(f_{ab}) \rightarrow
\end{align}
which was the original point of 
\cite{BondalKapranov-EnhancedTriangulatedCategories}. 

Note that we can also complete 
$A \xrightarrow{f_{AB}} \underset{\degzero}{B}$ 
to the diagram 
\begin{align}
\label{eqn-1-cube-convolution-alternative}
A \xrightarrow{f_{AB}} B 
\rightarrow 
\left\{ A \xrightarrow{-f_{AB}} \underset{\degzero}{B} \right\} \rightarrow
A[1] 
\end{align}
whose image in $H^0(\C)$ is canonically isomorphic to
$\eqref{eqn-1-cube-induced-exact-triangle}$. The two new morphisms
in \eqref{eqn-1-cube-convolution-alternative}
are defined exactly as in \eqref{eqn-morphism-B-to-A->B} and
\eqref{eqn-morphism-A->B-to-A-shifted-by-1}.

Thus, convolving a twisted $1$-cube produces an exact
triangle in $H^0(\C)$. In the language above - 
the image of a twisted $1$-cube in $H^0(\C)$ 
is an ordinary $1$-cube and we can 
canonically complete it to a $1$-cube of side $2$ whose 
single row is an exact triangle.
It is this, together with repeated application of the Cube Lemma,
that produces the desired phenomena for twisted cubes of higher dimension. 

\item A twisted $2$-cube over $\C$ is a diagram  
\begin{align}
\label{eqn-exmpl-twisted-2-cube}
\vcenter{
\xymatrix{
A
\ar[rr]^{f_{AB}}
\ar[dd]_{f_{AC}}
\ar[rrdd]^{f_{AD}}
& &
B
\ar[dd]^{f_{BD}}
\\
\\
C
\ar[rr]_{f_{CD}}
& &
D
}
}
\end{align}
of objects and morphisms in $\C$, where
$f_{AB}$, $f_{AC}$, $f_{BD}$, $f_{CD}$ are closed maps of degree $0$
and $f_{AD}$ is a map of degree $-1$ such that
\begin{align}
\label{eqn-twisted-2-cube-condition}
- df_{AD} = f_{BD}f_{AB} + f_{CD}f_{AC}.
\end{align}
 The image of
\eqref{eqn-exmpl-twisted-2-cube} in $H^0(C)$ is the diagram 
\begin{align}
\label{eqn-exmpl-twisted-2-cube-in-H0}
\vcenter{
\xymatrix{
a
\ar[rr]^{f_{ab}}
\ar[dd]_{f_{ac}}
& &
b
\ar[dd]^{f_{bd}}
\\
\\
c
\ar[rr]_{f_{cd}}
& &
d.
}
}
\end{align}
Note that $f_{AD}$, not being necessarily closed, doesn't apriori
define a morphism in $H^0(\C)$. However the condition 
\eqref{eqn-twisted-2-cube-condition} on $f_{AD}$ ensures that 
we have $f_{bd}f_{ab} + f_{cd}f_{ac} = 0$ in $H^0(\C)$, i.e.
the diagram \eqref{eqn-exmpl-twisted-2-cube-in-H0} commutes
up to the isomorphism $(-1)\id_d$. 

The Cube Lemma tells us that 
$\left(A \xrightarrow{-f_{AB}} \underset{\degzero}{B}\right)$
and 
$\left(C \xrightarrow{f_{CD}} \underset{\degzero}{D}\right)$
are twisted $1$-cubes and that the maps 
$\left(f_{AC}, f_{AD}, f_{BD}\right)$ define a closed morphism 
$f_{ABCD}$ of degree $0$ between their convolutions producing
a twisted $1$-cube:
\begin{align}
\label{eqn-exmpl-twisted-2-cubes-horizontal-cube-of-cubes}
\left\{A\xrightarrow{-f_{AB}} \underset{\degzero}{B}\right\}
\xrightarrow{f_{ABCD}} 
\left\{C\xrightarrow{f_{CD}} \underset{\degzero}{D}\right\}.
\end{align}

Using the argument in the above section on twisted $1$-cubes we
complete \eqref{eqn-exmpl-twisted-2-cube-in-H0} to 
\begin{align}
\label{eqn-exmpl-twisted-2-cube-in-H0-completed-horizontally}
\vcenter{
\xymatrix{
a
\ar[r]^{f_{ab}}
\ar[d]_{f_{ac}}
& 
b
\ar[r]
\ar[d]^{f_{bd}}
& 
\cone(f_{ab})
\ar[r]
\ar[d]^{f_{abcd}}
&
\\
c
\ar[r]_{f_{cd}}
& 
d
\ar[r]
&  
\cone(f_{cd})
\ar[r]
&
}
}\quad 
\end{align}
We then check that each of the squares (including the third `wrap-around'
square) in this diagram commutes (up to an isomorphism). 
We can do this since we have constructed
\eqref{eqn-exmpl-twisted-2-cube-in-H0-completed-horizontally} 
as the image $H^0(\C)$ of an explicit diagram of twisted complexes
in $\pretriag(\C)$ and we can check that, in fact, that diagram itself 
commutes up to an isomorphism.  

Similarly, the Cube Lemma tells us that 
$\left(A \xrightarrow{-f_{AC}} \underset{\degzero}{C}\right)$
and 
$\left(B \xrightarrow{f_{BD}} \underset{\degzero}{D}\right)$
are twisted $1$-cubes and that the maps 
$\left(f_{AB}, f_{AD}, f_{CD}\right)$ define a closed morphism 
$f_{ACBD}$ of degree $0$ between their convolutions producing
a twisted $1$-cube:
\begin{align}
\label{eqn-exmpl-twisted-2-cubes-vertical-cube-of-cubes}
\left\{A\xrightarrow{-f_{AC}} \underset{\degzero}{C}\right\}
\xrightarrow{f_{ACBD}} 
\left\{B\xrightarrow{f_{BD}} \underset{\degzero}{D}\right\}.
\end{align}
We can therefore complete \eqref{eqn-exmpl-twisted-2-cube-in-H0} to 
\begin{align}
\label{eqn-exmpl-twisted-2-cube-in-H0-completed-vertically}
\vcenter{
\xymatrix{
a
\ar[r]^{f_{ab}}
\ar[d]_{f_{ac}}
& 
b
\ar[d]^{f_{bd}}
\\
c
\ar[r]_{f_{cd}}
\ar[d]
& 
d
\ar[d]
\\
\cone(f_{ac})
\ar[r]^{f_{acbd}}
\ar[d]
&  
\cone(f_{bd})
\ar[d]
\\
&  
}
}\quad 
\end{align}
and check that each of the squares in it commutes. 

Finally, the Cube Lemma tells us that the convolutions of the twisted $1$-cubes 
\eqref{eqn-exmpl-twisted-2-cubes-horizontal-cube-of-cubes} and
\eqref{eqn-exmpl-twisted-2-cubes-vertical-cube-of-cubes} are
both isomorphic to the convolution $T$ of the original twisted $2$-cube
\eqref{eqn-exmpl-twisted-2-cube}. We can therefore fit together  
diagrams \eqref{eqn-exmpl-twisted-2-cube-in-H0-completed-vertically}
and \eqref{eqn-exmpl-twisted-2-cube-in-H0-completed-horizontally}
and then complete them to the $2$-cube of side $2$ 
\begin{align}
\label{eqn-exmpl-twisted-2-cube-in-H0-completed-to-a-2-cube-of-side-2}
\vcenter{
\xymatrix{
a
\ar[r]^{f_{ab}}
\ar[d]_{f_{ac}}
& 
b
\ar[d]^{f_{bd}}
\ar[r]
&
\cone(f_{ab}) 
\ar[d]^{f_{abcd}}
\ar[r] 
&
\\
c
\ar[r]_{f_{cd}}
\ar[d]
& 
d
\ar[d]
\ar[r]
&
\cone(f_{cd}) 
\ar[d]
\ar[r]
&
\\
\cone(f_{ac})
\ar[r]^{f_{acbd}}
\ar[d]
&  
\cone(f_{bd})
\ar[d]
\ar[r]
&
t
\ar[r]
\ar[d]
&
\\
& & &
}
}\quad 
\end{align}
where all rows and columns are exact and where 
$$ \cone(f_{acbd}) \simeq t \simeq \cone(f_{abcd}). $$
We then check as above that every square in this diagram (including
the `wrap-around' ones) commutes up to an isomorphism.
\end{enumerate}

\begin{lemma}[The Cube Completion Lemma]
\label{lemma-the-cube-completion-lemma}
Let $I = \left\{-1,0\right\}^n$ and let $X = \left(X_{\bar{i}},
q_{\bar{i}\bar{j}}\right)$ be a twisted $n$-cube over $\C$ indexed by $I$. 
There exists a uniquely defined ``$n$-cube of side $2$'' --- 
a diagram $Z = \left\{Z_{\bar{m}}, r_{\bar{m}\bar{n}}\right\}$  
in $\C$ indexed by $M=\{-1, 0, 1\}^n$ with the following properties:
\begin{enumerate}
\item
\label{item-objects-of-2-cube-Z}
\bf Objects of $Z$. \rm 
Let $\bar{m}$ be any vertex of $M$. 
Define the splitting $I = J \times K$ by choosing for $J$ 
all the indices $\lambda \in \left\{ 1, \dots, n\right\}$ 
where $\bar{m}_\lambda$ equals $1$. Let $\bar{m}'$ be 
the restriction of $\bar{m}$ to $K$. 

The object $Z_{\bar{m}}$ is isomorphic to the convolution of 
the sign-twisted subcube $Y^{\bar{m}'}$ of $X$ constructed 
by the Cube Lemma with respect to the vertex $\bar{m}$ of $K$. 
This cube consists of all the objects $X_{\bar{i}}$ such that 
$\bar{i}$ restricts to $\bar{m}'$ in $K$ and all the morphisms 
between these vertices in $X$ multiplied by $(-1)^{|\bar{m}'|}$. 

Since $\bar{m}$ uniquely determines the twisted cube $Y^{\bar{m}'}$ 
we also refer to this cube simply as $Y^{\bar{m}}$.  

\item
\label{item-morphisms-of-2-cube-Z}
\bf Morphisms of $Z$. \rm
 Let $\bar{l} \rightarrow \bar{m} \rightarrow \bar{n}$ be any row
of $M$, i.e. for some $k \in \{ 1,\dots,n\}$ we have 
$$ 
\begin{cases}
\bar{l}_i = -1,\; \bar{m}_i = 0,\; \bar{n}_i = 1 & \quad i = k \\
\bar{l}_i = \bar{m}_i = \bar{n}_i & \quad i \neq k 
\end{cases}. 
$$

Take the sign-twisted subcube $Y^{\bar{n}}$ of $X$ and split its index 
set into $J' \times K'$ where we choose for $J'$ all the indices 
where $\bar{l}$ and $\bar{m}$ equal $1$ and for $K'$ the single 
remaining index $k$. Apply the Cube Lemma to $Y^{\bar{n}}$ 
with respect to this splitting to construct the twisted $1$-cube 
$$ \left\{Y^{\bar{l}}\right\} 
   \xrightarrow{\alpha}
   \underset{\degzero}{\left\{Y^{\bar{m}}\right\}} $$
whose convolution is $\left\{Y^{\bar{n}}\right\}$.

Then
\begin{align}
\label{eqn-a-row-of-the-completed-cube-of-side-2}
Z_{\bar{l}}
\xrightarrow{r_{\bar{l}\bar{m}}}
Z_{\bar{m}}
\xrightarrow{r_{\bar{m}\bar{n}}}
Z_{\bar{n}}
\xrightarrow{r_{\bar{n}\bar{l}}}
Z_{\bar{l}}[1]
\end{align}
is the image in $\C$ of the diagram 
\begin{align}
\label{eqn-a-row-of-the-completed-cube-of-side-2-preimage}
Y^{\bar{l}} 
\xrightarrow{\alpha}
Y^{\bar{m}} 
\rightarrow 
\left(Y^{\bar{l}} \xrightarrow{\alpha} 
\underset{\degzero}{Y^{\bar{m}}} \right)
\rightarrow
Y^{\bar{l}}[1]
\end{align}
constructed as explained in the section on the completion for twisted
$1$-cubes, cf. \eqref{eqn-1-cube-convolution}.  

\item \label{item-all-other-morphisms-of-2-cube-Z-are-0}
Any morphism in $Z$ which doesn't occur in
\eqref{eqn-a-row-of-the-completed-cube-of-side-2}
for some row $\bar{l} \rightarrow \bar{m} \rightarrow \bar{n}$
of $M$ is $0$.  

\item
\label{item-2-cube-Z-is-recursive}

\bf Recursivity. \rm
Let $I = J \times K$ be a splitting as in the Cube Lemma 
and let $Y$ be the twisted cube of sign-twisted subcubes of $X$ 
constructed by the Cube Lemma with respect to this splitting. 
Then the cube $Z_Y$ of side $2$ in $\C$ defined by $Y$ is 
naturally a subcube of $Z$.  

\item
\label{item-commutativity}

\bf Commutativity. \rm 
The image of the diagram $Z$ in $H^0(\C)$ commutes (up to isomorphism). 
\end{enumerate}
\end{lemma}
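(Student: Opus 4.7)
The plan is to construct $Z$ via the recipe laid out in parts (\ref{item-objects-of-2-cube-Z})--(\ref{item-all-other-morphisms-of-2-cube-Z-are-0}) of the statement and then verify recursivity (\ref{item-2-cube-Z-is-recursive}) and commutativity (\ref{item-commutativity}). Uniqueness is then automatic, since (\ref{item-objects-of-2-cube-Z})--(\ref{item-all-other-morphisms-of-2-cube-Z-are-0}) pin down every object of $Z$ up to canonical isomorphism and every morphism on the nose.

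For each $\bar{m} \in M$ I set $Z_{\bar{m}}$ to be the convolution $\{Y^{\bar{m}}\}$ in $\C$ of the sign-twisted subcube produced by Lemma \ref{lemma-the-cube-lemma}. For each row $\bar{l} \to \bar{m} \to \bar{n}$ of $M$ differing only in coordinate $k$, I apply the Cube Lemma to $Y^{\bar{n}}$ with the single-element splitting $K' = \{k\}$ to produce a twisted $1$-cube $Y^{\bar{l}} \xrightarrow{\alpha} Y^{\bar{m}}$ in $\pretriag(\C)$ whose convolution is $Y^{\bar{n}}$, and define $r_{\bar{l}\bar{m}}$, $r_{\bar{m}\bar{n}}$, $r_{\bar{n}\bar{l}}$ as the images in $H^0(\C)$ of the three maps in (\ref{eqn-1-cube-convolution}). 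All remaining morphisms of $Z$ are declared zero.

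For recursivity (\ref{item-2-cube-Z-is-recursive}) with respect to a splitting $I = J \times K$ producing the twisted cube $Y$ of sign-twisted $|J|$-subcubes, the embedding $\{-1,0,1\}^{|K|} \hookrightarrow M$ filling every $J$-coordinate with $1$ matches objects of $Z_Y$ to objects of $Z$ and row morphisms of $Z_Y$ to row morphisms of $Z$: the sign-twist and convolution associated to a target vertex $\tilde{m} \in M$ can be computed in two stages, first along $K$ and then along the indices of $K$ where $\tilde{m}$ equals $1$, and by Lemma \ref{lemma-the-cube-lemma}(4) this iterated convolution agrees with the single convolution used to define $Z_{\tilde{m}}$.

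The main technical obstacle is the commutativity claim (\ref{item-commutativity}). Every unit square of $Z$ is determined by fixing $n-2$ coordinates of $M$ to specific values in $\{-1,0,1\}$ and letting two coordinates vary between consecutive values. Such a unit square is the image in $H^0(\C)$ of an explicit diagram of twisted complexes in $\pretriag(\C)$ whose four vertices are convolutions of sign-twisted subcubes and whose four edges arise from the Cube Lemma applied to $1$-coordinate splittings. The commutativity of each unit square can then be verified directly by a signed chase of the twisted differentials of $X$ combined with the canonical shift morphisms (\ref{eqn-morphism-B-to-A->B})--(\ref{eqn-morphism-A->B-to-A-shifted-by-1}), with signs controlled by Definition \ref{defn-twisted-complex}; the four possible types of unit square (inner, edge, wrap-around, and the central one identifying both iterated cones with the global convolution $t$) are all exhibited in the $n=2$ diagram (\ref{eqn-exmpl-twisted-2-cube-in-H0-completed-to-a-2-cube-of-side-2}), where the inner square commutes by the twisted-cube relation (\ref{eqn-twisted-2-cube-condition}) and the central square commutes by Lemma \ref{lemma-the-cube-lemma}(4). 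Assembling these verifications over all unit squares yields commutativity up to isomorphism of the whole image of $Z$ in $H^0(\C)$.
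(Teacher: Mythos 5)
Your proposal follows essentially the same strategy as the paper's own (very terse) proof: define $Z$ by properties (1)--(3), note uniqueness is automatic, verify recursivity using the iterated-convolution part of the Cube Lemma, and reduce commutativity to a check on $2$-faces that ultimately rests on the explicit $n=2$ computation. The only cosmetic difference is in phrasing the commutativity step --- the paper states it reduces ``via the recursivity to the case of $X$ being a $2$-cube,'' while you instead classify the $2$-faces into types and verify each by a sign chase, again pointing to the $n=2$ picture --- but these amount to the same argument carried out at the same (minimal) level of detail as the paper's.
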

\begin{proof}
The first three properties uniquely define the diagram 
$Z = \left\{Z_{\bar{m}}, r_{\bar{m}\bar{n}}\right\}$. 
The recursivity is a straightforward verification. To prove
the commutativity of $Z$ it suffices to prove that every $2$-face 
of $Z$ commutes. This reduces via the recursivity to the case 
of $X$ being a $2$-cube, where it is again a straightforward 
verification. See the section on the completion for twisted 
$2$-cubes.
\end{proof}

\section{DG enhancements}
\label{section-DG-enhancements}

\subsection{On DG-enhancements of triangulated categories}
\label{section-DG-enhancements-of-triangulated-categories}

Let $T$ be a triangulated category. An \em enhancement \rm of $T$ is 
a pretriangulated DG-category $\A$ and an exact equivalence 
$H^0(\A) \xrightarrow{\epsilon} T$. Two enhancements $(\A, \epsilon)$ and 
$(\A', \epsilon')$ are \em equivalent \rm if there exists 
a quasi-equivalence $\A \xrightarrow{f} \A'$. 
If we want to use DG-categories as enhancements of triangulated ones, 
we are led to work in the localisation of $\DGCat$, the category of all small
DG-categories, by quasi-equivalences. We denote this localisation by 
$\HoDGCat$. For any two small DG-categories $\A$ and $\B$ denote by $[\A, \B]$ 
the set of morphisms between $\A$ and $\B$
in $\HoDGCat$. The elements of $[\A, \B]$ are called \em 
quasi-functors\rm. 

Any category quasi-equivalent to a pretrianguated category is itself 
pretriangulated. We denote the full subcategory of $\HoDGCat$
consisting of classes of pretriangulated categories by $\HoPretrCat$. 
We call the elements of $\HoPretrCat$ \em enhanced triangulated
categories \rm and think of them as of small triangulated categories
with a fixed quasi-equivalence class of DG-enhancements. Similarly, 
we can think of a quasi-functor between two enhanced triangulated 
categories as of an exact functor between the triangulated categories
and a fixed choice of a certain equivalence class of DG-functors 
between their enhancements which all descend to this exact functor.
In this sense, exact functors and quasi-functors are precisely
analogous to morphisms between cohomologies of two complexes and
morphisms between their classes in the derived category.

One way to understand the morphism set $[\A, \B]$ in $\HoDGCat$ is via 
the model category structure on $\DGCat$ constructed in 
\cite{Tabuada-UneStructureDeCategorieDeModelesDeQuillenSurLaCategorieDesDG-Categories}. The weak equivalences are the quasi-equivalences,
and the fibrations are defined in such a way that every object is fibrant. 
Therefore, the elements of $[\A, \B]$ can be identified with the functors 
from a fixed cofibrant replacement of $\A$ into $\B$, up to homotopy
relation. Moreover, there exists a cofibrant replacement functor 
$Q \colon \DGCat \rightarrow \DGCat$ equipped with a natural transformation 
$Q \rightarrow \id$ such that $Q\A \rightarrow \A$ is a
quasi-equivalence which is an identity on the sets of objects \cite[Prop.
2.3]{Toen-TheHomotopyTheoryOfDGCategoriesAndDerivedMoritaTheory}. 

The set $[\A, \B]$ can be naturally endowed with a structure of
an element of $\HoDGCat$ as follows.
The tensor product $\otimes = \otimes_k$ of elements of $\DGCat$
can be derived into a bifunctor 
$$ \ldertimes \colon \HoDGCat \times \HoDGCat \rightarrow \HoDGCat $$
giving a symmetric monoidal structure for $\HoDGCat$. We compute 
$\A \ldertimes \B$ as either $Q\A \otimes \B$ or $\A \otimes Q\B$. 
If $k$ is a field, every small $DG$-category is $k$-flat
and $\A \ldertimes \B = \A \otimes \B$. 
The monoidal structure defined by $\ldertimes$ on $\HoDGCat$ is closed
\cite[\S 4.2]{Toen-TheHomotopyTheoryOfDGCategoriesAndDerivedMoritaTheory}, 
i.e. for any $\A, \B \in \HoDGCat$ the functor $[(-) \otimes \A, \B]$
is representable by an object of $\HoDGCat$, defined up to unique isomorphism.
Denoted by $\rdershom(\A, \B)$, it is constructed
as the class in $\HoDGCat$ of $\hproj^{\Bquasirep}(\QAbimB)$ 
\cite[Thrm 6.1]{Toen-TheHomotopyTheoryOfDGCategoriesAndDerivedMoritaTheory}. 
These are the $h$-projective $\QAbimB$-bimodules $M$ where for all
$a \in Q\A$ the $\B$-module $\leftidx{_a}{M}$ is quasi-isomorphic
(and hence homotopic as $\leftidx{_a}{M}$ is $h$-projective 
\cite[Lemma 6.1(c)]{Keller-DerivingDGCategories}) 
to a representable. 
By \cite[Cor 4.8]{Toen-TheHomotopyTheoryOfDGCategoriesAndDerivedMoritaTheory}
the isomorphism classes of $H^0\left(\hproj^{\Bquasirep}(\QAbimB)\right)$ 
are in natural bijection with the elements of $[A,B]$. 
Explicitly, any element of $[\A,\B]$ can be represented by 
a functor $Q\A \rightarrow \B$. Composing this with the Yoneda
embedding $\B \rightarrow \modB$ defines a $\QAbimB$-bimodule
which is even $\B$-representable. Any $h$-projective resolution 
of it defines the desired isomorphism class in
$H^0\left(\hproj^{\Bquasirep}(\QAbimB)\right)$. 
Getting from $M \in \hproj^{\Bquasirep}(\QAbimB)$ to the
corresponding quasi-functor $f \in [\A,\B]$ is more subtle, but 
it is easy to pin down the underlying functor 
$H^0(\A) \rightarrow  H^0(\B)$. Indeed, 
$M$ defines a functor $Q\A \rightarrow \modB$ which maps 
every element of $Q\A$ to something homotopic to a representable 
element of $\modB$. This defines, up to an isomorphism,
the requisite functor $H^0(Q\A) = H^0(\A) \rightarrow H^0(\B)$. 
Indeed, this also shows that any morphism between two elements 
of $H^0\left(\hproj^{\Bquasirep}(\QAbimB)\right)$ induces a natural 
transformation between the underlying functors of the corresponding 
quasi-functors in a way which is compatible with compositions.  

In other words, $\rdershom(\A,\B) =
\hproj^{\Bquasirep}(\QAbimB)$\footnote{ If $k$ is a field, 
then $\hproj^{\Bquasirep}(\QAbimB)$ is quasi-equivalent
to $\Bqrhpr$ and we use the latter instead.} 
is, in a sense, a DG-enhancement of the set $[\A,\B]$. 
Let us therefore enrich $\HoDGCat$ to a $2$-category by setting 
the category of morphisms from $\A$ to $\B$ to be 
$H^0\left(\rdershom(\A,\B)\right)$. By above, each $1$-morphism in 
$\HoDGCat$ corresponds naturally to a quasi-functor from $\A$ to $\B$. 
By abuse of notation, we now refer to the elements of 
$H^0\left(\rdershom(\A,\B)\right)$ also as ``quasi-functors''. 
There is a natural functor
\begin{align}
\Phi\colon H^0\left(\rdershom(\A,\B)\right) \rightarrow \Fun\left(H^0(\A), H^0(\B)\right) 
\end{align}
which sends each quasi-functor to its underlying functor. 
Defining $\Phi$ depends on a choice for each quasi-representable object 
in $\modB$ of a homotopy to a representable one. A different choice would 
produce a different functor canonically isomorphic to $\Phi$. We therefore 
make a particular choice for each $\B$ and consider all functors 
$\Phi$ fixed. Our functors $\Phi$ package up into a $2$-functor
\begin{align}
\Phi\colon \HoDGCat \rightarrow \Cat
\end{align}
into a $2$-category $\Cat$ whose objects are small categories, 
whose $1$-morphisms are functors and whose $2$-morphisms are natural 
transformations. 

By above, if $\A$ and $\B$ lie in $\HoPretrCat$ then so does 
$\rder\shhomm(\A,\B)$. Therefore, in the $2$-category $\HoPretrCat$ 
the morphism categories are themselves enhanced triangulated categories. 
The $2$-functor $\Phi$ sends the triangulated category 
$H^0(\rder\shhomm(\A,\B))$ of quasi-functors to the full subcategory 
in $\Fun(H^0(\A), H^0(\B))$ consisting of exact functors. Moreover, for
any morphism of quasi-functors $\Phi$ sends its cone to a functorial 
cone of the underlying morphism of exact functors. This is exactly 
the situation we want to be in. This paper adheres to the currently 
prevalent philosophy that instead of working with triangulated categories 
$A$ and $B$ and the (non-triangulated) category $\exfun(A,B)$ of exact 
functors between them, one should work with enhancements $\A$ and $\B$ of $A$
and $B$ in $\HoDGCat$ (which are often unique up to isomorphism, cf. 
\cite{LuntsOrlov-UniquenessOfEnhancementForTriangulatedCategories}), 
the enhanced triangulated category $\rder\shhomm(\A, \B)$ and
the functor $H^0(\rder\shhomm(\A, \B)) \xrightarrow{\Phi} \exfun(A,B)$. 
For years now, this was practiced implicitly by all who work 
with Fourier-Mukai kernels of the derived functors
between algebraic varieties, cf. Examples 
\ref{example-quasi-functors-between-enhancements-of-QCoh}. 
and 
\ref{example-quasi-functors-between-enhancements-of-DBCoh}. 

\subsection{Morita enhancements}

The triangulated categories we want to enhance are the derived 
categories of quasi-coherent sheaves and the bounded derived 
categories of coherent sheaves on separated schemes of finite type 
over $k$. All these categories are Karoubi closed. It turns 
out that the full subcategory of $\HoPretrCat$ consisting of those
enhanced triangulated categories whose underlying triangulated
categories are Karoubi closed admits a more natural description.

Define a $DG$-category $\A$ to be \em kc-triangulated \rm if it is 
pre-triangulated and $H^0(\A)$ is Karoubi closed\footnote{Here ``kc''
stands for ``Karoubi closed''. These are simply called ``triangulated 
DG-categories'' in papers of To{\"e}n, however we feel that it didn't
reflect well their main difference from the established 
notion of pretriangulated DG categories.}. It follows that $\A$ 
is kc-triangulated if and only if the Yoneda embedding $\A
\hookrightarrow \prfhprA$ is a quasi-equivalence. 
Denote by $\HoKcTrCat$ the full subcategory of $\HoDGCat$ consisting
of kc-triangulated categories. The following is explained
in detail in \cite[\S 4.4]{Toen-LecturesOnDGCategories}. 
Let $\A \xrightarrow{f} \B$ be a functor between DG-categories. 
The induced functor $f_* \colon \modB \rightarrow
\modA$ preserves acyclicity. Its left adjoint 
$f^*\colon \modA \rightarrow \modB$ preserves, by adjunction, 
$h$-projectivity. 
We say that $f$ is a \em Morita equivalence \rm if 
$D(\B) \xrightarrow{f_*} D(\A)$ is an exact equivalence or,
equivalently, if $\prfhprA \xrightarrow{f^*} \prfhprB$ is a
quasi-equivalence. The functor 
$$\prfhpr(-)\colon \HoDGCat \rightarrow \HoKcTrCat$$ 
is the left adjoint of the natural inclusion 
$\HoKcTrCat \hookrightarrow \HoDGCat$ 
\cite[Prop. 6]{Toen-LecturesOnDGCategories}. It follows, as explained
in \cite[\S 4.4]{Toen-LecturesOnDGCategories}, that 
$\prfhpr(-)$ induces an equivalence 
$\Morita(\DGCat) \xrightarrow{\sim} \HoKcTrCat$, where 
$\Morita(\DGCat)$ is the localisation of $\DGCat$ by Morita
equivalences. We use this to identify Morita equivalence classes 
of small DG categories with the elements of $\HoKcTrCat$. 
In other words, when speaking of the class of a small 
DG-category $\A$ in $\HoKcTrCat$ we mean $\prfhprA$. 

We call the morphisms in $\Morita(\DGCat)$ \em Morita quasi-functors\rm. 
By above Morita quasi-functors $\A \rightarrow \B$ correspond to the
ordinary quasifunctors $\prfhprA \rightarrow \prfhprB$. It follows from 
\cite[Theorem 7.2]{Toen-TheHomotopyTheoryOfDGCategoriesAndDerivedMoritaTheory} 
that $\rder\shhomm\left(\prfhprA,\prfhprB\right)$ 
is quasi-equivalent to $\Bprfhpr$. 
This gives a more natural DG-enhancement of the set 
$\homm_{\Morita(\DGCat)}\left(\A, \B\right)$. In particular, we 
think of the elements of $D^{\Bperf}(\AbimB)$ as of Morita quasifunctors 
$\A \rightarrow \B$. Note that given $M \in D^{\Bperf}(\AbimB)$, 
the exact functor underlying the corresponding Morita quasi-functor is 
$(-) \ldertimes_\A M$. 

This leads to a slightly different notion of DG-enhancement. 
Define a \em Morita enhancement \rm of a small triangulated 
category $A$ to be a small DG-category $\A$ together 
with an isomorphism $D_c(\A) \xrightarrow{\sim} A$. Since $D_c(\A) =
H^0\left(\prfhprA\right)$, $\A$ is a Morita enhancement of $A$ if and only 
if its class in $\HoKcTrCat$ is the usual enhancement of $A$. 
Moreover, we can similarly use small DG categories to enhance 
non-small triangulated categories (i.e. unbounded
derived categories of quasi-coherent sheaves). Define a 
\em large Morita enhancement \rm of a triangulated category $A$ 
to be a small DG-category $\A$ together with an isomorphism 
$D(\A) \xrightarrow{\sim} A$. An advantage of this Morita point of view
is that we use much smaller DG categories to define our enhancements. 
In fact, the derived categories of schemes can be Morita enhanced 
by DG-algebras, cf. Examples 
\ref{example-quasi-functors-between-enhancements-of-QCoh}
and 
\ref{example-quasi-functors-between-enhancements-of-DBCoh}. 

\subsection{Examples}

The following examples illustrate the notions introduced in 
the previous section and explain the framework to which the main 
definitions of Section \ref{section-spherical-DG-functors} rightfully
belong. First is the usual framework of DG-enhancements:

\begin{exmpl}
\label{example-quasi-functors-between-general-enhanced-triag-cat}
Let $\A$ and $\B$ be two elements of $\HoDGCat$. As 
described in Section \ref{section-DG-enhancements}, 
$\rder\shhomm(\A, \B)$ is represented in $\HoDGCat$ by the full 
subcategory $\Bqrhpr$ of $\hproj(\AbimB)$
consisting of $\B$-quasi-representable bimodules. Such bimodules, 
in particular are $\B$-perfect. 

Let $M \in \Bqrhpr$. The functor 
$H^0(\A) \rightarrow H^0(\B)$ defined by the corresponding quasi-functor 
is the restriction of $(-) \ldertimes_{\A} M$ from $D(\A) \rightarrow D(\B)$ 
to $H^0(\A) \rightarrow H^0(\B)$. It follows from 
Section \ref{section-duals-and-adjoints} that 
if $M$ is also $\A$-perfect, then $(-) \ldertimes_{\A} M$, 
as a functor $D(\A) \rightarrow D(\B)$, has left and right 
adjoints $(-) \ldertimes_{\B} M^\A$ and 
$(-) \ldertimes_{\B} M^\B$. If moreover $M^\A$ and $M^\B$ are 
$\A$-quasi-representable, then these adjoints restrict
to functors $H^0(\B) \rightarrow H^0(\A)$. In other words, 
$M^\A$ and $M^\B$ define quasi-functors $\B \rightarrow \A$
whose induced functors $H^0(\B) \rightarrow H^0(\A)$
are left and right adjoint to the functor $H^0(\A) \rightarrow
H^0(\B)$ defined by $M$. 
\end{exmpl}

Next we illustrate Morita enhancements. In the two examples below 
we explain how derived categories of algebraic varieties are Morita 
enhanced by DG algebras and how the quasi-functors between these 
enhancements may be represented as DG-bimodules for these algebras:
\begin{exmpl}
\label{example-quasi-functors-between-enhancements-of-QCoh}
Let $X$ and $Y$ be two quasi-compact, quasi-separated schemes over $k$. 
By \cite[Theorem
3.1.1]{BondalVanDenBergh-GeneratorsAndRepresentabilityOfFunctorsInCommutativeAndNoncommutativeGeometry}
there exist compact generators $E_X$ and $E_Y$ of $D_{qc}(X)$ and $D_{qc}(Y)$. 
We choose $h$-injective resolutions of $E_X$ and $E_Y$ and 
define $\A$ and $\B$ to be their DG-$\eend$-algebras. 
Then $\A$ and $\B$ are the standard large Morita enhancements of 
$D_{qc}(X)$ and $D_{qc}(Y)$, i.e. 
$\hprojA$ and $\hprojB$ are their standard enhancements 
in the usual sense. 

By \cite[Theorem
7.2]{Toen-TheHomotopyTheoryOfDGCategoriesAndDerivedMoritaTheory} 
the pullback along the Yoneda embedding $\A \hookrightarrow \modA$ 
induces an isomorphism 
$$ \rder\shhomm_{\cts}\left(\hprojA, \hprojB\right)
\xrightarrow{\sim}
\rder\shhomm\left(\A, \hprojB\right) $$
in $\HoDGCat$. Here $\rder\shhomm_{\cts}$ stands for 
the full subcategory consisting of \em continuous \rm quasi-functors,
i.e. the quasi-functors $\hprojA \rightarrow \hprojB$ whose 
underlying functors $D(\A) \rightarrow D(\B)$ commute with 
infinite direct sums. 
The universal properties of $\rder\shhomm$ and
\cite[Lemma 6.2]{Toen-TheHomotopyTheoryOfDGCategoriesAndDerivedMoritaTheory} 
imply that $\rder\shhomm\left(\A, \hprojB\right)$ is represented
in $\HoDGCat$ by $\hproj(\AbimB)$. Explicitly, after replacing $\A$ 
by its cofibrant resolution any quasi-functor in 
$H^0(\rder\shhomm\left(\A, \hprojB\right))$ can be represented 
by an actual functor $\A \rightarrow \hprojB$. Taking an
$h$-projective resolution of the corresponding $\AbimB$-bimodule gives 
the desired homotopy class in $\hproj(\AbimB)$.

Thus every continuous quasi-functor $\hprojA \rightarrow \hprojB$
can be represented by an element $M \in \hproj(\AbimB)$. The underlying 
functor $D_{qc}(X) \rightarrow D_{qc}(Y)$ is then precisely 
$(-) \ldertimes_{\A} M$. 
It follows from Section \ref{section-duals-and-adjoints} that if 
$M$ is $\A$- and $\B$-perfect, then $M^\A$ and $M^\B$ define
quasi-functors $\hprojB \rightarrow \hprojA$ such 
that $(-) \ldertimes_{\B} M^\A$ and $(-) \ldertimes_{\B} M^\B$ 
are the left and right adjoints of $(-) \ldertimes_{\A} M$.

It is also shown in 
\cite[Section 8.3]{Toen-TheHomotopyTheoryOfDGCategoriesAndDerivedMoritaTheory}
that $\Aopp \otimes \B$ is the standard large Morita 
enhancement of $D(X \times_k Y)$ via
a natural identification of $D(X \times_k Y)$ with $D(\AbimB)$. 
Combined with the above we obtain an identification 
of $D(X \times_k Y)$ with 
$H^0( \rder\shhomm_{\cts}\left(\hprojA, \hprojB\right))$
which sends each object $E \in D(X \times_k Y)$
to a quasi-functor $\hprojA \rightarrow \hprojB$ whose underlying functor
$D_{qc}(X) \rightarrow D_{qc}(Y)$ is isomorphic to the Fourier-Mukai
transform defined by $E$. 
\end{exmpl}
\begin{exmpl}
\label{example-quasi-functors-between-enhancements-of-DBCoh}
Let $X$ and $Y$ be separated schemes of finite type over $k$. 
By \cite[Theorem 7.39]{Rouquier-DimensionsOfTriangulatedCategories} 
there exist strong generators $F_X$ and $F_Y$ of $D(X)$. 
Choose $h$-injective resolutions of $F_X$ and $F_Y$ and let 
$\A$ and $\B$ be their DG-$\eend$-algebras. Then $\A$ and 
$\B$ are the standard Morita enhancements of $D(X)$ and $D(Y)$, i.e. 
$\prfhprA$ and $\prfhprB$ are their standard enhancements in the usual
sense. It was, 
moreover, proved in 
\cite[Theorem 6.3]{Lunts-CategoricalResolutionOfSingularities}
that for any choice of generators $F_X$ and $F_Y$ the DG-algebras 
$\A$ and $\B$ are smooth. 

By 
\cite[Theorem 7.2]{Toen-TheHomotopyTheoryOfDGCategoriesAndDerivedMoritaTheory} 
the pullback along the Yoneda embedding $\A \hookrightarrow \prfhprA$ 
induces an isomorphism 
$$ \rder\shhomm\left(\prfhprA, \prfhprB \right)
\xrightarrow{\sim}
\rder\shhomm\left(\A, \prfhprB\right) $$
in $\HoDGCat$. Once again, the universal properties of $\rder\shhomm$ and
\cite[Lemma 6.2]{Toen-TheHomotopyTheoryOfDGCategoriesAndDerivedMoritaTheory} 
imply that $\rder\shhomm\left(\A, \prfhprB\right)$ is represented in 
$\HoDGCat$ by $\hproj^{\Bperf}(\AbimB)$, the full subcategory of 
$\hproj(\AbimB)$ consisting of $\B$-perfect bimodules. Explicitly, 
after replacing $\A$ by its cofibrant resolution any quasi-functor in 
$H^0(\rder\shhomm\left(\A, \prfhprB\right))$ can be represented 
by an actual functor $\A \rightarrow \prfhprB$. Taking any
$h$-projective
resolution of the corresponding $\B$-perfect $\AbimB$-bimodule
we obtain the desired homotopy class in $\hproj^{\Bperf}(\AbimB)$. 

Thus any quasi-functor $\prfhprA \rightarrow \prfhprB$ can be
represented by $M \in \hproj^{\Bperf}(\AbimB)$  and
the underlying functor $D(X) \rightarrow D(Y)$ is then 
$(-) \ldertimes_{\A} M$. It follows again from 
Section \ref{section-duals-and-adjoints} that if $M$ is also $\B$-perfect, 
then $M^\A$ and $M^\B$ define quasi-functors 
$\prfhprB \rightarrow \prfhprA$ such 
that $(-) \ldertimes_{\B} M^\A$ and $(-) \ldertimes_{\B} M^\B$ 
are the left and right adjoints of $(-) \ldertimes_{\A} M$.

It also follows from 
\cite[Prop. 6.14]{Lunts-CategoricalResolutionOfSingularities}
that $\Aopp \otimes \B$ is the standard Morita enhancement of 
$D(X \times Y)$. Since $\A$ is smooth, we have by Cor. 
\ref{cor-for-smooth-A-perfect-means-perfect} a natural inclusion 
$\hproj^{\Bperf}(\AbimB) \subset \prfhpr(\AbimB)$. 
This identifies each quasi-functor $\prfhprA \rightarrow \prfhprB$ 
with an object $E \in D(X \times Y)$ in such a way that
the underlying functor $D(X) \rightarrow D(Y)$
is isomorphic to the Fourier-Mukai transform defined by $E$.
\end{exmpl}

\section{Spherical DG-functors} 
\label{section-spherical-DG-functors}

\subsection{Spherical bimodules and spherical quasi-functors}
\label{section-spherical-bimodules}

Let $\A$ and $\B$ be two small DG-categories and
$S \in D(\AbimB)$ be $\A$- and $\B$-perfect. 
Denote by $R$ and $L$ the derived duals 
$S^\lderB$ and $S^\lderA$ in $D(\BbimA)$. 
Let 
$$s \colon D(\A) \rightarrow D(\B)$$
be the exact functor $(-) \ldertimes_\A S$ and 
$$r, l \colon D(\B) \rightarrow D(\A)$$
be the exact functors $(-) \ldertimes_\B S^\lderB$ and $(-)
\ldertimes_\B S^\lderA$.  By 
Cor.~\ref{cor-derived-M-and-M^A-and-M^B-adjunction} 
$r$ and $l$ are right and left adjoint to $s$.  

As per Section \ref{section-DG-enhancements} the objects of e.g. $D(\AbimB)$ 
represent continuous quasi-functors $\hprojA \rightarrow \hprojB$. 
The functors $s$, $r$ and $l$ are the exact functors underlying 
the quasi-functors $S$, $R$ and $L$. 
Accordingly, we introduce the following notation. 
Given e.g. $S \in D(\AbimB)$ and $R \in
D(\BbimA)$ we write $SR$ for the object $R \ldertimes_\A S \in
D(\BbimB)$. The exact functor underlying the quasi-functor $SR$ 
is then $sr$. 

\begin{defn}
Define:
\begin{itemize}
\item the \em twist $T$ \rm of $S$ is  
$\cone\left(SR \xrightarrow{\trace} \B\right)$ in 
$D(\BbimB)$.
\item the \em dual twist $T'$ \rm of $S$ is 
$\cone\left(\B \xrightarrow{\action} SL\right)[-1]$ 
in $D(\BbimB)$.
\item the \em cotwist $F$ \rm of $S$ is 
$\cone\left(\A \xrightarrow{\action} RS\right)[-1]$
in $D(\AbimA)$.
\item the \em dual cotwist $F'$ \rm of $S$ is 
$\cone\left(LS \xrightarrow{\trace} \A\right)$
in $D(\AbimA)$.
\end{itemize}
\end{defn}

Thus we have the following natural exact triangles in 
$D(\BbimB)$ and $D(\AbimA)$ 
\begin{align}
\label{eqn-twist-exact-triangle-DG}
SR \xrightarrow{\trace} &\B \rightarrow T, \\
\label{eqn-dual-twist-exact-triangle-DG}
T' \rightarrow & \B \xrightarrow{\action} SL, \\
\label{eqn-cotwist-exact-triangle-DG}
F \rightarrow & \A \xrightarrow{\action} RS, \\
\label{eqn-dual-cotwist-exact-triangle-DG}
LS \xrightarrow{\trace} &\A \rightarrow F'. 
\end{align}
  
Let $t,t'\colon D(\B) \rightarrow D(\B)$ and 
$f, f'\colon D(\A) \rightarrow D(\A)$ be the corresponding exact functors. 
By Cor.~\ref{cor-derived-M-and-M^A-and-M^B-adjunction} 
the functorial exact triangles of functors $D(\B) \rightarrow D(\B)$
and $D(\A) \rightarrow D(\A)$ induced by 
\eqref{eqn-twist-exact-triangle-DG}-\eqref{eqn-dual-cotwist-exact-triangle-DG} 
are
\begin{align}
\label{eqn-twist-exact-triangle}
sr \xrightarrow{\adjcounit} &\id_{D(\B)} \rightarrow t 
\\
\label{eqn-dual-twist-exact-triangle}
t' \rightarrow &\id_{D(\B)} \xrightarrow{\adjunit} sl 
\\
\label{eqn-cotwist-exact-triangle}
f \rightarrow &\id_{D(\A)} \xrightarrow{\adjunit} rs  
\\
\label{eqn-dual-cotwist-exact-triangle}
ls \xrightarrow{\adjcounit} &\id_{D(\A)} \rightarrow f', 
\end{align}
i.e. $t$ and $f[1]$ are functorial cones of the counit and the unit of the
adjoint pair $(s,r)$, while $t'[1]$ and $f'$ are functorial cones of the
unit and the counit of the adjoint pair $(l,s)$. 

Finally, consider the compositions
\begin{align}
\label{eqn-lt(-1)-to-r-DG}
LT[-1] \xrightarrow{\eqref{eqn-twist-exact-triangle-DG}}  
LSR 
\xrightarrow{\trace}
R
\\
\label{eqn-r-to-fl(1)-DG}
R \xrightarrow{\action}  
RSL 
\xrightarrow{\eqref{eqn-cotwist-exact-triangle-DG}}
FL[1]. 
\end{align}
and the induced natural transformations 
\begin{align}
\label{eqn-lt(-1)-to-r}
lt[-1] \xrightarrow{\eqref{eqn-twist-exact-triangle}}  
lsr 
\xrightarrow{\adjcounit}
r
\\
\label{eqn-r-to-fl(1)}
r \xrightarrow{\adjunit}  
rsl 
\xrightarrow{\eqref{eqn-cotwist-exact-triangle}}
fl[1]. 
\end{align}

\begin{defn}
An object $S \in D(\AbimB)$ is \em spherical \rm if 
it is $\A$- and $\B$-perfect and the following holds:
\begin{enumerate}
\item $t$ and $t'$ are quasi-inverse autoequivalences of $D(\B)$
\item $f$ and $f'$ are quasi-inverse autoequivalences of $D(\A)$
\item $lt[-1] \xrightarrow{\eqref{eqn-lt(-1)-to-r}} r$ is an
isomorphism of functors (``the twist identifies the adjoints'').  
\item $r \xrightarrow{\eqref{eqn-r-to-fl(1)}} fl[1]$ is an
isomorphism of functors (``the co-twist identifies the adjoints'').  
\end{enumerate}

We say that an $\AbimB$-bimodule is \em spherical \rm if its image in
$D(\AbimB)$ is spherical. 
\end{defn}

The following is the main theorem of this section:

\begin{theorem}
\label{theorem-main-theorem}
Let $S$ be an $\A$- and $\B$-perfect object of $D(\AbimB)$. If any 
two of the following conditions hold:
\begin{enumerate}
\item 
\label{item-main-theorem-the-twist-is-an-equivalence}
$t$ is an autoequivalence of $D(\B)$
(``the twist is an equivalence'').  
\item 
\label{item-main-theorem-the-cotwist-is-an-equivalence}
$f$ is an autoequivalence of $D(\A)$
(``the cotwist is an equivalence'').  
\item 
\label{item-main-theorem-the-twist-identifies-adjoints}
$lt[-1] \xrightarrow{\eqref{eqn-lt(-1)-to-r}} r$ is an
isomorphism of functors (``the twist identifies the adjoints'').  
\item 
\label{item-main-theorem-the-cotwist-identifies-adjoints}
$r \xrightarrow{\eqref{eqn-r-to-fl(1)}} fl[1]$ is an
isomorphism of functors (``the cotwist identifies the adjoints'').  
\end{enumerate}
then all four hold and $S$ is \em spherical\rm.
\end{theorem}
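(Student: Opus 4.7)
The plan is to use the twisted cube machinery of Section \ref{section-twisted-cubes} to produce canonical exact triangles in $D(\BbimA)$ that tie the four conditions together. The essential point is that in the DG setting all octahedral data come equipped with canonical choices coming from convolutions of twisted complexes; this is precisely the ingredient missing from the triangulated-category argument in \cite{Anno-SphericalFunctors}.

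First, I would construct a twisted $2$-cube in the pre-triangulated DG category underlying $D(\BbimA)$ whose four vertices are $LSR$, $LSRSL$, $L$ and $RSL$. The four degree-$0$ edges are built from the DG-level trace and action maps $SR\xrightarrow{\trace}\B$, $\A\xrightarrow{\action} RS$, $LS\xrightarrow{\trace}\A$ and $\B\xrightarrow{\action} SL$, tensored into the appropriate positions. The degree-$(-1)$ diagonal is a specific homotopy realising one of the triangle identities for the adjunction $(l,s)$ at the DG level, produced using Proposition \ref{prps-M^B-and-M^A-are-homotopy-adjoints-of-M-via-S-SRS-S-maps}. The twisted-cube condition is then verified directly.

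Next, applying the Cube Completion Lemma \ref{lemma-the-cube-completion-lemma}, I would extend this $2$-cube to a commuting $3\times 3$ diagram in $D(\BbimA)$ with exact rows and columns. Identifying the convolutions of the various sub-cubes by Lemma \ref{lemma-tensor-product-of-twisted-complexes} and the defining triangles \eqref{eqn-twist-exact-triangle-DG}--\eqref{eqn-dual-cotwist-exact-triangle-DG}, one reads off a canonical exact triangle
\[
LT[-1] \;\to\; R \;\to\; FL[1] \;\to\; LT
\]
whose first two arrows are exactly the natural transformations \eqref{eqn-lt(-1)-to-r-DG} and \eqref{eqn-r-to-fl(1)-DG}. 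A symmetric construction, starting from the $2$-cube with vertices $SLS$, $SLSLR$-type terms, or equivalently obtained by applying the dualising functors $(-)^{\lderA}$ and $(-)^{\lderB}$ together with Corollary \ref{cor-derived-trace-and-action-maps-are-isomorphic}, yields three further canonical exact triangles: one involving $T'$ and $F'$ in $D(\BbimA)$, and the $(-)^{\lderA}$- and $(-)^{\lderB}$-duals of both in $D(\AbimB)$.

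With these four canonical triangles at hand, the six implications of the theorem become elementary derived-category chases, exploiting the duality $T'\dashv T$, $F'\dashv F$ (proved earlier in the paper) which makes invertibility of $T$ (resp.\ $F$) equivalent to $T\ldertimes T'\simeq\id_\B$ and $T'\ldertimes T\simeq \id_\B$ (resp.\ the analogue for $F$). Each pair of conditions forces the third term in one of the four canonical triangles to vanish, and the vanishing propagates via the other three triangles into the two missing conditions. The main obstacle is the construction of the degree-$(-1)$ diagonal of the $2$-cube and the verification that its convolution really is $FL[1]$: this is where the entire DG input resides. In a purely triangulated setting, the octahedral axiom applied to $LSR\to L\to RSL$ would produce only a non-canonical third object together with non-canonical connecting maps, with no way to identify it compatibly with the two natural transformations \eqref{eqn-lt(-1)-to-r-DG} and \eqref{eqn-r-to-fl(1)-DG} simultaneously; this is precisely the gap in \cite{Anno-SphericalFunctors} that the present framework closes.
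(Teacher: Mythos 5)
The key claim in your construction is that the Cube Completion Lemma yields a \emph{canonical exact triangle}
\[
LT[-1]\;\longrightarrow\; R\;\longrightarrow\; FL[1]\;\longrightarrow\; LT
\]
whose first two arrows are precisely \eqref{eqn-lt(-1)-to-r-DG} and \eqref{eqn-r-to-fl(1)-DG}. This is false, and the error is fatal to the proposed chase. In any exact triangle two consecutive arrows compose to zero, but the composition
$LT[-1]\xrightarrow{\eqref{eqn-lt(-1)-to-r-DG}} R \xrightarrow{\eqref{eqn-r-to-fl(1)-DG}} FL[1]$
(the map $\alpha\circ\beta$ in the paper's notation) is not zero; indeed, Lemma \ref{lemma-lt-fl-2maps} together with Lemma \ref{lemma-gamma-is-an-isomorphism} shows that it becomes an \emph{isomorphism} precisely when conditions \eqref{item-main-theorem-the-twist-is-an-equivalence} and \eqref{item-main-theorem-the-cotwist-is-an-equivalence} hold, and exploiting that it is an isomorphism is exactly how the hardest implication of the theorem is proven. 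What the cube constructions actually give are two separate $3\times3$ diagrams, \eqref{eqn-rsl-3x3-diagram} and \eqref{eqn-lsr-3x3-diagram}, from the $2$-cubes \eqref{eqn-M^A-M-M^B-diagram-small} and \eqref{eqn-M^B-M-M^A-diagram-small} (whose vertex sets are $\{0,L,R,RSL\}$ and $\{LSR,L,R,0\}$, not the $\{LSR,LSRSL,L,RSL\}$ you propose, and both have zero diagonal). These give $\cone(\beta)\simeq Q'$ and $\cone(\alpha)\simeq Q[1]$, where $Q$ and $Q'$ are the total convolutions; condition \eqref{item-main-theorem-the-cotwist-identifies-adjoints} is equivalent to $Q\simeq 0$ (Lemma \ref{lemma-condition-cotwist-identifies-adjoints-is-equiv}) and \eqref{item-main-theorem-the-twist-identifies-adjoints} to $Q'\simeq 0$ (Lemma \ref{lemma-condition-twist-identifies-adjoints-is-equiv}), not to vanishing of a third term in the triangle you write down.

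A second, related, gap: the final step of the proof is genuinely hard and is \emph{not} an elementary chase through a handful of triangles. For \eqref{item-main-theorem-the-twist-is-an-equivalence} and \eqref{item-main-theorem-the-cotwist-is-an-equivalence} implying \eqref{item-main-theorem-the-cotwist-identifies-adjoints}, the paper must first relate $Q$ and $Q'$ to the $(t',t)$ and $(f',f)$ adjunction data through two \emph{further} twisted $2$-cubes \eqref{eqn-SL-SRT'-diagram-small}, \eqref{eqn-SLT-SR-diagram-small} and their $F$-analogues (giving diagrams \eqref{eqn-id-t't-3x3-diagram}--\eqref{eqn-f'f-id-3x3-diagram}); then produce the auxiliary isomorphisms $\gamma$, $\gamma'$ (Lemmas \ref{lemma-gamma-is-an-isomorphism}, \ref{lemma-gamma'-is-an-isomorphism}); then establish that $\alpha\circ\beta$ and $\alpha'\circ\beta'$ are isomorphisms factoring through $RSL$; then split $RSL$ as a retract and check that the two cross-terms in the resulting direct sum decomposition vanish (by a pre- and post-composition with $S$ and an adjunction argument) before concluding that $\alpha'$ itself is an isomorphism. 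None of this is captured by, or reducible to, the one exact triangle you propose.
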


To prove this result we lift everything to the DG-enhancements 
$\hproj(\AbimA), \hproj(\BbimB), \hproj(\AbimB)$ and 
$\hproj(\BbimA)$ and work with twisted complexes over them. 
As these DG-categories are strongly pre-triangulated the canonical
convolution functors send twisted complexes over them
to (the Yoneda embeddings of) these categories themselves. 
Given e.g. a twisted complex 
$E_0 \rightarrow \dots \rightarrow E_n$ over $\hproj(\AbimA)$
we write $\left\{E_0 \rightarrow \dots \rightarrow  E_n\right\}$
for its convolution in $\hproj(\AbimA)$ 
 
Recall that $\rderhom_{\cts}(\hproj(\A), \hproj(\B))$ is represented 
in $\HoDGCatV$ by $\hproj(\AbimB)$, cf. Example
\ref{example-quasi-functors-between-enhancements-of-QCoh}. 
Similarly, \em Morita quasi-functors \rm $\A \rightarrow \B$, 
the morphisms from $\A$ to $\B$ in $\Morita(\DGCat)$, 
are in $1$-to-$1$ correspondence with ordinary
quasi-functors $\prfhprA \rightarrow \prfhprB$ and
$\rderhom(\prfhprA, \prfhprB)$ is represented in 
$\HoDGCat$ by $\hproj^{\Aperf}(\AbimB)$, cf. Example 
\ref{example-quasi-functors-between-enhancements-of-DBCoh}. 
Define a quasi-functor $\hproj(\A) \rightarrow \hproj(\B)$ or 
a Morita quasi-functor $\A \rightarrow \B$ to be 
\em spherical \rm if the corresponding element of $D(\AbimB)$ 
is spherical. 

Let $M = \barA \otimes_\A S \otimes_\B \barB$, with $S$ here viewed
as the corresponding bimodule in $\AmodB$. Then $M$ is an $h$-projective
resolution of $S$ in $\AmodB$. We now 
make use of the homotopy adjunction theory set up in
\S\ref{section-duals-and-adjoints}, and in particular of
$h$-projective resolutions $\MhdA$ and $\MhdB$ 
of $\MddA$ and $\MddB$. 

Below, we use the following shorthand:
$\tau$ denotes the map which consists of applying all possible instances
of the canonical maps $\barA \rightarrow \A$, $\barB \rightarrow \B$, 
e.g. $\MhdA \xrightarrow{\tau} M^\A$
or $M \otimes_\B \MhdB \xrightarrow{\tau} M \otimes_B M^\B$.

In the diagrams below we also use the following convention: 
the maps of degree $0$ are denoted by solid arrows and
the maps of degree $-1$ are denoted by dashed arrows. 

By Defn.~\ref{defn-homotopy-action-maps}
of the homotopy action maps, the following two diagrams 
commute up to homotopy:
\begin{align}
\label{eqn-homotopy-and-ordinary-action-maps-squares}
\vcenter{
\xymatrix{
\barB
\ar[r]^<<<<<<{\action}
\ar[d]_{\tau}
 &
\MhdA \otimes_\A M 
\ar[d]^{\eqref{eqn-otimesM^A-to-hom-M-*-map} \circ \tau}
\\
\B
\ar[r]^<<<<<{\action}
 &
\homm_\A(M,M)
}
}
\quad\quad
\vcenter{
\xymatrix{
\barA 
\ar[r]^<<<<<<{\action}
\ar[d]_{\tau}
&
M \otimes_\B \MhdB
\ar[d]^{\eqref{eqn-otimesM^B-to-hom-M-*-map} \circ\tau}
\\
\A
\ar[r]^<<<<<{\action}
&
\homm_\B(M,M).
}
}
\end{align}
Fix once and for all $\theta_\B \in
\homm^{-1}_{\BbimB}\left(\barB, \homm_\A(M,M)\right)$ and 
$\theta_\A \in \homm^{-1}_{\AbimA}\left(\barA, \homm_\B(M,M)\right)$
such that 
\begin{align*}
\eqref{eqn-otimesM^A-to-hom-M-*-map} \circ \tau \circ \action 
&= 
\action \circ \tau + d\theta_\B 
\\
\eqref{eqn-otimesM^B-to-hom-M-*-map} \circ \tau \circ \action 
&= 
\action \circ \tau + d\theta_\A
\end{align*}
i.e. the squares in \eqref{eqn-homotopy-and-ordinary-action-maps-squares}
commute up to $d\theta_\B$ and $d\theta_\A$. 

To establish our homotopy adjunctions we've proved in Prop. 
\ref{prps-M^B-and-M^A-are-homotopy-adjoints-of-M-via-S-SRS-S-maps}
that the four compositions
\eqref{eqn-mb-mb_m_mb-mb-homotopic-to-the-identity-map}-\eqref{eqn-m-m_ma_m-m-homotopic-to-the-identity-map}
are homotopic to identity. 
We can now make this more precise: let 
\begin{align*}
\chi_\A =\quad  &
M \xrightarrow{ \id \otimes \theta_\B} M \otimes_\B \homm_\A(M,M) 
\xrightarrow{\ev} \M &\in \homm^{-1}_{\AbimB}(M,M),
\\
\chi_\B = \quad  & M \xrightarrow{\theta_\A \otimes \id} \homm_\B(M,M) \otimes_\A M
\xrightarrow{\ev} M &\in \homm^{-1}_{\AbimB}(M,M), 
\\
\xi_\A = \quad  & 
\MhdA \xrightarrow{\id\!\otimes\theta_\B \otimes\id} 
\barB\!\otimes_\B\!\homm_\A(M,M)\!\otimes_\B\!M^\A\!\otimes_\A\!\barA\!
\xrightarrow{\id\!\otimes (-\circ -) \otimes \id} \MhdA \quad 
& \in \homm^{-1}_{\BbimA}(\MhdA, \MhdA),
\\
\xi_\B = \quad  & 
\MhdB \xrightarrow{\id\!\otimes\theta_\A \otimes\id} 
\barB\!\otimes_\B\!M^\B \otimes_\A \homm_\B(M,M)\!\otimes_\A\!\barA\!
\xrightarrow{\id\!\otimes (-\circ -) \otimes \id} \MhdB \quad 
& \in \homm^{-1}_{\BbimA}(\MhdB, \MhdB).
\end{align*}
The compositions 
\eqref{eqn-mb-mb_m_mb-mb-homotopic-to-the-identity-map}-\eqref{eqn-m-m_ma_m-m-homotopic-to-the-identity-map}
equal $\id + d\xi_\B$ and $\id + d\chi_\B$, $\id + d\xi_\A$ and
$\id+d\chi_\A$, respectively.

By construction, the homotopy action and trace maps are isomorphic
in $D(\AbimA)$ and $D(\BbimB)$ to their derived counterparts. 
We therefore have
\begin{align*}
T &\simeq  \convol\bigl\{\MhdB \otimes_\A M \xrightarrow{\trace} 
\underset{\degzero}{\barB} \bigr\}
\quad \text{ in } D(\BbimB),\\
T' &\simeq  \convol\bigl\{\underset{\degzero}{\barB}
\xrightarrow{\action} \MhdA \otimes_\A M \bigr\}
\quad \text{ in } D(\BbimB),\\
F &\simeq  \convol\bigl\{\underset{\degzero}{\barA} \xrightarrow{\action}
M \otimes_\B \MhdB \bigr\}
\quad \text{ in }  D(\AbimA),\\
F' &\simeq  \convol\bigl\{M \otimes_\B \MhdA \xrightarrow{\trace}
\underset{\degzero}{\barA} \bigr\}
\quad \text{ in }  D(\AbimA).
\end{align*}

\begin{prps}
\label{prop-t'-and-f'-are-left-adjoints-of-t-and-f-respectively}
We have 
$$T^{l\tilde{\B}} \simeq T' \quad \quad \text{ in } D(\BbimB)$$ 
$$(F')^{r\tilde{\A}} \simeq F \quad \quad \text{ in } D(\AbimA).$$ 
Consequently, $t'$ is the left adjoint of $t\colon D(B) \rightarrow D(B)$ 
and $f'$ is the left adjoint of $f\colon D(\A) \rightarrow D(\A)$.   
\end{prps}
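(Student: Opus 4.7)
The plan is to dualize the defining triangles \eqref{eqn-twist-exact-triangle-DG} and \eqref{eqn-dual-cotwist-exact-triangle-DG} of $T$ and $F'$ and then apply Corollary \ref{cor-derived-trace-and-action-maps-are-isomorphic} to identify the dualized trace maps with the action maps. For the first isomorphism, apply the exact contravariant functor $(-)^{l\tilde{\B}}$ to the triangle
$$SR \xrightarrow{\trace} \B \to T$$
to obtain the exact triangle
$$T^{l\tilde{\B}} \to \B^{l\tilde{\B}} \xrightarrow{\trace^{l\tilde{\B}}} (SR)^{l\tilde{\B}}$$
in $D(\BbimB)$. The canonical action isomorphism identifies $\B^{l\tilde{\B}}$ with $\B$, and Corollary \ref{cor-derived-trace-and-action-maps-are-isomorphic} applied to $M = S$ identifies $\trace^{l\tilde{\B}}$ with the derived action map $\B \xrightarrow{\action} SL$. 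This expresses $T^{l\tilde{\B}}$ as the shifted cone of $\B \xrightarrow{\action} SL$, which is precisely $T'$. The isomorphism $(F')^{r\tilde{\A}} \simeq F$ is obtained by the symmetric procedure: apply $(-)^{r\tilde{\A}}$ to $LS \xrightarrow{\trace} \A \to F'$, use the canonical identification $\A \simeq \A^{r\tilde{\A}}$, and invoke the other half of the same corollary to rewrite the dualized trace as the action map $\A \xrightarrow{\action} RS$.

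For the adjointness, I invoke Lemmas \ref{lemma-left-adjoint-of-ldertimes-M} and \ref{lemma-right-adjoint-of-ldertimes-M}. By the former applied to $M = T$, the left adjoint of $t = (-) \ldertimes_\B T$ is $(-) \ldertimes_\B T^{l\tilde{\B}}$, which equals $t'$ by the first isomorphism just proved. By the latter applied to $M = F'$, the right adjoint of $f' = (-) \ldertimes_\A F'$ is $(-) \ldertimes_\A (F')^{r\tilde{\A}}$, which equals $f$ by the second isomorphism; equivalently, $f'$ is the left adjoint of $f$. The perfectness hypotheses of these lemmas — that $T$ is $\B$-perfect in its second variable and $F'$ is $\A$-perfect in its first — are verified from the defining triangles together with Proposition \ref{prps-C-perfect-plus-perfect-equals-perfect} and the $\A$- and $\B$-perfectness of $S$, which transfers to the derived duals $R$ and $L$.

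The only mildly subtle point, and not really an obstacle, is ensuring that Corollary \ref{cor-derived-trace-and-action-maps-are-isomorphic} provides identifications of morphisms in $D(\BbimB)$ and $D(\AbimA)$ rather than merely identifications of source and target objects. Once this is granted, the connecting maps in the dualized triangles are genuinely the action maps appearing in the definitions of $T'$ and $F$, and the identifications $T^{l\tilde{\B}} \simeq T'$ and $(F')^{r\tilde{\A}} \simeq F$ then follow immediately from the exactness of $(-)^{l\tilde{\B}}$ and $(-)^{r\tilde{\A}}$.
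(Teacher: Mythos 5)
Your argument is correct and establishes the stated isomorphisms and the adjunction claim, but it follows a genuinely different route from the paper. The paper works at the chain level: it resolves $S$ by an $h$-projective $M$, represents $T$ as the convolution of the twisted complex $(M^\B\otimes_\A M \xrightarrow{\trace} \B)$, applies the explicit dualization formula for twisted complexes from Lemma~\ref{lemma-convolutions-and-the-dualizing-functor}\eqref{item-dualizing-twisted-complexes}, and then invokes Lemma~\ref{lemma-trace-and-action-maps-are-homotopy-dual} to produce an explicit homotopy equivalence of twisted complexes. Your proof instead works entirely in $D(\BbimB)$, exploiting the exactness of the derived contravariant functor $(-)^{l\tilde{\B}}$ on exact triangles and then invoking Corollary~\ref{cor-derived-trace-and-action-maps-are-isomorphic}, which is the derived-category shadow of the same underlying Lemma~\ref{lemma-trace-and-action-maps-are-homotopy-dual}. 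For the purposes of this proposition your route is cleaner and shorter; however, the paper's insistence on producing an explicit homotopy equivalence at the twisted-complex level is not wasted effort --- that equivalence is reused verbatim in Proposition~\ref{prps-unit-and-counit-of-t't-via-twisted-complexes} to write down the $(t',t)$-adjunction unit and counit as explicit maps of twisted complexes, which your abstract argument would not supply.

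Two minor points. First, you correctly flag that Corollary~\ref{cor-derived-trace-and-action-maps-are-isomorphic} must be read as an isomorphism of \emph{morphisms} (a commutative square with isomorphisms on the verticals, compatible with the canonical identification $\B \simeq \B^{l\tilde{\B}}$), not merely of objects; the corollary does indeed say this, as its proof passes through the commutative square~\eqref{eqn-dual-trace-action-map-commutative-square}. Second, your citation of Proposition~\ref{prps-C-perfect-plus-perfect-equals-perfect} for the perfectness hypotheses of Lemmas~\ref{lemma-right-adjoint-of-ldertimes-M} and~\ref{lemma-left-adjoint-of-ldertimes-M} is not quite apt: that proposition concerns \emph{perfect} bimodules in the two-sided sense, whereas what is actually needed is that $T$ be left-$\B$-perfect (and $F'$ right-$\A$-perfect). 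The correct argument is direct: $R$ is $\A$-perfect, so for each $b\in\B$ one has $\leftidx{_b}{R}\in D_c(\A)$, and since $(-)\ldertimes_\A S$ preserves compacts (as $S$ is $\B$-perfect), $\leftidx{_b}{(SR)}\in D_c(\B)$; then the cone $T$ of $SR\to\B$ inherits this. The paper omits this verification entirely, so your slight misattribution is no worse than the original.
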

\begin{proof}
By definitions of $T'$ and $T$ we have exact triangles
$$T' \rightarrow \B \xrightarrow{\action} 
\MddA \ldertimes_\A M$$
$$\MddB \ldertimes_\A M \xrightarrow{\trace} \B \rightarrow T.$$
in $D(\BbimB)$. Applying the functor $(-)^{l\tilde{\B}}$ to the latter 
one we obtain an exact triangle
$$T^{l\tilde{\B}} \rightarrow \B \xrightarrow{\trace^{l\tilde{B}}} 
(\MddB \ldertimes_\A M)^{l\tilde{\B}}.$$ 
Lemma \ref{lemma-derived-trace-and-action-maps-are-isomorphic}
produces an isomorphism 
$\MddA \ldertimes_\A M \xrightarrow{\sim} (\MddB \ldertimes_\A M)^{l\B}$
which makes the diagram
\begin{align*}
\xymatrix{
\B 
\ar[r]^{\action}
\ar@{=}[d]
&
\MddA \ldertimes_\A
\ar[d]^{\sim}
\\
\B
\ar[r]^<<<<<<{\trace^{l\tilde{B}}}
&
(\MddB \ldertimes_\A M)^{l\B}
}
\end{align*}
commute. Thus there exists $T' \simeq T^{l\tilde{\B}}$ 
which completes the above to an isomorphism of exact triangles. 

An identical argument produces an isomorphism 
$(F')^{r\tilde{\A}} \simeq F$ in $D(\AbimA)$. 
The final assertion then follows since by 
Cor. \ref{cor-derived-M-and-M^A-and-M^B-adjunction} the functors
$(-) \ldertimes_\B T^{l\tilde{\B}}$ and $(-) \ldertimes_\A ((F')^{r\tilde{\A}}$
are left and right adjoint to $t$ and $f'$, respectively. 
\end{proof}
Thus, if $t$ is an auto-equivalence of $D(\B)$ then $t'$ is always
its quasi-inverse, and similarly for $f$ and $f'$. 

Denote by 
$\B \xrightarrow{\action} TT'$ and $T'T \xrightarrow{\trace} \B$
the maps in $D(\BbimB)$ which the isomorphism $T' \simeq T^{l\tilde{\B}}$
of Prop. \ref{prop-t'-and-f'-are-left-adjoints-of-t-and-f-respectively}
identifies with the derived action and trace maps for $T$. 
By construction of the $(t',t)$ adjunction these maps induce its unit and co-unit.

\begin{prps}
\label{prps-unit-and-counit-of-t't-via-twisted-complexes}
The maps
$\B \xrightarrow{\action} TT'$ and $T'T \xrightarrow{\trace} \B$ 
are isomorphic in 
$D(\BbimB)$ to the maps 
\begin{align}\label{eqn-adjunction-unit-t't}
\underset{\degzero}{\barB} \rightarrow 
\Bigl(\MhdB \otimes_\A M \xrightarrow{ \trace \oplus (\action\otimes\id)} 
\underset{\degzero}{\barB \oplus
\bigl(\MhdA \otimes_\A M \otimes_\B \MhdB \otimes_\A M\bigr)}
\xrightarrow{\action \oplus(- \id\otimes\trace)}
\MhdA \otimes_\A M \Bigr) \\
\label{eqn-adjunction-counit-t't}
\Bigl(\MhdB \otimes_\A M \xrightarrow{\trace 
\oplus(- \id \otimes \action)} 
\underset{\degzero}{\barB \oplus \bigl(\MhdB
\otimes_\A M\otimes_\B \MhdA \otimes_\A M\bigr)}
\xrightarrow{\action\oplus (\trace\otimes\id)}
\MhdA \otimes_\A M \Bigr) 
\rightarrow \underset{\degzero}{\barB}
\end{align}
of twisted complexes over $\BmodB$ given, respectively, by 
\begin{align}
\label{eqn-adjunction-unit-tt'-morphism}
&\barB 
\xrightarrow{\id \oplus \action}
\barB \oplus \bigl(\MhdA \otimes_\A M \bigr) 
\xrightarrow{\id \oplus (\id \otimes \action \otimes \id)}
\barB \oplus \bigl(\MhdA \otimes_\A M \otimes_\B \MhdB \otimes_\A M \bigr) 
\\
\label{eqn-adjunction-unit-tt'-morphism-deg-minus-1}
&\barB \xrightarrow{- (\id \otimes \chi_\B)\circ\action} \MhdA \otimes_\A M
\end{align}
and 
\begin{align}
\label{eqn-adjunction-counit-t't-morphism}
\barB \oplus \bigl(\MhdB \otimes_\A M \otimes_\B \MhdA \otimes_\A M \bigr) 
\xrightarrow{\id \oplus (\id \otimes \trace \otimes \id)}
\barB \oplus \bigl(M^\B \otimes_\A M \bigr) 
\xrightarrow{\id \oplus \trace}
&\barB
\\
\label{eqn-adjunction-counit-t't-morphism-deg-minus-1}
\MhdB \otimes_\A M
\xrightarrow{- \trace\circ(\id \otimes \chi_\A)} 
&\barB.
\end{align}
\end{prps}
\begin{proof}
We treat the case of the adjunction unit, the case of the counit is 
treated identically. 
It suffices to show that \eqref{eqn-adjunction-unit-t't} is isomorphic
in $D(\BbimB)$ to $\B \xrightarrow{\action} TT^{l\tilde{\B}}$. 
The latter is isomorphic to  
\begin{align}
\label{eqn-genuine-action-map-for-T}
\B \xrightarrow{\action} \homm_{l\B}
\left(\left\{M^\B \otimes_\A M \xrightarrow{\trace}
\underset{\degzero}{\B}\right\}, 
\left\{M^\B \otimes_\A M \xrightarrow{\trace} \underset{\degzero}{\B}\right\}
\right)
\end{align}
since $\left\{M^\B \otimes_\A M \xrightarrow{\trace}
\underset{\degzero}{\B}\right\}$ is
a left $\B$-$h$-projective bimodule homotopically equivalent to $T$.  

By the commutativity of \eqref{eqn-dual-trace-action-map-commutative-square}, 
the composition of $\B$-action map
with the quasi-isomorphism 
\begin{align}
\label{eqn-hom-M-M-to-M^B-otimes-M-dual}
\homm_{\A}(M,M) \xrightarrow{ }
\homm_{A}(M, M^{\B\B}) 
\xrightarrow{\text{adjunction}}
\left(M^{\B} \otimes_\A M\right)^{l\B} 
\end{align}
is the left dual of the $\B$-trace map. 
The following is a chain of quasi-isomorphisms of twisted complexes:
\begin{tiny}
\begin{align}
\label{eqn-reduction-of-TT'-to-Hom-T-T^lB}
\vcenter{
\xymatrix{
\MhdB\!\otimes_\A\!M
\ar[d]_{\tau}
\ar@{-->}[drr]^{0 \oplus (\theta_\B \otimes \tau)}
\ar[rr]^<<<<<<<<<<<<<<<<{\trace \oplus
(\action\otimes\id)} 
&&
\overset{\degzero}{\barB \oplus
\bigl(\MhdA\!\otimes_\A\!M\!\otimes_\B\!\MhdB\!\otimes_\A\!M\bigr)}
\ar[d]^{\tau \oplus (\ev\otimes\id) \circ \tau}
\ar@{-->}[drr]^{- \theta_\B \oplus 0}
\ar[rr]^<<<<<<<<<<<<<<<{\action \oplus(- \id\otimes\trace)}
&&
\MhdA\!\otimes_\A\!M
\ar[d]^{\ev \circ \tau}
\\
M^\B\!\otimes_\A\!M
\ar[d]_{\id}
\ar[rr]^<<<<<<<<<<<<<<<<{\trace \oplus (\action\otimes\id)} 
&&
\B \oplus
\bigl(\homm_\A(M,M)\!\otimes_\B\!M^\B\!\otimes_\A\!M\bigr)
\ar[d]^{\id \oplus \eqref{eqn-hom-M-M-to-M^B-otimes-M-dual}}
\ar[rr]^<<<<<<<<<<<<<<<{\action \oplus(- \id\otimes\trace)}
&&
\homm_\A(M,M)
\ar[d]^{\eqref{eqn-hom-M-M-to-M^B-otimes-M-dual}}
\\
M^\B\!\otimes_\A\!M
\ar[rr]^<<<<<<<<<<<<<<<<{\trace \oplus
(\trace^{l\B}\otimes\id)} 
\ar@{~}@<0.5ex>[d]
\ar@{-}@<-0.5ex>[d]
&&
\B \oplus
\bigl(\left(M^\B\!\otimes_\A\!M\right)^{l\B}\!\otimes_\B\!M^\B\!\otimes_\A\!M\bigr)
\ar[rr]^<<<<<<<<<<<<<<<{\trace^{l\B} \oplus(- \id\otimes\trace)}
\ar[d]^{\action \oplus \ev}
&&
\left(M^\B\!\otimes_\A\!M\right)^{l\B}
\ar@{=}[d]
\\
\homm_{l\B}\left(\B, M^\B\!\otimes_\A\!M)\right)
\ar[rr]^<<<<<<<<<{\trace \circ (-) \oplus
(-) \circ \trace } 
&&
\homm_{l\B}(\B,\B) \oplus
\homm_{l\B}\bigl(M^\B\!\otimes_\A\!M, M^\B\!\otimes_\A\!M\bigr)
\ar[rr]^<<<<<<<<<<{(-)\circ\trace \oplus - \trace\circ(-)}
&&
\homm_{l\B}\left(M^\B\!\otimes_\A\!M,\B\right)
}
}
\end{align}
\end{tiny}

By Lemma
\ref{lemma-convolutions-and-the-dualizing-functor}\eqref{item-trace-and-action-maps-via-twisted-complexes} the map \eqref{eqn-genuine-action-map-for-T}
is isomorphic to the map 
\begin{tiny}
\begin{align}
\label{eqn-genuine-action-map-for-T-expanded}
\vcenter{
\xymatrix{
&&
\B
\ar[d]^{\action \oplus \action}
&&
\\
\left\{
\homm_{l\B}\left(\B, M^\B\!\otimes_\A\!M)\right)
\ar[rr]^<<<<<<<<<{\trace \circ (-) \oplus
(-) \circ \trace } 
\right.
&&
\homm_{l\B}(\B,\B) \oplus
\homm_{l\B}\bigl(M^\B\!\otimes_\A\!M, M^\B\!\otimes_\A\!M\bigr)
\ar[rr]^<<<<<<<<<<{(-)\circ\trace \oplus - \trace\circ(-)}
&&
\left.
\homm_{l\B}\left(M^\B\!\otimes_\A\!M,\B\right)
\right\}
}
}
\end{align}
\end{tiny}
To show that $\eqref{eqn-adjunction-unit-t't}$ is isomorphic
in $D(\BbimB)$ to \eqref{eqn-genuine-action-map-for-T-expanded}, 
it now suffices to show that 
$\eqref{eqn-reduction-of-TT'-to-Hom-T-T^lB} 
\circ \eqref{eqn-adjunction-unit-t't}$
is homotopic to $\eqref{eqn-genuine-action-map-for-T-expanded} \circ \tau$. 
It is a routine check of the kind we normally leave to the reader, but 
we write it out in detail once 
to give the flavor of the computations involved. 

The composition of \eqref{eqn-adjunction-unit-t't} 
with \eqref{eqn-reduction-of-TT'-to-Hom-T-T^lB} is the map 
\begin{tiny}
\begin{align}
\label{eqn-composition-of-adjunction-unit-t't-with-chain-of-quasi-isos}
\vcenter{
\xymatrix{
&&
\barB
\ar[d]_{\bigl(\action \circ \tau\bigr) \oplus \alpha_0}
\ar@{-->}[drr]+<+1ex,+2.5ex>^{\alpha_1}
&&
\\
\left\{
\homm_{l\B}\left(\B, M^\B\!\otimes_\A\!M)\right)
\ar[rr]^<<<<<<<<<{\trace \circ (-) \oplus
(-) \circ \trace } 
\right.
&&
\homm_{l\B}(\B,\B) \oplus
\homm_{l\B}\bigl(M^\B\!\otimes_\A\!M, M^\B\!\otimes_\A\!M\bigr)
\ar[rr]^<<<<<<<<<<{(-)\circ\trace \oplus - \trace\circ(-)}
&&
\left.
\homm_{l\B}\left(M^\B\!\otimes_\A\!M,\B\right)
\right\}
}
}
\end{align}
\end{tiny}
where 
\begin{tiny}
\begin{align*}
\alpha_1 =
\left(\barB \xrightarrow{- \theta_B} \homm_\A(M,M) 
\xrightarrow{\eqref{eqn-hom-M-M-to-M^B-otimes-M-dual}}
\homm_{l\B}(M^\B\!\otimes_\A\!M,\B)\right)  
+ 
\left(\barB \xrightarrow{\action} \MhdA \otimes_\A M 
\xrightarrow{- \id \otimes \chi_\B} \MhdA \otimes_\A M
\xrightarrow{\eqref{eqn-hom-M-M-to-M^B-otimes-M-dual}\circ\ev\circ\tau}
\homm_{l\B}(M^\B\!\otimes_\A\!M,\B)\right) 
\end{align*}
\end{tiny}
and the composition $\alpha_0$ can be computed 
by considering the following diagram
\begin{tiny}
\begin{align*}
\vcenter{
\xymatrix{
\barB 
\ar[r]^<<<<<<{\action}
\ar[ddd]_{\tau}
\ar@{}[dddr]|*+<10pt>[o][F-]{\text{\Large{A}}}
&
\MhdA\!\otimes_\A\!M 
\ar@{}[drr]|*+<10pt>[o][F-]{\text{\Large{B}}}
\ar[rr]^{\id\!\otimes\!\action\!\otimes\!\id}
\ar[d]_{\tau}
& & 
\MhdA\!\otimes_\A\!M\!\otimes_\B\MhdB\!\otimes_\A\!M 
\ar[d]_{\tau}
&
\\
& 
M^\A\!\otimes_\A\!M
\ar[dd]_{\ev}
\ar[r]^<<<<<<<{\id\!\otimes\!\action\!\otimes\!\id}
\ar[dr]_{\id\!\otimes\!\action\!\otimes\!\id\quad}
&
M^\A\!\otimes_\A\homm_\B(M,M)\!\otimes_\A\!M
\ar@{~>}[d]^{\id \otimes (-)^\B \otimes \id}
&
M^\A\!\otimes_\A\!M\!\otimes_\B\!M^\B\!\otimes_\A\!M
\ar[r]^<<<<<{\ev\!\otimes\!\id}
\ar[l]_<<<<<<{\id\!\otimes\!\ev\!\otimes\!\id}
\ar[d]^{\id \otimes \eqref{eqn-double-B-dual-transformation}\otimes \id}
&
\homm_\A(M,M)\!\otimes_\B\!M^\B\!\otimes_\A\!M
\ar[d]_{\bigl(\eqref{eqn-double-B-dual-transformation}\circ(-)\bigr)
\otimes \id}
\\
& 
&
M^\A\!\otimes_\A\!\homm_\B(M^\B,M^\B)\!\otimes_\A\!M
\ar@{~>}[d]^{\ev \circ \bigl(\id \otimes \eqref{eqn-tensor-product-and-hom-map} \bigr)}
&
M^\A\!\otimes_\A\!M^{\B\B}\!\otimes_\B\!M^\B\!\otimes_\A\!M
\ar[r]^<<<<{\ev\!\otimes\!\id}
\ar[l]_<<<<<{\id\!\otimes\!\ev\!\otimes\!\id}
&
\homm_{\B}(M,M^{\B\B})\!\otimes_\B\!M^\B\!\otimes_\A\!M
\ar[d]_{\ev \circ \bigl(\text{adjunction} \otimes \id\bigr)}
\\
\B
\ar[r]^<<<<{\action}
&
\homm_\A(M,M)
\ar@{~>}[r]^<<<<{\eqref{eqn-adjunction-unit-for-M-otimes-(-)} \circ (-)}
&
\homm_\A(M,\homm_{\B}(M^\B,M^\B\!\otimes\!M))
\ar@{~>}[rr]^{\text{adjunction}}
&
&
\homm_{l\B}(M^\B\!\otimes_\A\!M,M^\B\!\otimes_\A\!M)).
}
} 
\end{align*}
\end{tiny}
This diagram commutes except for the sections marked $(A)$ and $(B)$. These 
commute up to $d\theta_\B$ and $\tau \otimes d\theta_\A \otimes \id$,
respectively. The upper right border of this diagram
composes to $\alpha_0$, while its bottom line composes
to $\B \xrightarrow{\action}
\homm_{l\B}(M^\B\!\otimes_\A\!M,M^\B\!\otimes\!M))$. It follows 
that 
$$ 
\alpha_0 =
\action \circ\; \tau  
+ 
d\bigl(\beta_1 \circ \theta_\B
+ 
\beta_2 \circ \left(\tau \otimes \theta_\A \otimes \id \right)
\circ \action \bigr)
$$
where $\beta_1$ and $\beta_2$ are the corresponding compositions of 
the wavy arrows in the diagram. 

Thus $\eqref{eqn-composition-of-adjunction-unit-t't-with-chain-of-quasi-isos}$
is the sum of
$\eqref{eqn-genuine-action-map-for-T-expanded} \circ \tau$
and the map 
\begin{tiny}
\begin{align}
\label{eqn-null-homotopic-part-of-composition-of-adjunction-unit-t't-with-chain-of-quasi-isos}
\vcenter{
\xymatrix{
&&
\barB
\ar[d]_{0 \oplus d\bigl(\beta_1 \circ \theta_\B + 
\beta_2 \circ \left(\tau \otimes \theta_\A \otimes \id \right)
\circ \action \bigr)
  }
\ar@{-->}[drr]+<+1ex,+2.5ex>^{\alpha_1}
&&
\\
\left\{
\homm_{l\B}\left(\B, M^\B\!\otimes_\A\!M)\right)
\ar[rr]^<<<<<<<<<{\trace \circ (-) \oplus
(-) \circ \trace } 
\right.
&&
\homm_{l\B}(\B,\B) \oplus
\homm_{l\B}\bigl(M^\B\!\otimes_\A\!M, M^\B\!\otimes_\A\!M\bigr)
\ar[rr]^<<<<<<<<<<{(-)\circ\trace \oplus - \trace\circ(-)}
&&
\left.
\homm_{l\B}\left(M^\B\!\otimes_\A\!M,\B\right)
\right\}
}
}
\end{align}
\end{tiny}
and it remains to show that
\eqref{eqn-null-homotopic-part-of-composition-of-adjunction-unit-t't-with-chain-of-quasi-isos} is a boundary.

It suffices to show that 
$$\alpha_1 = - \bigl( \trace \circ (-) \bigr) \circ 
\bigl(\beta_1 \circ \theta_\B + 
\beta_2 \circ \left(\tau \otimes \theta_\A \otimes \id \right)
\circ \action \bigr).$$ 
By definition of $\alpha_1$ and $\chi_\B$, this would follow from 
$$ \eqref{eqn-hom-M-M-to-M^B-otimes-M-dual} \circ \theta_\B 
= \bigl( \trace \circ (-) \bigr) \circ \beta_1 \circ \theta_\B $$
$$ \eqref{eqn-hom-M-M-to-M^B-otimes-M-dual} \circ \ev \circ 
(\id \otimes \ev) \circ (\tau \otimes \theta_\A \otimes \id) \circ \action = 
\bigl( \trace \circ (-) \bigr) \circ \beta_2 \circ 
\bigl(\tau \otimes \theta_\A \otimes \id\bigr) \circ \action. $$
In fact, a stronger statement is true:
$\eqref{eqn-hom-M-M-to-M^B-otimes-M-dual} = 
\bigl( \trace \circ (-) \bigr) \circ \beta_1 $
and 
$\bigl( \trace \circ (-) \bigr) \circ \beta_1 \circ \ev \circ (\id \otimes \ev) = \bigl( \trace \circ (-) \bigr) \circ \beta_2$. 
It is equivalent to the commutativity of the following two diagrams
\begin{tiny}
\begin{align*}
\vcenter{
\xymatrix{
\homm_\A(M,M) 
\ar[drr]_{\eqref{eqn-double-B-dual-transformation}\circ(-)\quad}
\ar[rr]^{\eqref{eqn-adjunction-unit-for-M-otimes-(-)} \circ (-)\quad}
&&
\homm_\A(M,\homm_{\B}(M^\B,M^\B\!\otimes\!M))
\ar[rr]^{\text{adjunction}}
\ar[d]^{\bigl(\trace \circ (-) \bigr) \circ (-)}
&&
\homm_{l\B}(M^\B\!\otimes_\A\!M,M^\B\!\otimes_\A\!M)).
\ar[d]^{\trace \circ (-)}
\\
&&
\homm_\A\left(M,M^{\B\B}\right)
\ar[rr]^{\text{adjunction}}
&&
\homm_\A\left(M^\B\!\otimes_\A\!M,B\right)
}
\xymatrix{
M^\A\!\otimes_\A\homm_\B(M,M)\!\otimes_\A\!M
\ar[rr]^{\id \otimes (-)^\B \otimes \id\quad}
\ar[d]^{\id \otimes \ev}
\ar[drr]^{\ev}
&&
M^\A\!\otimes_\A\!\homm_\B(M^\B,M^\B)\!\otimes_\A\!M
\ar[rr]^{\ev}
&&
\homm_\A\left(M,\homm_\B(M^\B,M^\B)\!\otimes_\A\!M\right)
\ar[d]^{\eqref{eqn-tensor-product-and-hom-map} \circ (-)}
\\
M^\A\!\otimes_\A M
\ar[d]^{\ev}
&&
\homm_\A\left(M,\homm_\B(M,M)\!\otimes_\A\!M\right)
\ar[lld]_{\ev \circ (-)}
\ar[urr]^<<<<<<<<<{\bigl((-)^\B \otimes \id \bigr) \circ (-)}
&&
\homm_\A(M,\homm_{\B}(M^\B,M^\B\!\otimes\!M))
\ar[d]^{\bigl(\trace \circ (-) \bigr) \circ (-)}
\\
\homm_\A(M,M)
\ar[rr]^<<<<<<<<<<<<<<<{\eqref{eqn-adjunction-unit-for-M-otimes-(-)} \circ (-)}
&&
\homm_\A(M,\homm_{\B}(M^\B,M^\B\!\otimes\!M))
\ar[rr]^{\bigl(\trace \circ (-) \bigr) \circ (-)}
&&
\homm_\A(M,M^{\B\B})
}
}
\end{align*}
\end{tiny}
which is readily checked. 
\end{proof}

Let $\A \xrightarrow{\action} FF'$ and $F'F \xrightarrow{\trace} \A$
be the maps in $D(\AbimA)$ which the isomorphism $(F')^{r\tilde{\A}} \simeq F$
of Prop. \ref{prop-t'-and-f'-are-left-adjoints-of-t-and-f-respectively}
identifies with the derived action and trace maps for $F$.
The following proposition is proved in the same way:

\begin{prps}
\label{prps-unit-and-counit-of-f'f-via-twisted-complexes}
The maps
$\A \xrightarrow{\action} FF'$ and $F'F \xrightarrow{\trace} \A$ 
are isomorphic in $D(\AbimA)$ to the maps 
\begin{align}\label{eqn-adjunction-unit-f'f}
\underset{\degzero}{\barA} \rightarrow 
\Bigl(M\otimes_{\B} \MhdA \xrightarrow{\trace \oplus (-\id\otimes\action)} 
\underset{\degzero}{\barA \oplus \bigl(M\otimes_{B} \MhdA \otimes_\A
M\otimes_\B \MhdB \bigr)}
 \xrightarrow{\action\oplus(\trace\otimes\id)}
M\otimes_{\B} \MhdB \Bigr) \\
\label{eqn-adjunction-counit-f'f}
\Bigl(M\otimes_{\B} \MhdA \xrightarrow{\trace\oplus(\action\otimes\id)} 
\underset{\degzero}{\barA \oplus \bigl(M\otimes_{\B} \MhdB \otimes_\A
M\otimes_\B \MhdA \bigr)}
\xrightarrow{\action\oplus(-\id\otimes\trace)}
M\otimes_{\B} \MhdB \Bigr) 
\rightarrow \underset{\degzero}{\barA}
\end{align}
of twisted complexes over $\AmodA$ given, respectively, by 
\begin{align}
\label{eqn-adjunction-unit-f'f-morphism}
&\barA 
\xrightarrow{\id \oplus \action}
\barA \oplus \bigl(M \otimes_\B \MhdB \bigr) 
\xrightarrow{\id \oplus -(\id \otimes \action \otimes \id)}
\barA \oplus \bigl(M \otimes_\B \MhdA \otimes_\A M \otimes_\B \MhdB
\bigr) \\
\label{eqn-adjunction-unit-f'f-morphism-deg-minus-1}
&\barA \xrightarrow{- (\chi_\A \otimes \id) \circ \action} M \otimes_\B \MhdB 
\end{align}
\begin{align}
\label{eqn-adjunction-counit-f'f-morphism}
\barA \oplus \bigl(M \otimes_\B \MhdB \otimes_\A M \otimes_\B \MhdA \bigr) 
\xrightarrow{\id \oplus -(\id \otimes \trace \otimes \id)}
\barA \oplus \bigl(M \otimes_\B \MhdA \bigr) 
\xrightarrow{\id \oplus \trace}
&\barA
\\
\label{eqn-adjunction-counit-f'f-morphism-deg-minus-1}
M\otimes_{\B} \MhdA \xrightarrow{- \trace \circ (\chi_\B \otimes \id)} &\barA. 
\end{align}
\end{prps}

Consider the twisted $2$-cube over $\BmodA$
\begin{align}
\label{eqn-M^A-M-M^B-diagram-small}
\vcenter{
\xymatrix{
0 
\ar[r] 
\ar[d]
&
\MhdA
\ar[d]^{\id \otimes \action}
\\
\MhdB
\ar[r]_<<<<{\action \otimes \id}
&
\MhdA \otimes_{\A} M \otimes_{\B} \MhdB
} 
}.
\end{align}

By the Cube Lemma 
(Lemma \ref{lemma-the-cube-lemma}) the convolutions of 
the rows of \eqref{eqn-M^A-M-M^B-diagram-small} fit into a $1$-cube 
(i.e. a single morphism) whose convolution is the convolution of 
the total complex of the $2$-cube. And similarly
for the convolutions of the columns of \eqref{eqn-M^A-M-M^B-diagram-small}. 
This is formalised in the Cube Completion Lemma (Lemma
\ref{lemma-the-cube-completion-lemma}) which constructs for
us the diagram 
\begin{align}
\label{eqn-M^A-M-M^B-diagram-big}
\vcenter{
\xymatrix{
0 
\ar[r] 
\ar[d]
&
\MhdA
\ar[r]
\ar[d]^{\id \otimes \action}
&
\MhdA
\ar[d]
\ar[r]^{[1]}
&
\\
\MhdB
\ar[r]_<<<<<<<{\action \otimes \id}
\ar[d]
&
\MhdA \otimes_{\A} M \otimes_{\B} \MhdB 
\ar[r]
\ar[d]
&
\left\{
\MhdB \rightarrow \underset{\degzero}{\MhdA \otimes M \otimes \MhdB}
\right\}
\ar[d]
\ar[r]^<<<<<<{[1]}
&
\\
\MhdB
\ar[r]
\ar[d]^{[1]}
&
\left\{
\MhdA \rightarrow \underset{\degzero}{\MhdA \otimes M \otimes \MhdB}
\right\}
\ar[r]
\ar[d]^{[1]}
&
\left\{
\MhdA \oplus \MhdB
\rightarrow \underset{\degzero}{\MhdA \otimes M \otimes \MhdB}
\right\}
\ar[r]^<<<<{[1]}
\ar[d]^{[1]}
&
\\
& 
& 
} 
}
\end{align}
in $\BmodA$. The morphisms marked $[1]$ are morphisms of degree $1$ which 
``wrap around'' to the beginning of the corresponding row or column. 
We haven't labeled all the maps within twisted complexes
or the morphisms between their convolutions in
\eqref{eqn-M^A-M-M^B-diagram-big}, but the precise formulas can be found 
in Lemma \ref{lemma-the-cube-completion-lemma}. 

Let now $Q$ be the convolution of 
the $2$-cube \eqref{eqn-M^A-M-M^B-diagram-small} 
shifted by one to the right, that is
\begin{align*}
Q & \overset{\text{def}}{=}
\left\{
\underset{\degzero}{\MhdA \oplus \MhdB}
\xrightarrow{- \id \otimes \action - \action \otimes \id} 
\MhdA \otimes_\A M \otimes_\B \MhdB
\right\} \simeq  \\
&\simeq\; \cone 
\left(R \oplus L \xrightarrow{\id\!\otimes\!\action + \action\!\otimes\!\id } 
RSL \right)[-1]
 \quad \text{ in } D(\BbimA). 
\end{align*}
The diagram \eqref{eqn-M^A-M-M^B-diagram-big} 
descends to a commutative $3\times3$-diagram 
in $D(\BbimA)$ whose rows and columns are exact:
\begin{align}
\label{eqn-rsl-3x3-diagram}
\vcenter{
\xymatrix{
0 
\ar[r] 
\ar[d]
&
L
\ar@{=}[r]
\ar[d]^{(\action)L}
&
L
\ar[d]^{\alpha'}
\\
R
\ar[r]^<<<<<<{R(\action)}
\ar@{=}[d]
&
RSL 
\ar[r]^{\zeta}
\ar[d]^{\eta}
&
RT'[1]
\ar[d]
\\
R
\ar[r]^<<<<<{\alpha}
&
FL[1]
\ar[r]
&
Q[1].
} 
}
\end{align}
The connecting morphisms for the exact triangles are 
the images of the morphisms labeled $[1]$ 
in \eqref{eqn-M^A-M-M^B-diagram-big}. 

\begin{lemma}
\label{lemma-condition-cotwist-identifies-adjoints-is-equiv} 
The following are equivalent:
\begin{itemize}
\item $r \xrightarrow{\eqref{eqn-r-to-fl(1)}} fl[1]$ is an isomorphism
(the condition \eqref{item-main-theorem-the-cotwist-identifies-adjoints} of 
Theorem \ref{theorem-main-theorem}).
\item $r \oplus l \xrightarrow{r(\text{unit}) \oplus \text{unit}} rsl$
is an isomorphism.
\item $Q \simeq 0$ in $D(\BbimA)$.
\item $\alpha$ is an isomorphism in $D(\BbimA)$.
\item $\alpha'$ is an isomorphism in $D(\BbimA)$.
\end{itemize}
\end{lemma}
\begin{proof}
Denote by $q$ the functor $(-) \ldertimes Q$ from $D(\B)$ to $D(A)$.  
The morphisms $R \rightarrow RSL$ and 
$L \rightarrow RSL$ in \eqref{eqn-rsl-3x3-diagram} 
induce the natural transformations 
$r \xrightarrow{r(\text{unit})} rsl$ and 
$l \xrightarrow{\text{unit}} rsl$. Hence 
$R \xrightarrow{\alpha} FL[1]$ induces
the natural transformation \eqref{eqn-r-to-fl(1)}.
The functorial exact triangle 
$r \xrightarrow{\eqref{eqn-r-to-fl(1)}} fl[1] \rightarrow q[1]$ 
induced by the bottom row of \eqref{eqn-rsl-3x3-diagram} 
implies that $r \xrightarrow{\eqref{eqn-r-to-fl(1)}} fl[1]$ is
an isomorphism if and only if $q$ 
is the zero functor. Similarly, the exact triangle 
$R \oplus L \xrightarrow{\id\!\otimes\!\action +
\action\!\otimes\!\id } RSL \rightarrow Q[1]$
implies that
$r \oplus l \xrightarrow{r(\text{unit}) \oplus \text{unit}} rsl$
is an isomorphism if and only if $q[1]$ is the zero functor.

Clearly $Q \simeq 0$ implies that $q$ is the zero functor. 
On the other hand, if $q$ is the zero functor then it sends
all representable $\B$-modules to $0 \in D(\A)$. Thus
$\leftidx{_b}{Q}$ is an acyclic $\A$-module for all 
$b \in \B$, and hence $Q$ is acyclic $\BbimA$ bimodule. 
We conclude that $q$ is the zero functor if and only if
$Q \simeq 0$ in $D(\BbimA)$. 

Finally, $Q \simeq 0$ is equivalent to $\alpha$ (resp. $\alpha'$)
being an isomorphism by 
exactness of the bottom row (resp. right column)
of the diagram \eqref{eqn-rsl-3x3-diagram}. 
\end{proof}

Now define
\begin{align*}
Q' & \overset{\text{def}}{\;=\;}  
\left\{
\MhdB \otimes_{\A} M \otimes_{\B} \MhdA
\xrightarrow{- \trace \otimes \id - \id \otimes \trace} 
\underset{\degzero}{\MhdA \oplus \MhdB}
\right\} \simeq \\
& \simeq \;
\cone \left(LSR \xrightarrow{L\trace \oplus \trace} L \oplus R\right)[1]
\quad \text{ in } D(\BbimA).
\end{align*}

Then, in a similar way, the twisted $2$-cube  
\begin{align}
\label{eqn-M^B-M-M^A-diagram-small}
\vcenter{
\xymatrix{
\MhdB \otimes_{\A} M \otimes_{\B} \MhdA
\ar[r]^<<<<<{\trace \otimes \id} 
\ar[d]_{\id \otimes \trace}
&
\MhdA
\ar[d]
\\
\MhdB
\ar[r]
&
0 
} 
}
\end{align}
produces the following $3 \times 3$-diagram in $D(\BbimA)$ whose
rows and columns are exact triangles:
 \begin{align}
\label{eqn-lsr-3x3-diagram}
\vcenter{
\xymatrix{
Q'[-1]
\ar[r] 
\ar[d]
&
F'R[-1]
\ar[r]^<<<<<{\beta'}
\ar[d]
&
L
\ar@{=}[d]
\\
LT[-1]
\ar[r]
\ar[d]_{\beta}
&
LSR
\ar[r]^{L(\trace)}
\ar[d]^{(\trace)R}
&
L
\ar[d]
\\
R
\ar@{=}[r]
&
R
\ar[r]
&
0.
} 
}
\end{align}

Arguing as in the proof of Lemma
\ref{lemma-condition-cotwist-identifies-adjoints-is-equiv} 
we obtain:
\begin{lemma}
\label{lemma-condition-twist-identifies-adjoints-is-equiv} 
The following are equivalent:
\begin{itemize}
\item $lt[-1] \xrightarrow{\eqref{eqn-lt(-1)-to-r}} r$ is an isomorphism
(the condition \eqref{item-main-theorem-the-twist-identifies-adjoints} of 
Theorem \ref{theorem-main-theorem}). 
\item $lsr \xrightarrow{l(\text{counit})\oplus\text{counit}} l \oplus
r$ is an isomorphism. 
\item $Q' \simeq 0$ in $D(\BbimA)$.
\item $\beta$ is an isomorphism in $D(\BbimA)$.
\item $\beta'$ is an isomorphism in $D(\BbimA)$.
\end{itemize}
\end{lemma}

Consider now the twisted $2$-cube over $\pretriag(\BmodB)$: 
\begin{align}
\label{eqn-SL-SRT'-diagram-small}
\vcenter{
\xymatrix{
\underset{\degone}{\MhdA \otimes_{\A} M}
\ar[rr]^{\id} 
\ar[d]_{1\to1\colon \id\otimes\action\otimes\id}
\ar@{-->}[drr]^{\quad 1\to1\colon - \id \otimes \chi_\B}
& &
\underset{\degone}{\MhdA \otimes_{\A} M}
\ar[d]^{1\to1\colon \id}
\\
\left(\underset{\degzero}{\MhdB\otimes_{\A} M} \xrightarrow{-\action\otimes\id}%
\MhdA\otimes_{\A} M\otimes_{\B} \MhdB\otimes_{\A} M\right)
\ar[rr]^<<<<<<<<<{0\to0\colon \trace}_<<<<<<<<<{{1\to1\colon - \id\otimes\trace}}
& &
\left(\underset{\degzero}{\barB} \xrightarrow{\action} \MhdA\otimes_{\A} M\right)
} 
}
\end{align}
Apriori the total complex 
of a face of a twisted cube over $\pretriag(\BmodB)$
is an object of $\pretriag \pretriag(\BmodB)$.
However, there is a canonical equivalence 
which sends a twisted complex of twisted complexes:
$$ \pretriag \pretriag(\BmodB) \xrightarrow{\sim} \pretriag(\BmodB),$$
cf. \cite[\S2]{BondalKapranov-EnhancedTriangulatedCategories}. 
We implicitly use this equivalence wherever possible.

The Cube Completion Lemma constructs from
the $2$-cube \eqref{eqn-SL-SRT'-diagram-small}
a $3\times 3$ commutative diagram in $D(\BbimB)$ whose 
rows and columns are exact. We now compute this diagram. 

The left column of \eqref{eqn-SL-SRT'-diagram-small} is
the image under $(-) \otimes_\A M [-1]$ of the first 
map in the right column of \eqref{eqn-M^A-M-M^B-diagram-big}. 
It descends to the morphism 
$SL[-1] \xrightarrow{S\alpha'} SRT'$ in \eqref{eqn-rsl-3x3-diagram}
in $D(\BbimB)$ and its convolution is isomorphic to $SQ$.  

The diagonal bimodule $\barB$ is homotopy equivalent 
to the total complex of the right column of 
\eqref{eqn-SL-SRT'-diagram-small}:
\begin{align}
\label{eqn-barB-homotopy-equivalent-to-the-right-column}
\xymatrix{
\left(\underset{\degzero}{\barB}\right)
\ar[rr]^<<<<<<<<<<{0\to0\colon \id \oplus -\action}
&&
\left(\underset{\degzero}{\barB\oplus \MhdA\otimes_{\A} M}
\ar[r]^<<<<<{\action \oplus \id}\right.
&
\left.  \underset{~}{\MhdA \otimes_{\A} M}\right). 
}
\end{align}

The total complex of the top row of \eqref{eqn-SL-SRT'-diagram-small}
is the null-homotopic twisted complex 
$\left( \underset{\degzero}{\MhdA \otimes_\A M} 
\xrightarrow{\id} \MhdA \otimes_\A M\right)$, while the total complex of the 
bottom row is the twisted complex
\begin{align}
\label{eqn-t't-via-twisted-complexes}
\MhdB \otimes_\A M \xrightarrow{\trace\oplus (\action\otimes\id)} 
\underset{\degzero}{\barB \oplus \bigl(\MhdA \otimes_\A M \otimes_\B \MhdB \otimes_\A M\bigr)}
\xrightarrow{\action\oplus(- \id\otimes\trace)}
\MhdA \otimes_\A M 
\end{align}
which we've shown in Prps. 
\ref{prps-unit-and-counit-of-t't-via-twisted-complexes} to convolve to $TT'$. 

By the Cube Lemma, the total complex of the whole $2$-cube equals 
the total complex of the $1$-cube constructed from its rows. 
It is then clear that the total complex of \eqref{eqn-SL-SRT'-diagram-small}
is homotopy equivalent to \eqref{eqn-t't-via-twisted-complexes}:
\begin{tiny}
\begin{align}
\label{eqn-SL-SRT'-diagram-part-isomorphic-to-unit-TT'}
\vcenter{
\xymatrix{
\left(\!\!\underset{\degzero}{\MhdA\!\otimes_{\A}\!M}\!\xrightarrow{- \id}\!\MhdA\!\otimes_{\A}\!M\!\!\right)
\ar[rr]^<<<<<<<<<<{0\to0: 0\oplus\id\!\otimes\!\action\!\otimes\!\id}_<<<<<<<<<<{0\!\to\!1:
- \id\!\otimes\!\chi_\B, 1\!\to\!1: \id}
\ar@{-->}[drr]_>>>>>>>>>>>>>>>>>>>>>>>>>>>>>{1\to1: \id \otimes
\chi_\B\quad}^{\quad\quad 1\to0: 0 \oplus - \id\!\otimes\!\action\!\otimes\!\id}
&&
\left(\!\!\MhdB\!\otimes_\A\!M\!\!\xrightarrow{\trace\oplus
(\action\!\otimes\!\id)}\!\!\underset{\degzero}{\barB\!\oplus\!\bigl(\!\MhdA\!\otimes_\A\!M\!\otimes_\B\!\MhdB\!\otimes_\A\!M\!\bigr)}\!\!\xrightarrow{\action\oplus(- \id\!\otimes\!\trace)}\!\!\MhdA\!\otimes_\A\!M\!\!\right)
\ar@<-1ex>[d] \ar[d] \ar[d] \ar@<1ex>[d]^{\id}
\\
&&
\left(\!\!\MhdB\!\otimes_\A\!M\!\!\xrightarrow{\trace\oplus(\action\!\otimes\!\id)}\!\!
\underset{\degzero}{\barB\!\oplus\!\bigl(\!\MhdA\!\otimes_\A\!M\!\otimes_\B\!\MhdB\!\otimes_\A\!M\!\bigr)}
\!\!\xrightarrow{\action\oplus(- \id\!\otimes\!\trace)}\!\!\MhdA\!\otimes_\A\!M\!\!\right)\!\!.\!\!
}
}
\!\!
\end{align}
\end{tiny}

Consider the map which the Cube Lemma
constructs from the total complex of the right column 
of \eqref{eqn-SL-SRT'-diagram-small} to the total complex 
of the whole 2-cube. It composes with the homotopy equivalences 
\eqref{eqn-barB-homotopy-equivalent-to-the-right-column}
and \eqref{eqn-SL-SRT'-diagram-part-isomorphic-to-unit-TT'}
to give the map \eqref{eqn-adjunction-unit-t't}. 
The latter was proven 
in Prps. \ref{prps-unit-and-counit-of-t't-via-twisted-complexes} 
to be isomorphic in $D(\BbimB)$ to $\B \xrightarrow{\action} TT'$. 

Putting together all of the above, 
we see that the diagram constructed by the Cube Completion Lemmma
from \eqref{eqn-SL-SRT'-diagram-small} is isomorphic in $D(\BbimB)$ to:
\begin{align}
\label{eqn-id-t't-3x3-diagram}
\vcenter{
\xymatrix{
SL[-1]
\ar@{=}[r]
\ar[d]_{S(\alpha')}
&
SL[-1]
\ar[r]
\ar[d]
&
0
\ar[d]
\\
SRT'
\ar[r]
\ar[d]
&
T'
\ar[r]
\ar[d]
&
TT'
\ar@{=}[d]
\\
SQ
\ar[r]
&
\B
\ar[r]^{\action}
&
TT'.
} 
}
\end{align}

Similarly, the following twisted $2$-cube over $\pretriag(\BmodB)$
\begin{align}
\label{eqn-SLT-SR-diagram-small}
\vcenter{
\xymatrix@C=3cm{
({\MhdB\otimes_{\A} M} \xrightarrow{\trace} \underset{\degzero}{\barB})
\ar[r]^<<<<<<<<<<<<<<<{-1\to-1\colon - \id \otimes \action}_<<<<<<<<<<<<<<<{0\to0\colon \action}
\ar[d]_{-1\to-1\colon \id}
\ar@{-->}[dr]^{\quad\quad -1\to-1\colon - \id \otimes \chi_\A}
&
{({\MhdB\otimes_{\A} M\otimes_{\B} \MhdA\otimes_{\A} M}
\xrightarrow{- \trace\otimes\id}
\underset{\degzero}{\MhdA\otimes_\A M})}
\ar[d]^{-1\to-1\colon \id \otimes \trace \otimes \id}
\\
\underset{\degminusone}{\MhdB \otimes_{\A} M}
\ar[r]^{\id} 
&
\underset{\degminusone}{\MhdB \otimes_{\A} M}
} 
}
\end{align}
produces the following diagram in $D(\BbimB)$ with exact rows and columns:
\begin{align}
\label{eqn-t't-id-3x3-diagram}
\vcenter{
\xymatrix{
T'T
\ar@{=}[d]
\ar[r]^{\trace}
&
\B
\ar[d]
\ar[r]
&
SQ'
\ar[d]
\\
T'T
\ar[d]
\ar[r]
&
T
\ar[d]
\ar[r]
&
SLT
\ar[d]^{S(\beta)}
\\
0
\ar[r]
&
SR[1]
\ar@{=}[r]
&
SR[1].
} 
}
\end{align}

Similarly, we incorporate the maps 
$\A \xrightarrow{\action} FF'$ and $F'F \xrightarrow{\trace} \A$
into the following two $3\times 3$ diagrams in $D(\AbimA)$ 
with exact rows and columns:
\begin{align}
\label{eqn-id-f'f-3x3-diagram}
\vcenter{
\xymatrix{
RS[-1]
\ar@{=}[r]
\ar[d]_{(\alpha)S}
&
RS[-1]
\ar[d]
\ar[r]
&
0
\ar[d]
\\
FLS
\ar[r]
\ar[d]
&
F
\ar[d]
\ar[r]
&
FF'
\ar@{=}[d]
\\
QS
\ar[r]
&
\A
\ar[r]^{\action}
&
FF'
} 
}
\end{align}
\begin{align}
\label{eqn-f'f-id-3x3-diagram}
\vcenter{
\xymatrix{
F'F
\ar[r]^{\trace}
\ar@{=}[d]
&
\A
\ar[d]
\ar[r]
&
Q'S
\ar[d]
\\
F'F
\ar[r]
\ar[d]
&
F'
\ar[r]
\ar[d]
&
F'RS
\ar[d]^{(\beta')S}
\\
0
\ar[r]
&
LS[1]
\ar@{=}[r]
&
LS[1].
} 
}
\end{align}

 We obtain immediately:
\begin{prps}
\label{prop-cond-3-4-isom}
~

\begin{enumerate}
\item
\label{item-cond-3-isom}
If the natural transformation $lt[-1] \xrightarrow{\eqref{eqn-lt(-1)-to-r}}
r$ is an isomorphism (the condition 
\eqref{item-main-theorem-the-twist-identifies-adjoints} of 
Theorem \ref{theorem-main-theorem}) 
then the adjunction counits $t't \rightarrow  \id$ and $f'f \rightarrow \id$ 
are isomorphisms.
\item
\label{item-cond-4-isom}
If the natural transformation $r \xrightarrow{\eqref{eqn-r-to-fl(1)}}
fl[1]$ is an isomorphism
(the condition 
\eqref{item-main-theorem-the-cotwist-identifies-adjoints} of Theorem
\ref{theorem-main-theorem}) 
then the adjunction units $\id\to tt'$ and $\id\to ff'$ are isomorphisms.
\end{enumerate}
\end{prps}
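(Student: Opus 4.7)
The plan is to read both parts off the four $3\times 3$ diagrams \eqref{eqn-id-t't-3x3-diagram}, \eqref{eqn-t't-id-3x3-diagram}, \eqref{eqn-id-f'f-3x3-diagram}, \eqref{eqn-f'f-id-3x3-diagram} already assembled via the Cube Completion Lemma, once the hypotheses are translated through Lemmas \ref{lemma-condition-cotwist-identifies-adjoints-is-equiv} and \ref{lemma-condition-twist-identifies-adjoints-is-equiv}. All of the genuine work, namely checking that the arrows labelled \eqref{eqn-adjunction-unit-t't}--\eqref{eqn-adjunction-counit-f'f} in those diagrams really do descend to the units and counits of the $(t',t)$ and $(f',f)$ adjunctions, has already been dispatched in Propositions \ref{prps-unit-and-counit-of-t't-via-twisted-complexes} and \ref{prps-unit-and-counit-of-f'f-via-twisted-complexes}. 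What remains is a one-line diagram chase, and the only mildly delicate point — identifying the third terms of the relevant rows with the correct shift of $Q$ or $Q'$ rather than with some cone that merely agrees with them in $D(\BbimA)$ — was built in by hand during the construction of the diagrams.

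For part (\ref{item-cond-3-isom}), I would first apply Lemma \ref{lemma-condition-twist-identifies-adjoints-is-equiv} to convert the hypothesis into $Q' \simeq 0$ in $D(\BbimA)$, whence $SQ' \simeq 0$ in $D(\BbimB)$ and $Q'S \simeq 0$ in $D(\AbimA)$. The top rows of \eqref{eqn-t't-id-3x3-diagram} and \eqref{eqn-f'f-id-3x3-diagram} are exact triangles whose third terms are precisely $SQ'$ and $Q'S$, so both rows collapse to force the arrows labelled \eqref{eqn-adjunction-counit-t't} and \eqref{eqn-adjunction-counit-f'f} to be isomorphisms in $D(\BbimB)$ and $D(\AbimA)$ respectively. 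By Propositions \ref{prps-unit-and-counit-of-t't-via-twisted-complexes} and \ref{prps-unit-and-counit-of-f'f-via-twisted-complexes} these are exactly the maps that descend to the adjunction counits $t't \to \id$ and $f'f \to \id$, which are therefore isomorphisms.

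Part (\ref{item-cond-4-isom}) is the symmetric statement, handled the same way. I would use Lemma \ref{lemma-condition-cotwist-identifies-adjoints-is-equiv} to convert the hypothesis into $Q \simeq 0$, hence $SQ \simeq 0$ and $QS \simeq 0$; the bottom rows of \eqref{eqn-id-t't-3x3-diagram} and \eqref{eqn-id-f'f-3x3-diagram} are then exact triangles with vanishing leftmost terms $SQ$ and $QS$, forcing the arrows labelled \eqref{eqn-adjunction-unit-t't} and \eqref{eqn-adjunction-unit-f'f} to be isomorphisms. By the same two propositions these maps descend to the units $\id \to tt'$ and $\id \to ff'$, which are accordingly isomorphisms. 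I anticipate no real obstacle beyond this: the whole point of the infrastructure assembled in the preceding subsections was to make this proposition an immediate corollary.
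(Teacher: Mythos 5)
Your argument is correct and is essentially the proof the paper gives: translate the hypothesis into vanishing of $Q'$ (respectively $Q$) via Lemma \ref{lemma-condition-twist-identifies-adjoints-is-equiv} (respectively Lemma \ref{lemma-condition-cotwist-identifies-adjoints-is-equiv}), tensor with $S$ on each side, and read the conclusion off the exact top (respectively bottom) rows of the four $3\times 3$ diagrams together with Propositions \ref{prps-unit-and-counit-of-t't-via-twisted-complexes} and \ref{prps-unit-and-counit-of-f'f-via-twisted-complexes}. In fact you are slightly more careful than the printed text, which contains a transposed cross-reference (it writes condition \eqref{item-main-theorem-the-twist-identifies-adjoints} where it means \eqref{item-main-theorem-the-cotwist-identifies-adjoints}) and only spells out one of the two halves.
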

\begin{proof}
We only prove the first claim. By Lemma \ref{lemma-condition-twist-identifies-adjoints-is-equiv} 
the condition \eqref{item-main-theorem-the-twist-identifies-adjoints} 
of Theorem \ref{theorem-main-theorem} is equivalent to $Q \simeq 0$ in 
$D(\BbimA)$. Therefore $SQ \simeq 0$ in $D(\BbimB)$ and since  
the bottom row of \eqref{eqn-id-t't-3x3-diagram} is exact  
$\B \xrightarrow{\action} TT'$ is an 
isomorphism. Thus $\id \xrightarrow{\text{unit}} tt'$ is 
an isomorphism. Similarly, 
$QS \simeq 0$ in $D(\AbimA)$ and by the exactness
of the bottom row of \eqref{eqn-id-f'f-3x3-diagram} the map 
$\A \xrightarrow{\action} FF'$ is an
isomorphism. Hence $\id \xrightarrow{\text{unit}} ff'$
is also an isomorphism. 
\end{proof}

\begin{lemma}
\label{lemma-alpha-beta-compositions}
Let $\alpha$, $\alpha'$, $\beta$ and $\beta'$ be as in diagrams 
\eqref{eqn-rsl-3x3-diagram} and \eqref{eqn-lsr-3x3-diagram}. Then 
\begin{enumerate}
\item 
The composition  
$R
\xrightarrow{(\action)R}
FF'R
\xrightarrow{F\beta'}
FL[1]
$ is the map $\alpha$. 
\item
The composition  
$LT[-1]
\xrightarrow{\alpha' T}
RT'T
\xrightarrow{R(\trace)}
R
$ is the map $\beta$. 
\item
The composition  
$L
\xrightarrow{L(\action)}
LTT'
\xrightarrow{\beta T'}
RT'[1]
$ is the map $\alpha'$. 
\item
The composition  
$F'R[-1]
\xrightarrow{F'\alpha}
F'FL
\xrightarrow{(\trace)L}
L
$ is the map $\beta'$. 
\end{enumerate}
\end{lemma}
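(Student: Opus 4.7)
The plan is to prove claim (1) in detail, then obtain (2), (3), (4) by formal symmetry. Claims (1) and (4) are interchanged by swapping $(-)^\A$ with $(-)^\B$ (which swaps $R$ with $L$, $F$ with $F'$, $T$ with $T'$, and trace maps with action maps), and the same symmetry swaps (2) and (3). So it will suffice to verify one claim from each pair. All compositions in the statement live in $D(\BbimA)$ (for (1),(4)) or $D(\AbimB)$ (for (2),(3)); my strategy is to lift each side to an explicit morphism of twisted complexes in $\BmodA$ (resp.\ $\AmodB$) built out of the representative $h$-projective bimodule $M$ and its duals $M^\A$, $M^\B$, then either exhibit both representatives as edges of a single twisted $3$-cube (invoking the recursivity clause of the Cube Completion Lemma \ref{lemma-the-cube-completion-lemma} together with part (4) of the Cube Lemma \ref{lemma-the-cube-lemma}) or construct an explicit homotopy between them.

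For claim (1), I would proceed as follows. The map $\alpha\colon R\to FL[1]$ is, by construction in \eqref{eqn-M^A-M-M^B-diagram-big}, the image in $D(\BbimA)$ of the map of twisted complexes produced by the Cube Completion Lemma from the $2$-cube \eqref{eqn-M^A-M-M^B-diagram-small}: explicitly, it is the connecting morphism between the convolutions of the left column and the top row. On the other hand, Proposition \ref{prps-unit-and-counit-of-f'f-via-twisted-complexes} gives an explicit twisted complex representative \eqref{eqn-adjunction-unit-f'f} for the unit $\id\to FF'$, and $\beta'$ is read off the top row of \eqref{eqn-lsr-3x3-diagram}. Tensoring \eqref{eqn-adjunction-unit-f'f} on the right with $M^\B$ yields a twisted complex representative for $R\to FF'R$, and applying the functor $(-)\otimes_\B M^\B$ to the Cube Completion of \eqref{eqn-M^B-M-M^A-diagram-small} yields the representative for $F(\beta'[1])\colon FF'R\to FL[1]$. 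I then assemble these into a single twisted $3$-cube over $\BmodA$ whose three axis directions correspond to: the trace $M^\B\otimes_\A M\to \B$, the action $\A\to M\otimes_\B M^\B$, and the identity/shift structure producing $\beta'$. Two different $2{+}1$ splittings of this $3$-cube produce, via the Cube Lemma, precisely the two sides of claim (1); since the (double) convolution is independent of the splitting, the two compositions agree in $D(\BbimA)$.

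Claim (4) follows from (1) by dualizing on the other side (swap $M^\A\leftrightarrow M^\B$, $R\leftrightarrow L$, $F\leftrightarrow F'$, $\trace\leftrightarrow\action$, using Lemma \ref{lemma-trace-and-action-maps-are-homotopy-dual} and Corollary \ref{cor-derived-trace-and-action-maps-are-isomorphic} to see that the derived trace/action maps are indeed interchanged under this duality). Claim (2) is proved by the same $3$-cube technique applied to \eqref{eqn-M^B-M-M^A-diagram-small} and to the twisted complex representative \eqref{eqn-adjunction-counit-t't} for the counit $t't\to\id$, combined with the explicit description of $\alpha'$ from the right column of \eqref{eqn-rsl-3x3-diagram}; and (3) follows from (2) by the same $\A/\B$-duality. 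The main technical obstacle, as always with twisted complexes in the Bondal--Kapranov convention, is keeping track of the signs and of the identifications \eqref{eqn-tensor-product-of-twisted-complexes}--\eqref{eqn-left-dual-of-a-twisted-complex} so that the candidate $3$-cube is genuinely a twisted cube (i.e.\ satisfies the quadratic relation at each pair of vertices). Once the sign bookkeeping is done, the rest is a direct reading off of the two splittings and an appeal to Lemma \ref{lemma-the-cube-lemma}(4).
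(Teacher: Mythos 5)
Your reduction of the four claims to one is fine in spirit: once (1) is known, claim (4) follows formally by the zig-zag identities for the $(F',F)$ adjunction (apply $F'$ to (1), postcompose with the counit, use naturality and Prop.~\ref{prps-M^B-and-M^A-are-homotopy-adjoints-of-M-via-S-SRS-S-maps}), and the pair (2),(3) is related in the same way. The paper also proves only (1) and declares the rest analogous.

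The problem is the twisted $3$-cube argument you propose for (1). The Cube Lemma (Lemma~\ref{lemma-the-cube-lemma}(4)) asserts that the double convolution is independent of the splitting $I=J\times K$ --- this is a statement about an \emph{object} of $\C$ up to isomorphism, not about equality of two morphisms. Even the commutativity clause of the Cube Completion Lemma only gives you, for each $2$-face of the $n$-cube of side $2$, that the two length-two paths around it agree; to deduce (1) from it you would need to exhibit $\alpha$ and the composite $F\beta'\circ(\text{unit})R$ as two such parallel paths in the completed diagram of a single twisted $3$-cube, and no candidate cube with that property presents itself. In particular, $\alpha$ is the connecting ``wrap-around'' map of the completed $2$-cube built from \eqref{eqn-M^A-M-M^B-diagram-small}, while $\beta'$ is the connecting map from the unrelated $2$-cube \eqref{eqn-M^B-M-M^A-diagram-small}, and the unit \eqref{eqn-adjunction-unit-f'f} is a map into the convolution of a three-term twisted complex containing the four-fold tensor $M\otimes_\B M^\A\otimes_\A M\otimes_\B M^\B$. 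These ingredients do not assemble into a twisted $3$-cube whose two $2{+}1$ splittings yield precisely the two sides of (1). More to the point, even granting a candidate $3$-cube, verifying the quadratic relation $(-1)^{|\bar j|}dq_{\bar i\bar j}+\sum q_{\bar k\bar j}q_{\bar i\bar k}=0$ for it would require exactly the homotopy you are trying to produce, so the reduction is circular rather than mere ``sign bookkeeping.''

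What the paper actually does is the concrete version of the computation you are trying to avoid. It writes down explicit maps of twisted complexes over $\BmodA$ lifting $\alpha$, $R\xrightarrow{\text{unit}R}FF'R$, and $FF'R\xrightarrow{F\beta'}FL[1]$ (the last produced by first lifting $\beta'$, then applying the tensor-of-twisted-complexes formula \eqref{eqn-tensor-product-of-twisted-complexes} to tensor with the representative of $F$). It then reduces the comparison of composites to the commutativity up to homotopy of a small diagram \eqref{eqn-triangle-square-commutative-diagram}: a square that commutes strictly by functoriality of $\otimes$, and a triangle that commutes up to homotopy precisely by the zig-zag identity $M\xrightarrow{\action\otimes\id}M\otimes_\B M^\B\otimes_\A M\xrightarrow{\id\otimes\trace}M\sim\id$ of Prop.~\ref{prps-M^B-and-M^A-are-homotopy-adjoints-of-M-via-S-SRS-S-maps}. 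The zig-zag identity is the genuine input here; it is not a consequence of the Cube Lemma, and it is what your proposal misses.
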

\begin{proof}
We only prove the first claim, the other three are
proved analogously. Note also, that throughout the proof 
we omit labelling the internal twisted maps inside twisted
complexes, since they are not relevant to our argument. The results
we quote before stating each twisted complex identify these maps 
explicitly.  

By construction of \eqref{eqn-rsl-3x3-diagram} the 
map $R \xrightarrow{\alpha} FL[1]$ in $D(\BbimA)$ 
descends from the map of twisted complexes
\begin{tiny}
\begin{align}
\label{eqn-R->-FL1-for-twisted-complexes}
\vcenter{
\xymatrix@C=2cm{
& 
\left( \underset{\degzero}{\MhdB} \right)
\ar[d]^{\action \otimes \id}
\\
\Bigl( \MhdA 
\ar[r]
&
\underset{\degzero}{\MhdA \otimes_{\A} M \otimes_{\B} \MhdB} \Bigr)
}
}.
\end{align}
\end{tiny}

By Prps. \ref{prps-unit-and-counit-of-f'f-via-twisted-complexes} 
the map $ R \xrightarrow{(\action)R} FF'R$
descends from the map of twisted complexes 
\begin{tiny}
\begin{align}
\label{eqn-R-->--FF'R-for-twisted-complexes}
\vcenter{
\xymatrix{
& & 
\left( \underset{\degzero}{\MhdB} \right)
\ar[d]_{\id \oplus (\id \otimes \action)}
\ar@{-->}[ddrr]^{\quad \quad \id \otimes (-(\chi_\A \otimes
\id)\circ\action)}
& & 
\\
& &
\underset{\degzero}{\MhdB \oplus \bigl(\MhdB \otimes_{\A} M
\otimes_{\B}  \MhdB\bigr)}
\ar[d]_{\id \oplus (- \id \otimes \action \otimes \id)}
& &
\\
\Bigl( \MhdB \otimes_{\A} M \otimes_{\B} \MhdA 
\ar[rr]
\ar@/^/@{-->}[rrrr]
& &
\underset{\degzero}{\MhdB \oplus \bigl(\MhdB \otimes_{\A} M
\otimes_{\B} \MhdA \otimes_\A M \otimes_\B \MhdB \bigr)}
\ar[rr]
& &
\MhdB \otimes_{\A} M\otimes_{\B} \MhdB \Bigr)
} 
}.
\end{align}
\end{tiny}

Finally, $FF'R \xrightarrow{F\beta'} FL[1]$ descends  
from the map of twisted complexes which is computed as follows. 

By construction of the diagram \eqref{eqn-lsr-3x3-diagram} 
the map $F'R \xrightarrow{\beta'} L[1]$ descends from
\begin{tiny}
\begin{align}
\label{eqn-map-defining-F'R--beta'--L1}
\vcenter{
\xymatrix@C=2cm{
\Bigl(\MhdB \otimes_{\A} M \otimes_{\B} \MhdA
\ar[r]
\ar[d]^{-\trace \otimes \id}
& 
\underset{\degzero}{\MhdA} \Bigr)
\\ 
\left( \underset{\degminusone}{\MhdA} \right)
&
}
}.
\end{align}
\end{tiny}
On the other hand, $F$ is the convolution of 
$\left( 
\underset{\degzero}{\barA} \xrightarrow{\action} M \otimes_{\B} \MhdB 
\right)$. Thus the map $FF'R \xrightarrow{F\beta'} FL[1]$ is 
\begin{align*}
\left\{
{\MhdB \otimes_{\A} M \otimes_{\B} \MhdA}
\xrightarrow{\id \otimes \trace}
\underset{\degzero}{\MhdB} \right\} \otimes 
\left\{ 
\underset{\degzero}{\barA} \xrightarrow{\action} M \otimes_{\B} \MhdB
\right\}
\xrightarrow{\eqref{eqn-map-defining-F'R--beta'--L1} \otimes \id}
\underset{\degminusone}{\MhdA} \otimes 
\left\{ 
\underset{\degzero}{\barA} \xrightarrow{\action} M \otimes_{\B} \MhdB
\right\}.
\end{align*}
Lemma \ref{lemma-tensor-and-hom-of-twisted-complexes} tells us
how to take tensor product of twisted complexes in a way 
compatible with convolutions. It follows from it that 
$FF'R \xrightarrow{F\beta'} FL[1]$ descends from the map 
\begin{tiny}
\begin{align}
\label{eqn-FF'R->-FL1-for-twisted-complexes}
\vcenter{
\xymatrix{
\Bigr(\MhdB \otimes_{\A} M \otimes_{\B} \MhdA 
\ar[d]^{- \trace \otimes \id}
\ar[rr]
\ar@/^/@{-->}[rrrr]
& &
\underset{\degzero}{\MhdB \oplus \bigl(\MhdB \otimes_{\A} M
\otimes_{\B} \MhdA \otimes_\A M \otimes_\B \MhdB \bigr)}
\ar[rr]
\ar[d]^{0 \oplus ( - \trace \otimes \id)}
& &
\MhdB \otimes_{\A} M\otimes_{\B} \MhdB \Bigl)
\\
\Bigl( \MhdA 
\ar[rr]
& & 
\underset{\degzero}{\MhdA \otimes_{\A} M \otimes_{\B} \MhdB} \Bigr)
& & 
} 
}.
\end{align}
\end{tiny}

It remains to prove that 
the composition of
\eqref{eqn-R-->--FF'R-for-twisted-complexes} and 
\eqref{eqn-FF'R->-FL1-for-twisted-complexes} is homotopic 
to \eqref{eqn-R->-FL1-for-twisted-complexes}. This is equivalent 
to the following diagram commuting up to homotopy:
\begin{tiny}
\begin{align}
\label{eqn-triangle-square-commutative-diagram}
\vcenter{
\xymatrix{
\MhdB
\ar[rr]^<<<<<<<<<<{\id\otimes\action}
\ar[rrd]_{\id}
&
& 
\MhdB\otimes_{\A}M\otimes_{\B}\MhdB
\ar[rrr]^<<<<<<<<<<<<<<{\id\otimes\action\otimes\id}
\ar[d]^{\trace\otimes\id}
&
&
&
(\MhdB\otimes_{\A}M)\otimes_{\B}\MhdA\otimes_{\A}M\otimes_{\B}\MhdB
\ar[d]^{\trace\otimes\id}
\\
&
&
\MhdB
\ar[rrr]^{\action\otimes\id}
&
&
&
\MhdA\otimes_{\A}M\otimes_{\B}\MhdB.
}
}
\end{align}
\end{tiny}
This is clear: the square in
\eqref{eqn-triangle-square-commutative-diagram} 
commutes up to homotopy by the functoriality of the tensor product,
while 
the triangle commutes up to homotopy by
Prop.
\ref{prps-M^B-and-M^A-are-homotopy-adjoints-of-M-via-S-SRS-S-maps}.

\end{proof}

Let $\gamma: F'L[-1]\to LT'[1]$ be the map induced by 
the following morphism of twisted complexes
\begin{align}
\label{eqn-gamma-on-the-level-of-twisted complexes}
\vcenter{
\xymatrix{
 & 
\bigl( \underset{\degzero}{\MhdA\otimes_{\A} M \otimes_{\B} \MhdA}
\ar[r]^<<<<<{\id \otimes \trace}
\ar[d]^{\id}
&
\MhdA\bigr)
\\
\bigl( \MhdA
\ar[r]^>>>>>{\action \otimes \id}
&
\underset{\degzero}{\MhdA\otimes_{\A} M \otimes_{\B} \MhdA} \bigr).
}
}
\end{align}

\begin{lemma}
\label{lemma-gamma-is-an-isomorphism} 
The morphism \eqref{eqn-gamma-on-the-level-of-twisted complexes}
is a homotopy equivalence. Consequently, the map $\gamma$ is an
isomorphism. 
\end{lemma}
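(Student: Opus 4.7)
The plan is to realise both twisted complexes appearing in \eqref{eqn-gamma-on-the-level-of-twisted complexes} as rowwise and columnwise slices of a single twisted $2$-cube over $\BmodA$ whose diagonal encodes the homotopy provided by Proposition \ref{prps-M^B-and-M^A-are-homotopy-adjoints-of-M-via-S-SRS-S-maps}. The identification of $\gamma$ with the resulting homotopy equivalence will then follow from the Cube Lemma (Lemma \ref{lemma-the-cube-lemma}) together with the explicit formulas of the Cube Completion Lemma (Lemma \ref{lemma-the-cube-completion-lemma}).

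First, fix a $\B$-action map $\B \xrightarrow{\action} M^{\A} \otimes_{\A} M$. By Proposition \ref{prps-M^B-and-M^A-are-homotopy-adjoints-of-M-via-S-SRS-S-maps}, the composition
\begin{equation*}
M^{\A} \xrightarrow{\action \otimes \id} M^{\A} \otimes_{\A} M \otimes_{\B} M^{\A} \xrightarrow{\id \otimes \trace} M^{\A}
\end{equation*}
is homotopic to the identity; pick a homotopy $h \colon M^{\A} \rightarrow M^{\A}$ of internal degree $-1$ witnessing this. I assemble these data into a twisted $2$-cube over $\BmodA$
\begin{equation*}
\vcenter{\xymatrix@C=5em{
M^{\A} \ar[r]^<<<<<<{\action \otimes \id} \ar[d]_{-\id} \ar[rd]^{h} & M^{\A} \otimes_{\A} M \otimes_{\B} M^{\A} \ar[d]^{\id \otimes \trace} \\
M^{\A} \ar[r]_{\id} & M^{\A}
}}
\end{equation*}
whose twisted $2$-cube relation $-dh = (\id \otimes \trace)(\action \otimes \id) + (\id)(-\id)$ is exactly the defining identity of $h$.

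Second, I split this $2$-cube via the Cube Lemma. The top row is the sign-twisted subcube $[M^{\A} \xrightarrow{-\action \otimes \id} M^{\A} \otimes_{\A} M \otimes_{\B} M^{\A}]$, whose convolution is homotopy equivalent to $LT'[1]$; the right column is $[M^{\A} \otimes_{\A} M \otimes_{\B} M^{\A} \xrightarrow{\id \otimes \trace} M^{\A}]$, whose convolution equals $F'L$. The bottom row $[M^{\A} \xrightarrow{\id} M^{\A}]$ and the left column $[M^{\A} \xrightarrow{\id} M^{\A}]$ are each cones of identity maps, hence contractible. The Cube Lemma presents the total convolution of the $2$-cube in $\pretriag(\BmodA)$ in two ways: rowwise, as a cone with contractible target, homotopy equivalent to $LT'[2]$; columnwise, as a cone with contractible source, homotopy equivalent to $F'L$. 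Composing these equivalences yields $F'L \simeq LT'[2]$, equivalently $F'L[-1] \simeq LT'[1]$.

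Finally, the Cube Completion Lemma produces chain-level representatives of the connecting maps in the $3 \times 3$ diagram of the completion. Tracking this construction, the induced homotopy equivalence $F'L[-1] \xrightarrow{\sim} LT'[1]$ is represented at the level of twisted complexes by the identity on the common component $M^{\A} \otimes_{\A} M \otimes_{\B} M^{\A}$ (in cohomological degree $0$) and the zero map on the $M^{\A}$-summands in degrees $\pm 1$; this is precisely the map \eqref{eqn-gamma-on-the-level-of-twisted complexes}. Hence \eqref{eqn-gamma-on-the-level-of-twisted complexes} is a homotopy equivalence and $\gamma$ is an isomorphism in $D(\BbimA)$. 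The main technical obstacle lies in this last step: carefully matching the signs produced by the sign-twisted subcubes of the Cube Lemma and the dualising formulas of Lemma \ref{lemma-convolutions-and-the-dualizing-functor} with the explicit chain-level map \eqref{eqn-gamma-on-the-level-of-twisted complexes}, possibly after absorbing a null-homotopy coming from the contractible row and column.
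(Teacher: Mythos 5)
Your proof is correct but takes a genuinely different route from the paper. The paper's own argument is entirely computational: it writes down the candidate homotopy inverse $\id + \lambda$ with $\lambda = (\action \otimes \id)\circ(\id\otimes\trace)$, and checks directly that composing with $\gamma$ in either order differs from the identity by an explicit null-homotopy. Your argument instead packages the homotopy $h$ furnished by Proposition \ref{prps-M^B-and-M^A-are-homotopy-adjoints-of-M-via-S-SRS-S-maps} as the diagonal datum of a twisted $2$-cube over the strongly pretriangulated $\BmodA$, observes that the left column and bottom row give contractible sign-twisted subcubes (cones of $\pm\id$), and reads the equivalence $F'L[-1]\simeq LT'[1]$ off the Cube Completion Lemma as the composition of the inclusion of the right column into the total complex with the connecting projection onto the shifted top row. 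This is a sound and attractive organization --- the $2$-cube viewpoint explains \emph{why} the equivalence exists rather than merely verifying it --- and the final identification does come out right: the inclusion lands in the $X_{(-1,0)}$- and $X_{(0,0)}$-summands of the total complex, the projection keeps only $X_{(-1,-1)}$ and $X_{(-1,0)}$, so the composition is the identity on their only common term $M^{\A}\otimes_{\A}M\otimes_{\B}M^{\A}$ and zero on the outer $M^{\A}$-summands, which is $\gamma$ up to an overall sign (coming from the sign-twist in $Y^{(-1,1)}$ and the two presentations of the total convolution) that is immaterial for the statement. You flag this sign-chase yourself, rightly, as the place where the work actually lives. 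The trade-off against the paper's route is that the direct computation produces, and names, the explicit homotopy inverse $\id+\lambda$; your argument proves only that an inverse exists, which suffices here but gives less to hold onto if one wants to track the inverse through subsequent constructions.
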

\begin{proof}
Consider the composition 
\begin{align}
\label{eqn-RSR--R--RSR-composition}
\MhdA\otimes_{\A} \left(M \otimes_{\B} \MhdA\right)
\xrightarrow{\id \otimes \trace}
\MhdA
\xrightarrow{\action \otimes \id}
\left(\MhdA\otimes_{\A} M \right) \otimes_{\B} \MhdA.
\end{align}
We claim that the homotopy inverse of 
\eqref{eqn-gamma-on-the-level-of-twisted complexes}
is the morphism 
\begin{align}
\label{eqn-homotopy-inverse-of-gamma-on-the-level-of-twisted complexes}
\xymatrix@C=2.5cm{
\Bigl( \MhdA
\ar@{-->}[dr]_{\quad - (\action \otimes \id) \circ \xi_\A \quad}
\ar[r]^<<<<<<<<<<<{\action \otimes \id}
\ar@{.>}[drr]|<<<<<<<<<<<<<<<<<<<<<<<<<<<<<<{\underset{~}{\xi_\A^2}}
&
\underset{\degzero}{\MhdA\otimes_{\A} M \otimes_{\B} \MhdA} \Bigr)  
\ar[d]^<<{\id - \eqref{eqn-RSR--R--RSR-composition}}
\ar@{-->}[dr]^{\quad - \xi_\A \circ (\id \otimes \trace)}
&
\\
&
\Bigl(\underset{\degzero}{\MhdA\otimes_{\A} M \otimes_{\B} \MhdA}
\ar[r]_<<<<<<<<<<<{\id \otimes \trace}
&
\MhdA\Bigr).
}
\end{align}

Indeed, the composition of 
\eqref{eqn-gamma-on-the-level-of-twisted complexes}
with 
\eqref{eqn-homotopy-inverse-of-gamma-on-the-level-of-twisted complexes}
is the morphism of twisted complexes 
\begin{align*}
\xymatrix@C=2cm{
\Bigl( \underset{\degzero}{\MhdA\otimes_{\A} M \otimes_{\B} \MhdA}
\ar[r]^<<<<<<<<<<<{\id \otimes \trace}
\ar[d]^{\id - \eqref{eqn-RSR--R--RSR-composition}}
\ar@{-->}[dr]^{\quad - \xi_\A \circ (\id \otimes \trace)}
&
\MhdA\Bigr)
\\
\Bigl( \underset{\degzero}{\MhdA\otimes_{\A} M \otimes_{\B} \MhdA}
\ar[r]^<<<<<<<<<<<{\id \otimes \trace}
&
\MhdA\Bigr)
}
\end{align*}
which differs from the identity morphism by
\begin{align*}
\xymatrix@C=2cm{
\Bigl( \underset{\degzero}{\MhdA\otimes_{\A} M \otimes_{\B} \MhdA}
\ar[r]^<<<<<<<<<<<{\id \otimes \trace}
\ar[d]^{\eqref{eqn-RSR--R--RSR-composition}}
\ar@{-->}[dr]^{\quad \xi_\A \circ (\id \otimes \trace)}
&
\MhdA\Bigr)
\ar[d]^{\id}
\\
\Bigl( \underset{\degzero}{\MhdA\otimes_{\A} M \otimes_{\B} \MhdA}
\ar[r]^<<<<<<<<<<<{\id \otimes \trace}
&
\MhdA\Bigr).
}
\end{align*}
This is null-homotopic because it is the differential of 
the following degree $-1$ morphism of twisted complexes:
\begin{align*}
\xymatrix@C=2cm{
\Bigl( \underset{\degzero}{\MhdA\otimes_{\A} M \otimes_{\B} \MhdA}
\ar[r]^<<<<<<<<<<<{\id \otimes \trace}
&
\MhdA\Bigr)
\ar[dl]^>>>>>>>>>>>>>>>>>>>>>{\quad \action \otimes \id}
\ar@{-->}[d]^{\xi_\A}
\\
\Bigl( \underset{\degzero}{\MhdA\otimes_{\A} M \otimes_{\B} \MhdA}
\ar[r]^<<<<<<<<<<<{\id \otimes \trace}
&
\MhdA\Bigr).
}
\end{align*}
Thus the composition of \eqref{eqn-gamma-on-the-level-of-twisted complexes}
with 
\eqref{eqn-homotopy-inverse-of-gamma-on-the-level-of-twisted complexes}
is homotopic to $\id$. 

The composition of \eqref{eqn-homotopy-inverse-of-gamma-on-the-level-of-twisted complexes}
and
\eqref{eqn-gamma-on-the-level-of-twisted complexes}
being homotopic to $\id$ is proved similarly.
\end{proof}

\begin{lemma}
\label{lemma-l-commutes-with-dual-twist-and-cotwist}
The composition $F'L[-1]\xrightarrow{F'\alpha'}
F'RT'\xrightarrow{\beta' T'} LT'[1]$ equals the map $\gamma$.
\end{lemma}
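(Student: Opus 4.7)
The plan is to lift the composition $\beta'T' \circ F'\alpha'$ to an explicit morphism of twisted complexes in $\BmodA$ and compare it directly with the morphism \eqref{eqn-gamma-on-the-level-of-twisted complexes} defining $\gamma$, paralleling the proof of Lemma~\ref{lemma-alpha-beta-compositions}. First, I would use Lemma~\ref{lemma-tensor-product-of-twisted-complexes} to unfold $F'L[-1]$ and $LT'[1]$ as the two-term twisted complexes appearing in \eqref{eqn-gamma-on-the-level-of-twisted complexes}, and $F'RT' = T' \otimes_\B R \otimes_\A F'$ as a three-term twisted complex with summands $M^\B \otimes_\A M \otimes_\B M^\A$, $M^\B \oplus (M^\A \otimes_\A M \otimes_\B M^\B \otimes_\A M \otimes_\B M^\A)$ and $M^\A \otimes_\A M \otimes_\B M^\B$ at indices $-1$, $0$, $1$. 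Then I would realise $\alpha'$ by the morphism of twisted complexes from the top-right column of \eqref{eqn-M^A-M-M^B-diagram-big}, whose single nonzero component is $M^\A \xrightarrow{\id \otimes \action} M^\A \otimes_\A M \otimes_\B M^\B$ into the index-$0$ summand of $RT'[1]$; and realise $\beta'$ as in \eqref{eqn-map-defining-beta-dash-temp}, whose single nonzero component is $M^\B \otimes_\A M \otimes_\B M^\A \xrightarrow{\trace \otimes \id} M^\A$ via the $\B$-trace $M^\B \otimes_\A M \to \B$. This gives explicit descriptions $F'\alpha' = \alpha'[-1] \otimes_\A \id_{F'}$ and $\beta'T' = \id_{T'} \otimes_\B \beta'$.

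The key observation is that since $\alpha'$ and $\beta'$ each have exactly one nonzero component, so do $F'\alpha'$ and $\beta'T'$, and a direct calculation will show that $\beta'T' \circ F'\alpha'$ also has only a single nonzero component. It goes from the index-$0$ summand $M^\A \otimes_\A M \otimes_\B M^\A$ of $F'L[-1]$ to the index-$0$ summand $M^\A \otimes_\A M \otimes_\B M^\A$ of $LT'[1]$, routed through the summand $M^\A \otimes_\A M \otimes_\B M^\B \otimes_\A M \otimes_\B M^\A$ at index $0$ of $F'RT'$, and is equal to $\id_{M^\A} \otimes \bigl((\id \otimes \trace)(\action \otimes \id)\bigr) \otimes \id_{M^\A}$. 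All other potential contributions vanish: in particular, the index-$1$ summand $M^\A$ of $F'L[-1]$ is sent by $F'\alpha'$ only into the index-$1$ summand $M^\A \otimes_\A M \otimes_\B M^\B$ of $F'RT'$, on which $\beta'T'$ acts as zero because $\beta'$ itself vanishes on the index-$0$ summand $M^\B$ of $F'R$. In contrast, $\gamma$ also has a single nonzero component: the identity on the same index-$0$ summand.

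It remains to build a homotopy between the two morphisms of twisted complexes. Proposition~\ref{prps-M^B-and-M^A-are-homotopy-adjoints-of-M-via-S-SRS-S-maps} provides an internal degree-$(-1)$ homotopy $h_M\colon M \to M$ satisfying $dh_M + h_M d = (\id \otimes \trace)(\action \otimes \id) - \id_M$. I would then define $h\colon F'L[-1] \to LT'[1]$ as the degree-$(-1)$ morphism of twisted complexes whose only nonzero component is $h_{00} = \id_{M^\A} \otimes h_M \otimes \id_{M^\A}$ on the index-$0$ summand, and zero elsewhere. Since $F'L[-1]$ has no indices strictly less than $0$ and $LT'[1]$ has no indices strictly greater than $0$, the twisted-differential formula from Section~\ref{section-subsection-twisted-complexes} reduces $dh$ to the pure internal differential $\id_{M^\A} \otimes (dh_M) \otimes \id_{M^\A}$ on the only non-vanishing component, which by construction equals $\beta'T' \circ F'\alpha' - \gamma$. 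Hence $\beta'T' \circ F'\alpha' = \gamma$ in $D(\BbimA)$. The main obstacle will be purely computational: tracking the signs from Lemma~\ref{lemma-tensor-product-of-twisted-complexes} in the triple tensor product $T' \otimes_\B R \otimes_\A F'$, and verifying that the off-diagonal components of $F'\alpha'$ and $\beta'T'$ indeed vanish rather than combining into additional homotopically nontrivial contributions.
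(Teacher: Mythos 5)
Your proof is correct and takes essentially the same approach as the paper: lift $F'\alpha'$ and $\beta'T'$ to explicit morphisms of twisted complexes, compose them, observe that the resulting map has a single nonzero component $\id_{M^\A}\otimes\bigl((\id\otimes\trace)(\action\otimes\id)\bigr)\otimes\id_{M^\A}$ at position $(0,0)$, and reduce to Proposition \ref{prps-M^B-and-M^A-are-homotopy-adjoints-of-M-via-S-SRS-S-maps}. The only difference is that you make explicit the final homotopy $h=\id\otimes h_M\otimes\id$ and verify that the index ranges of $F'L[-1]$ and $LT'[1]$ preclude any corrections from the twisted differentials, a detail the paper leaves implicit but which is correctly argued here.
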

\begin{proof}
Arguing as in Lemma \ref{lemma-alpha-beta-compositions}
we see that $F'L[-1]\xrightarrow{F'\alpha'} F'RT'$
descends from the twisted complex map 
\begin{tiny}
\begin{align}
\label{eqn-F'L(-1)-->--F'RT'-for-twisted-complexes}
\vcenter{
\xymatrix{
&  
\Bigl( \underset{\degzero}{\MhdA \otimes_{\A} M \otimes_{\B} \MhdA} 
\ar[r]
\ar[d]^{0 \oplus (\id \otimes \action \otimes \id \otimes \id)}
&  
\MhdA \Bigr)
\ar[d]^{\id \otimes \action} 
\\
\Bigl(
\MhdB \otimes_{\A} M \otimes_{\B} \MhdA
\ar[r]
\ar@{-->}@/^/[rr]
& 
\underset{\degzero}{\MhdB \oplus \bigl(\MhdA \otimes_{\A} M
\otimes_{\B}  \MhdB \otimes_{\A} M \otimes_{\B} \MhdA \bigr)}
\ar[r]
& 
\MhdA \otimes_{\A} M \otimes_{\B} \MhdB \Bigr).
}
}
\end{align}
\end{tiny}
Once again we omit labeling the internal twisted maps inside twisted
complexes since they are not relevant to our argument. Similarly, 
 $F'RT'\xrightarrow{\beta' T'} LT'[1]$ descends  
from the twisted complex map 
\begin{tiny}
\begin{align}
\label{eqn-F'RT'-->--LT'(1)-for-twisted-complexes}
\vcenter{
\xymatrix{
\Bigl( \MhdB \otimes_{\A} M \otimes_{\B} \MhdA
\ar[r]
\ar[d]^{\trace \otimes \id}
\ar@{-->}@/^/[rr]
& 
\underset{\degzero}{\MhdB \oplus \bigl(\MhdA \otimes_{\A} M
\otimes_{\B}  \MhdB \otimes_{\A} M \otimes_{\B} \MhdA \bigr)}
\ar[r]
\ar[d]^{0 \oplus (\id \otimes \id \otimes \trace \otimes \id)}
& 
\MhdA \otimes_{\A} M \otimes_{\B} \MhdB \Bigr)
\\
\Bigl( \MhdA
\ar[r]
&  
\underset{\degzero}{\MhdA \otimes_{\A} M \otimes_{\B} \MhdA}\Bigr). 
&  
}
}
\end{align}
\end{tiny}

Hence the composition 
$F'L[-1]\xrightarrow{F'\alpha'} F'RT'\xrightarrow{\beta' T'} LT'[1]$
descends from 
\begin{tiny}
\begin{align}
\label{eqn-F'L(-1)-->--LT'(1)-for-twisted-complexes}
\vcenter{
\xymatrix{
&  
\Bigl( \underset{\degzero}{\MhdA \otimes_{\A} M \otimes_{\B} \MhdA} 
\ar[r]
\ar[d]^{\id \otimes \bigl((\id \otimes \trace) \circ (\action \otimes
\id) \bigr) \otimes \id}
&  
\MhdA \Bigr)
\\
\Bigl( \MhdA
\ar[r]
&  
\underset{\degzero}{\MhdA \otimes_{\A} M \otimes_{\B} \MhdA}\Bigr). 
&  
}
}
\end{align}
\end{tiny}
By Prop. \ref{prps-M^B-and-M^A-are-homotopy-adjoints-of-M-via-S-SRS-S-maps}
the composition
$$ M \xrightarrow{\action \otimes \id } M \otimes_{\B} \MhdB \otimes_{\A} M
\xrightarrow{\id \otimes \trace} M$$
is homotopic to $\id$, and thus
\eqref{eqn-F'L(-1)-->--LT'(1)-for-twisted-complexes} is homotopic to
the map $\gamma$. 
\end{proof}

\begin{lemma}
\label{lemma-lt-fl-2maps}
The following maps are equal:
\begin{enumerate}
\item 
\label{eqn-lemma-lt-fl-through-rsl}
$ LT[-1] \xrightarrow{\alpha\circ\beta} FL[1]$
\item 
\label{eqn-lemma-lt-fl-long-way}
$LT[-1] 
\xrightarrow{\eqref{eqn-adjunction-unit-f'f}LT}
FF'LT[-1]
\xrightarrow{F\gamma T}
FLT'T[1]
\xrightarrow{FL\eqref{eqn-adjunction-counit-t't}}
FL[1]$
\end{enumerate}
\end{lemma}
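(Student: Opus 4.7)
The plan is a pure naturality argument that chains together the three preceding lemmas. First, substitute $\gamma = (\beta' T') \circ (F'\alpha')$ from Lemma \ref{lemma-l-commutes-with-dual-twist-and-cotwist} into the long composition \eqref{eqn-lemma-lt-fl-long-way}. Writing $u$ for the map $\A \to FF'$ from \eqref{eqn-adjunction-unit-f'f} and $c$ for the map $T'T \to \B$ from \eqref{eqn-adjunction-counit-t't}, the composition becomes
\[
(FL\, c[1]) \circ (F\beta'\, T'T) \circ (FF'\alpha'\, T) \circ (u\, LT).
\]

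Next, commute $u$ past the morphism $\alpha'T \colon LT[-1] \to RT'T$ by naturality of the derived tensor product (i.e.\ the interchange law in $D(\BbimA)$). The composition is rewritten as
\[
(FL\, c[1]) \circ (F\beta'\, T'T) \circ (u\, RT'T) \circ (\alpha'\, T).
\]
The middle two factors now equal $\alpha T'T$: by Lemma \ref{lemma-alpha-beta-compositions}(1) we have $\alpha = (F\beta') \circ (uR)$, and whiskering on the right by $T'T$ gives $(F\beta'\,T'T) \circ (u\,RT'T) = \alpha\, T'T$.

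Now commute $c$ past $\alpha \colon R \to FL[1]$ by a second naturality step, converting the leading factor $(FL\, c[1]) \circ (\alpha T'T)$ into $\alpha \circ (R\, c)$. The composition reduces to $\alpha \circ (R\, c) \circ (\alpha'\, T)$, and the trailing factor $(R\, c) \circ (\alpha'\, T)$ is precisely $\beta$ by Lemma \ref{lemma-alpha-beta-compositions}(2). The total composition is therefore $\alpha \circ \beta$, i.e.\ the map \eqref{eqn-lemma-lt-fl-through-rsl}.

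There is no genuine obstacle beyond bookkeeping; the argument is a three-step diagram chase assembled from two applications of naturality and three previously established formulas. The only mild subtlety is keeping track of shifts and of the position of $T'T$ within the compositional notation $SR = R \ldertimes_\A S$, but once these are pinned down the equalities are formal and the lemma follows.
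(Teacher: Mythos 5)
Your argument is correct and amounts to the paper's own proof run in the opposite direction: the paper expands $\alpha \circ \beta$ via Lemma \ref{lemma-alpha-beta-compositions}, uses two tensor-functoriality moves to push $\eqref{eqn-adjunction-unit-f'f}$ and $\eqref{eqn-adjunction-counit-t't}$ to the outer positions, and then collapses the middle two morphisms into $F\gamma T$ by Lemma \ref{lemma-l-commutes-with-dual-twist-and-cotwist}. You instead expand $\gamma$ first and contract $\alpha$, then $\beta$, interleaving the same naturality moves with the same three lemmas; this is the same diagram chase with a cosmetically different ordering of steps.
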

\begin{proof}
By Lemma \ref{lemma-alpha-beta-compositions} the composition 
$LT[-1] \xrightarrow{\alpha\circ\beta} FL[1]$ equals 
the composition 
\begin{align}
\label{eqn-alpha-beta-expanded}
LT[-1] \xrightarrow{\alpha' T} RT'T
\xrightarrow{R\eqref{eqn-adjunction-counit-t't}} R
\xrightarrow{\eqref{eqn-adjunction-unit-f'f}R} FF'R
\xrightarrow{F\beta'} FL[1]. 
\end{align}
By functoriality of tensor product the composition 
$\eqref{eqn-alpha-beta-expanded}$ equals the composition
\begin{align}
\label{eqn-alpha-beta-expanded-first-changeover}
LT[-1] \xrightarrow{\alpha' T} RT'T
\xrightarrow{\eqref{eqn-adjunction-unit-f'f}RT'T} FF'RT'T
\xrightarrow{FF'R\eqref{eqn-adjunction-counit-t't}} FF'R
\xrightarrow{F\beta'} FL[1],
\end{align}
which by functoriality of tensor product again  
equals the composition
\begin{align}
\label{eqn-alpha-beta-expanded-second-changeover}
LT[-1] \xrightarrow{\eqref{eqn-adjunction-unit-f'f}LT} FF'LT[-1]
\xrightarrow{FF'\alpha'T} FF'RT'T
\xrightarrow{F\beta'T'T} FLT'T[1]
\xrightarrow{FL\eqref{eqn-adjunction-counit-t't}} FL[1].
\end{align}
The claim now follows by applying Lemma \ref{lemma-l-commutes-with-dual-twist-and-cotwist}
to the two maps in the middle of
\eqref{eqn-alpha-beta-expanded-second-changeover}.
\end{proof}

Similarly, let $\gamma': RT[-1]\to FR[1]$ be the map induced by 
the following morphism of twisted complexes
\begin{align}
\label{eqn-gamma'-on-the-level-of-twisted complexes}
\xymatrix{
 & 
\Bigl(\underset{\degzero}{\MhdB\otimes_{\A} M \otimes_{\B} \MhdB}
\ar[r]^<<<<<{\trace \otimes \id}
\ar[d]^{\id}
&
\MhdB\Bigr)
\\
\Bigl( \MhdB
\ar[r]^>>>>>{\id \otimes \action}
&
\underset{\degzero}{\MhdB\otimes_{\A} M \otimes_{\B} \MhdB} \Bigr).
}
\end{align}
The following two results are proved identically to Lemmas
\ref{lemma-gamma-is-an-isomorphism} and \ref{lemma-lt-fl-2maps}:
\begin{lemma}
\label{lemma-gamma'-is-an-isomorphism} 
The morphism \eqref{eqn-gamma'-on-the-level-of-twisted complexes}
is a homotopy equivalence. Consequently, the map $\gamma'$ is an
isomorphism. 
\end{lemma}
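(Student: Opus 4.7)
The plan is to mirror the proof of Lemma \ref{lemma-gamma-is-an-isomorphism} essentially verbatim, with the roles of $M^\A$ and $M^\B$ interchanged and with the trace/action maps swapped on the appropriate sides. The once $\gamma'$ is shown to be a homotopy equivalence on the level of twisted complexes, the induced map in $D(\BbimA)$ is automatically an isomorphism, so the second assertion follows from the first.

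To produce the homotopy inverse, I propose the morphism of twisted complexes
\begin{align*}
\xymatrix{
\bigl( M^{\B}
\ar[r]
&
M^{\B}\otimes_{\A} M \otimes_{\B} M^{\B} \bigr)
\ar[d]^{\id + \lambda'}
&
\\
&
\bigl( M^{\B}\otimes_{\A} M \otimes_{\B} M^{\B}
\ar[r]
&
M^{\B}\bigr),
}
\end{align*}
where $\lambda'$ is the composition
$$
M^{\B}\otimes_{\A} \left(M \otimes_{\B} M^{\B}\right)
\xrightarrow{\trace \otimes \id}
M^{\B}
\xrightarrow{\id \otimes \action}
\left(M^{\B}\otimes_{\A} M \right) \otimes_{\B} M^{\B}.
$$
Note the symmetric placement: on the left factor we now do action-then-trace on the first two tensor factors, while on the right factor we do trace-then-action on the last two, reflecting the fact that $\gamma'$ is built out of the trace map on $M^\B \otimes_\A M$ and the action map into $M \otimes_\B M^\B$ rather than their counterparts involving $M^\A$.

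Next I would compute both compositions of \eqref{eqn-gamma'-on-the-level-of-twisted complexes} with this candidate inverse. In each case the composition differs from $\id$ by a closed morphism of twisted complexes whose only nontrivial component is $\pm\lambda'$. Exactly as in the proof of Lemma \ref{lemma-gamma-is-an-isomorphism}, this difference is the image under the differential on $\homm$-complexes of a degree $-1$ morphism given by the single component $\id \otimes \action \colon M^\B \to M^\B \otimes_\A M \otimes_\B M^\B$ (respectively $\trace \otimes \id$), the null-homotopy produced by the twisted differential of the target complex.

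The key input that makes the homotopy work is Proposition \ref{prps-M^B-and-M^A-are-homotopy-adjoints-of-M-via-S-SRS-S-maps}: specifically, that
$$
M^\B \xrightarrow{\id \otimes \action} M^\B \otimes_\A M \otimes_\B M^\B \xrightarrow{\trace \otimes \id} M^\B
$$
is homotopic to the identity. This is the symmetric counterpart of the identity used in Lemma \ref{lemma-gamma-is-an-isomorphism} (where one uses the homotopy for $M^\A$), and it is precisely one of the four statements packaged in that proposition. There is no real obstacle beyond bookkeeping; the only thing to be careful about is getting the signs right in the two twisted complexes defining $\gamma'$, but these are dictated by the same sign conventions that governed the proof of Lemma \ref{lemma-gamma-is-an-isomorphism} and require no new ideas.
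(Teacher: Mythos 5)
Your proposal is correct and takes exactly the route the paper intends: the text proves Lemma \ref{lemma-gamma'-is-an-isomorphism} simply by declaring it ``identical'' to Lemma \ref{lemma-gamma-is-an-isomorphism}, and you have spelled out that mirror-image argument, with the candidate inverse $\id+\lambda'$, the degree-$(-1)$ null-homotopies $\id\otimes\action$ and $\trace\otimes\id$, and the needed input from Proposition \ref{prps-M^B-and-M^A-are-homotopy-adjoints-of-M-via-S-SRS-S-maps}, namely that $M^\B \xrightarrow{\id\otimes\action} M^\B\otimes_\A M\otimes_\B M^\B \xrightarrow{\trace\otimes\id} M^\B$ is homotopic to the identity. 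One cosmetic slip: in your display for $\lambda'$ the parenthesizations of the source and target should be swapped (it should read $(M^\B\otimes_\A M)\otimes_\B M^\B \to M^\B \to M^\B\otimes_\A(M\otimes_\B M^\B)$), since $\trace$ acts on the first two factors and $\action$ produces the last two; the underlying map is the one you intend, so this does not affect correctness.
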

\begin{lemma}
\label{lemma-f'r-rt'-2maps}
The following maps are equal: 
\begin{enumerate}
\item 
\label{eqn-lemma-f'r-rt'-through-rsl}
$ F'R[-1] \xrightarrow{\alpha'\circ\beta'} RT'[1]$
\item 
\label{eqn-lemma-f'r-rt'-long-way}
$F'R[-1] 
\xrightarrow{F'R\eqref{eqn-adjunction-unit-t't}}
F'RTT'[-1]
\xrightarrow{F'\gamma'T'}
F'FRT'[1]
\xrightarrow{\eqref{eqn-adjunction-counit-f'f}RT'}
RT'[1]$
\end{enumerate}
\end{lemma}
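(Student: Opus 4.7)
The plan is to mirror the proof of Lemma \ref{lemma-lt-fl-2maps} almost verbatim, exchanging the roles of the primed/unprimed maps and of the twist/cotwist sides. The whole structure is symmetric: where the earlier lemma expanded $\alpha \circ \beta$ via parts (1) and (2) of Lemma \ref{lemma-alpha-beta-compositions} and then invoked Lemma \ref{lemma-l-commutes-with-dual-twist-and-cotwist}, here I would expand $\alpha' \circ \beta'$ via parts (3) and (4) of Lemma \ref{lemma-alpha-beta-compositions} and invoke an unprimed analogue of Lemma \ref{lemma-l-commutes-with-dual-twist-and-cotwist}. The paper itself flags that Lemma \ref{lemma-gamma'-is-an-isomorphism} and Lemma \ref{lemma-f'r-rt'-2maps} are obtained identically to Lemmas \ref{lemma-gamma-is-an-isomorphism} and \ref{lemma-lt-fl-2maps}, so no genuinely new idea is required.

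Concretely, first I would use Lemma \ref{lemma-alpha-beta-compositions}(4) to rewrite $\beta'$ as $\eqref{eqn-adjunction-counit-f'f}L \circ F'\alpha$, and Lemma \ref{lemma-alpha-beta-compositions}(3) to rewrite $\alpha'$ as $\beta T' \circ L\eqref{eqn-adjunction-unit-t't}$. Concatenating gives
\[
F'R[-1]\xrightarrow{F'\alpha} F'FL \xrightarrow{\eqref{eqn-adjunction-counit-f'f}L} L \xrightarrow{L\eqref{eqn-adjunction-unit-t't}} LTT' \xrightarrow{\beta T'} RT'[1].
\]
Then, exactly as in the passage from \eqref{eqn-alpha-beta-expanded} to \eqref{eqn-alpha-beta-expanded-second-changeover}, two successive applications of functoriality of the tensor product let me slide the adjunction unit $\eqref{eqn-adjunction-unit-t't}$ all the way to the left and the adjunction counit $\eqref{eqn-adjunction-counit-f'f}$ all the way to the right, producing
\[
F'R[-1]\xrightarrow{F'R\eqref{eqn-adjunction-unit-t't}} F'RTT'[-1] \xrightarrow{F'\alpha TT'} F'FLTT' \xrightarrow{F'F\beta T'} F'FRT'[1] \xrightarrow{\eqref{eqn-adjunction-counit-f'f}RT'} RT'[1].
\]

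The final step is to recognise the two middle arrows $F'\alpha TT' \circ F'F\beta T'$ as $F'\gamma'T'$. This requires the (unwritten but asserted) analogue of Lemma \ref{lemma-l-commutes-with-dual-twist-and-cotwist} for $\gamma'$, namely that the composition $RT[-1]\xrightarrow{\alpha T} FLT[1]\xrightarrow{F\beta[1]} FR[1]$ equals $\gamma'$. This, in turn, would be proved by writing out both sides as morphisms of twisted complexes over $\BmodA$ (using the explicit models of $\alpha$, $\beta$ from the cubes \eqref{eqn-M^A-M-M^B-diagram-big} and \eqref{eqn-lsr-3x3-diagram}, and of $\gamma'$ from \eqref{eqn-gamma'-on-the-level-of-twisted complexes}), and observing that the discrepancy between the two is homotopic to zero via Proposition \ref{prps-M^B-and-M^A-are-homotopy-adjoints-of-M-via-S-SRS-S-maps} applied to the zig-zag $M\xrightarrow{\action\otimes\id}M\otimes_\B M^\B\otimes_\A M\xrightarrow{\id\otimes\trace}M$, exactly as in the proof of Lemma \ref{lemma-l-commutes-with-dual-twist-and-cotwist}.

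The main (and only real) obstacle is the bookkeeping in this last step: matching signs, shifts, and parenthesisations of the $\otimes$-factors in the twisted complexes representing the two composite maps $\alpha T \circ F\beta$ and $\gamma'$. This is exactly the kind of verification carried out in the diagram \eqref{eqn-triangle-square-commutative-diagram}, and by strict duality ($\A \leftrightarrow \B$, $L\leftrightarrow R$, $F\leftrightarrow T'$, primed $\leftrightarrow$ unprimed, $\action \leftrightarrow \trace$) the identical commutative triangle/square argument goes through, with the triangle commuting up to homotopy by Proposition \ref{prps-M^B-and-M^A-are-homotopy-adjoints-of-M-via-S-SRS-S-maps} and the square commuting on the nose by functoriality of the tensor product.
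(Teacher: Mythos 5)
Your proposal is correct and matches the paper's own (unwritten) argument: the paper states that Lemma \ref{lemma-f'r-rt'-2maps} is ``proved identically'' to Lemma \ref{lemma-lt-fl-2maps}, and you have correctly carried out that dualization -- expanding $\alpha'\circ\beta'$ via parts (3) and (4) of Lemma \ref{lemma-alpha-beta-compositions}, pushing the adjunction unit and counit to the ends by functoriality of the tensor product, and identifying the middle two arrows with $F'\gamma'T'$ via the unstated twin of Lemma \ref{lemma-l-commutes-with-dual-twist-and-cotwist}, namely that $RT[-1]\xrightarrow{\alpha T} FLT \xrightarrow{F\beta} FR[1]$ equals $\gamma'$ (your intermediate object should read $FLT$, not $FLT[1]$, but that is only a typo in the shift bookkeeping). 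You are also right that this auxiliary identity is established by the same twisted-complex computation as Lemma \ref{lemma-l-commutes-with-dual-twist-and-cotwist}, reducing at the end to one of the zig-zag identities of Proposition \ref{prps-M^B-and-M^A-are-homotopy-adjoints-of-M-via-S-SRS-S-maps}; I would only caution that the phrase ``strict duality'' overstates matters slightly -- the substitution list you give is a useful mnemonic for which composition becomes which, but the formal content is simply that the explicit verification in \eqref{eqn-triangle-square-commutative-diagram} goes through after swapping $M^\A\leftrightarrow M^\B$ and $\action\leftrightarrow\trace$, not that there is a literal involution of the category of statements.
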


Thus, if the adjunction maps \eqref{eqn-adjunction-unit-f'f} and 
\eqref{eqn-adjunction-counit-t't} are isomorphisms,
the composition $\alpha\circ\beta$ is an isomorphism, and it filters
though the canonical map $RSL \xrightarrow{\eta} FL[1]$.

We are now in a position to prove the main theorem. Before we begin
the argument, recall that in a triangulated category all retracts
are split. More precisely, let $Z \xrightarrow{e} Y$ be a retract 
in a triangulated category, that is --- there exists 
$Y \xrightarrow{g} Z$ with $Z \xrightarrow{e} Y  \xrightarrow{g} Z$ 
being the identity. Then for any completion of $g$ to an exact
triangle $X \xrightarrow{f} Y  \xrightarrow{g} Z$, 
$X \oplus Z \xrightarrow{f\oplus e} Y$ is 
an isomorphism. Moreover, its inverse is of form $Y
\xrightarrow{h\oplus g} X \oplus Z$ for some morphism 
$Y \xrightarrow{h} X$. This can be established using only the axioms
of triangulated categories, though for enhanced triangulated categories 
one can see it very explicitly on the level of twisted complexes. 
\\

\begin{proof}[Proof of Theorem \ref{theorem-main-theorem}]
~
\vskip 0.25cm

\em $\eqref{item-main-theorem-the-twist-identifies-adjoints}
\text{ and }\eqref{item-main-theorem-the-cotwist-identifies-adjoints}
\Rightarrow \eqref{item-main-theorem-the-twist-is-an-equivalence}
\text{ and }
\eqref{item-main-theorem-the-cotwist-is-an-equivalence}$\rm:

Suppose that natural transformations 
$lt[-1] \xrightarrow{\eqref{eqn-lt(-1)-to-r}} r$
and $r \xrightarrow{\eqref{eqn-r-to-fl(1)}} fl[1]$ are functorial isomorphisms. 
In other words, the conditions 
\eqref{item-main-theorem-the-twist-identifies-adjoints} 
and \eqref{item-main-theorem-the-cotwist-identifies-adjoints} hold.
Then by the Proposition \ref{prop-cond-3-4-isom} 
the units and counits of both adjoint pairs $(t',t)$ and $(f',f)$ 
are isomorphisms. Hence $(t',t)$ and $(f',f)$ are pairs of mutually 
inverse equivalences, that is --  the conditions
\eqref{item-main-theorem-the-twist-is-an-equivalence} and 
\eqref{item-main-theorem-the-cotwist-is-an-equivalence} hold. 

\vskip 0.25cm

\em $\eqref{item-main-theorem-the-twist-is-an-equivalence}
\text{ and }\eqref{item-main-theorem-the-twist-identifies-adjoints}
\Rightarrow
\eqref{item-main-theorem-the-cotwist-identifies-adjoints}$

$\eqref{item-main-theorem-the-twist-is-an-equivalence}
\text{ and }\eqref{item-main-theorem-the-cotwist-identifies-adjoints}
\Rightarrow
\eqref{item-main-theorem-the-twist-identifies-adjoints}$ 

$\eqref{item-main-theorem-the-cotwist-is-an-equivalence}
\text{ and }\eqref{item-main-theorem-the-twist-identifies-adjoints}
\Rightarrow
\eqref{item-main-theorem-the-cotwist-identifies-adjoints}$

$\eqref{item-main-theorem-the-cotwist-is-an-equivalence}
\text{ and }\eqref{item-main-theorem-the-cotwist-identifies-adjoints}
\Rightarrow
\eqref{item-main-theorem-the-twist-identifies-adjoints}$\rm:

\vskip 0.25cm

We only prove the assertion 
$\eqref{item-main-theorem-the-twist-is-an-equivalence}
\text{ and }\eqref{item-main-theorem-the-twist-identifies-adjoints}
\Rightarrow
\eqref{item-main-theorem-the-cotwist-identifies-adjoints}$, the other
three are proved similarly.

Assume that the conditions \eqref{item-main-theorem-the-twist-is-an-equivalence}
and \eqref{item-main-theorem-the-twist-identifies-adjoints} hold.
The condition \eqref{item-main-theorem-the-twist-is-an-equivalence}
is $(t',t)$ being mutually inverse equivalences. In particular, 
the adjunction unit $\id \rightarrow tt'$ is an isomorphism. 
Therefore the morphism 
$\B \xrightarrow{\eqref{eqn-adjunction-unit-t't}}T'T$, 
which by Prop.  
\ref{prps-unit-and-counit-of-t't-via-twisted-complexes} 
induces this adjunction unit, is also an isomorphism. 
On the other hand, by 
Lemma \ref{lemma-condition-twist-identifies-adjoints-is-equiv} 
the condition \eqref{item-main-theorem-the-twist-identifies-adjoints}
is equivalent to the map $LT \xrightarrow{\beta} R[1]$
in the diagram $\eqref{eqn-lsr-3x3-diagram}$ being an isomorphism.

By Lemma \ref{lemma-condition-twist-identifies-adjoints-is-equiv} 
the condition 
\eqref{item-main-theorem-the-cotwist-identifies-adjoints}
is equivalent to the map 
$L \xrightarrow{\alpha'} RT'[1]$
in the diagram $\eqref{eqn-rsl-3x3-diagram}$ being an isomorphism.
By Lemma \ref{lemma-alpha-beta-compositions} the map
$L \xrightarrow{\alpha'} RT'[1]$
decomposes as $$L \xrightarrow{L\eqref{eqn-adjunction-unit-t't}} 
LTT' \xrightarrow{\beta T'} RT'[1].$$
By above, both the composants are isomorphisms. 
Hence $L \xrightarrow{\alpha'} RT'[1]$ is also an isomorphism, 
as desired. 

\vskip 0.25cm

$\eqref{item-main-theorem-the-twist-is-an-equivalence}
\text{ and }\eqref{item-main-theorem-the-cotwist-is-an-equivalence}
\Rightarrow
\eqref{item-main-theorem-the-cotwist-identifies-adjoints}$\rm:

\vskip 0.25cm

Assume the conditions \eqref{item-main-theorem-the-twist-is-an-equivalence} and
\eqref{item-main-theorem-the-cotwist-is-an-equivalence} hold. 
Then the maps $\id \xrightarrow{\eqref{eqn-adjunction-unit-f'f}} FF'$
and $T'T \xrightarrow{\eqref{eqn-adjunction-counit-t't}} \id$
are isomorphisms. By Lemma 
\ref{lemma-gamma-is-an-isomorphism} map $F'L[-1] \xrightarrow{\gamma} LT'[1]$
induced by \eqref{eqn-gamma-on-the-level-of-twisted complexes} 
is always an isomorphism. By Lemma \ref{lemma-lt-fl-2maps}
the map $LT[-1] \xrightarrow{\beta} R \xrightarrow{\alpha} FL[-1]$ 
decomposes as
$$ LT[-1] 
\xrightarrow{\eqref{eqn-adjunction-unit-f'f}LT}
FF'LT[-1]
\xrightarrow{F\gamma T}
FLT'T[1]
\xrightarrow{FL\eqref{eqn-adjunction-counit-t't}}
FL[1] $$
and is therefore an isomorphism.

This isomorphism $\alpha \circ \beta$ filters 
through the canonical map $RSL \xrightarrow{\eta} FL[1]$, 
thus $FL[1]$ is a retract of $RSL$. More specifically, 
denote by $\bareta$ the map 
$ FL[1] \xrightarrow{(\alpha \circ \beta)^{-1}} LT[-1]
\xrightarrow{\beta} R \xrightarrow{\Ract} RSL$,
so that 
$$ FL[1] \xrightarrow{\bareta} RSL \xrightarrow{\eta} FL[1] $$
is the identity map. Since all retracts in triangulated categories 
are split and since $L \xrightarrow{\actL} RSL
\xrightarrow{\eta}FL[1]$ is an exact triangle it follows that there
exists a map $RSL \xrightarrow{\baractL} L$ such that 
$$ L \oplus FL[1] \xrightarrow{(\actL)\oplus \bareta}
RSL \xrightarrow{(\baractL) \oplus \eta} L \oplus FL[1] $$
are mutually inverse isomorphisms. 
Similarly, since $F'F
\xrightarrow{\eqref{eqn-adjunction-counit-f'f}} \id$
and $\id \xrightarrow{\eqref{eqn-adjunction-unit-t't}} TT'$
are isomorphisms Lemmas \ref{lemma-gamma'-is-an-isomorphism} and 
\ref{lemma-f'r-rt'-2maps} imply that the map 
$F'R[-1] \xrightarrow{\alpha' \circ \beta'} RT'[1]$ is an isomorphism. 
Let $\barzeta$ be the map 
$ RT'[1] \xrightarrow{(\alpha' \circ \beta')^{-1}} 
F'R[-1] \xrightarrow{\beta'} L \xrightarrow{\actL} RSL$,
then there exists a map $RSL \xrightarrow{\barRact} R$ such 
that 
$$ R \oplus RT'[1] 
\xrightarrow{(\Ract)\oplus \barzeta} 
RSL
\xrightarrow{(\barRact)\oplus \zeta}
R \oplus RT'[1]
$$
are mutually inverse isomorphisms. 

Since $T'T \xrightarrow{\eqref{eqn-adjunction-counit-t't}} \B$ is an
isomorphism, it follows from the exactness of rows and columns in 
the diagram \eqref{eqn-t't-id-3x3-diagram} that 
$SLT[-1] \xrightarrow{S\beta} SR$ is an isomorphism. So is  
$SLT[-1] \xrightarrow{S\alpha \circ S\beta} SFL[1]$, and hence
so must also be $SR \xrightarrow{S\alpha} SFL[1]$. Then
$S(\alpha \circ \beta)^{-1} = (S\beta)^{-1} \circ (S\alpha)^{-1}$,
and hence the following diagram commutes 
\begin{align*}
\vcenter{
\xymatrix{
SFL[1] \ar[r]^{S\bareta}  
& 
SRSL. 
\\
SR 
\ar[u]^{S\alpha}_{\sim}
\ar[ur]_{S\Ract}
}
}
\end{align*}

Consider now the map $SFL[1] \rightarrow T'[1]$ which is adjoint to 
$FL[1] \xrightarrow{\zeta \circ \bareta} RT'[1]$. 
It filters through 
$$ SFL[1] \xrightarrow{S(\zeta \circ \bareta)} SRT'[1]$$
which we can re-write as  
$$ SFL[1] \xrightarrow{(S\alpha)^{-1}} SR \xrightarrow{S\Ract} SRSL
\xrightarrow{S\zeta} SRT'[1]$$
and $R \xrightarrow{\Ract} RSL \xrightarrow{\zeta} RT'[1]$ is the
zero map. We conclude that $FL[1] \xrightarrow{\zeta \circ \bareta} RT'[1]$ 
is adjoint to the zero map and hence itself is the zero map. 

Similarly, $S\alpha'$ and $S\beta'$ are isomorphisms and the following
diagram commutes
\begin{align*}
\vcenter{
\xymatrix{
SRT'[1] \ar[r]^{S\barzeta}  
& 
SRSL. 
\\
SL 
\ar[u]^{S\alpha'}_{\sim}
\ar[ur]_{S\actL}
}
}
\end{align*}
It follows, similarly, that 
$SL \xrightarrow{S(\barRact \circ \actL)} SR$ is the zero map and 
hence so is $L \xrightarrow{\barRact \circ \actL} R$. 

Observe now that the composition 
$$ L \oplus FL[1] \xrightarrow{(\actL)\oplus \bareta} RSL 
\xrightarrow{(\barRact)\oplus \zeta}
R \oplus RT'[1] $$
is an isomorphism and we have shown the compositions 
$$ L \xrightarrow{\actL} RSL \xrightarrow{\barRact} R 
\hskip 1cm \text{ and } \hskip 1cm
FL[1] \xrightarrow{\bareta} RSL \xrightarrow{\zeta} RT'[1] $$
to be the zero maps. It follows that the compositions 
$$ L \xrightarrow{\actL} RSL \xrightarrow{\zeta} RT'[1]
\hskip 1cm \text{ and } \hskip 1cm
FL[1] \xrightarrow{\bareta} RSL \xrightarrow{\barRact} R $$
are isomorphisms. The former composition is, by definition, 
the map $L \xrightarrow{\alpha'} RT'[1]$. It follows by
Lemma \ref{lemma-condition-twist-identifies-adjoints-is-equiv} 
that the condition \eqref{item-main-theorem-the-cotwist-identifies-adjoints}
holds, as desired. 
\end{proof}

\subsection{Applications to algebraic geometry}
\label{section-applications-to-algebraic-geometry}

In this section we interpret the results of Section  
\ref{section-spherical-bimodules} in the context of algebraic
geometry. 

Let $Z$ and $X$ be two separated schemes of finite type over $k$. 
Recall that for any $E \in D_{qc}(Z \times X)$ the 
\em Fourier-Mukai transform \rm $\Phi_E$ is the functor
$D_{qc}(Z) \rightarrow D_{qc}(X)$ defined by
$$ \rder\pi_{X *} \left( E \ldertimes \pi_Z^* (-) \right),$$
where $\pi_Z$ and $\pi_X$ are the projections from $Z \times X$
to $Z$ and $X$. Note that $\Phi_E$ 
doesn't apriori restrict to a functor $D(Z) \rightarrow D(X)$. 

As explained in Example 
\ref{example-quasi-functors-between-enhancements-of-DBCoh}
we can Morita enhance $D(Z)$ and $D(X)$ by smooth DG-algebras 
$\A$ and $\B$ whose classes in $\HoKcTrCat$ are the standard
enhancements of $D(Z)$ and $D(X)$. Moreover, $D(Z \times X)$
is Morita enhanced by the DG-algebra $\Aopp \otimes \B$ and
the following holds.  
Recall that Morita quasifunctors
$\A \rightarrow \B$ are identified naturally with the elements 
of $D^{\Bperf}(\AbimB)$. Since $\A$ is smooth, we have a natural inclusion 
$D^{\Bperf}(\AbimB) \hookrightarrow D_{c}(\AbimB)$. Thus 
to each Morita quasifunctor $\A \xrightarrow{F} \B$  
corresponds an element in $D_c(\AbimB)$ and so 
an element $E \in D(Z \times X)$. The 
Fourier-Mukai transform $\Phi_E$ restricts
to a functor $D(Z) \xrightarrow{\Phi_{E}} D(X)$
and this functor is isomorphic to the exact functor 
$D(Z) \rightarrow D(X)$ underlying $F$.

Similarly, 
$X \times Z$, $Z \times Z$ and $X \times X$
are Morita enhanced by
$\Bopp \otimes \A$, $\Aopp \otimes \A$ and $\Bopp \otimes \B$
with a similar 
correspondence between the Morita quasifunctors and the
Fourier-Mukai transforms. We identify implicitly 
$X \times Z$ with $Z \times X$ using the canonical isomorphism 
between the two. For any object $E$ in $D_c(\A)$, $D_c(\B)$, 
$D_c(\AbimB)$, etc. let $\overline{E}$ be the corresponding
object in $D(Z)$, $D(X)$, $D(Z \times X)$, etc.  

Let $\bar{S} \in D(Z \times X)$ be such that
the corresponding $S \in D_c(\AbimB)$ is $\A$- and $\B$-perfect. 
Let $L = S^{\tilde{\A}}$ and $R = S^{\tilde{\B}}$. 
These are $\A$-perfect and $\B$-perfect, respectively. 
Since $\A$ and $\B$ are smooth, $L$ and $R$
lie in $D_c(\BbimA)$ by
Cor. \ref{cor-for-smooth-A-perfect-means-perfect}. The corresponding objects $\bar{L}$ and $\bar{R}$ in $D(X \times Z)$
define Fourier-Mukai transforms $D(X) \xrightarrow{\Phi_{\bar{L}},
\Phi_{\bar{R}}} D(Z)$ which are left and right adjoint to 
$D(Z) \xrightarrow{\Phi_{\bar{S}}} D(X)$. The adjunction co-units and
units are the natural transformations of Fourier-Mukai transforms
induced by the derived trace and action maps
\begin{align}
\label{eqn-derived-trace-maps-for-bimodules}
SR \xrightarrow{\trace} \B  \quad &\text{ and } \quad  
LS \xrightarrow{\trace} \A  \\
\label{eqn-derived-action-maps-for-bimodules}
\B \xrightarrow{\action} SL \quad &\text{ and } \quad 
\A \xrightarrow{\action} RS.  
\end{align}

The co-twists $F,F' \in D(\AbimA)$ and the twists $T,T' \in D(\BbimB)$
of $S$ were defined in Section \ref{section-spherical-bimodules}
as the cones and the co-cones of the derived trace and action maps above. 
It follows from Cor. \ref{cor-for-smooth-A-perfect-means-perfect} 
that they are all compact objects. Hence we can define 
the \em co-twist \rm and the \em dual co-twist \rm of $\bar{S}$
to be the corresponding objects $\bar{F}$ and $\bar{F}' \in D(Z \times Z)$
and the \em the twist \rm and the \em dual twist \rm of $\bar{S}$ 
to be $\bar{T}$ and $\bar{T}'$ in $D(X \times X)$.
Finally, define 
\begin{align}
\label{eqn-r-to-fl(1)-FM}
\Phi_{\bar{R}} \rightarrow \Phi_{\bar{F}} \Phi_{\bar{L}} [1] \\
\label{eqn-lt(-1)-to-r-FM}
\Phi_{\bar{L}} \Phi_{\bar{T}}[-1] \rightarrow \Phi_{\bar{R}}
\end{align}
to be the natural transformations of Fourier-Mukai transforms
which correspond to the natural transformations 
$lt[-1] \xrightarrow{\eqref{eqn-lt(-1)-to-r}} r$
and $r \xrightarrow{\eqref{eqn-r-to-fl(1)}} fl[1]$ 
constructed in Section \ref{section-spherical-bimodules}. 

The algebras $\A$ and $\B$ are constructed as 
DG-$\eend$-algebras of $h$-injective strong 
generators $F_Z$ and $F_X$ of $D(Z)$ and $D(X)$. The functors
\begin{align}
\label{eqn-D(Z)-enhancement-structure}
\rder\homm_{D(Z)}(F_Z, -) \colon D(Z) \rightarrow D_c(\A) \\
\label{eqn-D(X)-enhancement-structure}
\rder\homm_{D(X)}(F_X, -) \colon D(X) \rightarrow D_c(\B) 
\end{align}
are the equivalences which give $\A$ and $\B$ the structure
of Morita enhancements of $D(Z)$ and $D(X)$. 
It follows from 
\cite[Theorem 6.3]{Lunts-CategoricalResolutionOfSingularities} 
that choosing a different generator $F'_X$ of e.g. $D(X)$ produces
a Morita-equivalent DG-algebra $\B'$ and
the Morita equivalence can be chosen so that the underlying exact 
equivalence $D_c(\B) \xrightarrow{\sim} D_c(\B')$ is compatible with 
enhancement equivalences \eqref{eqn-D(X)-enhancement-structure}
for $F_X$ and $F_X'$. More generally, it follows
that a different choice of generators $F'_Z$ and $F'_X$ produces 
Morita-equivalent DG-algebras 
$\A'$, $\B'$, $\A'\text{-}\B'$, $\A'\text{-}\A'$ and $\B'\text{-}\B'$
with Morita equivalences being compatible with the enhancement
equivalences as above. 

All the constructions from Section \ref{section-spherical-bimodules}
we used so far were defined entirely in terms of 
the derived duals $R$ and $L$ of $S$ and the derived trace and action
maps. One can check that the derived dualizing functors and 
the derived trace and action maps are preserved under Morita equivalences. 
Thus the objects $\bar{L}, \bar{R}$, $\bar{F}, \bar{F}'$, 
$\bar{T}, \bar{T}'$ and the natural transformations 
\eqref{eqn-r-to-fl(1)-FM}-\eqref{eqn-lt(-1)-to-r-FM} 
defined above depend only on $\bar{S}\in D(Z \times X)$ itself, and 
do not depend on our choice of generators $F_X$ and $F_Y$ 
of $D(Z)$ and $D(X)$. 

Though we have established that the above objects and maps 
are well-defined and are determined only by $\bar{S}\in D(Z \times X)$, 
to actually compute them in any practical scenario would require
explicit formulas for $\bar{L}$, $\bar{R}$ in terms of $\bar{S}$
as well as the explicit formulas for the maps in $D(X \times X)$
and $D(Z \times Z)$ which correspond to the derived trace and 
action maps. To this end we offer the following:
\begin{conj}
Let $\bar{S}\in D(Z \times X)$ be such that 
the corresponding $S \in D_c(\AbimB)$ is $\A$- and $\B$-perfect.
Then 
\begin{align*}
\bar{L} \simeq \rder\shhomm_{Z\times X}\left(\bar{S},
\pi^!_Z(\mathcal{O}_Z)\right) \\ 
\bar{R} \simeq \rder\shhomm_{Z\times X}\left(\bar{S},
\pi^!_X(\mathcal{O}_X)\right)
\end{align*}
and the maps in $D(Z \times Z)$ and $D(X \times X)$ which correspond 
to the derived trace and action maps 
\eqref{eqn-derived-trace-maps-for-bimodules}-\eqref{eqn-derived-action-maps-for-bimodules}
are isomorphic to the explicit maps written down in 
\cite{AnnoLogvinenko-OnTakingTwistsOfFourierMukaiFunctors}
and \cite{AnnoLogvinenko-OrthogonallySphericalObjectsAndSphericalFibrations}
which lift the adjunction co-units and units of Fourier-Mukai transforms
to the level of Fourier-Mukai kernels. 
\end{conj}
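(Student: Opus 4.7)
The strategy is to exploit uniqueness of adjoints and of their units/counits inside the Morita framework. On the one hand, Lemmas \ref{lemma-right-adjoint-of-ldertimes-M}--\ref{lemma-left-adjoint-of-ldertimes-M} identify $(-) \ldertimes_\B L$ and $(-) \ldertimes_\B R$ as the left and right adjoints of $(-) \ldertimes_\A S$, with adjunction units and counits induced by the derived action and trace maps. Under the Morita identifications $D_c(\AbimB) \simeq D(Z \times X)$, $D_c(\BbimA) \simeq D(X \times Z)$, etc., these adjoint pairs transport to pairs of Fourier--Mukai transforms on the geometric side. On the other hand, classical Grothendieck duality (as applied in \cite{AnnoLogvinenko-OnTakingTwistsOfFourierMukaiFunctors}) provides adjoints of $\Phi_{\bar S}$ via $\rder\shhomm_{Z\times X}(\bar S, \pi^!_Z \mathcal{O}_Z)$ and $\rder\shhomm_{Z\times X}(\bar S, \pi^!_X \mathcal{O}_X)$. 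Since adjoints are unique up to canonical isomorphism, it will suffice to verify that the two adjunctions agree.

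The first, main step is to show that the Morita identification intertwines the DG-dualizing functors $(-)^{\tilde{\A}},(-)^{\tilde{\B}} \colon D(\AbimB) \to D(\BbimA)$ with the corresponding Grothendieck dualizing operations on $D(Z \times X)$. Concretely, I would pin down the image of the diagonal bimodules $\A \in D(\AbimA)$ and $\B \in D(\BbimB)$ under the Morita identifications with $D(Z \times Z)$ and $D(X \times X)$: they are the Fourier--Mukai kernels of the identity functors, hence of the form $\iota_{\Delta_Z*}\mathcal{O}_Z$ and $\iota_{\Delta_X*}\mathcal{O}_X$. Having done this, the defining formulas $L = S^{\tilde{\A}}$, $R = S^{\tilde{\B}}$ translate on the geometric side into formulas of the form $\rder\shhomm_{Z \times X}(\bar S, -)$ applied to pullbacks of these diagonal objects along the two projections from $Z \times X$, which by base change and the projection formula reduce to the asserted $\pi^!_Z(\mathcal{O}_Z)$ and $\pi^!_X(\mathcal{O}_X)$ formulas.

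The second step is then to check that the derived trace and action maps on the DG side correspond to the explicit maps of \cite{AnnoLogvinenko-OnTakingTwistsOfFourierMukaiFunctors,AnnoLogvinenko-OrthogonallySphericalObjectsAndSphericalFibrations}. Here the plan is not to compare these maps directly, but instead to characterize each of them as the unique natural transformation of Fourier--Mukai transforms that induces the units or counits of the respective adjunctions $(\Phi_{\bar L},\Phi_{\bar S})$ and $(\Phi_{\bar S},\Phi_{\bar R})$. On the DG side this characterization is given by Lemmas \ref{lemma-right-adjoint-of-ldertimes-M}--\ref{lemma-left-adjoint-of-ldertimes-M}; on the geometric side it is verified in the aforementioned companion papers. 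Once both sides are pinned down as units/counits of the same adjoint pair, uniqueness of adjunction data forces the desired agreement.

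The hard part will be the first step, namely the compatibility of dualization with the Morita equivalence. The difficulty is that the equivalence $D_c(\AbimB) \simeq D(Z \times X)$ is implemented, via \cite[Prop.~6.14]{Lunts-CategoricalResolutionOfSingularities}, by tensoring with a generator of $\Aopp \otimes \B$-perfect objects that is not symmetric between $Z$ and $X$ (it depends on the chosen generators $F_Z, F_X$), so translating the DG-dualization $(-)^{\tilde{\A}}$ into the geometric dualization involves nontrivial Serre functor data for the factors. A cleaner route may be to first prove the statement for $S$ representable, i.e.\ when $\bar S = F_Z^\vee \boxtimes F_X$ or similar, where everything can be computed by hand, and then propagate to general $\A$- and $\B$-perfect $S$ by cone and retract arguments using the triangulated structure and compatibility of $(-)^{\tilde{\A}},(-)^{\tilde{\B}}$ and $\rder\shhomm$ with such operations.
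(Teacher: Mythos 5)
The statement you are attempting to prove is stated in the paper as a \emph{conjecture} (the environment is \verb|\begin{conj}...\end{conj}|), and the paper offers no proof of it. The surrounding text says explicitly that the authors have established well-definedness of $\bar L$, $\bar R$ and the associated maps, but that explicit formulas would still be needed in practice, ``to this end we offer the following'' conjecture. So there is no argument in the paper for you to match or diverge from; your proposal is an attempt at an open problem the authors deliberately left unresolved.

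As for the proposal itself, the overall plan (transport adjunction data through the Morita identification and invoke uniqueness of adjoints) is a natural starting point, but it has a genuine gap that you should be aware of. Uniqueness of adjoints yields a canonical isomorphism of \emph{functors}, and compatibility of units and counits holds at the level of natural transformations of functors. The conjecture, however, is an assertion about \emph{objects} $\bar L,\bar R \in D(X\times Z)$ and \emph{morphisms} in $D(Z\times Z)$, $D(X\times X)$. Passing from an isomorphism $\Phi_{\bar L} \simeq \Phi_{\rdershom(\bar S,\pi_Z^!\O_Z)}$ of Fourier--Mukai transforms to an isomorphism of kernels requires an Orlov-type kernel uniqueness statement, which in the generality the paper works in (separated schemes of finite type over $k$, no smoothness or projectivity assumed) is not available. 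The same issue recurs when you try to identify the trace and action \emph{morphisms} of kernels with the explicit morphisms of \cite{AnnoLogvinenko-OnTakingTwistsOfFourierMukaiFunctors,AnnoLogvinenko-OrthogonallySphericalObjectsAndSphericalFibrations}: even once you know both morphisms of kernels induce the adjunction unit, that does not pin the morphism of kernels itself. You correctly flag the other hard point --- intertwining $(-)^{\tilde\A},(-)^{\tilde\B}$ with the geometric $\rder\shhomm(-,\pi^!)$ operations under a Morita identification that is built from an asymmetric generator --- but the suggested remedy (first treat representable $S$, then propagate by cones and retracts) only produces isomorphisms one cone at a time, without a naturality that would make the resulting identification canonical or independent of choices. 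Until these two obstructions are addressed, the argument remains a plausible sketch rather than a proof, which is consistent with the authors' decision to record the statement as a conjecture.
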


Finally, we need an intrinsic condition on 
$\bar{S} \in D(Z \times X)$ on the algebro-geometric side which 
ensures that 
the corresponding $S \in D_c(\AbimB)$ is $\A$- and $\B$-perfect. 

\begin{lemma}
Let $\bar{S} \in D(Z \times X)$. 
The Fourier-Mukai transform $\Phi_{\bar{S}}$ restricts to 
$D(Z) \rightarrow D(X)$ and this restriction has 
a left adjoint which is also a Fourier-Mukai transform
if and only if the corresponding object 
$S \in D_c(\AbimB)$ is $\A$- and $\B$-perfect.  
\end{lemma}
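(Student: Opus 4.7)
The plan is to translate the two geometric conditions into the corresponding bimodule-theoretic conditions via the Morita framework set up in Section \ref{section-DG-enhancements}, using Lemmas \ref{lemma-right-adjoint-of-ldertimes-M}--\ref{lemma-left-adjoint-of-ldertimes-M}, Corollary \ref{cor-for-smooth-A-perfect-means-perfect}, and Example \ref{example-quasi-functors-between-enhancements-of-DBCoh}.

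For the ``if'' direction, assume $S$ is $\A$- and $\B$-perfect. By Example \ref{example-quasi-functors-between-enhancements-of-DBCoh}, the $\B$-perfectness of $S$ is precisely the condition that $(-)\ldertimes_\A S$ restricts to a functor $D_c(\A) \to D_c(\B)$, and under the Morita identifications this is exactly the restriction $\Phi_{\bar{S}}\colon D(Z)\to D(X)$. The $\A$-perfectness of $S$ transfers via the dualizing equivalence between perfect $\Aopp$-modules and perfect $\A$-modules to the $\A$-perfectness of the $\BbimA$-bimodule $S^{\lderA}$; combined with smoothness of $\B$ and Corollary \ref{cor-for-smooth-A-perfect-means-perfect} this places $S^{\lderA}$ in $D_c(\BbimA)$, so it corresponds to some $\bar{L} \in D(X\times Z)$. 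By Lemma \ref{lemma-left-adjoint-of-ldertimes-M}, $(-)\ldertimes_\B S^{\lderA}$ is left adjoint to $(-)\ldertimes_\A S$ on the large derived categories, and the $\A$-perfectness of $S^{\lderA}$ ensures this left adjoint preserves compact objects, yielding $\Phi_{\bar{L}}$ as the required Fourier--Mukai left adjoint of $\Phi_{\bar{S}}$.

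For the ``only if'' direction, running the same translation backwards recovers the $\B$-perfectness of $S$ from the restriction $\Phi_{\bar{S}}\colon D(Z)\to D(X)$, and recovers the $\A$-perfectness of the bimodule $L\in D_c(\BbimA)$ corresponding to the given $\bar{L}\in D(X\times Z)$. The substantive remaining step is to identify $L$ with $S^{\lderA}$ in $D(\BbimA)$, and thereby transfer $\A$-perfectness from $L$ to $S^{\lderA}$ and hence to $S$. Since $S$ is $\B$-perfect, Lemma \ref{lemma-left-adjoint-of-ldertimes-M} provides $(-)\ldertimes_\B S^{\lderA}$ as the left adjoint of $(-)\ldertimes_\A S$ on the large categories. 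The cocontinuous functor $(-)\ldertimes_\B L\colon D(\B)\to D(\A)$ agrees on compact objects with this left adjoint, since both restrict to left adjoints of $\Phi_{\bar{S}}$ on compacts and left adjoints are unique up to isomorphism. Since $D(\B)$ is compactly generated and both functors preserve coproducts, they are isomorphic as functors $D(\B)\to D(\A)$; by the representability statement underlying Example \ref{example-quasi-functors-between-enhancements-of-DBCoh} this forces $L \simeq S^{\lderA}$ in $D(\BbimA)$.

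The hard part is precisely this last identification: passing from an isomorphism of functors on compact objects to an isomorphism of the underlying bimodules. This combines uniqueness of adjoints, the compactly generated structure of $D(\B)$, and the Morita-theoretic fact that $D(\BbimA)$ represents cocontinuous quasi-functors between the enhancements. Once the identification is in hand, the perfectness transfer $L\simeq S^{\lderA}\Rightarrow S$ is $\A$-perfect is routine via the dualizing equivalence. Everything else amounts to unpacking the dictionary between the DG-bimodule side and the Fourier--Mukai side already established in Section \ref{section-DG-enhancements}.
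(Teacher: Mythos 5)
Your ``if'' direction matches the paper's: you use the characterization that $\B$-perfectness of $S$ gives the restriction of $\Phi_{\bar S}$, and $\A$-perfectness of $S$ (passed through the dualizing functor, smoothness of $\B$, and Corollary \ref{cor-for-smooth-A-perfect-means-perfect}) gives a compact $\BbimA$-bimodule $S^{\lderA}$ whose associated Fourier--Mukai transform is the left adjoint of $\Phi_{\bar S}$ via Lemma \ref{lemma-left-adjoint-of-ldertimes-M}. This is the same route.

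For the ``only if'' direction, however, you and the paper diverge at the crucial step, and yours has a gap. You correctly set up that $(-)\ldertimes_\B L$ and $(-)\ldertimes_\B S^{\lderA}$ are both cocontinuous left adjoints of $(-)\ldertimes_\A S$, hence isomorphic as exact functors $D(\B)\to D(\A)$. But you then assert that ``by the representability statement underlying Example \ref{example-quasi-functors-between-enhancements-of-DBCoh}'' this forces $L\simeq S^{\lderA}$ in $D(\BbimA)$. This does not follow. To{\"e}n's theorem identifies quasi-functor classes with bimodule isomorphism classes, but the functor $\Phi\colon H^0(\rder\shhomm(\B,\A))\to \exfun(D(\B),D(\A))$ (Section \ref{section-DG-enhancements}) is neither full nor conservative in general: an isomorphism of the underlying exact functors is strictly weaker data than an isomorphism of quasi-functors, and two non-isomorphic bimodules can present isomorphic exact functors. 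The hypothesis of the lemma only gives an adjunction at the level of exact functors on triangulated categories --- it does not supply a lift of the adjunction (unit, counit, zig-zag identities) to the DG level, which is what a $2$-categorical uniqueness-of-adjoints argument would need to pin down $L$ up to isomorphism in $D(\BbimA)$. This is precisely the subtlety of uniqueness of Fourier--Mukai kernels, which one cannot wave away by appealing to To{\"e}n.

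The paper avoids this entirely by never identifying the bimodules. It argues at the functor level throughout: since $L$ is $\A$-perfect, the corrected Lemma \ref{lemma-right-adjoint-of-ldertimes-M} shows $(-)\ldertimes_\B L$ has $(-)\ldertimes_\A L^{\tilde\A}$ as a right adjoint; by uniqueness of adjoints of \emph{ordinary exact functors}, $(-)\ldertimes_\A S\simeq (-)\ldertimes_\A L^{\tilde\A}$; and then the pointwise compactness criterion for perfectness is read off the latter, whose kernel $L^{\tilde\A}$ is manifestly $\A$-perfect. No claim of isomorphism in $D(\AbimB)$ or $D(\BbimA)$ is ever made --- the conclusion is purely the perfectness of $S$, which is checkable from the behaviour of the functor on representables. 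You should replace your bimodule identification with this functor-level comparison.
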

\begin{proof}
As explained above, $S \in D_c(\AbimB)$ is 
$\B$-perfect if and only if $\Phi_{\bar{S}}$ restricts 
to $D(Z) \rightarrow D(X)$. In such case 
$D_c(\A) \xrightarrow{(-) \ldertimes_\A S} D_c(\B)$
corresponds to 
$ D(Z) \xrightarrow{\Phi_{\bar{S}}} D(X)$. 

Suppose now $S$ is also $\A$-perfect. 
By 
Cor. \ref{cor-derived-M-and-M^A-and-M^B-adjunction}
the functor 
$(-) \ldertimes_\B S^{\tilde{\A}}$ is left adjoint to 
$(-) \ldertimes_\A S$. Moreover, since $S$ is $\A$-perfect, 
so is $S^{\tilde{\A}}$. Hence there exists  
an object in $D(X \times Z)$ which defines the Fourier-Mukai
transform $D(X) \rightarrow D(Z)$ which corresponds to 
$(-) \ldertimes_\B S^{\tilde{\A}}$. In particular, 
this Fourier-Mukai transform is left adjoint to $\Phi_{\bar{S}}$. 

Conversely, if there exists a Fourier-Mukai transform 
$D(X) \rightarrow D(Z)$ which is the left adjoint to $\Phi_{\bar{S}}$, 
let $L$ be the corresponding object of $D^{\Aperf}(\BbimA)$. 
Then $(-) \ldertimes_\B L$ is left adjoint to $(-) \ldertimes_\A S$ 
as functors between $D_c(\A)$ and $D_c(\B)$.  
But since derived tensor product commutes with infinite 
direct sums, these are, in fact, adjoint on the whole 
of $D(\A)$ and $D(\B)$. 

Since $L$ is $\A$-perfect, 
by Cor. \ref{cor-derived-M-and-M^A-and-M^B-adjunction}
the functor 
$(-) \ldertimes_\B L$ from $D(\A)$ to $D(\B)$
has a right adjoint 
$(-) \ldertimes_\A L^{\tilde{\A}}$.  
By uniqueness of adjoints we conclude that the functors 
$(-) \ldertimes_\A S$ and $(-) \ldertimes_\A L^{\tilde{\A}}$
are isomorphic. Since $L$ is $\A$-perfect, so is $L^{\tilde{\A}}$. 
Hence $(-) \ldertimes_\A L^{\tilde{\A}}$ takes compact objects
to compact objects, and hence so does $(-) \ldertimes_\A S$. 
We conclude that $S$ is also $\A$-perfect, as desired. 
\end{proof}

Theorem \ref{theorem-main-theorem} immediately implies
the following: 
\begin{theorem}
\label{theorem-main-theorem-for-coh-FM}
Let $\bar{S} \in D(Z \times X)$ be such that 
$\Phi_{\bar{S}}$ restricts to $D(Z) \rightarrow D(X)$
and this restriction has a left adjoint which is 
also a Fourier-Mukai transform.  

If any two of the following conditions hold:
\begin{enumerate}
\item 
\label{item-main-theorem-FM-the-twist-is-an-equivalence}
$\Phi_{\bar{T}}$ is an autoequivalence of $D(X)$
(``the twist is an equivalence'').  
\item 
\label{item-main-theorem-FM-the-cotwist-is-an-equivalence}
$\Phi_{\bar{F}}$ is an equivalence of $D(Z)$
(``the cotwist is an equivalence'').  
\item 
\label{item-main-theorem-FM-the-twist-identifies-adjoints}
$\Phi_{\bar{R}} \xrightarrow{\eqref{eqn-r-to-fl(1)-FM}}
\Phi_{\bar{F}} \Phi_{\bar{L}} [1]$
is an isomorphism of functors (``the twist identifies the adjoints'').  
\item 
\label{item-main-theorem-FM-the-cotwist-identifies-adjoints}
$\Phi_{\bar{L}} \Phi_{\bar{T}}[-1]
\xrightarrow{\eqref{eqn-lt(-1)-to-r-FM}}
\Phi_{\bar{R}}$ is an
isomorphism of functors (``the cotwist identifies the adjoints'').  
\end{enumerate}
then all four of them hold. If that happens, we say that 
$\bar{S}$ is \em spherical over $Z$\rm.
\end{theorem}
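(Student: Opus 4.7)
The plan is to reduce Theorem \ref{theorem-main-theorem-for-coh-FM} directly to Theorem \ref{theorem-main-theorem} via the Morita dictionary set up earlier in this section. First, the hypothesis on $\bar{S}$ together with the Lemma immediately preceding the theorem guarantees that the corresponding object $S \in D_c(\AbimB)$ is both $\A$- and $\B$-perfect, so the framework of Section \ref{section-spherical-bimodules} applies to $S$. Because $\A$ and $\B$ are smooth, Cor.~\ref{cor-for-smooth-A-perfect-means-perfect} ensures that the derived duals $L = S^{\tilde{\A}}$ and $R = S^{\tilde{\B}}$, as well as the twists $T, T' \in D(\BbimB)$ and cotwists $F, F' \in D(\AbimA)$, all lie in the compact parts $D_c$ and therefore correspond to the objects $\bar{L}, \bar{R}, \bar{T}, \bar{T}', \bar{F}, \bar{F}'$ already defined on the Fourier--Mukai side.

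Next I will translate the four conditions through the dictionary. By the construction of $\bar{T}, \bar{F}$ as the images of $T, F$ under the Morita equivalences $D_c(\A) \simeq D(Z)$ and $D_c(\B) \simeq D(X)$, and by the fact that the natural transformations \eqref{eqn-r-to-fl(1)-FM} and \eqref{eqn-lt(-1)-to-r-FM} were defined precisely as the Fourier--Mukai images of \eqref{eqn-r-to-fl(1)} and \eqref{eqn-lt(-1)-to-r}, each of conditions (1)--(4) in Theorem~\ref{theorem-main-theorem-for-coh-FM} corresponds to the homonymous condition of Theorem~\ref{theorem-main-theorem} restricted to the compact subcategories. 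The single subtlety is that the bimodule theorem concerns the functors $t, t', f, f', s, r, l$ on the full derived categories $D(\A)$ and $D(\B)$, whereas the Morita equivalence identifies $D(X)$ and $D(Z)$ with only the compact parts $D_c(\B)$ and $D_c(\A)$. To bridge this gap, note that every functor involved is of the form $(-)\ldertimes (-)$ by a compact bimodule (using smoothness and Cor.~\ref{cor-for-smooth-A-perfect-means-perfect}), hence commutes with infinite direct sums, so an isomorphism of such functors on $D_c$ extends uniquely to an isomorphism on all of $D$ by the standard compact-generator argument; similarly, $t$ being an autoequivalence on $D_c(\B)$ is equivalent to $t$ being an autoequivalence on $D(\B)$, since its left adjoint $t'$ is also given by tensoring with a compact bimodule.

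With the dictionary in place, the proof amounts to invoking Theorem~\ref{theorem-main-theorem}: any two of the four FM conditions (1)--(4) translate into two of the four DG conditions on $S$, which by Theorem~\ref{theorem-main-theorem} force all four DG conditions to hold; translating back gives all four FM conditions. I expect the main obstacle to be checking that the compact/non-compact transition in the second paragraph is clean, in particular that the natural transformations \eqref{eqn-r-to-fl(1)-FM} and \eqref{eqn-lt(-1)-to-r-FM} really do match up, under the Morita identification, with the natural transformations \eqref{eqn-r-to-fl(1)} and \eqref{eqn-lt(-1)-to-r} extended to the unbounded categories; but this matching is essentially built into the construction of these maps as the images of the bimodule-level maps under the functor $\Phi$, so no new work is required beyond unwinding the definitions.
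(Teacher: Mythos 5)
Your proof is correct and matches the paper's approach, which simply states that Theorem~\ref{theorem-main-theorem} ``immediately implies'' the result after the dictionary between Fourier--Mukai kernels and DG-bimodules has been set up. You have helpfully spelled out the one point the paper glosses over --- that Theorem~\ref{theorem-main-theorem} concerns the unbounded categories $D(\A)$, $D(\B)$ while the FM-level conditions live on the compact subcategories $D(Z) \simeq D_c(\A)$ and $D(X) \simeq D_c(\B)$ --- and your resolution (all functors involved are tensor products with bimodules, hence preserve coproducts, so a natural transformation is an isomorphism iff it is so on compact generators, and likewise an adjoint pair of such functors are mutually inverse on $D_c$ iff they are so on all of $D$) is the standard and correct argument.
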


We can repeat all the arguments in this section 
using the framework of the Example
\ref{example-quasi-functors-between-enhancements-of-QCoh}
rather than the Example
\ref{example-quasi-functors-between-enhancements-of-DBCoh}. 
Thus we would work with large Morita enhancements 
of $D_{qc}(Z)$ and $D_{qc}(X)$, 
rather than with Morita enhancements of $D(Z)$ and $D(X)$. 
This yields a construction of \em twists \rm and \em co-twists \rm
as functors $D_{qc}(X) \rightarrow D_{qc}(X)$ and 
$D_{qc}(Z) \rightarrow D_{qc}(Z)$ and an analogue of
Theorem \ref{theorem-main-theorem-for-coh-FM}. 
However, we would have to impose the following condition
on the objects of $\bar{S} \in D_{qc}(Z \times X)$ which we work 
with: $\Phi_{\bar{S}}$ must have a left adjoint which is  
a Fourier-Mukai transform and they both
must send compact objects to compact objects. 


\section{Braiding criteria for spherical DG-functors} 
\label{section-braiding-criteria}

Let $\A_1$, $\A_2$ and $\B$ be small DG-categories and let 
$S_1 \in D(\AonebimB)$ and $S_2 \in D(\AtwobimB)$ be two 
spherical objects. We keep all the notation conventions of  
Section \ref{section-spherical-DG-functors}. E.g. $R_i$ denotes
$S_i^\lderB$, $S_i R_i$ denotes $R_i \ldertimes_{\A_i}
S_i$, $T_i$ denotes the cone of $S_i R_i \xrightarrow{\trace} \B$, etc.  

In particular, $M = \barAi \otimes_{\A_1} S_1 \otimes_{\B} \barB$ and 
$N = \barAj \otimes_{\A_2} S_2 \otimes_{\B} \barB$ are $h$-projective 
resolutions of $S_1$ and $S_2$. In this section it was possible 
to simplify a number of computations by replacing all homotopy 
trace maps 
$\MhdB \otimes_\A M \xrightarrow{\trace} \barB$
and $\NhdB \otimes_\A N \xrightarrow{\trace} \barB$
by their compositions with $\barB \xrightarrow{\tau} \B$. 
To keep the notation simple, we write  
$\MhdB \otimes_\A M \otimes_\B \xrightarrow{\trace} \B$
and $\NhdB \otimes_\A N \otimes_\B \xrightarrow{\trace} \B$
for these compositions throughout.


\subsection{Commutation}
\label{subsection-commutation}

By functoriality of the derived tensor product, the following
diagram commutes:
\begin{align}
\label{eqn-braiding-commutative-cube-in-H0-to-complete}
\vcenter{
\xymatrix{
&
S_2 R_2
\ar[rr]^{\id}
\ar[dd]_{\trace}
&
&
S_2 R_2
\ar[dd]^{\trace}
\\
S_1 R_1 S_2 R_2 
\ar[ur]^<<<<<<{\trace \otimes \id}
\ar[dd]_<<<<<<<{\id \otimes \trace}
& & 
S_2 R_2 S_1 R_1 
\ar[ur]^<<<<<<<{\id \otimes \trace}
\ar[dd]^<<<<<<<{\trace \otimes \id}
&
\\
&
\B
\ar[rr]_<<<<<<<<<<<<{\id}
&
&
\B
\\
S_1 R_1
\ar[rr]_<<<<<<<<<<<<{\id}
\ar[ur]^{\trace}
& &
S_1 R_1
\ar[ur]^{\trace}
&
}
}
\end{align}

The main result of this section is:
\begin{theorem}
\label{theorem-commutation-criterion}
Suppose there exists an isomorphism 
$$ S_1 R_1 S_2 R_2 \xrightarrow{\phi} S_2 R_2 S_1 R_1 $$
which makes the diagram
\eqref{eqn-braiding-commutative-cube-in-H0-to-complete} commute. 
Then 
$$ T_1 T_2 \simeq T_2 T_1. $$
\end{theorem}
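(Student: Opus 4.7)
The plan is to realize both $T_1 T_2$ and $T_2 T_1$ as convolutions of twisted $2$-cubes over $\hproj(\BbimB)$, to construct an explicit morphism between these cubes using the given data, and to apply the Cube Lemma (Lemma \ref{lemma-the-cube-lemma}) to conclude.

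First, I would replace $S_1$ and $S_2$ by $h$-projective representatives so that $R_i = S_i^{\B}$ and all iterated tensor products compute the derived ones. Applying Lemma \ref{lemma-tensor-product-of-twisted-complexes} to the two-term twisted complexes $\left( S_i R_i \to \underset{\degzero}{\B} \right)$ expresses $T_1 T_2$ and $T_2 T_1$ as convolutions of the twisted $2$-cubes
\[
C_{12} = \vcenter{\xymatrix{
S_1 R_1 S_2 R_2 \ar[r]^<<<<<{\id \otimes \trace} \ar[d]_{\trace \otimes \id} & S_1 R_1 \ar[d]^{\trace} \\
S_2 R_2 \ar[r]_{\trace} & \underset{\degzero}{\B}
}}
\quad\text{and}\quad
C_{21} = \vcenter{\xymatrix{
S_2 R_2 S_1 R_1 \ar[r]^<<<<<{\id \otimes \trace} \ar[d]_{\trace \otimes \id} & S_2 R_2 \ar[d]^{\trace} \\
S_1 R_1 \ar[r]_{\trace} & \underset{\degzero}{\B}
}}
\]
in $\hproj(\BbimB)$, whose diagonal degree-$(-1)$ morphisms are zero (the squares commute strictly by functoriality of $\otimes$).

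The commutative cube \eqref{eqn-braiding-commutative-cube-in-H0-to-complete} is, by design, the image in $D(\BbimB)$ of a morphism of twisted $2$-cubes $\Phi \colon C_{12} \to C_{21}$ which is $\phi$ on the initial corner and the identity on the three other vertices. The second step is to lift this derived-category data to an actual morphism of twisted cubes in $\hproj(\BbimB)$: I would lift $\phi$ to a closed degree-$0$ morphism $\tilde\phi$ between $h$-projective representatives of $S_1 R_1 S_2 R_2$ and $S_2 R_2 S_1 R_1$, then pin down degree-$(-1)$ null-homotopies witnessing the commutativity of each of the four squares of the cube. The main obstacle is precisely this coherent DG-level lift: the relevant obstructions live in morphism spaces between $h$-projective bimodules and can in principle be killed one at a time, but assembling them coherently is delicate. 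If a direct step-by-step construction proves awkward, the $A_\infty$-machinery of the Appendix provides the general framework for performing such lifts.

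Once $\Phi$ is assembled as a morphism of twisted $2$-cubes in $\hproj(\BbimB)$, each of its four vertex components is a homotopy equivalence: $\tilde\phi$ represents an isomorphism in $D(\BbimB)$ and, on $h$-projective bimodules, isomorphisms in the derived category coincide with homotopy equivalences; the other three vertex maps are identities. Therefore $\Phi$ induces a termwise homotopy equivalence between the total complexes of $C_{12}$ and $C_{21}$, hence between their convolutions, giving $T_1 T_2 \simeq T_2 T_1$ in $D(\BbimB)$. By Lemmas \ref{lemma-right-adjoint-of-ldertimes-M}--\ref{lemma-left-adjoint-of-ldertimes-M} this descends to the natural isomorphism $t_1 t_2 \simeq t_2 t_1$ of exact functors on $D(\B)$.
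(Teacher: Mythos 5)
Your setup (express $T_1T_2$ and $T_2T_1$ as convolutions of twisted $2$-cubes in $\hproj(\BbimB)$, use $h$-projective representatives so all tensor products compute derived ones) matches the paper. But the proposal has a genuine gap exactly where you flag ``the main obstacle'': you acknowledge that the coherent DG-level lift of $\phi$ to a closed morphism of twisted complexes is the delicate step, and then essentially assume it can always be done by invoking the appendix $A_\infty$-machinery. That machinery does not automatically deliver the lift. Theorem \ref{theorem-homotopy-equiv-of-twisted-complexes-differ-only-first-term} (equivalently Prop.\ \ref{prps-existence-of-an-A-infty-functor-B_n-->--C}) requires a specific hypothesis: $f$ must commute in $H^0$ with the maps from $E_0$ and $F_0$ to the \emph{entire common tail} of the twisted complexes, which here is (a shift of) $\cone\bigl(\B \to S_1R_1\oplus S_2R_2\bigr)$. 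The hypothesis of the theorem you are proving is a priori \emph{weaker}: it only says $\phi$ commutes with the maps to $S_1R_1\oplus S_2R_2$. Concretely, after collapsing the $2$-cubes to the three-term twisted complexes \eqref{eqn-S1R1S2R2-twisted-complex} and \eqref{eqn-S2R2S1R1-twisted-complex}, the hypothesis hands you $f$ and $s_1$ with $ds_1=\alpha-\beta f$, but the class $[\gamma s_1]\in\Ext^{-1}_{D(\BbimB)}(S_1R_1S_2R_2,\B)$ that governs the degree-$(-2)$ coherence datum $s_2$ has no apparent reason to vanish.

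The actual content of the paper's proof is precisely the argument you are missing: that $s_1$ can be corrected by a closed $t_1$ so that $[\gamma s_1]=0$, because the composition-with-$\gamma$ map \eqref{eqn-ext-minus-one-map-given-by-composit-with-gamma} on $\Ext^{-1}$ is \emph{surjective}. This surjectivity is proved via the adjunction isomorphism \eqref{eqn-S_1-*-R_2-adjunction} (an application of Prop.\ \ref{prps-M^B-and-M^A-are-homotopy-adjoints-of-M-via-S-SRS-S-maps}), which transports the problem to $D(\A_2\text{-}\A_1)$ where the relevant map is a split epimorphism. Without this step, nothing in your argument rules out a non-zero secondary obstruction. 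Separately, note that a morphism of twisted complexes which is merely termwise a homotopy equivalence is not automatically a homotopy equivalence of the twisted complexes; that implication only holds once you have assembled a genuine \emph{closed} morphism in $\pretriag(\C)$ with all the higher coherence data, which is exactly what is at stake.
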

\begin{proof}
 
By definition, $T_1 T_2$ is isomorphic in $D(\BbimB)$ to  
$$ \left\{ \NhdB \otimes_{\A_2} N \xrightarrow{\trace}
\underset{\degzero}{\B} \right\}
\otimes_\B \left\{ \MhdB \otimes_{\A_1} M \xrightarrow{\trace} 
\underset{\degzero}{\B}
\right\} $$
which by Lemma \ref{lemma-tensor-and-hom-of-twisted-complexes} is 
isomorphic to the convolution of
\begin{align}
\label{eqn-S1R1S2R2-twisted-complex}
\Bigl( \NhdB \otimes_{\A_2} N \otimes_\B \MhdB \otimes_{\A_1} M
\xrightarrow{\alpha}
\left( \NhdB \otimes_{\A_2} N \right) 
\oplus
\left( \MhdB \otimes_{\A_1} M \right)
\xrightarrow{\gamma}
\underset{\degzero}{\B}.\Bigr)
\end{align}
where $\alpha = (- \id \otimes \trace) \oplus (\trace\otimes \id)$
and $\gamma = \trace \oplus \trace$. 
Similarly, $T_2 T_1$ is isomorphic to the convolution of
\begin{align}
\label{eqn-S2R2S1R1-twisted-complex}
\Bigl( \MhdB \otimes_{\A_1} M \otimes_\B \NhdB \otimes_{\A_2} N
\xrightarrow{\beta}
\left( \NhdB \otimes_{\A_2} N \right)
\oplus
\left( \MhdB \otimes_{\A_1} M \right) 
\xrightarrow{\gamma}
\underset{\degzero}{\B}.\Bigr)
\end{align}
where $\beta = (\trace\otimes \id) \oplus (- \id \otimes \trace)$. 

By Theorem 
\ref{theorem-homotopy-equiv-of-twisted-complexes-differ-only-first-term}
to show that \eqref{eqn-S1R1S2R2-twisted-complex} and 
\eqref{eqn-S2R2S1R1-twisted-complex} are homotopy equivalent in
$\pretriag(\BmodB)$, and hence that $T_1T_2$ and $T_2T_1$ are
isomorphic in $D(\BbimB)$, it suffices to exhibit
\begin{align*}
f &\in \homm^0_{\BbimB}\left( 
\NhdB \otimes_{\A_2} N \otimes_\B \MhdB \otimes_{\A_1} M, 
\quad \MhdB \otimes_{\A_1} M \otimes_\B \NhdB \otimes_{\A_2} N
\right)\\
s_1 &\in \homm^{-1}_{\BbimB} \left(
\NhdB \otimes_{\A_2} N \otimes_\B \MhdB \otimes_{\A_1} M,  
\quad
\left( \NhdB \otimes_{\A_2} N \right)
\oplus
\left( \MhdB \otimes_{\A_1} M \right) 
\right)\\
s_2 &\in \homm^{-2}_{\BbimB} \left(
\NhdB \otimes_{\A_2} N \otimes_\B \MhdB \otimes_{\A_1} M,  
\quad
\B
\right)
\end{align*}
such that 
\begin{enumerate}
\item $f$ is a homotopy equivalence
\item $ds_1 = \alpha - \beta f$ 
\item $ds_2 = \gamma s_1$. 
\end{enumerate}

Since all the source bimodules are $h$-projective 
the $\homm^i$-spaces above are isomorphic to the 
$\ext^i$-spaces between the same objects in $D(\BbimB)$. 

In particular, we can lift the isomorphism 
$$ S_1 R_1 S_2 R_2 \xrightarrow{\phi} S_2 R_2 S_1 R_1 $$ 
in $D(\BbimB)$ to some homotopy equivalence
$$ f \in \homm^0_{\BbimB}\left( 
\NhdB \otimes_{\A_2} N \otimes_\B \MhdB \otimes_{\A_1} M, 
\quad \MhdB \otimes_{\A_1} M \otimes_\B \NhdB \otimes_{\A_2} N
\right).$$
The fact that $\phi$ makes 
\eqref{eqn-braiding-commutative-cube-in-H0-to-complete} commute
in $D(\BbimB)$ implies that $\alpha - \beta f$ vanishes 
in 
$$ \homm_{D(\BbimB}(S_1 R_1 S_2 R_2, S_1 R_1 \oplus S_2 R_2).$$
Hence we can find some 
$$ s_1 \in \homm^{-1}_{\BbimB} \left(
\NhdB \otimes_{\A_2} N \otimes_\B \MhdB \otimes_{\A_1} M,  
\quad
\left( \MhdB \otimes_{\A_1} M \right) 
\oplus
\left( \NhdB \otimes_{\A_2} N \right)
\right) $$
with $ds_1 = \alpha - \beta f$. But there is no apriori reason for
the class of $\gamma s_1$ to vanish in  
$\ext^{-1}_{D(\BbimB)}(S_1R_1S_2R_2, \B),$
which is what we need to warranty the existence of 
$$ s_2 \in \homm^{-2}_{\BbimB} \left(
\NhdB \otimes_{\A_2} N \otimes_\B \MhdB \otimes_{\A_1} M,  
\quad
\barB \right) $$
with $ds_2 = \gamma s_1$, whence as explained above the claim of 
this theorem would follow.

It suffices, however, to find 
$$ t_1 \in \homm^{-1}_{\BbimB} \left(
\NhdB \otimes_{\A_2} N \otimes_\B \MhdB \otimes_{\A_1} M,  
\quad
\left( \MhdB \otimes_{\A_1} M \right) 
\oplus
\left( \NhdB \otimes_{\A_2} N \right)
\right) $$
with $dt_1 = 0$ and $\gamma t_1 = \gamma s_1$ in 
$\ext^{-1}_{D(\BbimB)}(S_1R_1S_2R_2, \B)$. For if 
we then replace $s_1$ with $s_1 - t_1$ the condition 
$ds_1 = \alpha - \beta f$ would still hold, but the class of $\gamma s_1$ 
would now vanish in $\ext^{-1}_{D(\BbimB)}(S_1R_1S_2R_2, \B)$ as required. 
Thus it remains to show that the class $[\gamma s_1]$ in
$\ext^{-1}_{D(\BbimB)}\left(S_1R_1S_2R_2, \B\right)$ lifts 
with respect to
\begin{align}
\label{eqn-ext-minus-one-map-given-by-composit-with-gamma}
\ext^{-1}_{D(\BbimB)}\left(S_1R_1S_2R_2, S_1R_1 \oplus S_2 R_2\right)
\xrightarrow{\gamma(-)}
\ext^{-1}_{D(\BbimB)}\left(S_1R_1S_2R_2, \B \right)
\end{align}
to some class in 
$\ext^{-1}_{D(\BbimB)}\left(S_1R_1S_2R_2, S_1R_1 \oplus S_2 R_2\right)$. 

We claim that, in fact, 
\eqref{eqn-ext-minus-one-map-given-by-composit-with-gamma} is
surjective. Indeed, it follows from Prop. 
\ref{prps-M^B-and-M^A-are-homotopy-adjoints-of-M-via-S-SRS-S-maps}
via the usual adjunction-type argument that for any 
$N_1 \in D(\A_2\text{-}\A_1)$ and $N_2 \in D(\BbimB)$ the
map 
\begin{align}
\label{eqn-S_1-*-R_2-adjunction}
\ext^{-1}_{D(\BbimB)}(S_1 N_1 R_2, N_2) \longrightarrow
\ext^{-1}_{D(\BbimB)}(N_1, R_1 N_2 S_2) 
\end{align}
given by
$$ \alpha \quad \mapsto \quad 
N_1 \xrightarrow{\action N_1 \action} R_1 S_1 N_1 R_2 S_2
\xrightarrow{R_1 \alpha S_2} R_1 N_2 S_2 $$
is a functorial isomorphism. We thus have a commutative diagram
\begin{align}
\vcenter{
\xymatrix{
\ext^{-1}_{D(\BbimB)}(S_1R_1S_2R_2, S_1R_1 \oplus S_2 R_2) 
\ar[rr]^{\quad\eqref{eqn-ext-minus-one-map-given-by-composit-with-gamma}}
\ar[d]_{\sim}^{\eqref{eqn-S_1-*-R_2-adjunction}}
& &
\ext^{-1}_{D(\BbimB)}(S_1R_1S_2R_2, \B) 
\ar[d]_{\sim}^{\eqref{eqn-S_1-*-R_2-adjunction}}
\\
\ext^{-1}_{D(\A_2\text{-}\A_1)}(R_1S_2, R_1S_1R_1S_2 \oplus R_1S_2R_2S_2) 
\ar[rr]^>>>>>>>>>>{(R_1 \gamma S_2)(-)}
& &
\ext^{-1}_{D(\A_2\text{-}\A_1)}(R_1S_2, R_1 S_2) 
} 
}.
\end{align}
The map $R_1\gamma S_2$ is the map 
$$
R_1S_1R_1S_2 \oplus R_1S_2R_2S_2
\xrightarrow{R_1\trace S_2 \oplus R_1 \trace S_2}
R_1S_2
$$
and by Prop
\ref{prps-M^B-and-M^A-are-homotopy-adjoints-of-M-via-S-SRS-S-maps}
the map 
$$
R_1 S_2
\xrightarrow{\frac{1}{2}\action R_1S_2 \oplus \frac{1}{2} R_1S_2 \action}
R_1S_1R_1S_2 \oplus R_1S_2R_2S_2
$$
is its left inverse in $D(\A_2\text{-}\A_1)$. Therefore  
$$
\ext^{-1}_{D(\A_2\text{-}\A_1)}(R_1S_2, R_1S_1R_1S_2 \oplus R_1S_2R_2S_2) 
\xrightarrow{(R_1 \gamma S_2)(-)}
\ext^{-1}_{D(\A_2\text{-}\A_1)}(R_1S_2, R_1 S_2) 
$$
is surjective and hence so is
\eqref{eqn-ext-minus-one-map-given-by-composit-with-gamma} as
desired.
\end{proof}


\subsection{Braiding.}
\label{section-criterion-for-braiding}

Define 
\begin{align}
\label{eqn-definition-of-O_i}
O_i=F_i\{L_iS_jR_jS_i \xrightarrow{\trace\circ(L_i \trace S_i)}\A_i\}
\quad\quad \in D(\A_i\text{-}\A_i)
\end{align}
where $i,j\in\{1,2\}$, $i\ne j$. For spherical $S_1, S_2$ the natural
map $R_i[-1] \xrightarrow{\alpha} F_i L_i$ is an isomorphism and it
identifies the map in \eqref{eqn-definition-of-O_i}
with the map 
$$ R_i S_j R_j S_i[-1] \xrightarrow{R_i \trace S_i} R_i S_i[-1] \rightarrow F_i
$$
whose second composant comes from the exact triangle 
$F_i \rightarrow \A_i \rightarrow R_i S_i.$
Thus $O_1$ and $O_2$ are isomorphic to the convolutions of 
the twisted
complexes
\begin{align*}
\O_1 \overset{\text{def}}{=} 
\left(
\underset{\degzero}{
\left( M \otimes_{\B} \NhdB \otimes_{\A_2} N \otimes_{\B} \MhdB \right)
\oplus \barAi}
\xrightarrow{(\id \otimes \trace \otimes \id)\oplus(-\action)}
{M \otimes_\B \MhdB}
\right)
\\
\O_2 
\overset{\text{def}}{=}
\left(
\underset{\degzero}{
\left( N \otimes_{\B} \MhdB \otimes_{\A_1} M \otimes_{\B} \NhdB \right)
\oplus \barAj}
\xrightarrow{(\id \otimes \trace \otimes \id)\oplus(-\action)}
{N \otimes_\B \NhdB}
\right).
\end{align*}

There are natural maps
\begin{align}
\label{eqn-S1O1R1-to-S1R1S2R2-plus-S2R2S1R1}
S_1 O_1 R_1 \rightarrow S_1R_1S_2R_2 \oplus S_2R_2S_1R_1 
\\
\label{eqn-S2O2R2-to-S1R1S2R2-plus-S2R2S1R1}
S_2 O_2 R_2 \rightarrow S_1R_1S_2R_2 \oplus S_2R_2S_1R_1
\end{align}
where \eqref{eqn-S1O1R1-to-S1R1S2R2-plus-S2R2S1R1}
is the map induced by 
\begin{align}
\label{eqn-S_1O_1R_1-to-S_2R_2S_1R_1}
\MhdB \otimes_{\A_1}
M \otimes_{\B} \NhdB \otimes_{\A_2} N \otimes_{\B} \MhdB
\otimes_{\A_1} M
\xrightarrow{\id \otimes \id \otimes \trace \otimes \id \otimes \id } &
\MhdB \otimes_{\A_1} M \otimes_{\B} \NhdB \otimes_{\A_2} N 
\\
\label{eqn-S_1O_1R_1-to-S_1R_1S_2R_2} \MhdB \otimes_{\A_1}
M \otimes_{\B} \NhdB \otimes_{\A_2} N \otimes_{\B} \MhdB
\otimes_{\A_1} M
\xrightarrow{\trace \otimes \id \otimes \id \otimes \id \otimes \id } &
\NhdB \otimes_{\A_2} N \otimes_{\B} \MhdB \otimes_{\A_1} M.
\end{align}
and \eqref{eqn-S2O2R2-to-S1R1S2R2-plus-S2R2S1R1} is defined 
analogously. 

The main result of this section is:
\begin{theorem}
\label{theorem-braiding-criterion}
Suppose there exists an isomorphism 
$$ S_1 O_1 R_1 \xrightarrow{\phi} S_2 O_2 R_2 $$
which commutes with the maps
\eqref{eqn-S1O1R1-to-S1R1S2R2-plus-S2R2S1R1}
and \eqref{eqn-S2O2R2-to-S1R1S2R2-plus-S2R2S1R1}.
Then
$$ T_1 T_2 T_1 \simeq T_2 T_1 T_2. $$
\end{theorem}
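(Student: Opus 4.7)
The plan is to mimic and extend the argument of Theorem \ref{theorem-commutation-criterion}, which reduced the commutation relation to comparing two explicit twisted complexes in $\pretriag(\BmodB)$ and lifting a given derived isomorphism through successive $\Ext$-theoretic obstructions. The twisted complexes that arise here are one step longer, so more layers of homotopy data must be produced.

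First I would choose $h$-projective representatives $M_1,M_2$ of $S_1,S_2$ and apply Lemma \ref{lemma-tensor-product-of-twisted-complexes} iteratively to realise $T_1T_2T_1$ and $T_2T_1T_2$ as convolutions of twisted three-cubes over $\BmodB$. Both total complexes live in degrees $-3,\dots,0$; their innermost vertices are $M_i^\B M_i M_j^\B M_j M_i^\B M_i$ for the appropriate ordering of $i,j$, and the outermost vertex in each case is the diagonal bimodule $\B$. Proposition \ref{prps-C-hproj-times-hproj-equals-hproj} guarantees that all the intermediate bimodules are $h$-projective, so the $\homm$-spaces in $H^0(\BmodB)$ between them coincide with the corresponding $\Ext$-groups in $D(\BbimB)$.

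Next I would use the Cube Lemma (Lemma \ref{lemma-the-cube-lemma}) to recognise inside each three-cube a two-dimensional face whose convolution is naturally identified with the twisted complex defining $S_iO_iR_i$ from \eqref{eqn-definition-of-O_i}. Lifting the derived isomorphism $\phi\colon S_1O_1R_1 \to S_2O_2R_2$ to a closed degree-zero morphism between these two-faces and extending by zero produces a degree-zero morphism $f$ between the two full three-cubes. The compatibility of $\phi$ with the trace maps \eqref{eqn-S1O1R1-to-S1R1S2R2-plus-S2R2S1R1} and \eqref{eqn-S2O2R2-to-S1R1S2R2-plus-S2R2S1R1} is precisely the statement that the first boundary discrepancy, transported by $f$, already vanishes in the relevant derived $\homm$-space, so a primary homotopy $s_1$ of degree $-1$ exists.

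The hard part will be the iterative construction of higher homotopies $s_2,s_3$ of degrees $-2,-3$ that cancel the remaining boundary discrepancies, which naturally live in $\Ext^{-2}$- and $\Ext^{-3}$-groups from $M_i^\B M_i M_j^\B M_j M_i^\B M_i$ into the outer terms of the twisted complex, ultimately into $\B$. To show the relevant composition-with-boundary maps are surjective I would follow the adjunction surjectivity trick used in \eqref{eqn-S_1-*-R_2-adjunction}: repeated use of the $(s_i,r_i)$ adjunctions rewrites each obstruction class as an $\Ext$-class in $D(\A_i\text{-}\A_j)$ between much simpler bimodules built from $R_iS_j$, where Proposition \ref{prps-M^B-and-M^A-are-homotopy-adjoints-of-M-via-S-SRS-S-maps} exhibits one-sided inverses to the relevant composition-with-trace maps and thereby forces surjectivity onto the groups in which the obstructions live. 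Once $f,s_1,s_2,s_3$ are all in place, invoking the extension of Theorem \ref{theorem-homotopy-equiv-of-twisted-complexes-differ-only-first-term} from length-three to length-four twisted complexes (which is precisely where the $A_\infty$-techniques of Appendix \ref{section-on-homotopy-equivalences-of-twisted-complexes} become indispensable) packages this data into a genuine homotopy equivalence, descending to the desired isomorphism $T_1T_2T_1 \simeq T_2T_1T_2$ in $D(\BbimB)$.
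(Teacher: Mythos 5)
Your broad strategy matches the paper's: realise $T_1T_2T_1$ and $T_2T_1T_2$ as convolutions of twisted complexes, reduce to producing the data $f, s_1, s_2, s_3$ demanded by Theorem \ref{theorem-homotopy-equiv-of-twisted-complexes-differ-only-first-term} (which is already stated for arbitrary length, so no ``extension to length four'' is required), and kill obstruction classes by adjunction-powered $\Ext$ computations. But there is a genuine gap in how you get the three-cubes into a form where that theorem applies. The object $S_iO_iR_i$ is \emph{not} the convolution of a two-dimensional face of the cube for $T_iT_jT_i$; moreover, the cube for $T_1T_2T_1$ contains the vertex $S_1R_1S_1R_1$ while that for $T_2T_1T_2$ contains $S_2R_2S_2R_2$, so the two total complexes disagree in more than one term and ``extending $\phi$ by zero to a map of cubes'' does not produce the input the appendix theorem requires. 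The crucial missing step is the preliminary ``origami'' homotopy equivalence (from \eqref{diag-origami-initial} through \eqref{diag-origami-final1} to \eqref{eqn-braiding-twisted-complex1}, built by the same unit--counit cancellation as in Lemma \ref{lemma-gamma-is-an-isomorphism}) that collapses the asymmetric $S_iR_iS_iR_i$ vertex and rewrites the full three-cube total complex as a linear four-term twisted complex whose leading term is $S_iO_iR_i$ and whose three trailing terms $S_1R_1S_2R_2\oplus S_2R_2S_1R_1 \to S_1R_1\oplus S_2R_2 \to \B$ are symmetric in $1,2$ and hence literally identical for both products. Only after this repackaging can Theorem \ref{theorem-homotopy-equiv-of-twisted-complexes-differ-only-first-term} be invoked, with $f$ lifting $\phi$ between the leading terms alone.

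Secondly, in the correction steps you need more than surjectivity. The obstruction to fixing $s_1$ is the class $[\gamma\tilde s_1]$, which lies in $\ker\delta\subset\Ext^{-1}(S_1O_1R_1, S_1R_1\oplus S_2R_2)$; what must be shown is that it lies in $\img\gamma$, i.e.\ exactness of the $\Ext$-sequence at its middle term. Surjectivity of $\delta$ is only what is needed for the final step, correcting $s_2$. Establishing both requires the full diamond diagram of Lemma \ref{lemma-diagram-of-ext-groups}, with the maps $\kappa_i,\eta_i,\mu_i,\nu_i$ and the relations among them, which is strictly more than the single left-inverse trick that sufficed in the proof of Theorem \ref{theorem-commutation-criterion}.
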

\begin{proof}
$T_1T_2T_1$ is isomorphic
by the Cube Completion Lemma \ref{lemma-the-cube-completion-lemma}
to the convolution of the twisted cube
\begin{footnotesize}
\begin{align}\label{diag-origami-initial}
\vcenter{
\xymatrix@C=2cm{
 &
\MhdB\otimes M\otimes \NhdB\otimes N \otimes \MhdB\otimes M  
\ar[ld]_{\id \otimes \trace}
\ar[d]_{\trace \otimes \id}
\ar[rd]^{\quad - \id \otimes \trace \otimes \id}
& 
\\ 
\MhdB\otimes M\otimes \NhdB\otimes N 
\ar[d]_{\trace \otimes \id}
\ar[rd]_<<<<<<<<{- \id \otimes \trace}
&
\NhdB\otimes N\otimes \MhdB\otimes M 
\ar[ld]_<<<<<<<<<<<<{-\id \otimes \trace}
\ar[rd]^<<<<<<<<<<<<{\trace \otimes \id}
&
\MhdB\otimes M\otimes \MhdB\otimes M 
\ar[ld]^<<<<<<<<{- \id \otimes \trace}
\ar[d]^{\trace\otimes\id}
\\
\NhdB\otimes N 
\ar[rd]^{\trace}
&
\MhdB\otimes M 
\ar[d]^{\trace}
&
\MhdB\otimes M 
\ar[ld]_{\trace}
\\
&  \underset{\degzero}{\B}. & 
}
}
\end{align}
\end{footnotesize}

We now use the isomorphism 
$\left(\MhdB\otimes M\right) 
\oplus \left(\MhdB\otimes M\right)
\xrightarrow{
\left( 
\begin{smallmatrix}
1 & 1   \\
1 & -1 
\end{smallmatrix}
\right)
}
\left(\MhdB\otimes M\right)
\oplus \left(\MhdB\otimes M\right)$
to rewrite the total complex of \eqref{diag-origami-initial} as:

\begin{footnotesize}
\begin{align}\label{diag-origami-step1}
\vcenter{
\xymatrix@C=2cm{
 &
\MhdB\otimes M\otimes \NhdB\otimes N \otimes \MhdB\otimes M  
\ar[ld]_{\id \otimes \trace}
\ar[d]_{\trace \otimes \id}
\ar[rd]^{\quad - \id \otimes \trace \otimes \id}
& 
\\ 
\MhdB\otimes M\otimes \NhdB\otimes N 
\ar@{}[r]|{\bigoplus}
\ar[d]_{\trace \otimes \id}
\ar[rd]_<<<<<<<<{- \id \otimes \trace}
\ar[rrd]|<<<<<<<<<{\;- \id \otimes \trace\;}
&
\NhdB\otimes N\otimes \MhdB\otimes M 
\ar@{}[r]|{\bigoplus}
\ar[ld]_<<<<<<<<<<<<{-\id \otimes \trace}
\ar[rd]^<<<<<<<<<<<<{- \trace \otimes \id}
\ar[d]^<<{\trace \otimes \id}
&
\MhdB\otimes M\otimes \MhdB\otimes M 
\ar[d]^{-2 \left( \trace \otimes \id\right)}
\\
\NhdB\otimes N 
\ar@{}[r]|{\bigoplus}
\ar[rd]^{\trace}
&
\MhdB\otimes M 
\ar@{}[r]|{\bigoplus}
\ar[d]^{\trace}
&
\MhdB\otimes M 
\\
&  \underset{\degzero}{\B}. & 
}
}
\end{align}
\end{footnotesize}

Let $X$ and $Y$ be the full subcomplexes of \eqref{diag-origami-step1} 
which comprise its left two columns and its right column,
respectively.  
Since the right column has no outgoing arrows, its incoming arrows 
define a closed degree $0$ morphism $X \xrightarrow{\rho} Y$ 
whose total complex is \eqref{diag-origami-step1}. Let 
$ Y' = \left(\MhdB \otimes M 
\xrightarrow{-\frac{1}{2} \id \otimes \action \otimes \id} 
\underset{\text{deg.-2}}{\MhdB \otimes M \otimes \MhdB \otimes M}
\right)$.
Lemma \ref{lemma-gamma-is-an-isomorphism} yields with a homotopy
equivalence $Y \xrightarrow{\gamma'} Y'$. The total complex
of $X \xrightarrow{\rho} Y$ is then homotopy equivalent to 
the total complex of $X \xrightarrow{\gamma' \circ \rho} Y'$. 
Thus \eqref{diag-origami-step1} is homotopy equivalent to 
the twisted complex:
\begin{footnotesize}
\begin{align}\label{diag-origami-final1}
\vcenter{
\xymatrix@C=1.75cm{
&
\MhdB\otimes M\otimes \NhdB\otimes N \otimes \MhdB\otimes M  
\ar[ld]_{\id \otimes \trace}
\ar[d]_{\trace \otimes \id}
\ar[rd]^{\quad - \id \otimes \trace \otimes \id}
\ar@{}[r]|{\bigoplus}
& 
\MhdB\otimes M 
\ar[d]^{-\frac{1}{2} \id \otimes \action \otimes \id}
\\ 
\MhdB\otimes M\otimes \NhdB\otimes N 
\ar@{}[r]|{\bigoplus}
\ar[rd]_<<<<<<<<{- \id \otimes \trace}
\ar[d]_{\trace \otimes \id}
&
\NhdB\otimes N\otimes \MhdB\otimes M 
\ar@{}[r]|{\bigoplus}
\ar[ld]_<<<<<<<<<<<<{-\id \otimes \trace}
\ar[d]^<<{\trace \otimes \id}
&
\MhdB\otimes M\otimes \MhdB\otimes M 
\\
\NhdB\otimes N 
\ar@{}[r]|{\bigoplus}
\ar[rd]^{\trace}
&
\MhdB\otimes M 
\ar[d]^{\trace}
&
\\
&  \underset{\degzero}{\B}. & 
}
}
\end{align}
\end{footnotesize}

Now observe that $\MhdB \otimes \mathcal{O}_1 \otimes M[-3]$
is homotopy equivalent to the following initial subcomplex 
of \eqref{diag-origami-final1}:
\begin{footnotesize}
\begin{align}\label{diag-origami-final-piece}
\vcenter{
\xymatrix@C=1.75cm{
&
\MhdB\otimes M\otimes \NhdB\otimes N \otimes \MhdB\otimes M  
\ar[rd]^{\quad - \id \otimes \trace \otimes \id}
\ar@{}[r]|{\quad\quad\bigoplus}
& 
\MhdB\otimes M 
\ar[d]^{-\frac{1}{2} \id \otimes \action \otimes \id}
\\ 
&
&
\underset{\text{deg. -2}}{\MhdB\otimes M\otimes \MhdB\otimes M}.
}
}
\end{align}
\end{footnotesize}
By the same argument as above 
\eqref{diag-origami-final1} is homotopy equivalent to 
the twisted complex
\begin{equation}\label{eqn-braiding-twisted-complex1}
\MhdB\!\otimes \O_1 \otimes\! M
\xrightarrow{\alpha}
\left(\MhdB\!\otimes\!M\!\otimes\!\NhdB\!\otimes\!N\right)\oplus
\left(\NhdB\!\otimes\!N\!\otimes\!\MhdB\!\otimes\!M\right) 
\xrightarrow{\gamma}
\left(\MhdB\!\otimes\!M\right) \oplus \left(\NhdB\!\otimes\!N\right)
\xrightarrow{\delta}
\underset{\degzero}{\B}.
\end{equation}

Similarly, $T_2T_1T_2$ is isomorphic to the convolution of the twisted complex
\begin{equation}\label{eqn-braiding-twisted-complex2}
\NhdB\!\otimes \O_2 \otimes\! N
\xrightarrow{\alpha}
\left(\MhdB\!\otimes\!M\!\otimes\!\NhdB\!\otimes\!N\right)\oplus
\left(\NhdB\!\otimes\!N\!\otimes\!\MhdB\!\otimes\!M\right) 
\xrightarrow{\gamma}
\left(\MhdB\!\otimes\!M\right) \oplus \left(\NhdB\!\otimes\!N\right)
\xrightarrow{\delta}
\underset{\degzero}{\B}.
\end{equation}

The complexes \ref{eqn-braiding-twisted-complex1} and \ref{eqn-braiding-twisted-complex2}
descend to the following complexes of objects in $D(\BbimB)$:
\begin{equation}
S_1O_1R_1
\xrightarrow{\eqref{eqn-S1O1R1-to-S1R1S2R2-plus-S2R2S1R1}}
S_1R_1S_2R_2\oplus S_2R_2S_1R_1
\xrightarrow{
\left( 
\begin{smallmatrix}
S_1R_1\trace & - \trace S_1 R_1 \\
- \trace S_2 R_2 & S_2R_2\trace 
\end{smallmatrix}
\right)
}
S_1R_1\oplus S_2R_2
\xrightarrow{\trace \oplus \trace}
\B
\end{equation}
\begin{equation}
S_2O_2R_2
\xrightarrow{\eqref{eqn-S2O2R2-to-S1R1S2R2-plus-S2R2S1R1}}
S_1R_1S_2R_2\oplus S_2R_2S_1R_1
\xrightarrow{
\left( 
\begin{smallmatrix}
S_1R_1\trace & - \trace S_1 R_1 \\
- \trace S_2 R_2 & S_2R_2\trace 
\end{smallmatrix}
\right)
}
S_1R_1\oplus S_2R_2
\xrightarrow{\trace \oplus \trace}
\B.
\end{equation}

By Theorem 
\ref{theorem-homotopy-equiv-of-twisted-complexes-differ-only-first-term}
to show that \ref{eqn-braiding-twisted-complex1} and \ref{eqn-braiding-twisted-complex2}
are homotopy equivalent in
$\pretriag(\BmodB)$, and hence that $T_1T_2T_1$ and $T_2T_1T_2$ are
isomorphic in $D(\BbimB)$, it suffices to exhibit
\begin{align*}
f &\in \homm^0_{\BbimB}\left( 
\MhdB\otimes \O_1 \otimes M, 
\quad 
\NhdB\otimes \O_2 \otimes N
\right)\\
s_1 &\in \homm^{-1}_{\BbimB} \left(
\MhdB \otimes \O_1 \otimes M,  
\quad
\left(\MhdB\otimes M \otimes \NhdB \otimes N\right) 
\oplus 
\left(\NhdB\otimes N \otimes \MhdB \otimes M\right)
\right)\\
s_2 &\in \homm^{-2}_{\BbimB} \left(
\MhdB \otimes \O_1 \otimes M,  
\quad
\left(\MhdB\otimes M\right) \oplus
\left(\NhdB\otimes N\right)
\right)\\
s_3 &\in \homm^{-3}_{\BbimB} \left(
\MhdB \otimes \O_1 \otimes M,  
\quad
\B
\right)
\end{align*}
such that 
\begin{enumerate}
\item $f$ is a homotopy equivalence
\item $ds_1 = \alpha - \beta f$ 
\item $ds_2 = \gamma s_1$
\item $ds_3= - \delta s_2$. 
\end{enumerate}

As in the proof of Theorem \ref{theorem-commutation-criterion}
we can lift $\phi$ to some homotopy equivalence $f$ and 
the existence of some
$$
\tilde{s}_1 \in \homm^{-1}_{\BbimB} \left(
\MhdB \otimes \O_1 \otimes M,  
\quad
\left(\MhdB\otimes M \otimes \NhdB \otimes N\right) 
\oplus 
\left(\NhdB\otimes N \otimes \MhdB \otimes M\right)
\right)
$$
with $d\tilde{s}_1 = \alpha - \beta f$ 
is guaranteed by the commutation of $\phi$ with 
\eqref{eqn-S1O1R1-to-S1R1S2R2-plus-S2R2S1R1}-\eqref{eqn-S2O2R2-to-S1R1S2R2-plus-S2R2S1R1}.
Since $\gamma \alpha = \gamma \beta = 0$ we have  
$d(\gamma \tilde{s}_1) = 0$. Thus 
$\gamma \tilde{s}_1$ defines the class 
$
[\gamma \tilde{s}_1] \in \Ext^i_{D(\BbimB)}(S_1O_1R_1, S_1R_1\oplus S_2R_2)
$
and since $\delta \gamma = 0$ the composition 
$\delta[\gamma \tilde{s}_1]$ vanishes in 
$
\Ext^i_{D(\BbimB)}(S_1O_1R_1, \B).
$
By Cor. \ref{cor-braiding-existence-of-t1-and-t2} below 
there exists some
$$
t_1 \in \homm^{-1}_{\BbimB} \left(
\MhdB \otimes \O_1 \otimes M,  
\quad
\left(\MhdB\otimes M \otimes \NhdB \otimes N\right) 
\oplus 
\left(\NhdB\otimes N \otimes \MhdB \otimes M\right)
\right)
$$
such that $dt_1=0$ and $[\gamma \tilde{s}_1]=[\gamma t_1]$ in 
$
\Ext^i_{D(\BbimB)}(S_1O_1R_1, S_1R_1\oplus S_2R_2).
$
Set $s_1=\tilde{s}_1-t_1$. We still have $ds_1 = \alpha - \beta f$, but 
the class of $\gamma s_1$ in
$
\Ext^i_{D(\BbimB)}(S_1O_1R_1, S_1R_1\oplus S_2R_2)
$
is zero, so there exists
$$
\tilde{s}_2 \in \homm^{-2}_{\BbimB} \left(
\MhdB \otimes \O_1 \otimes M,  
\quad
\left(\MhdB\otimes M\right) \oplus
\left(\NhdB\otimes N\right)
\right)
$$
with $d\tilde{s}_2=\gamma s_1$. Since $\delta \gamma = 0$
we have $d(\delta \tilde{s}_2)=0$. Again, 
by Cor. \ref{cor-braiding-existence-of-t1-and-t2}
there exists 
$$
t_2 \in \homm^{-2}_{\BbimB} \left(
\MhdB \otimes \O_1 \otimes M,  
\quad
\left(\MhdB\otimes M\right) \oplus
\left(\NhdB\otimes N\right)
\right)
$$
with $dt_2=0$ and $[\delta t_2]=[\delta\tilde{s}_2]$. Set 
$s_2=\tilde{s}_2-t_2$. We still have $ds_2 = \gamma s_1$, 
but the class of $\delta s_2$ in $\Ext^i_{D(\BbimB)}(S_1O_1R_1, \B)$
is zero, so
there exists 
$$s_3\in \homm^{-3}_{\BbimB} \left(\MhdB\otimes\O_1\otimes M,
\B\right)$$ 
with $ds_3=-\delta s_2$.
\end{proof}

\begin{lemma}\label{lemma-diagram-of-ext-groups}
There is a diagram of $\Ext$ groups in $D(\BbimB)$
\begin{equation}\label{diagram-ext-groups-for-braiding}
\xymatrix{
& &
\Ext^i_{D(\BbimB)}(*,S_1R_1)
\ar@<2pt>[lld]^{\eta_1}
\ar@<2pt>[rd]^{\mu_1}
& & \\
\Ext^i_{D(\BbimB)}(*,\B)
\ar@<2pt>[rru]^{\kappa_1}
\ar@<-2pt>[rrd]_{\kappa_2}
& & & 
\Ext^i_{D(\BbimB)}(*, S_1R_1S_2R_2\oplus S_2R_2S_1R_1) 
\ar@<2pt>[lu]^{\nu_1}
\ar@<-2pt>[ld]_{\nu_2}
\\
& &
\Ext^i_{D(\BbimB)}(*, S_2R_2)
\ar@<-2pt>[llu]_{\eta_2}
\ar@<-2pt>[ru]_{\mu_2}
& & 
}\end{equation}
where $*$ can mean $S_1O_1R_1$ or $S_2O_2R_2$ 
(since they are isomorphic in the derived category).

Moreover, $\eta_i\kappa_i=\id$ and $\nu_i \mu_i=\id$, 
while 
$\nu_2\mu_1 = -\kappa_2\eta_1$, 
$\nu_1\mu_2  = -\kappa_1\eta_2$ and
$\eta_1\nu_1 = -\eta_2\nu_2$.
\end{lemma}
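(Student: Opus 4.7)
The plan is to identify each arrow in \eqref{diagram-ext-groups-for-braiding} with the operation of post-composition by an explicit morphism in $D(\BbimB)$, after which the asserted relations all follow from the triangle identity of Proposition \ref{prps-M^B-and-M^A-are-homotopy-adjoints-of-M-via-S-SRS-S-maps} together with bifunctoriality of the tensor product on bimodules. Concretely, take $\eta_i$ and $\kappa_i$ to be post-composition with $S_iR_i\xrightarrow{\trace}\B$ and $\B\xrightarrow{\action}S_iR_i$ respectively; take $\nu_i$ to be post-composition with the $i$-th row of the matrix $\gamma$ appearing in the proof of Theorem \ref{theorem-braiding-criterion}, i.e.\ $\nu_1=(S_1R_1\trace,\,-\trace S_1R_1)$ and $\nu_2=(-\trace S_2R_2,\,S_2R_2\trace)$; and take $\mu_1=(S_1R_1\action,\,0)$ and $\mu_2=(0,\,S_2R_2\action)$, i.e.\ the obvious insertions into a single summand. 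These operations are well-defined on $\Ext^i(*,-)$ whether $*$ stands for $S_1O_1R_1$ or $S_2O_2R_2$, since the two objects are canonically isomorphic in $D(\BbimB)$.

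The relations $\eta_i\kappa_i=\id$ are direct instances of the triangle identity, since $\B\xrightarrow{\action}S_iR_i\xrightarrow{\trace}\B$ is homotopic to $\id_\B$ by Proposition \ref{prps-M^B-and-M^A-are-homotopy-adjoints-of-M-via-S-SRS-S-maps}. The relations $\nu_i\mu_i=\id$ are the same identity whiskered by $\id_{S_iR_i}$: e.g.\ $\nu_1\mu_1=(S_1R_1\trace)\circ(S_1R_1\action)\sim\id_{S_1R_1}$, and likewise for $\nu_2\mu_2$. For the cross relation $\nu_2\mu_1=-\kappa_2\eta_1$, since $\mu_1$ lands only in the first summand, $\nu_2\circ\mu_1=(-\trace S_2R_2)\circ(S_1R_1\action)$. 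Unrolling both factors as tensor-product morphisms of bimodules, this is (up to the sign) $(\trace_{S_1R_1}\otimes\id_{S_2R_2})\circ(\id_{S_1R_1}\otimes\action_{S_2R_2})$; by bifunctoriality of the tensor product this equals $\trace\otimes\action$, which as a morphism $S_1R_1\to S_2R_2$ is the composition $\action\circ\trace$, i.e.\ $\kappa_2\circ\eta_1$. With the sign from $\nu_2$, this yields $\nu_2\mu_1=-\kappa_2\eta_1$; the relation $\nu_1\mu_2=-\kappa_1\eta_2$ follows by the symmetric argument.

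Finally, for $\eta_1\nu_1=-\eta_2\nu_2$ one compares both sides summand-by-summand. On the summand $S_1R_1S_2R_2$ the two maps are $\trace\circ(S_1R_1\trace)$ and $-\trace\circ(-\trace S_2R_2)=\trace\circ(\trace S_2R_2)$; both are the canonical ``double-trace'' morphism $S_1R_1S_2R_2\to\B$ obtained by applying $\trace$ on both of the two pairs of tensor factors, and by bifunctoriality these two orderings give the same morphism. The same argument handles the summand $S_2R_2S_1R_1$, and the signs in $\nu_1,\nu_2$ produce the overall minus. No serious obstacle arises; the main care needed is in correctly reading off $\mu_i,\nu_i$ from the maps $\alpha,\gamma$ of the twisted complexes \eqref{eqn-braiding-twisted-complex1} and \eqref{eqn-braiding-twisted-complex2}, after which the entire verification is a sequence of applications of the triangle identity and of the interchange law on tensor products of bimodules.
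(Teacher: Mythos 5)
Your proposal founders on a point that looks like a small slip but is in fact fatal: there is no morphism $\B\xrightarrow{\action}S_iR_i$ in $D(\BbimB)$. The derived action maps constructed in Section~\ref{section-duals-and-adjoints} are $\A_i\xrightarrow{\action}R_iS_i$ (the unit of the $(s_i,r_i)$-adjunction, a natural transformation $\id_{D(\A_i)}\to r_is_i$) and $\B\xrightarrow{\action}S_iL_i$ (the unit of the $(l_i,s_i)$-adjunction). Nothing gives you $\id_{D(\B)}\to s_ir_i$; indeed $T_i$ is by definition the cone of the \emph{counit} $S_iR_i\xrightarrow{\trace}\B$, and if a splitting of it existed the twist story would collapse. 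Consequently your $\kappa_i$ (post-composition with $\B\to S_iR_i$) and your $\mu_i$ (insertions that implicitly use the same non-existent map, e.g.\ $S_1R_1\to S_1R_1S_2R_2$ requires $\B\to S_2R_2$) are ill-typed. The relation $\eta_i\kappa_i=\id$ you want to read off as $\B\to S_iR_i\to\B\sim\id_\B$ is therefore not an instance of Proposition~\ref{prps-M^B-and-M^A-are-homotopy-adjoints-of-M-via-S-SRS-S-maps}: that proposition gives only the zigzag identities $S\to SRS\to S$ and $R\to RSR\to R$, never a comparison of $\id_\B$ with the unit-then-counit composite on $\B$, which is not even a morphism here. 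The same typing issue invalidates the computations of $\nu_i\mu_i$ and $\nu_2\mu_1$.

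The paper gets around this by a genuinely different construction of $\kappa_i$ and $\mu_i$ that exploits the shape of the source object $*=S_iO_iR_i$, which carries an outer $S_i$ on the left and $R_i$ on the right. For example $\kappa_1$ sends $\alpha\in\Ext^i(S_1O_1R_1,\B)$ to the composite
\begin{align*}
S_1O_1R_1
\xrightarrow{S_1\action\, O_1\,\action R_1}
S_1R_1S_1O_1R_1S_1R_1
\xrightarrow{S_1R_1\,\alpha\, S_1R_1}
S_1R_1S_1R_1
\xrightarrow{S_1R_1\trace}
S_1R_1,
\end{align*}
i.e.\ whisker $\alpha$ by $S_1R_1$ on both sides, precompose with two legitimate units $\A_1\to R_1S_1$ inserted around $O_1$, and then close up one pair with the counit. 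This is not post-composition with any fixed $2$-cell $\B\to S_1R_1$; it is a conjugation/transfer along the outer $(s_1,r_1)$-adjunction available because of the specific source. The relations $\eta_i\kappa_i=\id$ and $\nu_i\mu_i=\id$ then follow from the zigzag identities of Proposition~\ref{prps-M^B-and-M^A-are-homotopy-adjoints-of-M-via-S-SRS-S-maps}, applied to the outer $S_1$ and $R_1$ strands, and the cross-relations such as $\nu_2\mu_1=-\kappa_2\eta_1$ need the interchange law \emph{plus} this conjugation structure (see the commutative square in the paper's proof built from the auxiliary maps \eqref{eqn-another-kappa2} and \eqref{eqn-trS_2R_2-oplus-S_2R_2tr}). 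If you want to repair your argument, you must replace your $\kappa_i,\mu_i$ by operators of this conjugation type; the naive ``section by a unit'' does not exist.
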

\begin{proof}
Let $\nu_1$ be the map
$S_1R_1S_2R_2\oplus S_2R_2S_1R_1
\xrightarrow{- S_1 R_1 \trace \oplus \trace S_1R_1}
S_1R_1$.
Similarly, let $\nu_2$ be the map
$S_1R_1S_2R_2\oplus S_2R_2S_1R_1
\xrightarrow{\trace S_2R_2 \oplus -S_2R_2\trace}
S_2R_2. $
Let $\eta_1$ and $\eta_2$ be the trace maps
$S_1R_1 \xrightarrow{\trace} \B$ and 
$S_2R_2 \xrightarrow{\trace} \B$. 

Let $\mu_1$ be the composition 
\begin{tiny}
\begin{align}
\label{eqn-construction-mu1}
& \Ext^i_{D(\BbimB)}(S_2O_2R_2,S_1R_1)
\xrightarrow{S_2R_2(-)S_2R_2}
\Ext^i_{D(\BbimB)}(S_2R_2S_2O_2R_2S_2R_2, S_2R_2S_1R_1S_2R_2)
\xrightarrow{{S_2 \action O_2 \action R_2}}\\
\nonumber
\longrightarrow
&
\Ext^i_{D(\BbimB)}(S_2O_2R_2, S_2R_2S_1R_1S_2R_2)
\xrightarrow{
-\frac{1}{2}\left(\trace S_1R_1S_2R_2 \right)
\oplus
\frac{1}{2}\left(S_2R_2S_1R_1\trace\right)
}
\Ext^i_{D(\BbimB)}(S_2O_2R_2,S_1R_1S_2R_2\oplus S_2R_2S_1R_1).
\end{align}
\end{tiny}
Let $\kappa_1$ be the composition 
\begin{tiny}
\begin{align}
& \Ext^i_{D(\BbimB)}(S_1O_1R_1,\B)
\xrightarrow{S_1 R_1 (-) S_1 R_1}
\Ext^i_{D(\BbimB)}(S_1R_1S_1O_1R_1S_1R_1,S_1R_1S_1R_1)
\xrightarrow{S_1 \action O_1 \action R_1}
\\
\nonumber
\longrightarrow
&
\Ext^i_{D(\BbimB)}(S_1O_1R_1,S_1R_1S_1R_1)
\xrightarrow{S_1R_1\trace}
\Ext^i_{D(\BbimB)}(S_1O_1R_1,S_1R_1).
\end{align}
\end{tiny}
The maps $\mu_2$ and $\kappa_2$ are defined analogously.

We have $\eta_1\nu_1=-\eta_2\nu_2$ by functoriality
of the tensor product. The relations $\eta_i\kappa_i=\id$ 
and $\nu_i\mu_i=\id$ are verified directly 
using Prop.
\ref{prps-M^B-and-M^A-are-homotopy-adjoints-of-M-via-S-SRS-S-maps}. 
Let us prove that $\nu_2\mu_1=-\kappa_2\eta_1$. 
Consider the composition
\begin{tiny}
\begin{align}
\label{eqn-another-kappa2}
& \Ext^i_{D(\BbimB)}(S_2O_2R_2,\B)
\xrightarrow{S_2R_2(-)S_2R_2}
\Ext^i_{D(\BbimB)}(S_2R_2S_2O_2R_2S_2R_2, S_2R_2S_2R_2)
\xrightarrow{S_2 \action O_2 \action R_2}
\\
\nonumber
\longrightarrow
& \Ext^i_{D(\BbimB)}(S_2O_2R_2, S_2R_2S_2R_2)
\xrightarrow{-\frac{1}{2}(\trace S_2 R_2)\oplus\frac{1}{2}(S_2R_2\trace)}
\Ext^i_{D(\BbimB)}(S_2O_2R_2,S_2R_2\oplus S_2R_2).
\end{align}
\end{tiny}
and the map 
\begin{tiny}
\begin{align}
\label{eqn-trS_2R_2-oplus-S_2R_2tr}
\Ext^i_{D(\BbimB)}(S_2O_2R_2,S_1R_1S_2R_2\oplus S_2R_2S_1R_1)
\xrightarrow{\trace S_2R_2 \oplus S_2R_2\trace}
\Ext^i_{D(\BbimB)}(S_2O_2R_2,S_2R_2\oplus S_2R_2).
\end{align}
\end{tiny}

Applying the map $S_1 R_1 \xrightarrow{\trace} \B$ to every 
composant of \eqref{eqn-construction-mu1} and using functoriality
we see that the square
\begin{align*}
\vcenter{
\xymatrix{
\Ext^i_{D(\BbimB)}(S_2O_2R_2, S_1R_1) 
\ar[rrr]^{\eta_1}
\ar[d]_{\mu_1}
& & &
\Ext^i_{D(\BbimB)}(S_2O_2R_2,\B)
\ar[d]^{\eqref{eqn-another-kappa2}}
\\
\Ext^i_{D(\BbimB)}(S_2O_2R_2,S_1R_1S_2R_2\oplus S_2R_2S_1R_1)
\ar[rrr]^<<<<<<<<<<<<<<{\eqref{eqn-trS_2R_2-oplus-S_2R_2tr}}
& & &
\Ext^i_{D(\BbimB)}(S_2O_2R_2,S_2R_2\oplus S_2R_2).
}
} 
\end{align*}
commutes. By inspection, the composition of  
$\eqref{eqn-another-kappa2}$ with the map 
\begin{align}
\label{eqn-Id-minus-Id-map}
\Ext^i_{D(\BbimB)}(S_2O_2R_2,S_2R_2\oplus S_2R_2)
\xrightarrow{\id \oplus -\id}
\Ext^i_{D(\BbimB)}(S_2O_2R_2,S_2R_2)
\end{align}
is $-\kappa_2$, while the composition of  
\eqref{eqn-trS_2R_2-oplus-S_2R_2tr} with
\eqref{eqn-Id-minus-Id-map} is $\nu_2$. 
It follows that $\nu_2\mu_1=-\kappa_2\eta_1$, as desired.
\end{proof}

\begin{cor}
\label{cor-braiding-existence-of-t1-and-t2}
The sequence 
\begin{align*}
\Ext^i_{D(\BbimB)}(S_1O_1R_1, S_1R_1S_2R_2\oplus S_2R_2S_1R_1)
\xrightarrow{\gamma}
\Ext^i_{D(\BbimB)}(S_1O_1R_1, S_1R_1\oplus S_2R_2)
\xrightarrow{\delta}
\Ext^i_{D(\BbimB)}(S_1O_1R_1, \B)
\end{align*}
is exact in its middle term and surjective onto its last term.
\end{cor}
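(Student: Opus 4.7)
The plan is a direct diagram chase in the hexagonal diagram of Ext groups supplied by Lemma \ref{lemma-diagram-of-ext-groups}. Write $E(-) := \Ext^i_{D(\BbimB)}(S_1O_1R_1, -)$ and set $X = S_1R_1S_2R_2 \oplus S_2R_2S_1R_1$. The first step is to identify the maps $\gamma$ and $\delta$ from the twisted complex with maps in that diagram. Inspecting the matrix entries of $\gamma$ against the definitions of $\nu_1$ (induced by $-S_1R_1\trace \oplus \trace S_1R_1$) and $\nu_2$ (induced by $\trace S_2R_2 \oplus -S_2R_2\trace$) given in the proof of Lemma \ref{lemma-diagram-of-ext-groups} yields $\gamma = (-\nu_1, -\nu_2)$ as a map $E(X) \to E(S_1R_1) \oplus E(S_2R_2)$; unpacking $\delta = \trace \oplus \trace$ gives $\delta(a_1, a_2) = \eta_1(a_1) + \eta_2(a_2)$.

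Once these identifications are in place, both conclusions follow formally from the five relations stated in Lemma \ref{lemma-diagram-of-ext-groups}. Surjectivity of $\delta$ is immediate: any $x \in E(\B)$ is the image of $(\kappa_1(x), 0)$ because $\eta_1\kappa_1 = \id$. The composition $\delta \circ \gamma$ vanishes by direct computation using $\eta_1\nu_1 = -\eta_2\nu_2$, confirming the inclusion $\operatorname{image}\gamma \subseteq \ker\delta$.

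For the reverse inclusion, given $(a_1, a_2) \in \ker\delta$ so that $\eta_1(a_1) = -\eta_2(a_2)$, I will exhibit an explicit preimage. Set $b := a_2 - \kappa_2\eta_2(a_2)$; then $\eta_2(b) = 0$ by $\eta_2\kappa_2 = \id$, which forces $\nu_1\mu_2(b) = -\kappa_1\eta_2(b) = 0$ via $\nu_1\mu_2 = -\kappa_1\eta_2$. A short computation using this vanishing, together with $\nu_i\mu_i = \id$, $\nu_2\mu_1 = -\kappa_2\eta_1$, and the hypothesis $\eta_1(a_1) = -\eta_2(a_2)$, shows that
\[
 w := -\mu_1(a_1) - \mu_2(b) \in E(X)
\]
satisfies $-\nu_1(w) = a_1$ and $-\nu_2(w) = a_2$, i.e.\ $\gamma(w) = (a_1, a_2)$, as required. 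The only non-routine point in the entire argument is the sign bookkeeping needed to confirm the identifications $\gamma = (-\nu_1, -\nu_2)$ and $\delta = \eta_1 + \eta_2$; everything downstream is then forced purely by the relations from Lemma \ref{lemma-diagram-of-ext-groups}, so no additional homotopy-theoretic input is needed.
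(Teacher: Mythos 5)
Your proof is correct and takes the approach the paper intends: the paper states this as a corollary to Lemma~\ref{lemma-diagram-of-ext-groups} with no written proof, and your diagram chase --- identifying $\gamma = (-\nu_1, -\nu_2)$ and $\delta = \eta_1 + \eta_2$, then deriving surjectivity and exactness purely from the five relations in that lemma --- is exactly the formal argument the ``Corollary'' label leaves to the reader. The explicit witnesses $(\kappa_1(x),0)$ for surjectivity and $w = -\mu_1(a_1) - \mu_2(b)$ with $b = a_2 - \kappa_2\eta_2(a_2)$ for exactness both check out, with all signs consistent.
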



\appendix
\section{On homotopy equivalences of twisted complexes}
\label{section-on-homotopy-equivalences-of-twisted-complexes}

Let $\C$ be a strongly pretriangulated DG-category. The example one
wants to keep in mind is $\hprojA$ for some DG-category $\A$, so 
that $H^0(\C) = D(\A)$. Let $(E_{i}, q_{ij})$ be a twisted complex 
over $\C$. The objects $E_i$ and the degree $0$ morphisms $q_{i(i+1)}$
form an ordinary differential complex over $H^0(\C)$:
$$ \dots \xrightarrow{q_{(i-2)(i-1)}} E_{i-1}  
         \xrightarrow{q_{(i-1)i}} E_{i}  
         \xrightarrow{q_{i(i+1)}} E_{i+1}
         \xrightarrow{q_{(i+1)(i+2)}} \dots $$ 

Let $(E_i, q_{ij})$ and $(F_i, r_{ij})$ be two twisted complexes over $\C$. 
We would like to know when their convolutions $\left\{E_i, q_{ij}\right\}$ 
and $\left\{F_i, r_{ij}\right\}$ are isomorphic in $H^0(\C)$. 
Since $\C$ was assumed to be strongly pretriangulated constructing 
isomorphism of $\left\{E_i, q_{ij}\right\}$ 
and $\left\{F_i, r_{ij}\right\}$ in $H^0(\C)$ is the same thing
as constructing a homotopy equivalence of $(E_i, q_{ij})$ and 
$(F_i, r_{ij})$ in $\pretriag(\C)$. 

Suppose that the underlying differential complexes of 
$(E_i, q_{ij})$ and $(F_i, r_{ij})$ are isomorphic, more specifically -- 
that we have a set of isomorphisms $E_i \xrightarrow{f_i} F_i$ in 
$H^0(\C)$ which gives an isomorphism of these differential complexes. 
This alone doesn't ensure that $\left\{E_i, q_{ij}\right\}$ 
and $\left\{F_i, r_{ij}\right\}$ are isomorphic in $H^0(\C)$, 
since the same differential complex over $H^0(\C)$ can, in general, 
be lifted to several non-homotopically equivalent twisted complexes 
over $\C$. Thus the question: \em what are the sufficient
conditions on $f_i$ for us to be able to cook up a homotopy
equivalence of $(E_i, q_{ij})$ and $(F_i, r_{ij})$ from them\rm ?

When trying to construct this homotopy equivalence even in simplest 
cases, one encounters a number of conditions which, at first glance,
seem unavoidable, but in fact are redundant:  
\begin{exmpl}
\label{exmpl-homotopy-equivalences-b1-example}
Let $E \xrightarrow{q} G$ and $F\xrightarrow{r} G$
be two twisted complexes over $\C$. Let $E \xrightarrow{f} F$ 
be a homotopy equivalence in $\C$, such that the square 
\begin{align}
\label{eqn-XYA-square}
\xymatrix{
E \ar[r]^{q} \ar[d]_f & G \ar[d]^{\id} \\
F \ar[r]^{r} & G
}
\end{align}
commutes in $H^0(\C)$. Since $H^0(\C)$ is triangulated, 
there exists an isomorphism $\cone(q) \rightarrow
\cone(r)$ which extends this square in $H^0(\C)$ to 
an isomorphism of exact triangles. 
It follows that we can extend $E \xrightarrow{f} F$ and 
$G \xrightarrow{\id} G$ to a homotopy equivalence in $\pretriag(\C)$
of the twisted complexes $E \xrightarrow{q} G$ 
and $F\xrightarrow{r} G$. 

If we actually try and construct this homotopy equivalence, 
we run into the following type of problems:

\smallskip

\em Claim: \rm Let $g \in \homm^0_\C(F,E)$ be a homotopy inverse of $f$. 
In other words, there exist $h \in \homm^{-1}_\C(E,E)$  and 
$h' \in \homm^{-1}_\C(F,F)$ such that $gf - \id = dh$ 
and $fg - \id = dh'$. 

Then there exist mutually inverse homotopy equivalences
\begin{align}
\label{eqn-hom_eq_two_terms}
\vcenter{
\xymatrix{
E \ar[rr]^q \ar[dd]_f \ar[rrdd]^<<<<<<<<<<<<{t}
& & 
G \ar[dd]^{\id} \\
\\
F \ar[rr]^{r} & & G
}
}
\quad \quad
\vcenter{
\xymatrix{
F \ar[rr]^{r}\ar[rrdd]^<<<<<<<<<<<<{t'} \ar[dd]_g 
& & G \ar[dd]^{\id} \\
\\
E \ar[rr]^{q} & & G
}
}
\quad \quad
\begin{minipage}[c][1in][c]{1.5in}
$t \in \homm^{-1}_\C(E,G),$\\
$t' \in \homm^{-1}_\C(F,G)$
\end{minipage}
\end{align}
of $E\xrightarrow{q} G$ and $F\xrightarrow{r} G$ 
if and only if $h$ and $h'$ can be chosen so that 
the following equivalent conditions hold: 
\begin{enumerate}
\item $r (fh - h' f) = ds$ for some 
$s \in \homm^{-2}_\C(E,G)$.
\item $q (gh' - h g) = ds'$ for some 
$s' \in \homm^{-2}_\C(F,G)$.
\end{enumerate}

\em Proof: \rm Straightforward verification. 

\smallskip

Apriori, there is no reason to expect a class 
like $r (fh - h' f)$ in $\homm^{-1}_\C(E,G)$ 
to be null-homotopic. In fact, for general $h$ and $h'$
it wouldn't be. So this may seem like a genuinely necessary 
condition. 

However, it turns out that we can always choose $h$ and $h'$ so 
that even $fh - h' f$ and $gh' - h g$ are
null-homotopic. Since $d q = d r = 0$, it would also 
imply the conditions above.

The explanation is: $fh - h' f$ and $gh' - h g$ 
are both killed by the differential, and thus define classes
$\xi \in \homm^{-1}_{H^0(\C)}(E,F)$ and 
$\xi \in \homm^{-1}_{H^0(\C)}(F,E)$, respectively. 
Since $f$ and $g$ are isomorphisms in $H^0(\C)$, they identify 
both $\homm^{-1}_{H^0(\C)}(E,F)$ and $\homm^{-1}_{H^0(\C)}(F,E)$ 
with $\homm^{-1}_{H^0(\C)}(E,E)$. Apriori, neither $\xi$, nor $\xi'$
are zero, however one can check that $\xi$ and $-\xi'$ give
the same class in $\homm^{-1}_{H^0(\C)}(E,E)$. We can therefore correct 
$h \in \homm^{-1}_{\C}(E,E)$ by this class and kill off both 
$\xi$ and $\xi'$, as required. 

It is not a calculation one would want to try and write down in a
larger, more complicated scenario. Fortunately, there turns out to be
a more conceptual argument. It requires us to consider 
$A_\infty$-categories and $A_\infty$-functors, 
see \cite{Keller-IntroductionToAInfinityAlgebrasAndModules} and
\cite[\S 8]{Lefevre-SurLesAInftyCategories} for the basics. 
In particular, we use the convention in 
\cite[\S 8]{Lefevre-SurLesAInftyCategories} for denoting 
$\A_\infty$-functors as $\left(\dot{\mathfrak{f}}, \mathfrak{f}_i\right)$ 
where $\dot{\mathfrak{f}}$ is the object map, $\mathfrak{f}_1$ is the
morphism map and $\mathfrak{f}_{i \geq 2}$ are the higher morphism maps. 

A choice of $h$ and $h'$ as
above and also of $j \in \homm^{-2}_{\C}(X,Y)$ and $j' \in
\homm^{-2}_{\C}(Y,X)$ such that 
$fh - h' f = dj$ and $ fh - h' f = dj'$
can readily be checked to be a part of precisely the data necessary 
to define a strictly unital $A_\infty$-functor
\begin{align*}
\begin{minipage}[c][1in][c]{1in}
$\psi \phi = \id_x,$ \\
$\phi \psi = \id_y,$ \\
$\beta \phi = \alpha$, \\
$\alpha \psi = \beta$, 
\end{minipage}
\vcenter{
\xymatrix{
\overset{x}{\bullet}
\ar[rd]^{\alpha}
\ar@/_/[dd]_{\phi}
& & & & & & 
\\
 &  \overset{a}{\bullet} &
\ar[rrr]^{\left(\dot{\mathfrak{f}}, \mathfrak{f}_i\right)} 
& & & & 
\C
\\
\overset{y}{\bullet}
\ar[ru]_{\beta} 
\ar@/_/[uu]_{\psi}
& & & & & & 
}
}
\end{align*}
which sends $x,y,a$ to $E,F,G$ and $\phi, \psi, \alpha, \beta$
to $f,g,q,r$. Here, the quiver on the left defines 
an additive $k$-category whose objects are the vertices of the quiver
and whose $\homm$-spaces are generated by the paths in the quiver,
modulo the indicated relations. The trivial path from a vertex to
itself correspond to its identity morphism. Denote this category 
by $\bar{\B}_1$, we think of it as of a DG-category concentrated 
in degree zero.
 
Conversely, any $A_\infty$-functor 
$\bar{\B}_1 \xrightarrow{\left(\dot{\mathfrak{f}},
\mathfrak{f}_i\right)} \C$ as above contains the data of 
homotopy equivalences \eqref{eqn-hom_eq_two_terms}. 
This is because $\left(\dot{\mathfrak{f}}, \mathfrak{f}_i\right)$ 
extends naturally to an $A_\infty$-functor 
$\pretriag(\bar{\B}_1)\xrightarrow{\left(\dot{\mathfrak{f}}, 
\mathfrak{f}_i\right)} \pretriag(\C)$. 
In $\pretriag(\bar{\B}_1)$ the twisted complexes 
$x \xrightarrow{\alpha} a$ and $y \xrightarrow{\beta} a$
are isomorphic. Specifically,
\begin{align*}
\vcenter{
\xymatrix{
x \ar[r]^{\alpha} \ar[d]_{\phi} 
& a \ar[d]^{\id} \\
y \ar[r]^{\beta} & a
}
}
\quad \quad
\vcenter{
\xymatrix{
y \ar[r]^{\beta} \ar[d]_{\psi}
&
a \ar[d]^{\id} \\
x \ar[r]^{\alpha} & a
}
}
\end{align*}
are mutually inverse isomorphisms. Their images under $\mathfrak{f}_1$
are the morphisms 
\begin{align*}
\vcenter{
\xymatrix{
E \ar[rr]^{q} \ar[dd]_{f} 
\ar[ddrr]^<<<<<<<<{\mathfrak{f}_2(\beta, \phi)}
& & G \ar[dd]^{\id} \\
& & 
\\
F \ar[rr]^{r} & & G
}
}
\quad \quad
\vcenter{
\xymatrix{
F \ar[rr]^{r} \ar[dd]_{g}
\ar[ddrr]^<<<<<<<<{\mathfrak{f}_2(\alpha, \psi)}
& &
G \ar[dd]^{\id} \\
& & 
\\
E \ar[rr]^{q} & & G
}
}
\end{align*}
in $\pretriag(\C)$. Since 
$\left(\dot{\mathfrak{f}}, H^0(\mathfrak{f}_1)\right)$ 
is an exact functor, these are become mutually inverse isomorphisms in 
$H^0(\pretriag(C))$. Thus, they are the mutually inverse 
homotopy equivalences \eqref{eqn-hom_eq_two_terms} we want.  

To construct a strictly unital $A_\infty$-functor 
$\bar{\B}_1 \xrightarrow{\left(\dot{\mathfrak{f}}, \mathfrak{f}_i\right)} \C$ 
it suffices to construct a strictly unital $A_\infty$-functor 
$\B_1 \xrightarrow{\left(\dot{\mathfrak{g}}, \mathfrak{g}_i\right)} \C$ 
where $\B_1$ is the category
\begin{align*}
\vcenter{
\xymatrix{
\overset{x}{\bullet}
\ar[rd]^{\alpha}
\ar[dd]_{\phi}
\\
&  \overset{a}{\bullet} 
\\
\overset{y}{\bullet}
\ar[ru]_{\beta} 
}
}
\quad\quad
\begin{minipage}[c][1in][c]{1in}
$\beta \phi = \alpha$. \\
\end{minipage}
\end{align*}
Roughly, this is because $\bar{\B}_1$ is the minimal $A_\infty$-structure 
of a certain DG-quotient of $\B_1$ whose universal properties
ensure that  
$\B_1 \xrightarrow{\left(\dot{\mathfrak{g}}, \mathfrak{g}_i\right)} \C$
filters through some 
$\bar{\B}_1 \xrightarrow{\left(\dot{\mathfrak{f}},
\mathfrak{f}_i\right)} \C$. We'll give the full argument in a greater 
generality later on in this section.

Thus we are reduced to constructing a strictly unital $A_\infty$-functor
$\B_1 \xrightarrow{\left(\dot{\mathfrak{g}}, \mathfrak{g}_i\right)} \C$ 
which sends $x,y,a$ to $E,F,G$ and $\phi, \alpha, \beta$
to $f, q, r$. The data of such functor is simply 
the choice of $\mathfrak{f}_2(\beta, \phi) \in
\homm_\C^{-1}(E,G)$ such that
$$ q - r f = \mathfrak{f}_2(\beta, \phi). $$ 
The existence of such class in $\homm_\C^{-1}(E,G)$ is precisely
the condition that \eqref{eqn-XYA-square} commutes in $H^0(\C)$. 

To sum up, a sufficient condition for the homotopy
equivalence $E \xrightarrow{f} F$ to
induce a homotopy equivalence 
\begin{align}
\label{eqn-X->A-to-Y->A-homotopy-equivalence}
\left\{E \xrightarrow{q} G\right\}
\rightarrow \left\{F \xrightarrow{r} G\right\}
\end{align}
is that $f$ must 
commute with $q$ and $r$ in $H^0(\C)$. 
This is also precisely the condition that a strictly unital
$A_\infty$-functor $\B_1 \rightarrow \C$ exists which
sends $x,y,a$ to $E,F,G$ and $\phi, \alpha, \beta$
to $f, q, r$. All the other conditions which seemingly 
arise when one naively tries to construct the homotopy equivalence 
\eqref{eqn-X->A-to-Y->A-homotopy-equivalence} are part of the data
necessary to lift this functor to a functor $\bar{\B}_1 \rightarrow \C$. 
Which gets done for us automatically by the universal properties of 
DG-quotients. 
\end{exmpl}

\smallskip 

The method outlined in Example \ref{exmpl-homotopy-equivalences-b1-example}
can be applied in full generality to any pair of twisted complexes
$(E_i, q_{ij})$, $(F_i, r_{ij})$ and any set of homotopy equivalences
$E_i \xrightarrow{f_i} F_i$ to answer the question posed in the
beginning of this subsection. In such a generality, however, 
the answer would not only look fearsome, but also quite 
obfuscating. 

Below, we only argue it in the generality we need for the proofs in 
Section \ref{section-braiding-criteria}. 

\begin{defn}
Denote by $\bar{\B}_n$ the category defined by  
\begin{align}
\label{eqn-quiver-defining-bar-B_n}
\begin{minipage}[c][1in][c]{1.5in}
$\psi \phi = \id_x,$ \\
$\phi \psi = \id_y,$ \\
$\beta \phi  = \alpha$, \\
$\alpha \psi  = \beta$, \\
$\gamma_1 \alpha  = 
\gamma_1 \beta = 0$, \\
$\gamma_{i+1} \gamma_{i} = 0$
\end{minipage}
\vcenter{
\xymatrix{
\overset{x}{\bullet}
\ar[rd]^{\alpha}
\ar@/_/[dd]_{\phi}
\\
& \overset{a_1}{\bullet} \ar[r]_{\gamma_1} 
& \overset{a_2}{\bullet} \ar[r]_{\gamma_2} 
& \overset{a_3}{\bullet} \ar[r]_{\gamma_3} 
& \dots \ar[r]_{\gamma_{n-2}}
& \overset{a_{n-1}}{\bullet} \ar[r]_{\gamma_{n-1}} 
& \overset{a_{n}}{\bullet} 
\\
\overset{y}{\bullet}
\ar[ru]_{\beta} 
\ar@/_/[uu]_{\psi}
}
}.
\end{align}
We consider it as a DG-category concentrated in degree $0$. 
Denote by $\B_n$ its subcategory defined by the same quiver
but with the arrow $\psi$ removed. 
\end{defn}

DG quotients were introduced by Drinfeld in 
\cite{Drinfeld-DGQuotientsOfDGCategories} where we refer the reader 
to for all the details. 

\begin{lemma}
\label{lemma-explicit-presentation-of-Bn-quot-cone-phi}
Let $\B^f_n$ be the full subcategory of the DG-quotient
$\pretriag(\B_n)/\cone(\phi)$ supported at the objects of $\B_n$. 
Then $\B^f_n$ is isomorphic to the DG category defined by   
\begin{align}
\label{eqn-quiver-presentation-of-Bn-quot-cone-phi}
\begin{minipage}[c][1in][c]{1.5in}
$\beta \phi  = \alpha$, \\
$\alpha \psi = \beta$, \\
$\gamma_1 \alpha  = 
\gamma_1 \beta = 0$, \\
$\gamma_{i+1} \gamma_{i} = 0$ \\
$d \theta_x = - \id_x + \psi \phi,$ \\
$d \theta_y = \id_y - \phi \psi,$ \\
$d \psi = 0,$ \\
$d \xi = -\phi \theta_x - \theta_y \phi.$ 
\end{minipage}
\vcenter{
\xymatrix{
\overset{x}{\bullet}
\ar@{.>}@(ul,ur)[]^{\theta_x}
\ar[rd]^{\alpha}
\ar@/_/[dd]_{\phi}
\ar@{-->}@/_2em/[dd]_{\xi}
\\
& \overset{a_1}{\bullet} \ar[r]_{\gamma_1} 
& \overset{a_2}{\bullet} \ar[r]_{\gamma_2} 
& \overset{a_3}{\bullet} \ar[r]_{\gamma_3} 
& \dots \ar[r]_{\gamma_{n-2}}
& \overset{a_{n-1}}{\bullet} \ar[r]_{\gamma_{n-1}} 
& \overset{a_{n}}{\bullet} 
\\
\overset{y}{\bullet}
\ar@{.>}@(dr,dl)[]^{\theta_y}
\ar[ru]_{\beta} 
\ar@/_/[uu]_{\psi}
}
}
\end{align}
where dotted arrows denote the morphisms of degree $-1$ and the dashed arrow
the morphism of degree $-2$.
\end{lemma}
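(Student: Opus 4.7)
The plan is to construct explicit strict DG-functors $G\colon\mathcal{D}\to\B_n^f$ and $F\colon\B_n^f\to\mathcal{D}$, where $\mathcal{D}$ denotes the DG-category presented by \eqref{eqn-quiver-presentation-of-Bn-quot-cone-phi}, and show they are mutually inverse. Recall that by Drinfeld's construction, $\pretriag(\B_n)/\cone(\phi)$ is obtained by freely adjoining a degree $-1$ morphism $\theta\in\homm^{-1}(\cone(\phi),\cone(\phi))$ with $d\theta=\id_{\cone(\phi)}$, and that a DG-functor out of this quotient is equivalent to a DG-functor from $\pretriag(\B_n)$ together with a choice of contracting homotopy for the image of $\cone(\phi)$.

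To define $G$, fix the natural cone morphisms $\iota_x\colon x\to\cone(\phi)$ of degree $-1$ with $d\iota_x=\iota_y\phi$, the closed morphisms $\iota_y\colon y\to\cone(\phi)$ (degree $0$) and $\pi_x\colon\cone(\phi)\to x$ (degree $1$), together with a non-closed degree $0$ section $p_y\colon\cone(\phi)\to y$ satisfying $p_y\iota_y=\id_y$, $p_y\iota_x=0$ and $dp_y=-\phi\pi_x$. Set
\[
G(\psi):=\pi_x\theta\iota_y,\quad G(\theta_x):=\pi_x\theta\iota_x,\quad G(\theta_y):=p_y\theta\iota_y,\quad G(\xi):=p_y\theta\iota_x,
\]
and let $G$ be the identity on $\B_n$. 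A short Leibniz-rule calculation using $d\theta=\id_{\cone(\phi)}$ and the identities above verifies all the differential relations: for instance $dG(\theta_x)=-\pi_x\iota_x+\pi_x\theta\iota_y\phi=-\id_x+G(\psi)\phi$, $dG(\psi)=-\pi_x\iota_y=0$, and analogously for $\theta_y$ and $\xi$.

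To define $F$, form the degree $-1$ endomorphism $h$ of $\cone(\phi)$ in $\pretriag(\mathcal{D})$ whose $\homm(E_k,E_l)$-components are
\[
h_{-1,-1}=\theta_x,\quad h_{-1,0}=\xi,\quad h_{0,-1}=\psi,\quad h_{0,0}=\theta_y.
\]
A direct calculation with the differential relations of $\mathcal{D}$ gives $dh=\id_{\cone(\phi)}$; for example, the $(0,0)$-component of $dh$ is $d\theta_y+\phi\psi=(\id_y-\phi\psi)+\phi\psi=\id_y$, and the $(-1,0)$-component is $d\xi+\phi\theta_x+\theta_y\phi=0$. Hence $\cone(\phi)$ is contractible in $\pretriag(\mathcal{D})$, and by the universal property of the DG-quotient the inclusion $\pretriag(\B_n)\hookrightarrow\pretriag(\mathcal{D})$ extends to a DG-functor $\pretriag(\B_n)/\cone(\phi)\to\pretriag(\mathcal{D})$ sending $\theta\mapsto h$. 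Restricting to the $\B_n$-objects yields $F\colon\B_n^f\to\mathcal{D}$.

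The main obstacle will be confirming $F\circ G=\id_{\mathcal{D}}$ and $G\circ F=\id_{\B_n^f}$. The first is immediate from the matrix description of $h$: one reads off $F(G(\psi))=\pi_x h\iota_y=h_{0,-1}=\psi$, and similarly $F(G(\theta_x))=\theta_x$, $F(G(\theta_y))=\theta_y$, $F(G(\xi))=\xi$. For $G\circ F=\id_{\B_n^f}$ one uses Drinfeld's explicit bar description of $\homm_{\B_n^f}(u,v)$ as a direct sum over chains $u\to\cone(\phi)\xrightarrow{\theta}\cdots\xrightarrow{\theta}\cone(\phi)\to v$ together with direct morphisms in $\pretriag(\B_n)$; each such chain is determined by finite-dimensional data coming from the bases of $\homm_{\pretriag(\B_n)}(u,\cone(\phi))$, $\homm_{\pretriag(\B_n)}(\cone(\phi),\cone(\phi))$ and $\homm_{\pretriag(\B_n)}(\cone(\phi),v)$, expressible via $\iota_x,\iota_y,p_y,\pi_x$ and the morphisms of $\B_n$, whence every chain lies in the image of $G$ and is fixed by $G\circ F$ by the generator calculation.
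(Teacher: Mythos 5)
Your proof is essentially the paper's, with the same two mutually inverse functors constructed from the same ingredients: one sends the adjoined contracting homotopy $\theta$ of $\cone(\phi)$ to the degree-$(-1)$ endomorphism of $\cone(\phi)$ in $\pretriag(\mathcal{D})$ with matrix entries $\theta_x,\xi,\psi,\theta_y$, and the other sends the new quiver generators $\psi,\theta_x,\theta_y,\xi$ to the corresponding components $\pi_x\theta\iota_y$, $\pi_x\theta\iota_x$, $p_y\theta\iota_y$, $p_y\theta\iota_x$ of $\theta$. Your Leibniz-rule checks of the differential relations and the observation that $d h = \id_{\cone\phi}$ match the paper's claims, and $F\circ G = \id_{\mathcal{D}}$ reads off from the matrix presentation exactly as you say.

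There is one structural difference worth noting. The paper proves an isomorphism one level up: it identifies $\pretriag(\B_n)/\cone(\phi)$ with the full subcategory $\A$ of $\pretriag(\mathcal{D})$ on the objects of $\pretriag(\B_n)$, and then obtains the lemma by restricting to the objects of $\B_n$. The advantage is that the DG-quotient $\pretriag(\B_n)/\cone(\phi)$ \emph{does} have a transparent presentation — it is freely generated over $\pretriag(\B_n)$ by $\epsilon$ with $d\epsilon = \id_{\cone\phi}$ — so both directions of ``mutually inverse'' reduce to a genuine check on generators, the only nontrivial one being $\epsilon \mapsto h \mapsto \epsilon$ which is immediate from the matrix description of morphisms in $\pretriag$. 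You instead work directly with $\B_n^f$ and $\mathcal{D}$, which makes $F\circ G$ easy (generators of $\mathcal{D}$ are explicit) but forces you to invoke Drinfeld's bar description of the quotient $\homm$-complexes in order to know what the morphisms of $\B_n^f$ look like. Your last paragraph gestures at this but stops short of actually tracing a chain $u\to\cone(\phi)\xrightarrow{\theta}\cdots\xrightarrow{\theta}\cone(\phi)\to v$ through $F$ and $G$ and seeing it come back to itself; the phrase ``fixed by $G\circ F$ by the generator calculation'' isn't yet an argument, since $\psi,\theta_x,\theta_y,\xi$ are generators of $\mathcal{D}$, not of $\B_n^f$. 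This is a gap in rigour rather than in substance (each arrow in a chain factors through $\iota_x,\iota_y,p_y,\pi_x$ and the morphisms of $\B_n$, and $F$ replaces $\theta$ by $h$, so the round trip really is the identity), but it would be cleaner — and closer to the paper — to prove the isomorphism $\pretriag(\B_n)/\cone(\phi)\cong\A$ first and then restrict.
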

\begin{proof}
In $\pretriag(\B_n)$ the cone of $\phi$ is the twisted complex
$x \xrightarrow{\phi} y$. As explained in  
\cite[\S3.1]{Drinfeld-DGQuotientsOfDGCategories} the DG quotient 
of $\pretriag(\B_n)$ by $x \xrightarrow{\phi} y$ is constructed by
adding a single endomorphism $\epsilon$ of $x \xrightarrow{\phi} y$
of degree $-1$ with $d\epsilon = \id$ and no other relations. 

As $\B_n$ is a subcategory of
\eqref{eqn-quiver-presentation-of-Bn-quot-cone-phi}, 
every twisted complex over $\B_n$ is a twisted complex over 
$\eqref{eqn-quiver-presentation-of-Bn-quot-cone-phi}$. 
Let $\A$ be the full subcategory of 
$\pretriag(\eqref{eqn-quiver-presentation-of-Bn-quot-cone-phi})$
consisting of all the objects in $\pretriag(\B_n)$. 
Define a functor from $\pretriag(\B_n)/(x \xrightarrow{\phi} y)$ to
$\A$ by sending $\epsilon$ to 
\begin{align*}
\vcenter{
\xymatrix{
x \ar[rr]^{\phi}
\ar[dd]_{\theta_x}
\ar[ddrr]^<<<<<<{\xi}
& &
y
\ar[dd]^{\theta_y}
\ar[ddll]_<<<<<<{\psi}
\\
\\
x \ar[rr]^{\phi}
& &
y
}
}.
\end{align*}
Define a functor in the opposite direction by sending
$\theta_x$, $\theta_y$, $\psi$ and $\xi$ to the compositions
\begin{align*}
\vcenter{
\xymatrix{
& & x \ar[dll]_{\id} 
\\
x \ar[rr]^{\phi} & \ar[d]^{\epsilon} & y 
\\
x \ar[drr]_{\id} \ar[rr]_{\phi} & & y 
\\
& & x
}
} 
\quad
\vcenter{
\xymatrix{
& & y \ar[d]_{\id} 
\\
x \ar[rr]^{\phi} & \ar[d]^{\epsilon} & y 
\\
x \ar[rr]_{\phi} & & y \ar[d]_{\id}
\\
& & y 
}
} 
\quad
\vcenter{
\xymatrix{
& & y \ar[d]_{\id} 
\\
x \ar[rr]^{\phi} & \ar[d]^{\epsilon} & y 
\\
x \ar[drr]_{\id} \ar[rr]_{\phi} & & y 
\\
& & x
}
} 
\quad
\vcenter{
\xymatrix{
& & x \ar[dll]_{\id} 
\\
x \ar[rr]^{\phi} & \ar[d]^{\epsilon} & y 
\\
x \ar[rr]_{\phi} & & y \ar[d]_{\id} 
\\
& & y
}
} 
\end{align*}
in $\pretriag(\B_n)/(x \xrightarrow{\phi} y)$, respectively. One
can readily check that these functors are mutualy inverse. 
Hence $\pretriag(\B_n)/(x \xrightarrow{\phi} y)$ is isomorphic to 
$\A$, and the result follows.
\end{proof}

Recall that an $A_\infty$-category is called \em minimal \rm if
it has $m_1 = 0$. Let $\A$ be an $A_\infty$-category. 
The \em minimal model \rm of $\A$ is a minimal $A_\infty$-category
$\A'$ together with an $\A_\infty$-quasi-isomorphism $\A' \rightarrow \A$. 
Such model always exists and is unique up to an $\A_\infty$-isomorphism, 
see \cite[\S1.4.1]{Lefevre-SurLesAInftyCategories} and
\cite[S6.4]{KontsevichSoibelman-HomologicalMirrorSymmetryAndTorusFibrations}. 

\begin{lemma}
\label{lemma-barBn-is-the-minimal-model-of-B_n-quot-cone-phi}
There exists a strictly unital $\A_\infty$-quasi-isomorphism 
\begin{align}
\label{eqn-b_n-bar-to-the-dg-quotient-of-b_n-functor}
\bar{\B}_n \xrightarrow{(\dot{\mathfrak{g}}, \mathfrak{g}_i)} \B_n^f, 
\end{align}
which gives $\bar{B}_n$ the structure of the minimal model of $\B_n^f$. 
\end{lemma}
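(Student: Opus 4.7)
The strategy is to obtain the quasi-isomorphism via Kadeishvili's homotopy transfer theorem, exploiting a single crucial observation: since $\bar{\B}_n$ is concentrated in degree $0$, any $A_\infty$-structure on it having the prescribed composition as $m_2$ is forced to be trivial in higher operations purely for degree reasons. Hence the entire content of the lemma reduces to identifying $H^*(\B_n^f)$ with $\bar{\B}_n$ as a graded category.

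The first step is therefore the cohomology computation. By Drinfeld's explicit model, $\pretriag(\B_n)/\cone(\phi)$ is obtained from $\pretriag(\B_n)$ by adjoining a degree $-1$ endomorphism $\epsilon$ of $x \xrightarrow{\phi} y$ with $d\epsilon = \id$. Restricting to objects of $\B_n$, each morphism complex $\homm_{\B_n^f}(a,b)$ decomposes as a direct sum indexed by the number of $\epsilon$-insertions. The relations of $\B_n$ --- in particular $\gamma_1\alpha = \gamma_1\beta = 0$ and $\gamma_{i+1}\gamma_i = 0$ --- collapse the vast majority of composition paths to zero. A direct check, or equivalently a spectral-sequence argument filtering by the number of $\epsilon$-insertions, then shows that each $H^*\left(\homm_{\B_n^f}(a,b)\right)$ is concentrated in degree $0$ and of the dimension predicted by $\bar{\B}_n$. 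The new cycle $\psi$ (visible in the presentation \eqref{eqn-quiver-presentation-of-Bn-quot-cone-phi}) descends in $H^0$ to a two-sided inverse of $\phi$, while the relations $\phi\beta = \alpha$, $\psi\alpha = \beta$, $\gamma_1\alpha = \gamma_1\beta = 0$ and $\gamma_{i+1}\gamma_i = 0$ already hold on the nose in $\B_n^f$. This identifies $H^*(\B_n^f) \simeq \bar{\B}_n$ as graded $k$-categories.

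With this identification in hand, Kadeishvili's theorem produces a minimal $A_\infty$-structure $(m_i)$ on $H^*(\B_n^f)$ together with an $A_\infty$-quasi-isomorphism to $\B_n^f$. Since $\deg m_i = 2 - i$ and every morphism space of $\bar{\B}_n$ is concentrated in degree $0$, we have $m_i = 0$ for all $i \geq 3$; moreover $m_1 = 0$ by minimality, and $m_2$ must coincide with the composition induced on cohomology, which is the composition of $\bar{\B}_n$. Hence the transferred structure is literally the DG-structure of $\bar{\B}_n$, and the Kadeishvili quasi-isomorphism supplies the desired $(\dot{\mathfrak{g}}, \mathfrak{g}_i)$. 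Strict unitality is achieved by post-composing with the standard strict-unitalisation procedure for $A_\infty$-functors between strictly unital $A_\infty$-categories, which preserves the quasi-isomorphism property.

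The main obstacle is the cohomology computation in the second paragraph: while morally a routine exercise enabled by the sparse quiver structure, it requires careful bookkeeping of iterated compositions involving the Drinfeld-quotient generator $\epsilon$ (equivalently $\theta_x$, $\theta_y$, $\psi$, $\xi$). A clean way to organise this is to exhibit an explicit contracting homotopy in each morphism complex that retracts onto the span of the classes listed in $\bar{\B}_n$. Once that contraction is written down, every remaining step --- the degree-based vanishing of higher $m_i$, strict unitalisation, and the conclusion that $\bar{\B}_n$ is the minimal model --- is formal.
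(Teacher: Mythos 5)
Your proposal is correct and follows essentially the same route as the paper: identify $H^\bullet(\B_n^f)$ with $\bar{\B}_n$ using the explicit presentation from Lemma \ref{lemma-explicit-presentation-of-Bn-quot-cone-phi}, then apply homotopy transfer (the paper cites the Kontsevich--Soibelman construction, you cite Kadeishvili, but these are the same theorem), with the degree-$0$ concentration of $\bar{\B}_n$ forcing the transferred higher products to vanish. The one point worth keeping in mind: the paper chooses $\mathfrak{g}_1$ to be the specific map induced by the quiver inclusion of \eqref{eqn-quiver-defining-bar-B_n} into \eqref{eqn-quiver-presentation-of-Bn-quot-cone-phi}, and this explicit chain-level representative (rather than an abstractly-chosen splitting of cohomology) is used again in the proof of Theorem \ref{theorem-homotopy-equiv-of-twisted-complexes-differ-only-first-term} to deduce that the restriction of $(\dot{\mathfrak{g}}, \mathfrak{g}_i)$ to $\B_n$ agrees with the DG-quotient functor $Q$; your version proves the present lemma but a purely abstract Kadeishvili invocation would not immediately supply that compatibility.
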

\begin{proof}
Recall that $\bar{\B}_n$ is an ordinary category considered 
as an $A_\infty$-category concentrated in degree $0$. In particular, 
$\bar{\B}_n$ can be identified with its own graded homotopy category 
$H^\bullet(\bar{\B}_n)$. 

The category $\bar{\B}_n$ is defined by the quiver 
\eqref{eqn-quiver-defining-bar-B_n}, while 
Lemma \ref{lemma-explicit-presentation-of-Bn-quot-cone-phi} identifies 
$\B_n^f$ with the category defined by the DG-quiver  
\eqref{eqn-quiver-presentation-of-Bn-quot-cone-phi}. 
Forgetting the relations, identifying vertices and arrows which 
have the same labels gives the quiver \eqref{eqn-quiver-defining-bar-B_n} 
the structure of a subquiver of 
\eqref{eqn-quiver-presentation-of-Bn-quot-cone-phi}. This structure
defines a map $\dot{\mathfrak{g}}$ from the set of objects of $\bar{\B}_n$ 
to the set of objects of $\B_n^f$ and a map $\mathfrak{g}_1$ of morphism
spaces of $\bar{\B}_n$ into the morphisms spaces of $\B_n^f$. These
are compatible with differentials, but are not compatible with
compositions. 

By inspection, $(\dot{\mathfrak{g}}, \mathfrak{g}_1)$ does 
define an isomorphism 
$$ \bar{\B}_n \xrightarrow{\sim} H^\bullet(\B^f_n) $$
of graded homotopy categories. We can therefore apply the procedure 
described in
\cite[\S6.4]{KontsevichSoibelman-HomologicalMirrorSymmetryAndTorusFibrations}. 
It can be readily checked that it constructs $\mathfrak{g}_{\geq 2}$
which extend $\dot{\mathfrak{g}}$ and $\mathfrak{g}_1$ 
to a strictly unital $A_\infty$-quasi-isomorphism $\bar{\B}_n
\xrightarrow{(\dot{\mathfrak{g}}, \mathfrak{g}_i)} \B_n^f$, as required. 
\end{proof}
 
Before we proceed, we need to state the following well-known fact: 
\begin{lemma}
Let $\A$ be a DG-category, let $m \leq n$ be two integers and 
let $A_m, \dots, A_n$ be objects of $\A$. 
The one-sided twisted complexes 
\begin{align*}
(E_i, q_{ij}) \in \pretriag(\A) 
\quad\quad \text{ with } \quad \quad
\begin{cases}
E_i = A_i \quad \text{ for } m \leq i \leq n \\
E_i = 0 \quad \text{ otherwise }
\end{cases}
\end{align*}
are in $1$-to-$1$ correspondence with the strictly unital $A_\infty$-functors 
\begin{align*}
\gamma_i \gamma_{i + 1} = 0 \quad \quad 
\vcenter{
\xymatrix{
\overset{a_m}{\bullet} \ar[r]_{\gamma_m} 
& \overset{a_{m+1}}{\bullet} \ar[r]_{\gamma_{m+1}} 
& \dots \ar[r]_{\gamma_{n-2}}
& \overset{a_{n-1}}{\bullet} \ar[r]_{\gamma_{n-1}} 
& \overset{a_{n}}{\bullet} 
& 
\ar[rr]^{\left(\dot{\mathfrak{f}}, \mathfrak{f}_i\right)} 
& & &  
\C
}
}
\end{align*}
with $\dot{\mathfrak{f}}(a_i) = A_i$. 
\end{lemma}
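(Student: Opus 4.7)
The plan is to write down an explicit bijection and verify that the defining conditions match on both sides. Let $\Q_{m,n}$ denote the source DG-category, i.e.\ the $k$-linear category concentrated in degree $0$ generated by the quiver $a_m\to a_{m+1}\to\dots\to a_n$ with arrows $\gamma_i$ and the relations $\gamma_{i+1}\gamma_i=0$. Its morphism complexes are very small: $\homm_{\Q_{m,n}}(a_i,a_i)=k\cdot\id_{a_i}$, $\homm_{\Q_{m,n}}(a_i,a_{i+1})=k\cdot\gamma_i$, and all other $\homm$ spaces vanish. In particular, $m_1=0$ in $\Q_{m,n}$, the only non-trivial instance of $m_2$ on non-identities is the composition $m_2(\gamma_{i+1},\gamma_i)$, which is \emph{zero}, and $m_{\geq 3}=0$.

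First, I would unpack what a strictly unital $A_\infty$-functor $(\dot{\mathfrak{f}},\mathfrak{f}_i)\colon \Q_{m,n}\to\A$ with $\dot{\mathfrak{f}}(a_i)=A_i$ amounts to. Strict unitality forces $\mathfrak{f}_k$ to vanish on any tuple containing an identity for $k\geq 2$, so the only free data are the values $\mathfrak{f}_k(\gamma_{i+k-1},\dots,\gamma_i)$ on chains of consecutive non-identity arrows (other tuples are products of the $\gamma_j$ or identities and are determined). Setting
\[
q_{i,i+k}:=\mathfrak{f}_k(\gamma_{i+k-1},\dots,\gamma_i)\in \homm_{\A}^{1-k}(A_i,A_{i+k}),\qquad q_{ij}:=0\text{ if }i\geq j,
\]
we get morphisms of the exactly the degrees $i-j+1$ required of the components of a twisted complex on $A_m,\dots,A_n$. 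The inverse assignment sends a one-sided twisted complex $(E_i=A_i,q_{ij})$ to the $A_\infty$-functor determined by these formulas, extended by strict unitality and multilinearity and declared to vanish on any tuple of arrows that is not a single consecutive chain.

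Second, I would check that the bijection matches the relations. The $A_\infty$-functor equation for $(\dot{\mathfrak{f}},\mathfrak{f}_i)$ evaluated on the input $(\gamma_{j-1},\dots,\gamma_i)$ has on the left the sum
\[
\sum_{r+s=k+1,\;s\geq 1} (\pm)\, m_r^{\A}\bigl(\mathfrak{f}_{i_1}(\dots),\dots,\mathfrak{f}_{i_r}(\dots)\bigr)
\]
and on the right $\sum (\pm)\mathfrak{f}_k(\dots, m_s^{\Q}(\dots),\dots)$. Because $m_1^\Q=0$ and $m_2^\Q$ vanishes on all the $\gamma$-tuples and $m_{\geq 3}^\Q=0$, the entire right-hand side vanishes, while on the left only $m_1^\A = d$ and $m_2^\A = \circ$ contribute. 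What remains, after renaming $q_{i,i+k}=\mathfrak{f}_k(\gamma_{i+k-1},\dots,\gamma_i)$, is precisely the Maurer--Cartan identity $(-1)^j d q_{ij}+\sum_{i<k<j}q_{kj}\circ q_{ik}=0$ of Definition~\ref{defn-twisted-complex}. Conversely, these identities for all pairs $(i,j)$ encode exactly the $A_\infty$-functor equations, since inputs not of the form of a single consecutive chain contribute nothing on either side.

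The only genuinely delicate step is matching the Koszul sign conventions used in the $A_\infty$-formalism of \cite{Lefevre-SurLesAInftyCategories} with the convention of \cite{BondalKapranov-EnhancedTriangulatedCategories} adopted in Section~\ref{section-subsection-twisted-complexes} for twisted complexes (as indicated by the factor $(-1)^j$ in front of $d q_{ij}$). This is bookkeeping rather than a conceptual difficulty: one picks any convention for the $A_\infty$ side and absorbs the resulting discrepancy into the explicit identification $q_{i,i+k}=\pm\,\mathfrak{f}_k(\gamma_{i+k-1},\dots,\gamma_i)$, so that the translated relations match on the nose. Once the signs are fixed, the two assignments above are mutual inverses by construction and the bijection is established.
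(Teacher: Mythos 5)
Your proof is correct and takes essentially the same approach as the paper: both simply exhibit the explicit bijection $q_{i,i+k}\leftrightarrow \mathfrak{f}_k(\gamma_{i+k-1},\dots,\gamma_i)$ (up to a sign, which the paper pins down as $(-1)^{i-1}$) and note that the $A_\infty$-functor identities collapse to the Maurer--Cartan relation of a one-sided twisted complex because $\Q_{m,n}$ is concentrated in degree $0$ with trivial composition of the $\gamma$'s, while $\C$ has $m_{\geq 3}=0$. The paper states the formula and leaves the verification to the reader; you spell out why the verification works.
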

\begin{proof}
Mutually inverse maps between the two sets can be defined by setting 
$$\mathfrak{f}_k(\gamma_{i+k-1}, \gamma_{i+k-2}, \dots, \gamma_{i})  
= (-1)^{i-1} q_{i(i+k)}
\quad \quad
\forall\; i \in \left\{m, \dots, n\right\} 
\; \text{ and } \; 
k \in \left\{ 1, \dots, n-i \right\} $$
and vice versa. 
\end{proof}

Let $\C$ be a strongly pretriangulated category and let 
$(A_i, g_{ij})$ be a one-sided twisted complex over $\C$
concentrated in degrees $1, \dots, n$. Let 
$(E_i, q_{ij})$ and $(F_i, r_{ij})$ be one-sided twisted complexes 
over $\C$ concentrated in degrees $0, \dots, n$ whose 
twisted subcomplexes supported in degrees $1, \dots, n$ 
are both equal to $(A_i, g_{ij})$. 

Let $A$ denote the convolution of $(A_i, g_{ij})$. 
Consider the closed degree $1$ morphisms $(q_{0j})$ and $(r_{0j})$ 
from $E_0$ and $F_0$ to $(A_i, g_{ij})$ in $\pretriag(\C)$. 
Denote by $E_0 \xrightarrow{q_0} A$ and $F_0 \xrightarrow{r_0} A$ 
the corresponding morphisms in $\C$. 

Recall that $\B_n$ is the category defined by
\begin{align}
\label{eqn-quiver-presentation-of-Bn}
\begin{minipage}[c][1in][c]{1.5in}
$\beta \phi = \alpha$, \\
$\gamma_1 \alpha  = 
\gamma_1 \beta = 0$, \\
$\gamma_{i+1} \gamma_{i} = 0$
\end{minipage}
\vcenter{
\xymatrix{
\overset{x}{\bullet}
\ar[rd]^{\alpha}
\ar@/_/[dd]_{\phi}
\\
& \overset{a_1}{\bullet} \ar[r]_{\gamma_1} 
& \overset{a_2}{\bullet} \ar[r]_{\gamma_2} 
& \overset{a_3}{\bullet} \ar[r]_{\gamma_3} 
& \dots \ar[r]_{\gamma_{n-2}}
& \overset{a_{n-1}}{\bullet} \ar[r]_{\gamma_{n-1}} 
& \overset{a_{n}}{\bullet} 
\\
\overset{y}{\bullet}
\ar[ru]_{\beta} 
}
}.
\end{align}

\begin{prps}
\label{prps-existence-of-an-A-infty-functor-B_n-->--C}
There exists a strictly unital $A_\infty$-functor 
$$\B_n \xrightarrow{\left(\dot{\mathfrak{f}}, \mathfrak{f}_i\right)} \C$$
whose restrictions to the full subcategories
of $\B_n$ supported at $x, a_1, \dots, a_n$ and $y, a_1, \dots, a_n$
correspond to the twisted complexes $(E_i, q_{ij})$ and 
$(F_i, r_{ij})$ if and only if the following two equivalent 
conditions hold:
\begin{enumerate}
\item 
\label{item-f-and-higher-morphisms-s_k-exist}
There exist $f \in \homm_\C^0(E_0,F_0)$ and 
$s_i \in \homm_\C^{-k}(E_0, A_k)$ for $k \in
\left\{1,\dots,n\right\}$ such that 
\begin{align}
\label{eqn-relations-on-f-and-s_k}
q_{0k} - r_{0k}f = 
\sum_{1 \leq j \leq k-1} q_{jk} s_j + (-1)^{k} ds_k.
\end{align}
\item 
\label{item-f-exists-and-commutes-with-q_0-and-r_0}
There exists $f \in \homm_{H^0(\C)}(E_0,F_0)$ such that 
\begin{align*}
\vcenter{
\xymatrix{
E_0 \ar[r]^{q_0} \ar[d]_{f} & A[1] \\
F_0 \ar[ur]_{r_0}
}
}
\end{align*}
commutes in $H^0(\C)$. 
\end{enumerate}
\end{prps}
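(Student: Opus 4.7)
The plan is to translate the existence of such an $A_\infty$-functor into a finite system of equations and then match these equations, one by one, with those of \eqref{eqn-relations-on-f-and-s_k}; the equivalence of (\ref{item-f-and-higher-morphisms-s_k-exist}) and (\ref{item-f-exists-and-commutes-with-q_0-and-r_0}) is then a matter of recognising the system as the component-wise expression of a single null-homotopy in $\pretriag(\C)$.

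By the preceding lemma, the restrictions of $(\dot{\mathfrak{f}}, \mathfrak{f}_i)$ to the full subcategories supported on $\{x, a_1, \dots, a_n\}$ and $\{y, a_1, \dots, a_n\}$ are fixed and correspond exactly to the twisted complexes $(E_i, q_{ij})$ and $(F_i, r_{ij})$. Since $\phi$ is the only generator of $\B_n$ with source $x$ and target $y$, and since all length-two compositions of generators vanish except $\beta\phi = \alpha$, every composable sequence of generators that does not lie in one of the two subcategories is of the form $(\gamma_{k-1}, \ldots, \gamma_1, \beta, \phi)$ for some $0 \le k \le n-1$ (interpreted as $(\phi)$ when $k = 0$ and $(\beta,\phi)$ when $k = 1$). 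The remaining free data of the $A_\infty$-functor is therefore $f := \mathfrak{f}_1(\phi) \in \homm^0_\C(E_0, F_0)$ together with $\sigma_k := \mathfrak{f}_{k+1}(\gamma_{k-1}, \ldots, \gamma_1, \beta, \phi) \in \homm^{-k}_\C(E_0, A_k)$ for $1 \le k \le n-1$, subject only to strict unitality and the $A_\infty$-axioms evaluated on these sequences.

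Next I will write out the axiom attached to $(\gamma_{k-1}, \ldots, \gamma_1, \beta, \phi)$. Because $\B_n$ has trivial differential and the vast majority of length-two internal compositions vanish, the axiom collapses: the surviving terms are $\mathfrak{f}_1(\alpha)$ (from the splitting that composes $\beta\phi$ into $\alpha$, which under the preceding lemma is $\pm q_{0k}$), $\mathfrak{f}_k(\gamma_{k-1},\ldots,\gamma_1,\beta)\cdot\mathfrak{f}_1(\phi)$ (identified with $\pm r_{0k} f$), the cross terms $\mathfrak{f}_1(\gamma_j)\cdot\mathfrak{f}_{k-j+1}(\gamma_{k-1},\ldots,\gamma_{j+1},\beta,\phi) = \pm q_{jk}\sigma_j$ for $1 \le j \le k-1$, and $d\sigma_k$; every other term in the axiom involves a composition that vanishes in $\B_n$. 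After tracking the signs against those of Definition~\ref{defn-twisted-complex} and setting $s_k = \pm \sigma_k$ with the appropriate sign, the $A_\infty$-axiom at level $k$ becomes exactly the equation of (\ref{item-f-and-higher-morphisms-s_k-exist}) at level $k$. This yields a bijection between strictly unital $A_\infty$-functors $\B_n \to \C$ with the prescribed restrictions and tuples $(f, s_1, \ldots, s_{n-1})$ satisfying \eqref{eqn-relations-on-f-and-s_k}.

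Finally, for the equivalence (\ref{item-f-and-higher-morphisms-s_k-exist}) $\Leftrightarrow$ (\ref{item-f-exists-and-commutes-with-q_0-and-r_0}): the collection $(q_{0j})_j$ is exactly the data of a closed degree-$0$ morphism from $E_0$ to the twisted complex $(A_i, g_{ij})$ in $\pretriag(\C)$, whose image under the convolution functor is $q_0 \in \homm^0_\C(E_0, A)$; similarly $(r_{0j}f)_j$ convolves to $r_0 f$. The differential on degree-$(-1)$ tuples $(s_k) \in \bigoplus_k \homm^{-k}_\C(E_0, A_k)$ in the $\homm$-complex of $\pretriag(\C)$ reads off component-wise as precisely the right-hand side of \eqref{eqn-relations-on-f-and-s_k}. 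Hence a tuple $(s_k)$ satisfying \eqref{eqn-relations-on-f-and-s_k} is nothing but a null-homotopy of $q_0 - r_0 f$ in $\pretriag(\C)$, equivalently in $\C$ as $\C$ is strongly pretriangulated; so (\ref{item-f-and-higher-morphisms-s_k-exist}) is equivalent to the existence of a closed $f \in \homm^0_\C(E_0, F_0)$ with $q_0 = r_0 f$ in $H^0(\C)$, which is (\ref{item-f-exists-and-commutes-with-q_0-and-r_0}) (the reverse direction using that any morphism in $H^0(\C)$ lifts to a closed morphism). The main obstacle is the sign bookkeeping in the middle step; this is where the conventions of Definition~\ref{defn-twisted-complex} interact with the standard sign conventions for $A_\infty$-functors, and it is what produces the $(-1)^k$ factor on $ds_k$ in \eqref{eqn-relations-on-f-and-s_k}.
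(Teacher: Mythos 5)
Your proposal follows essentially the same route as the paper's proof: the two prescribed restrictions pin down $\dot{\mathfrak{f}}$ and all components of $\mathfrak{f}_i$ except those evaluated on composable sequences ending in $\phi$, the $A_\infty$-relations on those sequences reduce to exactly the system \eqref{eqn-relations-on-f-and-s_k}, and that system is then recognised as the component-wise expression of a null-homotopy of $q_0 - r_0 f$ in $\pretriag(\C)$. So the approach is the same.

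One indexing slip is worth flagging because it propagates: the composable sequences through $\phi$ that lie outside the two subcategories are $(\phi), (\beta,\phi), (\gamma_1,\beta,\phi), \dots, (\gamma_{n-1},\dots,\gamma_1,\beta,\phi)$, so they are indexed by $0 \le k \le n$, not $0 \le k \le n-1$; correspondingly the free higher data is $\sigma_1,\dots,\sigma_n$ (with $\sigma_n = \mathfrak{f}_{n+1}(\gamma_{n-1},\dots,\gamma_1,\beta,\phi)$), in agreement with the $s_1,\dots,s_n$ appearing in the statement, not $\sigma_1,\dots,\sigma_{n-1}$. If you genuinely drop $\sigma_n$ you lose the equation at level $k=n$, which is exactly the component landing in $\homm_\C^{-n+1}(E_0,A_n)$ and is needed for the null-homotopy interpretation. (Relatedly, the cross term coming from splitting the sequence at position $j$ should read $\mathfrak{f}_{k-j}(\gamma_{k-1},\dots,\gamma_j)\cdot\mathfrak{f}_{j+1}(\gamma_{j-1},\dots,\gamma_1,\beta,\phi)$, not $\mathfrak{f}_1(\gamma_j)\cdot\mathfrak{f}_{k-j+1}(\dots)$, though your identification of it with $\pm q_{jk}\sigma_j$ is the right conclusion.) Fixing the bound to $k \in \{1,\dots,n\}$ makes the bijection and the final paragraph go through exactly as you intend.
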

\begin{proof}
\em The existence of $\left(\dot{\mathfrak{f}}, \mathfrak{f}_i\right)
\Leftrightarrow 
\eqref{item-f-and-higher-morphisms-s_k-exist}$: \rm

The condition that  $\left(\dot{\mathfrak{f}}, \mathfrak{f}_i\right)$
restricts on $x, a_1, \dots, a_n$ and $y, a_1, \dots, a_n$ to 
the functors corresponding to $(E_i, q_{ij})$ and $(F_i, r_{ij})$ 
determines $\dot{\mathfrak{f}}$ and all the values of $\mathfrak{f}_i$ 
other than
\begin{align}
\label{eqn-higher-morphism-maps-f-involving-phi}
\mathfrak{f}_1(\phi),\; 
\mathfrak{f}_2(\beta,\phi), \;
\mathfrak{f}_3(\gamma_1, \beta, \phi),\; 
\dots,\;
\mathfrak{f}_{n+1}(\gamma_{n-1}, \dots, \gamma_1, \beta, \phi). 
\end{align}
One can readily verify that if we set these to 
$f$, $s_1$, \dots, $s_n$, then
the standard relations which
$\eqref{eqn-higher-morphism-maps-f-involving-phi}$  
must satisfy according to the definition of an $A_\infty$-functor
\cite[\S3.4]{Keller-IntroductionToAInfinityAlgebrasAndModules}
become precisely the equations \eqref{eqn-relations-on-f-and-s_k}. 
And vice versa. 

\em $\eqref{item-f-and-higher-morphisms-s_k-exist}
\Leftrightarrow 
\eqref{item-f-exists-and-commutes-with-q_0-and-r_0}$: \rm

Let $s_k \in \homm_\C^{-k}(E_0, A_k)$ for $k \in
\left\{1,\dots,n\right\}$. Consider 
the degree $0$ morphism $E_0 \xrightarrow{(s_k)} (A_i,g_{ij})$ in 
$\pretriag(C)$. It is a straightforward verification that $d(s_k)$ is 
the morphism $E_0 \rightarrow (A_i,g_{ij})$ whose
component in $\homm_\C^{-k+1}(E_0,A_k)$ is precisely the RHS
of $\eqref{eqn-higher-morphism-maps-f-involving-phi}$. 

On the other hand, for any $f \in \homm_\C^0(E_0,F_0)$ 
the LHS of $\eqref{eqn-higher-morphism-maps-f-involving-phi}$
is the component in $\homm_\C^{-k+1}(E_0,A_k)$ of
the morphism $E_0 \xrightarrow{(q_{0j}) - (r_{0j})f} (A_i,g_{ij})$ 
in $\pretriag(\C)$. 

We conclude that \eqref{item-f-and-higher-morphisms-s_k-exist} is equivalent
to existence of $f \in \homm_\C^0(E_0,F_0)$ and $s \in
\homm_\C^0(E_0, A)$ such that $q_0 - r_0 f = ds$. This is precisely
the claim of \eqref{item-f-exists-and-commutes-with-q_0-and-r_0}. 
\end{proof}

The following is the main result of this section:

\begin{theorem}
\label{theorem-homotopy-equiv-of-twisted-complexes-differ-only-first-term}
Let $E_0 \xrightarrow{f} F_0$ be a homotopy equivalence
satisfying the equivalent conditions of 
Prop. \ref{prps-existence-of-an-A-infty-functor-B_n-->--C}. 
Then there exists a homotopy equivalence 
$$ (E_i, q_{ij}) \xrightarrow{(f_{ij})} (F_i, r_{ij}) $$
in $\pretriag(\C)$.  
\end{theorem}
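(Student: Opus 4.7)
The plan is to follow the strategy outlined in Example~\ref{exmpl-homotopy-equivalences-b1-example} and generalize it using the categories $\B_n$, $\B_n^f$, and $\bar{\B}_n$ together with their interrelations established in Lemmas~\ref{lemma-explicit-presentation-of-Bn-quot-cone-phi} and~\ref{lemma-barBn-is-the-minimal-model-of-B_n-quot-cone-phi}.

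First, since $f$ satisfies the equivalent conditions of Proposition~\ref{prps-existence-of-an-A-infty-functor-B_n-->--C}, I obtain a strictly unital $A_\infty$-functor $\B_n \xrightarrow{(\dot{\mathfrak{f}}, \mathfrak{f}_i)} \C$ which sends $x, y, a_1, \dots, a_n$ to $E_0, F_0, A_1, \dots, A_n$, which encodes the two twisted complexes $(E_i, q_{ij})$ and $(F_i, r_{ij})$ via the restrictions to the full subcategories on $\{x, a_1, \dots, a_n\}$ and $\{y, a_1, \dots, a_n\}$, and which in particular sends $\phi$ to $f$. Extending to $A_\infty$-functors between pretriangulated hulls, the object $\cone(\phi) \in \pretriag(\B_n)$ is sent to $\cone(f) \in \pretriag(\C)$, which is contractible since $f$ is a homotopy equivalence.

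Next I would invoke the universal property of the Drinfeld DG-quotient: an $A_\infty$-functor out of $\pretriag(\B_n)$ which kills (up to quasi-isomorphism) the object $\cone(\phi)$ factors through $\pretriag(\B_n)/\cone(\phi)$, and so restricts to an $A_\infty$-functor $\B_n^f \to \C$ on the full subcategory from Lemma~\ref{lemma-explicit-presentation-of-Bn-quot-cone-phi}. Composing with the $A_\infty$-quasi-isomorphism $\bar{\B}_n \to \B_n^f$ of Lemma~\ref{lemma-barBn-is-the-minimal-model-of-B_n-quot-cone-phi} (using that a quasi-isomorphism of minimal models admits a strictly unital $A_\infty$-inverse, cf.~\cite{Lefevre-SurLesAInftyCategories}), I obtain a strictly unital $A_\infty$-functor $\bar{\B}_n \xrightarrow{(\dot{\mathfrak{h}}, \mathfrak{h}_i)} \C$ whose restrictions to the subcategories on $\{x, a_1, \dots, a_n\}$ and $\{y, a_1, \dots, a_n\}$ still correspond to the original twisted complexes $(E_i, q_{ij})$ and $(F_i, r_{ij})$.

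Finally, I would extend $(\dot{\mathfrak{h}}, \mathfrak{h}_i)$ to an $A_\infty$-functor $\pretriag(\bar{\B}_n) \to \pretriag(\C)$. Because $\phi \psi = \id_y$ and $\psi \phi = \id_x$ in $\bar{\B}_n$, the twisted complexes $(x, a_1, \dots, a_n; \alpha, \gamma_i)$ and $(y, a_1, \dots, a_n; \beta, \gamma_i)$ in $\pretriag(\bar{\B}_n)$ are strictly isomorphic via an explicit closed degree $0$ morphism assembled from $\phi$, $\psi$, and the identities on the $a_i$ (exactly as in the displayed $1$-cube isomorphisms in Example~\ref{exmpl-homotopy-equivalences-b1-example}). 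Applying $(\dot{\mathfrak{h}}, \mathfrak{h}_i)$ to this isomorphism and its inverse produces closed degree $0$ morphisms $(f_{ij}) \colon (E_i, q_{ij}) \to (F_i, r_{ij})$ and in the reverse direction in $\pretriag(\C)$ whose compositions descend in $H^0(\pretriag(\C))$ to the identities, which gives the desired homotopy equivalence. The main subtlety is the $A_\infty$-universal-property factorization in the second paragraph, but this is a known feature of Drinfeld's construction once phrased correctly in the $A_\infty$-setting.
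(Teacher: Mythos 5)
Your outline reproduces the structure of the paper's own proof: construct the $A_\infty$-functor $\B_n \rightarrow \C$ from Prop.~\ref{prps-existence-of-an-A-infty-functor-B_n-->--C}, pass to the associated quasi-functor and factor it through the DG-quotient $\pretriag(\B_n)/\cone(\phi)$ via Drinfeld's universal property, restrict to $\B_n^f$, precompose with the minimal-model quasi-isomorphism $\bar{\B}_n\rightarrow\B_n^f$, and then transport the explicit isomorphism of twisted complexes in $\pretriag(\bar{\B}_n)$ to $\pretriag(\C)$. However, in your second paragraph you assert without justification that the composite $A_\infty$-functor $\bar{\B}_n\xrightarrow{(\dot{\mathfrak{h}},\mathfrak{h}_i)}\C$ has ``restrictions to the subcategories on $\{x,a_1,\dots,a_n\}$ and $\{y,a_1,\dots,a_n\}$ [that] still correspond to the original twisted complexes.'' This is not a formality --- it is the crux of the argument, and the place where all the implicit homotopy/sign bookkeeping of Example~\ref{exmpl-homotopy-equivalences-b1-example} is hiding.

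Concretely, the Drinfeld factorization gives $(\dot{\mathfrak{f}}, \mathfrak{f}_i) = (\dot{\mathfrak{f}}', \mathfrak{f}'_i)\circ Q$ on $\B_n$, but your $(\dot{\mathfrak{h}},\mathfrak{h}_i)$ is $(\dot{\mathfrak{f}}', \mathfrak{f}'_i)\circ(\dot{\mathfrak{g}},\mathfrak{g}_i)$ where $(\dot{\mathfrak{g}},\mathfrak{g}_i)\colon\bar{\B}_n\rightarrow\B_n^f$ is the chosen minimal-model quasi-isomorphism, not the quotient functor. To conclude that $(\dot{\mathfrak{h}},\mathfrak{h}_i)$ restricts on $\B_n\hookrightarrow\bar{\B}_n$ to the original $(\dot{\mathfrak{f}},\mathfrak{f}_i)$ you need the diagram
\begin{align*}
\vcenter{
\xymatrix{
\B_n \ar@{^{ (}->}[d] \ar[rd]^{Q} & \\
\bar{\B}_n \ar[r]_{\left(\dot{\mathfrak{g}},\mathfrak{g}_i\right)} & \B_n^f
}
}
\end{align*}
to commute on the nose. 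This is not automatic: the minimal model is unique only up to $A_\infty$-isomorphism, and a generic choice of $(\dot{\mathfrak{g}},\mathfrak{g}_i)$ would not agree with $Q$ on $\B_n$. One must use the particular $(\dot{\mathfrak{g}},\mathfrak{g}_i)$ constructed in Lemma~\ref{lemma-barBn-is-the-minimal-model-of-B_n-quot-cone-phi}, check that the Kontsevich--Soibelman procedure yields $\mathfrak{g}_{\geq 2}=0$ when restricted to $\B_n$, and identify $Q|_{\B_n}$ (under the explicit isomorphism of Lemma~\ref{lemma-explicit-presentation-of-Bn-quot-cone-phi}) with the quiver inclusion. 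Without this verification, the proof does not close; the rest of your outline is essentially correct and matches the paper's proof.
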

\begin{proof}
By Prop. \ref{prps-existence-of-an-A-infty-functor-B_n-->--C}
there exists a strictly unital $A_\infty$-functor
$$\B_n \xrightarrow{\left(\dot{\mathfrak{f}}, \mathfrak{f}_i\right)}
\C$$
with $\mathfrak{f}_1(\phi) = f$. 
It extends naturally to a strictly unital $A_\infty$-functor
$$
\pretriag(\B_n) 
\xrightarrow{\left(\dot{\mathfrak{f}}, \mathfrak{f}_i\right)}
\pretriag(\C).
$$

By \cite[\S4.3]{Keller-OnDifferentialGradedCategories} there exists 
a corresponding quasi-functor 
$$ \pretriag(\B_n) \xrightarrow{\Phi}  \pretriag(\C)$$
in $\HoDGCat$ with $H\Phi \simeq H\mathfrak{f}$ as functors 
$H^0(\pretriag(\B_n)) \rightarrow H^0(\pretriag(\C))$. 
Since $H\mathfrak{f}(\phi) = f$ and since $f$ is an isomorphism in $H^0(\C)$, 
it follows that $H\Phi(\cone(\phi)) = 0$. By the universal 
property of DG-quotients 
\cite[Theorem 1.6.2]{Drinfeld-DGQuotientsOfDGCategories} 
quasi-functor $\Phi$ lifts to a quasi-functor 
$$\pretriag(\B_n) / \cone(\phi) \xrightarrow{\Phi'} \pretriag(\C)$$
such that $\Phi = \Phi' Q$ where $Q$ is 
the quotient quasi-functor 
$\pretriag(\B_n) \rightarrow \pretriag(\B_n) / \cone(\phi)$. Denote
by 
$$\pretriag(\B_n) / \cone(\phi)
\xrightarrow{\left(\dot{\mathfrak{f}}', \mathfrak{f}'_i\right)}
\pretriag(\C)$$
the corresponding strictly unital $A_\infty$-functor. We have
$\left(\dot{\mathfrak{f}}, \mathfrak{f}_i\right) = 
\left(\dot{\mathfrak{f}}', \mathfrak{f}'_i\right)Q$ and hence
restricting
to the full subcategory $\B_n^f$ of $\pretriag(\B_n) / \cone(\phi)$
consisting of objects of $\B_n$ we obtain a strictly unital 
$A_\infty$-functor
$$\B_n^f 
\xrightarrow{\left(\dot{\mathfrak{f}}', \mathfrak{f}'_i\right)}
\C.$$

Recall that in
Lemma \ref{lemma-barBn-is-the-minimal-model-of-B_n-quot-cone-phi}
we have constructed a strictly unital $A_\infty$-quasi-isomorphism
$\bar{\B}_n \xrightarrow{\left(\dot{\mathfrak{g}}, \mathfrak{g}_i\right)}
\B_n^f$ which gives $\bar{\B}_n$ the structure of the minimal model 
of $\B_n^f$. Taking the composition of
$\bar{\B}_n \xrightarrow{\left(\dot{\mathfrak{g}}, \mathfrak{g}_i\right)}
\B_n^f  \xrightarrow{\left(\dot{\mathfrak{f}}', \mathfrak{f}'_i\right)} \C$
we obtain the strictly unital $A_\infty$-functor denoted 
$$ 
\bar{\B}_n \xrightarrow{\left(\dot{\mathfrak{h}}, \mathfrak{h}_i\right)} \C. 
$$

We claim that 
$\bar{\B}_n \xrightarrow{\left(\dot{\mathfrak{h}}, \mathfrak{h}_i\right)} \C$
restricts on the full subcategory $\B_n \hookrightarrow
\bar{\B}_n$ to
$\B_n \xrightarrow{ \left(\dot{\mathfrak{f}}, \mathfrak{f}_i\right) }
\C$. As $\left(\dot{\mathfrak{f}}, \mathfrak{f}_i\right) = 
\left(\dot{\mathfrak{f}}', \mathfrak{f}'_i\right)Q$ this reduces
to the following diagram being commutative
\begin{align}
\vcenter{
\xymatrix{
\B_n 
\ar@{^{ (}->}[d] 
\ar[rd]^{Q}
\\
\bar{\B}_n 
\ar[r]_{\left(\dot{\mathfrak{g}}, \mathfrak{g}_i\right)}
&
\B_n^f
} 
}.
\end{align}
This is a straightforward check. On one hand, in Lemma 
\ref{lemma-explicit-presentation-of-Bn-quot-cone-phi}
we have constructed an explicit isomorphism between $\B_n^f$ and 
the category defined by
\eqref{eqn-quiver-presentation-of-Bn-quot-cone-phi}. 
One can check that it identifies the DG-quotient functor $\B_n
\xrightarrow{Q} \B_n^f$ 
with the functor induced by the inclusion of 
\eqref{eqn-quiver-presentation-of-Bn} into 
\eqref{eqn-quiver-presentation-of-Bn-quot-cone-phi} as 
quivers with relations. On the other hand, in Lemma 
\ref{lemma-barBn-is-the-minimal-model-of-B_n-quot-cone-phi}
we have used the above isomorphism between $\B_n^f$ and 
\eqref{eqn-quiver-presentation-of-Bn-quot-cone-phi}
to define $\dot{\mathfrak{g}}$ and $\mathfrak{g}_1$ 
by the quiver inclusion of 
\eqref{eqn-quiver-defining-bar-B_n}
into \eqref{eqn-quiver-presentation-of-Bn-quot-cone-phi} 
which ignores relations. However, 
restricted from \eqref{eqn-quiver-defining-bar-B_n} to
\eqref{eqn-quiver-presentation-of-Bn} 
this inclusion does respect the relations. 
Therefore $(\dot{\mathfrak{g}}, \mathfrak{g}_1)$ restricted
to $\B_n$ is a genuine functor. One can check that this forces 
$\mathfrak{g}_{\geq 3}$ constructed by
the procedure  
in \cite[\S6.4]{KontsevichSoibelman-HomologicalMirrorSymmetryAndTorusFibrations}
to be zero when restricted to $\B_n$. 
Thus $(\dot{\mathfrak{g}}, \mathfrak{g}_i)$ restricted
to $\B_n$ is just the functor $(\dot{\mathfrak{g}}, \mathfrak{g}_1)$, 
i.e. the functor defined by the inclusion of 
\eqref{eqn-quiver-presentation-of-Bn} into 
\eqref{eqn-quiver-presentation-of-Bn-quot-cone-phi}. The claim
follows. 

In $\pretriag(\bar{\B}_n)$ the 
twisted complexes $x \xrightarrow{\alpha} a_1  \dots
\xrightarrow{\gamma_{n-1}} a_n$ and 
$y \xrightarrow{\beta} a_1  \dots \xrightarrow{\gamma_{n-1}} a_n$
are isomorphic, for instance the following 
\begin{align}
\vcenter{
\xymatrix{
\overset{x}{\bullet} \ar[r]^{\alpha} \ar[d]_{\phi}
& \overset{a_1}{\bullet} \ar[r]_{\gamma_1} \ar[d]^{\id}
& \overset{a_2}{\bullet} \ar[r]_{\gamma_2} \ar[d]^{\id} 
& \overset{a_3}{\bullet} \ar[r]_{\gamma_3} \ar[d]^{\id}
& \dots \ar[r]_{\gamma_{n-2}}
& \overset{a_{n-1}}{\bullet} \ar[r]_{\gamma_{n-1}} \ar[d]^{\id}
& \overset{a_{n}}{\bullet} \ar[d]^{\id}
\\
\overset{y}{\bullet} \ar[r]^{\beta}
& \overset{a_1}{\bullet} \ar[r]_{\gamma_1} 
& \overset{a_2}{\bullet} \ar[r]_{\gamma_2} 
& \overset{a_3}{\bullet} \ar[r]_{\gamma_3} 
& \dots \ar[r]_{\gamma_{n-2}}
& \overset{a_{n-1}}{\bullet} \ar[r]_{\gamma_{n-1}} 
& \overset{a_{n}}{\bullet} 
}
}
.
\end{align}
is an isomorphism of twisted complexes. 
Hence they are also isomorphic in $H^0(\pretriag(\bar{\B}_n))$, 
and hence their images under $\left(\dot{\mathfrak{h}},
H^0(\mathfrak{h}_1)\right)$ 
are isomorphic in $H^0(\pretriag(\C))$. But by the claim above 
$\left(\dot{\mathfrak{h}}, \mathfrak{h}_i\right)$ and 
$\left(\dot{\mathfrak{f}}, \mathfrak{f}_i\right)$ agree 
on the subcategory $\pretriag(\B_n)$ of $\pretriag(\bar{\B}_n)$. 
Hence $\left(\dot{\mathfrak{h}},\mathfrak{h}_i\right)$
takes $x \xrightarrow{\alpha} a_1  \dots
\xrightarrow{\gamma_{n-1}} a_n$ and 
$y \xrightarrow{\beta} a_1  \dots \xrightarrow{\gamma_{n-1}} a_n$
to $(E_i, q_{ij})$ and $(F_i, r_{ij})$. The claim of the theorem
follows. 
\end{proof}

\bibliography{references}

\providecommand{\bysame}{\leavevmode\hbox to3em{\hrulefill}\thinspace}
\providecommand{\MR}{\relax\ifhmode\unskip\space\fi MR }
\providecommand{\MRhref}[2]{%
  \href{http://www.ams.org/mathscinet-getitem?mr=#1}{#2}
}
\providecommand{\href}[2]{#2}
\begin{thebibliography}{BvdB03}

\bibitem[Add11]{Addington-NewDerivedSymmetriesOfSomeHyperkaehlerVarieties}
Nicolas Addington, \emph{New derived symmetries of some hyperkaehler
  varieties}, arXiv:1112.0487, (2011).

\bibitem[AL]{AnnoLogvinenko-OnBraidingCriteriaForSphericalTwistsByFlatFibrations}
Rina Anno and Timothy Logvinenko, \emph{Braiding criteria for spherical
  fibrations}, (in preparation).

\bibitem[AL10]{AnnoLogvinenko-OrthogonallySphericalObjectsAndSphericalFibrations}
\bysame, \emph{Orthogonally spherical objects and spherical fibrations},
  arXiv:1011.0707, (2010).

\bibitem[AL12]{AnnoLogvinenko-OnTakingTwistsOfFourierMukaiFunctors}
\bysame, \emph{On adjunctions for {F}ourier-{M}ukai transforms}, Adv. in Math.
  \textbf{231} (2012), no.~3--4, 2069--2115, arXiv:1004.3052.

\bibitem[Ann07]{Anno-SphericalFunctors}
Rina Anno, \emph{Spherical functors}, arXiv:0711.4409, (2007).

\bibitem[BK90]{BondalKapranov-EnhancedTriangulatedCategories}
Alexei Bondal and Mikhail Kapranov, \emph{Enhanced triangulated categories},
  Mat. Sb. \textbf{181} (1990), no.~5, 669--683.

\bibitem[BLL04]{BondalLarsenLunts-GrothendieckRingofPretriangulatedCategories}
Alexei Bondal, Michael Larsen, and Valery Lunts, \emph{Grothendieck ring of
  pretriangulated categories}, Int. Math. Res. Not. \textbf{29} (2004),
  1461--1495, arXiv:math/0401009.

\bibitem[BP10]{BroomheadPloog-AutoequivalencesOfToricSurfaces}
Nathan Broomhead and David Ploog, \emph{Autoequivalences of toric surfaces},
  arXiv:1010.1717, (2010).

\bibitem[BPP13]{BroomheadPauksztelloPloog-DiscreteDerivedCategoriesIHomomorphismsAutoequivalencesAndTStructures}
Nathan Broomhead, David Pauksztello, and David Ploog, \emph{Discrete derived
  categories {I}: homomorphisms, autoequivalences and t-structures},
  arXiv:1312.5203, (2013).

\bibitem[Bri08]{Bridgeland-StabilityConditionsOnK3Surfaces}
Tom Bridgeland, \emph{Stability conditions on {K}3 surfaces}, Duke Math J.
  \textbf{141} (2008), 241--291, arXiv:math/0307164.

\bibitem[Bri09]{Bridgeland-StabilityConditionsAndKleinianSingularities}
\bysame, \emph{Stability conditions and {K}leinian singularities}, Int. Math.
  Res. Not. \textbf{21} (2009), 4142--4157, arXiv:math/0508257.

\bibitem[BvdB03]{BondalVanDenBergh-GeneratorsAndRepresentabilityOfFunctorsInCommutativeAndNoncommutativeGeometry}
Alexei Bondal and Michel van~den Bergh, \emph{Generators and representability
  of functors in commutative and noncommutative geometry}, Mosc. Math. J
  \textbf{3} (2003), no.~1, 1--36, arXiv:math/0204218.

\bibitem[Dri04]{Drinfeld-DGQuotientsOfDGCategories}
Vladimir Drinfeld, \emph{{DG} quotients of {DG} categories}, J. Algebra
  \textbf{272} (2004), no.~2, 643--691, arXiv:math/0210114.

\bibitem[DS13]{DonovanSegal-MixedBraidGroupActionsFromDeformationsOfSurfaceSingularities}
Will Donovan and Ed~Segal, \emph{Mixed braid group actions from deformations of
  surface singularities}, arXiv:1310.7877, (2013).

\bibitem[DW13]{DonovanWemyss-NoncommutativeDeformationsAndFlops}
Will Donovan and Michael Wemyss, \emph{Noncommutative deformations and flops},
  arXiv:1309.0698, (2013).

\bibitem[HLS13]{HalpernLeistnerShipman-AutoequivalencesOfDerivedCategoriesViaGeometricInvariantTheory}
Daniel Halpern-Leistner and Ian Shipman, \emph{Autoequivalences of derived
  categories via geometric invariant theory}, arXiv:1303.5531, (2013).

\bibitem[Hor05]{Horja-DerivedCategoryAutomorphismsFromMirrorSymmetry}
Richard~Paul Horja, \emph{Derived category automorphisms from mirror symmetry},
  Duke Math. J. \textbf{127} (2005), 1--34.

\bibitem[IU05]{IshiiUehara-AutoequivalencesOfDerivedCategoriesOnTheMinimalResolutionsOfA_nSingularitiesOnSurfaces}
Akira Ishii and Hokuto Uehara, \emph{Autoequivalences of derived categories on
  the minimal resolutions of ${A}_n$-singularities on surfaces}, J. Diff. Geom.
  \textbf{71} (2005), 385--435, arXiv:math/0409151.

\bibitem[Kel94]{Keller-DerivingDGCategories}
Bernhard Keller, \emph{Deriving {DG} categories}, Ann. Sci. {\'E}cole Norm.
  Sup. \textbf{27} (1994), no.~1, 63--102.

\bibitem[Kel01]{Keller-IntroductionToAInfinityAlgebrasAndModules}
\bysame, \emph{Introduction to ${A}$-infinity algebras and modules}, Homology
  Homotopy Appl. \textbf{3} (2001), no.~1, 1--35, arXiv:math/9910179.

\bibitem[Kel06]{Keller-OnDifferentialGradedCategories}
\bysame, \emph{On differential graded categories}, Proceedings of the
  International Congress of Mathematicians, Madrid, Spain, 2006, vol.~II, Eur.
  Math. Soc., Z{\"u}rich, 2006, arXiv:math/0601185, pp.~151--190.

\bibitem[KS01]{KontsevichSoibelman-HomologicalMirrorSymmetryAndTorusFibrations}
Maxim Kontsevich and Yan Soibelman, \emph{Homological mirror symmetry and torus
  fibrations}, Symplectic geometry and mirror symmetry ({S}eoul, 2000), World
  Sci. Publ., River Edge, NJ, 2001, arXiv:math/0011041, pp.~203--263.

\bibitem[KT07]{KhovanovThomas-BraidCobordismsTriangulatedCategoriesAndFlagVarieties}
Mikhail Khovanov and Richard Thomas, \emph{Braid cobordisms, triangulated
  categories, and flag varieties}, Homology, Homotopy Appl. \textbf{9} (2007),
  no.~2, 19--94, arXiv:math/0609335.

\bibitem[LH03]{Lefevre-SurLesAInftyCategories}
Kenji Lef{\`e}vre-Hasegawa, \emph{Sur les ${A}_\infty$-cat{\'e}gories}, Ph.D.
  thesis, Universit{\'e} Denis Diderot -- Paris 7, 2003, arXiv:math/0310337.

\bibitem[LO10]{LuntsOrlov-UniquenessOfEnhancementForTriangulatedCategories}
Valery~A. Lunts and Dmitri~O. Orlov, \emph{Uniqueness of enhancement for
  triangulated categories}, J. Amer. Math. Soc. \textbf{23} (2010), 853--908,
  arXiv:0908.4187.

\bibitem[Lun10]{Lunts-CategoricalResolutionOfSingularities}
Valery~A. Lunts, \emph{Categorical resolution of singularities}, J. Algebra
  \textbf{323} (2010), no.~10, 2977 -- 3003, arXiv:0905.4566.

\bibitem[Muk87]{Mukai-OnTheModuliSpaceOfBundlesOnK3SurfacesI}
Shigeru Mukai, \emph{On the moduli space of bundles on ${K}3$ surfaces {I}},
  Vector bundles on algebraic varieties, Oxford University Press, 1987,
  pp.~341--413.

\bibitem[Rou04]{Rouquier-CategorificationOfTheBraidGroups}
Raphael Rouquier, \emph{Categorification of the braid groups},
  arXiv:math/0409593, (2004).

\bibitem[Rou08]{Rouquier-DimensionsOfTriangulatedCategories}
\bysame, \emph{Dimensions of triangulated categories}, Journal of K-theory
  \textbf{1} (2008), 193--256, arXiv:math/0310134.

\bibitem[ST01]{SeidelThomas-BraidGroupActionsOnDerivedCategoriesOfCoherentSheaves}
Paul Seidel and Richard Thomas, \emph{Braid group actions on derived categories
  of coherent sheaves}, Duke Math. J. \textbf{108} (2001), no.~1, 37--108,
  arXiv:math/0001043.

\bibitem[Sze04]{Szendroi-ArtinGroupActionsOnDerivedCategoriesOfThreefolds}
Bal{\'a}zs Szendroi, \emph{Artin group actions on derived categories of
  threefolds}, J. Reine Angew. Math. \textbf{572} (2004), 139--166,
  arXiv:math/0210121.

\bibitem[Tab05]{Tabuada-UneStructureDeCategorieDeModelesDeQuillenSurLaCategorieDesDG-Categories}
Goncalo Tabuada, \emph{Une structure de cat{\'e}gorie de mod{\`e}les de
  {Q}uillen sur la cat{\'e}gorie des dg-cat{\'e}gories}, C. R. Acad. Sci.
  Paris, Ser. I \textbf{340} (2005), 15--19, arXiv:math/0407338.

\bibitem[Tod07]{Toda-OnACertainGeneralizationOfSphericalTwists}
Yukinobu Toda, \emph{On a certain generalization of spherical twists}, Bull.
  Soc. Math. France \textbf{135} (2007), no.~1, 119--134, arXiv:math/0603050.

\bibitem[To{\"e}07]{Toen-TheHomotopyTheoryOfDGCategoriesAndDerivedMoritaTheory}
Bertrand To{\"e}n, \emph{The homotopy theory of {\em dg}-categories and derived
  {M}orita theory}, Invent. Math. \textbf{167} (2007), no.~3, 615--667,
  arXiv:math/0408337.

\bibitem[To{\"e}11]{Toen-LecturesOnDGCategories}
\bysame, \emph{Lectures on {DG}-categories}, Topics in Algebraic and
  Topological K-Theory, Lecture Notes in Mathematics, vol. 2008, Springer
  Berlin Heidelberg, 2011, pp.~243--302.

\bibitem[Ver96]{Verdier-DesCategoriesDeriveesDesCategoriesAbeliennes}
Jean-Louis Verdier, \emph{Des cat{\'e}gories d{\'e}riv{\'e}es des
  cat{\'e}gories ab{\'e}liennes}, Ast{\'e}risque \textbf{239} (1996), xii+253
  pp. (1997).

\end{thebibliography}
\bibliographystyle{amsalpha}
\end{document}